\newtheorem{thm}{Theorem}
\numberwithin{thm}{section}
\newtheorem*{thm*}{Theorem}
\newtheorem{qst}[thm]{Question}
\newtheorem*{qst*}{Question}
\newtheorem*{oq*}{Open Question 3.10}
\newtheorem{prp}[thm]{Proposition}
\newtheorem{lma}[thm]{Lemma}
\newtheorem{cor}[thm]{Corollary}
\newtheorem{conj}[thm]{Conjecture}
\newcommand{\mo}{<_M}
\newcommand{\gmo}{>_M}
\newcommand{\wo}{\leq_S}
\newcommand{\swo}{<_S}
\newcommand{\slwo}{>_S}
\newcommand{\E}{\leq_E}
\newcommand{\sE}{<_E}
\newcommand{\sgE}{>_E}
\theoremstyle{definition}
\newtheorem{defn}[thm]{Definition}
\newtheorem{conv}[thm]{Convention}
\newtheorem{rmk}[thm]{Remark}
\newtheorem{example}[thm]{Example}
\begin{document}
\title{The Seed Order}
\author{Gabriel Goldberg}
\maketitle
\section{Introduction}
This paper is an exposition of the basic theory of the seed order, serving as a companion paper to \cite{MitchellOrder} and \cite{GCH}. The seed order is a definable relation on the class of uniform countably complete ultrafilters that consistently wellorders this class, even in the presence of very large cardinals. In fact, it seems likely that the linearity of the seed order is compatible with all large cardinals. If this is the case, then the statement that the seed order is linear, which we call the {\it Ultrapower Axiom}, is a candidate for a new axiom for set theory, with some striking consequences for the structure of the universe of sets, especially under strong large cardinal hypotheses.

Under the Ultrapower Axiom, the theory of countably complete ultrafilters is quite tractable, and the seed order is the main tool in its analysis. Under large cardinal hypotheses, particularly supercompactness, there are enough countably ultrafilters that one can leverage this tractability to derive consequences outside the realm of pure ultrafilter theory, for example in cardinal arithmetic and inner model theory (assuming the Ultrapower Axiom). We list some sample applications and then outline in detail the organization of this paper.

As a first example, we state what was chronologically the first consequence of the Ultrapower Axiom, the one that motivated its formulation.
\begin{thm}[Ultrapower Axiom]\label{NormalMitchell}
The Mitchell order linearly orders the class of normal ultrafilters.
\end{thm}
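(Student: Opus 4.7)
The plan is to reduce to the case that $U$ and $W$ live on the same measurable cardinal. If $U$ is normal on $\kappa_U$ and $W$ on $\kappa_W$ with $\kappa_U < \kappa_W$, then the normal ultrapower $M_W$ contains $V_{\kappa_W+1}$, so $U \in V_{\kappa_U+2} \subseteq V_{\kappa_W} \subseteq M_W$, and $U \mo W$ outright, with no appeal to UA. The case $\kappa_W < \kappa_U$ is symmetric, so the real content is the case $\kappa_U = \kappa_W = \kappa$.

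In that case I would apply the Ultrapower Axiom to the pair $(U,W)$ to obtain internal ultrapower embeddings $i : M_U \to N$ and $j : M_W \to N$ with $k := i \circ j_U = j \circ j_W$. By normality, the canonical seeds of $U$ in $M_U$ and of $W$ in $M_W$ are both just $\kappa$; set $\alpha := i(\kappa)$ and $\beta := j(\kappa)$. A short normality computation shows that, for every $X \subseteq \kappa$,
\[
X \in U \iff \alpha \in k(X), \qquad X \in W \iff \beta \in k(X).
\]
Thus $\alpha = \beta$ immediately forces $U = W$, and the trichotomy of $\alpha$ versus $\beta$ in the ordinals of $N$ reduces the proof to showing that $\alpha < \beta$ implies $U \mo W$, the reverse case following by symmetry.

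To establish $U \in M_W$ under $\alpha < \beta$, I would use two ingredients: first, that $P(\kappa)^{M_W} = P(\kappa)^V$ (because $W$ is normal on $\kappa$); and second, that $j$, being the internal ultrapower of $M_W$ by some $Z \in M_W$, represents every ordinal below $j(\kappa) = \beta$ via a function in $M_W$. Pick $g \in M_W$ with $g : \mathrm{dom}(Z) \to \kappa$ and $\alpha = j(g)([\mathrm{id}]_Z)$, and define inside $M_W$ the derived ultrafilter
\[
U^{*} := \{ X \in P(\kappa) : \{ t : g(t) \in X \} \in Z \}.
\]
By \L{}o\'s's theorem for the ultrapower giving $j$, $X \in U^{*}$ iff $\alpha \in j(X)$. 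On the other hand, applying $j$ to the $M_W$-valid identity $j_W(X) \cap \kappa = X$ (which holds because $\mathrm{crit}(j_W) = \kappa$) yields $j(j_W(X)) \cap \beta = j(X)$, so from $\alpha < \beta$ we get $\alpha \in j(X)$ iff $\alpha \in j(j_W(X)) = k(X)$ iff $X \in U$. Hence $U = U^{*} \in M_W$.

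The principal obstacle is this last step: producing $U$ as an element of $M_W$ even though the comparing ordinal $\alpha$ need not itself lie in $M_W$. The device that makes this work is to represent $\alpha$ by an internal function $g \in M_W$ and then use the normality of $W$ to recognize that the $M_W$-internal derived ultrafilter $U^{*}$ agrees with $U$ on its natural domain $P(\kappa)$.
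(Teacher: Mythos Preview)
Your proof is correct and follows essentially the same route as the paper's (Theorem~\ref{mitchellnormalmeasures}): compare by internal ultrafilters, use normality to identify the seeds with $\kappa$, and read off $U$ inside $M_W$ from the comparison. One remark: your stated ``principal obstacle''---that $\alpha$ might not lie in $M_W$---is not an obstacle at all, since $\alpha$ is an ordinal and $M_W$ is a transitive inner model containing all ordinals. The paper therefore skips the representing function $g$ entirely and simply observes that $X\in U$ iff $\alpha\in j(X)$; since $j$ is definable over $M_W$ from the internal ultrafilter $Z\in M_W$ and $\alpha\in M_W$, this already exhibits $U$ as a set in $M_W$. Your detour through $U^*$ is correct but unnecessary.
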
 
This solves the open question, see \cite{Kanamori}, of whether the linearity of the Mitchell order is compatible with the existence of a cardinal \(\kappa\) that is \(2^\kappa\)-supercompact, since by \cite{FiniteLevels} and \cite{NeemanSteel}, the Ultrapower Axiom holds in an inner model satisfying this large cardinal hypothesis. (The construction of this model requires an iteration hypothesis for now. For a more complete discussion of this problem, see \cite{MitchellOrder}.) The theorem is proved by showing that the restriction of the seed order to normal ultrafilters is equal to the Mitchell order. A basic open problem is to characterize the uniform ultrafilters on which the Mitchell order and the seed order coincide. In fact, this question turns out to be the fundamental problem in the subject, and the attempt to understand it leads naturally to every theorem proved here, in \cite{MitchellOrder}, and in \cite{GCH}. In \cite{MitchellOrder} in particular, we prove much stronger results on the Mitchell order. The underlying fact seems to be the following:
\begin{thm}[Ultrapower Axiom]\label{DoddSolidMitchell}
The Mitchell order is linear on all Dodd solid ultrafilters.
\end{thm}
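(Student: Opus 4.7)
The plan is to reduce \Cref{DoddSolidMitchell} to showing that on Dodd solid ultrafilters the Mitchell order coincides with the seed order. Once this coincidence is in place, linearity is automatic: the Ultrapower Axiom asserts that $\swo$ linearly orders the class of uniform countably complete ultrafilters, so any two distinct Dodd solid $U, W$ are $\swo$-comparable, and by the coincidence they are therefore $\mo$-comparable.

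The easy inclusion is that $U \mo W$ implies $U \swo W$. This I would take as a general fact about the seed order requiring no solidity hypothesis: if $U \in \mathrm{Ult}(V,W)$, then a seed for $U$ can be read off inside $\mathrm{Ult}(V,W)$ and used to witness $U \swo W$ directly from the definition. The substantive content is the converse: if $W$ is Dodd solid and $U \swo W$, then $U \in \mathrm{Ult}(V,W)$. The subtlety is that $U \swo W$ only provides a seed $a \in \mathrm{Ult}(V,W)$ out of which $U$ is derived externally via $j_U$; from $a$ alone one cannot recover $U$ as a set in $\mathrm{Ult}(V,W)$. Dodd solidity of $W$ is invoked exactly here: it guarantees that all proper initial segments of the Dodd parameter of $W$, and hence enough coherent fragments of the extender structure of $j_W$, are already elements of $\mathrm{Ult}(V,W)$.

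I would then carry out the comparison of $U$ and $W$, locate the branch witnessing $U \swo W$, and analyze the $W$-side of that comparison. The goal is to show that, using the solid parameters supplied by Dodd solidity, the $W$-side of the comparison, and therefore the derived ultrafilter $U$, is definable inside $\mathrm{Ult}(V,W)$ from $a$ together with parameters already present there. This definability places $U$ in $\mathrm{Ult}(V,W)$, which is precisely $U \mo W$.

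The main obstacle is this last definability step: passing from a seed $a$ for $U$ to the ultrafilter $U$ itself inside $\mathrm{Ult}(V,W)$. This is the point at which the precise content of Dodd solidity does the real work, by letting one internalize the generators of $j_W$ up to the relevant stage. The coincidence result is thus really a fine structural calculation relating solid parameters to the seed order, while the overall theorem is, given this calculation, an immediate application of the Ultrapower Axiom.
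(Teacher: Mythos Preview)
Your overall strategy is right---the paper too reduces the theorem to relating the seed order and the Mitchell order on Dodd solid ultrafilters---but you have misidentified which direction is easy and which is needed. The implication $U \mo W \Rightarrow U \swo W$ is \emph{not} a general fact; the paper devotes \cref{mitchextend0} to proving it under extra hypotheses and notes explicit counterexamples without them. Your justification (``a seed for $U$ can be read off inside $\mathrm{Ult}(V,W)$ and used to witness $U \swo W$ directly from the definition'') is confused: the seed of $U$ lives in $\mathrm{Ult}(V,U)$, and witnessing $U \swo W$ requires producing an internal comparison of the two ultrapowers, which membership $U \in \mathrm{Ult}(V,W)$ does not by itself provide. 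More to the point, this direction is unnecessary for the theorem. Linearity of $\mo$ on Dodd solid ultrafilters follows from the single implication you call ``substantive'': if $U_1$ is Dodd solid and $U_0 \swo U_1$ then $U_0 \mo U_1$. Given that, any two distinct Dodd solid ultrafilters are $\swo$-comparable by the Ultrapower Axiom, hence $\mo$-comparable.

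For that substantive direction, your sketch is on the right track but too vague, and the language of ``fine structural calculation relating solid parameters'' overstates what is needed. The paper's argument (\cref{doddlin*}) is concrete: take a comparison $\langle k_0,k_1\rangle$ of $\langle U_0,U_1\rangle$ to a common model, witnessing $U_0\swo U_1$, so $k_0(a_0) < k_1(a_1)$ where $a_i = [\mathrm{id}]_{U_i}$. Then $X \in U_0 \iff k_0(a_0) \in k_1(j_{U_1}(X))$, and since $k_0(a_0) < k_1(a_1)$ this is equivalent to $k_0(a_0) \in k_1(E(X))$ where $E = U_1 | a_1$ is the extender of $U_1$ below $a_1$. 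Dodd solidity says exactly that $E \in \mathrm{Ult}(V,U_1)$; since $k_1$ is an internal ultrapower embedding of $\mathrm{Ult}(V,U_1)$ and $k_0(a_0)$ lies in the common model (a definable inner model of $\mathrm{Ult}(V,U_1)$), the displayed equivalence defines $U_0$ inside $\mathrm{Ult}(V,U_1)$.
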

A Dodd solid ultrafilter is one whose ultrapower contains as much of its own extender as is possible. (See \cite{MitchellOrder} for the precise definition, which is not required for understanding this introduction.) Once again, this is shown by proving that the seed order and the Mitchell order coincide on Dodd solid ultrafilters. By applying Solovay's lemma, we obtain the following corollary:
\begin{thm}[Ultrapower Axiom]\label{SupercompactMitchell}
The Mitchell order is linear on all normal fine ultrafilters on \(P_\lambda(\lambda)\) for any regular cardinal \(\lambda = 2^{<\lambda}\).
\end{thm}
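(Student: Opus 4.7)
My plan is to reduce to \cref{DoddSolidMitchell}. The key tool is Solovay's lemma: for a normal fine ultrafilter $U$ on $P_\lambda(\lambda)$ with $\lambda$ regular, normality and fineness together imply $[\mathrm{id}]_U = j_U[\lambda]$, so the ultrapower embedding $j_U$ is generated by the single seed $j_U[\lambda]$. The hypothesis $\lambda = 2^{<\lambda}$ ensures that $|P_\lambda(\lambda)| = \lambda$, so $U$ is a uniform countably complete ultrafilter on a set of size $\lambda$ and therefore falls within the scope of the seed-order theory developed in the paper.

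The main step, and the main obstacle, is to show that any such $U$ is Dodd solid. The intuition is that once $j_U[\lambda]$ already generates the full ultrapower, every proper initial segment of the extender derived from $(j_U, j_U[\lambda])$ should be coded inside $M_U = \mathrm{Ult}(V, U)$, which is precisely what Dodd solidity demands. Carrying this out requires unpacking the definition from \cite{MitchellOrder} and checking that the specific seed supplied by Solovay's lemma has the required structural properties; this is where the bulk of the work lies.

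Once Dodd solidity is established, \cref{DoddSolidMitchell} immediately delivers linearity of the Mitchell order on the resulting class of ultrafilters. A final routine verification is that the Mitchell order inherited from the class of all uniform countably complete ultrafilters agrees with the standard Mitchell order on normal fine ultrafilters on $P_\lambda(\lambda)$, which is immediate from the fact that both are defined in terms of membership in the target ultrapower.
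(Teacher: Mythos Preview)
Your proposal is correct and takes essentially the same approach as the paper, which explicitly derives \cref{SupercompactMitchell} from \cref{DoddSolidMitchell} by applying Solovay's lemma. One small caution: your ``final routine verification'' is not quite as immediate as you suggest, since the generalized Mitchell order is not Rudin-Keisler invariant; the point that makes it go through here is that both the normal fine ultrafilter and its Dodd solid representative live on sets of size $\lambda$ and the target ultrapower is closed under $\lambda$-sequences.
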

Indeed a natural variant of the Mitchell order is linear on all normal fine ultrafilters on \(P_\lambda(\lambda)\) for {\it any} cardinal \(\lambda = 2^{<\lambda}\); the singular case turns out to be quite interesting, involving a generalization of Solovay's lemma to singular cardinals. In this paper, we content ourselves with the linearity theorem for normal ultrafilters, \cref{NormalMitchell}. We also show here that if \(2^{<\lambda} = \lambda\), the seed order extends the Mitchell order on uniform ultrafilters on \(\lambda\). This theorem is part of the motivation for \cite{GCH}.

Second, assuming the Ultrapower Axiom, the class of countably complete ultrafilters satisfies a version of the Fundamental Theorem of Arithmetic: 
\begin{thm}[Ultrapower Axiom]\label{FactorizationTheorem} Every countably complete ultrafilter factors as a finite linear iteration of irreducible ultrafilters.\end{thm}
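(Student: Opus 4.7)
The plan is to induct on the seed order, which is wellfounded on the class of uniform countably complete ultrafilters, a general fact independent of the Ultrapower Axiom. First I would make factorization precise: a countably complete ultrafilter $U$ on a set $X$ \emph{factors} as $W \cdot V$ when its ultrapower embedding satisfies $j_U = j^{M_W}_V \circ j_W$, where $V$ is an internal countably complete ultrafilter on $j_W(X)$ sitting inside $M_W$, the ultrapower of the universe by $W$. Call $U$ \emph{irreducible} when every such factorization has one of $W$, $V$ principal. After the standard reduction to the uniform case, it suffices to show every uniform countably complete $U$ admits a finite factorization into irreducibles.

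With these definitions, the inductive step is short in outline. If $U$ is irreducible, the length-one factorization $U = U$ already witnesses the conclusion. Otherwise, fix any nontrivial factorization $U = W \cdot V$. The key structural lemma, to be established separately, is that both factors strictly precede $U$ in the appropriate seed order: $W \swo U$ in the ambient universe, and, inside $M_W$, the internal ultrafilter $V$ is strictly below $j_W(U)$ in the seed order as computed in $M_W$. Granting this, the induction hypothesis applies to $W$, yielding an iteration $W = W_1 \cdot W_2 \cdots W_k$ into irreducibles. Inside $M_W$, which itself satisfies the Ultrapower Axiom since UA is preserved under countably complete ultrapowers, the induction hypothesis applies to $V$, giving $V = V_1 \cdots V_\ell$. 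Splicing produces the desired finite factorization $U = W_1 \cdots W_k \cdot V_1 \cdots V_\ell$ into irreducibles.

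The main obstacle is the structural lemma that proper factors strictly decrease the seed order. The seed order is defined by comparing seeds $a_U = [\mathrm{id}]_U$ across a common ultrapower, so strictness in the composition $j_U = j^{M_W}_V \circ j_W$ must be extracted from how this composition acts on seeds. The delicate case is ruling out seed-order equivalence of $W$ with $U$ even though $W$ is only a proper factor; here the linearity of the seed order under UA is invoked to force a strict comparison in one direction, and the nontriviality of the factorization then excludes the equal case. Once this lemma is in hand, the induction closes and the theorem follows. An analogous refinement argument, comparing outermost irreducible factors of two given factorizations via UA, would yield a uniqueness-up-to-rearrangement statement if one is desired as a companion to the existence theorem.
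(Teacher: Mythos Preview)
Your approach is correct and rests on the same structural lemma as the paper's proof: in a nontrivial canonical factorization $Z=(U,W)$ with $U$ nonprincipal, one has $W\swo j_U(Z)$ in $\textnormal{Ult}(V,U)$ (this is \cref{decomplemma}, which in turn rests on the Bounding Lemma, \cref{bounding}). The organization differs. The paper runs an explicit algorithm: at each stage peel off the $\swo$-least nonprincipal factor $U_i$ of the current remainder $W_i$, which is automatically irreducible by \cref{factorlemma}, and let $W_{i+1}$ be the canonical cofactor; termination comes directly from the wellfoundedness of the resulting internal iteration via $W_{i+1}\swo j_{U_i}(W_i)$. Your divide-and-conquer induction recurses on both halves of an arbitrary nontrivial factorization, which works as well but needs the extra ingredient $W\swo U$ (again \cref{factorlemma}, requiring $W$ to be the minimal representative of its Rudin--Keisler class) to handle the outer factor. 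One point you should make explicit: when you say the induction hypothesis applies to $V$ inside $M_W$, the operative fact is the \emph{elementarity} of $j_W$---the first-order statement ``$U$ is the $\swo$-least ultrafilter with no finite irreducible factorization'' transfers to ``$j_W(U)$ is the $\swo$-least such in $M_W$'', and then $V\swo j_W(U)$ finishes it. That UA holds in $M_W$ is necessary to have a seed order there at all, but it is not by itself what delivers the inductive hypothesis.
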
 
An irreducible ultrafilter is an ultrafilter that admits no nontrivial factorization as an iterated ultrapower, see \cref{primitivity}. \cref{FactorizationTheorem} should be viewed as an abstract generalization of Kunen's theorem that every ultrafilter in \(L[U]\) is Rudin-Keisler equivalent to a finite iteration of the normal ultrafilter \(U\). Indeed, under the Ultrapower Axiom irreducibility is equivalent to a property called {\it predictiveness} that manifestly generalizes normality. Second, by a theorem of Schlutzenberg, in the Mitchell-Steel models below a superstrong cardinal, the irreducible ultrafilters are precisely the total ultrafilters indexed on the extender sequence, the exact analogs of the ultrafilter \(U\) in the context of \(L[U]\). In particular, in these models, every irreducible ultrafilter is Dodd solid, by a theorem of Steel, see \cite{Schimmerling}.

To what extent the Dodd solidity of irreducible ultrafilters persists to inner models for larger cardinals, and to what extent it is a consequence of the Ultrapower Axiom, is an open problem, intimately related to the relationship between the seed order and the Mitchell order by \cref{DoddSolidMitchell}. In any case, \cref{FactorizationTheorem} reduces the theory of ultrafilters under the Ultrapower Axiom to the analysis of the irreducible ultrafilters and their iterations. At least part of this analysis can be carried out assuming the Ultrapower Axiom by an induction through the large cardinal hierarchy. The first step of this induction is a direct analog of Kunen's theorem: under the Ultrapower Axiom, either there is a \(\mu\)-measurable cardinal or else every countably complete ultrafilter is Rudin-Keisler equivalent to a finite iteration of normal ultrafilters.

Our last two examples show that under large cardinal assumptions, the consequences of Ultrapower Axiom extend beyond ultrafilter theory to answer fundamental structural questions about the universe of sets. Moreover, the proofs make significant use of the theory of the seed order. For example, we will show the following by a surprisingly simple argument:

\begin{thm}[Ultrapower Axiom] Suppose \(\kappa\) is supercompact. Then \(V\) is a set-generic extension of \(\textnormal{HOD}\) by a partial order of size \(2^{2^\kappa}\). Moreover \(\kappa\) is supercompact in \(\textnormal{HOD}\).\end{thm}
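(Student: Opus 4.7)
The plan has three steps.

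\textbf{Step 1 (canonical ultrafilters in HOD).} Under UA the seed order is a definable wellorder on countably complete uniform ultrafilters (see \cref{NormalMitchell}), so for every $\lambda \geq \kappa$ the seed-least normal fine ultrafilter $\mathcal{U}_\lambda$ on $P_\kappa(\lambda)$, which exists by $\lambda$-supercompactness of $\kappa$, is ordinal-definable. Hence $\mathcal{U}_\lambda \in \textnormal{HOD}$ for every $\lambda$. This mirrors the strategy behind \cref{SupercompactMitchell}.

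\textbf{Step 2 ($\kappa$ supercompact in HOD).} The restriction $\bar{\mathcal{U}}_\lambda := \mathcal{U}_\lambda \cap \textnormal{HOD}$ is itself OD and so lies in HOD. For each HOD-subset $X$ of $P_\kappa(\lambda)$ exactly one of $X$ and $P_\kappa(\lambda) \setminus X$ belongs to $\mathcal{U}_\lambda$, and $\kappa$-completeness, normality, and fineness all transfer down to HOD. Thus $\bar{\mathcal{U}}_\lambda$ is a normal fine $\kappa$-complete ultrafilter on $P_\kappa(\lambda)^{\textnormal{HOD}}$ inside HOD, and since $\lambda$ was arbitrary, $\kappa$ is supercompact in HOD.

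\textbf{Step 3 (generic extension of size $2^{2^\kappa}$).} I would invoke Bukovsky's theorem: $V$ is a generic extension of HOD by a forcing of HOD-size at most $2^{2^\kappa}$ if and only if for every $f : \alpha \to \textnormal{Ord}$ in $V$ there is $F \in \textnormal{HOD}$ with $f(\beta) \in F(\beta)$ and $|F(\beta)|^{\textnormal{HOD}} \leq 2^{2^\kappa}$ for all $\beta < \alpha$. To verify this covering, given $f$ with range bounded by some $\lambda$, one uses $\mathcal{U}_\lambda$: the ultrapower representation $[f]_{\mathcal{U}_\lambda}$ lives inside the HOD-ultrapower by $\bar{\mathcal{U}}_\lambda$, and the cover $F(\beta)$ can be extracted as the set of possible values of $f(\beta)$ compatible with a given HOD-ultrapower datum; supercompactness in HOD then bounds $|P(P(\kappa))|^{\textnormal{HOD}}$ by $2^{2^\kappa}$, giving the required size control.

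The main obstacle is the third step and, within it, achieving the exact bound $2^{2^\kappa}$. That $V$ is \emph{some} set-generic extension of HOD is essentially automatic from step~2 via Vopenka's theorem, but tightening the forcing size to $2^{2^\kappa}$ requires leveraging both the definability of the $\mathcal{U}_\lambda$ and the supercompactness of $\kappa$ in HOD in order to produce covers of HOD-cardinality no more than $2^{2^\kappa}$ — a bound that matches $|P(P(\kappa))|^{\textnormal{HOD}}$.
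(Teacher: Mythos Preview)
Your Step~2 has a real gap, and Step~3 misses the paper's key idea. In Step~2 you assert that $\bar{\mathcal U}_\lambda=\mathcal U_\lambda\cap\textnormal{HOD}$ is a normal fine $\kappa$-complete ultrafilter on $P_\kappa(\lambda)^{\textnormal{HOD}}$ inside $\textnormal{HOD}$, but this requires the concentration condition $P_\kappa(\lambda)\cap\textnormal{HOD}\in\mathcal U_\lambda$, which you never verify. Without it, $\bar{\mathcal U}_\lambda$ need not even contain $P_\kappa(\lambda)^{\textnormal{HOD}}$ as a measure-one set, and the restriction fails to be a supercompactness measure in $\textnormal{HOD}$. (Also, your Step~1 overstates things: $\mathcal U_\lambda$ is ordinal definable, not an element of $\textnormal{HOD}$; only $\mathcal U_\lambda\cap\textnormal{HOD}$ lies in $\textnormal{HOD}$.) The paper establishes concentration via Solovay's lemma together with the fact that $\textnormal{HOD}$ is stationary-correct at large regular $\lambda$, and that stationary correctness is itself a consequence of the set-generic-extension result. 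So the logical order is the reverse of yours: the generic extension bound comes first, and supercompactness in $\textnormal{HOD}$ (indeed, $\textnormal{HOD}$ being a weak extender model) follows.

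For Step~3 the paper does not use Bukovsk\'y covering at all, and your sketch of how to extract covers from the ultrapower is too vague to be a proof. The paper's argument is short and different: since every countably complete ultrafilter is OD under UA, a Laver function $f:\kappa\to V_\kappa$ definable from any $A\subseteq\kappa$ with $V_\kappa\subseteq\textnormal{HOD}_A$ already yields $V=\textnormal{HOD}_A$, because every set $x$ is $j_U(f)(\kappa)$ for some OD ultrafilter $U$. Then Vop\v{e}nka's theorem immediately gives that $V$ is a generic extension of $\textnormal{HOD}$ by a forcing of size at most $2^{2^\kappa}$. You have the right starting observation (ultrafilters are OD), but the Laver-function step is the idea you are missing.
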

In fact, under these assumptions, \(\textnormal{HOD}\) is a weak extender model for the supercompactness of \(\kappa\), in the sense of \cite{Woodin}.

We mention one last theorem, whose proof appears in \cite{GCH} and requires the Bounding Lemma of \cref{MinimalSection}, the most basic result in the deeper part of the general theory of the seed order.
\begin{thm}[Ultrapower Axiom]\label{SupercompactGCH} Suppose \(\kappa\) is supercompact. Then for every \(\lambda \geq \kappa\), \(2^\lambda = \lambda^+\).
\end{thm}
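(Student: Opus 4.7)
The plan is to code each subset of $\lambda$ by a countably complete ultrafilter and then bound the number of such codes using the Bounding Lemma. First I would reduce to the regular case: since $\kappa$ is in particular strongly compact, Solovay's ZFC theorem gives SCH above $\kappa$, so an induction on $\lambda$ handles the singular case once the regular case is known. It therefore suffices to prove $2^\lambda = \lambda^+$ for regular $\lambda \geq \kappa$.

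Fix such a $\lambda$ and a normal fine ultrafilter $\mathcal{U}$ on $P_\kappa(\lambda)$ witnessing the $\lambda$-supercompactness of $\kappa$. Because $M_\mathcal{U}$ is closed under $\lambda$-sequences, every subset $X \subseteq \lambda$ lies in $M_\mathcal{U}$, and so $X = j_\mathcal{U}(f_X)(s_\mathcal{U})$ for some function $f_X \in V$ and the canonical seed $s_\mathcal{U}$ of $\mathcal{U}$. To each such $X$ I would then associate a canonical countably complete ultrafilter $W_X$ on a set of cardinality at most $\lambda$, for instance the $\wo$-minimum ccu whose ultrapower interprets $X$ in a prescribed fashion. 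The linearity of the seed order, i.e.\ the Ultrapower Axiom, supplies both the existence of this minimum and the injectivity of the assignment $X \mapsto W_X$.

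The final ingredient is the Bounding Lemma, which should give a $\lambda^+$ bound on the $\wo$-initial segment containing all the $W_X$. Combining injectivity with this bound yields $2^\lambda \leq \lambda^+$, hence $2^\lambda = \lambda^+$.

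I expect the main obstacle to lie in choosing the coding $X \mapsto W_X$ so that (i) it is genuinely injective, and (ii) its image is confined to a $\wo$-initial segment whose cardinality the Bounding Lemma controls by $\lambda^+$ rather than by a larger cardinal such as $2^\lambda$. The factorization theorem (\cref{FactorizationTheorem}) should help by reducing the analysis to iterations of irreducible ultrafilters, on which the seed order is most tractable. Beyond that, the careful ultrafilter-theoretic machinery of \cref{MinimalSection} is needed both to articulate the correct $\wo$-minimal coding and to verify the size bound, which is really the heart of the argument.
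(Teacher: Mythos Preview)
The paper does not actually prove this theorem. It is stated in the introduction with the proof explicitly deferred to the companion paper \cite{GCH}; the only information given here is that the argument ``requires the Bounding Lemma of \cref{MinimalSection}'' and that it ``arises from the interplay between the seed order and the Mitchell order'' when one tries to remove the cardinal arithmetic hypothesis from \cref{SupercompactMitchell}.

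Your sketch is consonant with those hints: the reduction to regular $\lambda$ via Solovay's SCH theorem is correct, and the overall strategy of coding subsets of $\lambda$ by uniform countably complete ultrafilters and then counting the codes via the seed order is indeed the shape of the argument. You also correctly identify the Bounding Lemma (\cref{bounding}) as the tool that controls the size of the relevant seed-order segment.

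That said, what you have written is a plan, not a proof, and you are candid about this. The entire difficulty lies exactly where you place it: arranging an injective map $X \mapsto W_X$ whose range is confined to a seed-order initial segment of size $\lambda^+$ rather than $2^\lambda$. The naive codings are circular, and your invocation of \cref{FactorizationTheorem} is speculative rather than a concrete step. Since the present paper contains no proof of \cref{SupercompactGCH} against which to check your outline, there is nothing further to compare; a genuine assessment would require the argument of \cite{GCH}.
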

\cref{SupercompactGCH} also arises from the interplay between the seed order and the Mitchell order. In particular, when one attempts to remove the cardinal arithmetic assumption from \cref{SupercompactMitchell}, one instead is led to {\it prove} this assumption.  

Having provided a sampling of what the Ultrapower Axiom can do, let us say more precisely what the Ultrapower Axiom is. It is first of all an {\it axiom of comparison}, closely related to Woodin's axiom Weak Comparison. For the purposes of this introduction, if \(M_0\) and \(M_1\) are inner models and \(N\) is an inner model of both \(M_0\) and \(M_1\), let us call a pair \(\langle k_0,k_1\rangle\) a {\it comparison} of \(\langle M_0,M_1\rangle\) to \(N\) if \(k_0:M_0\to N\) and \(k_1: M_1\to N\) are elementary embeddings definable over \(M_0\) and \(M_1\) respectively. (Comparisons are closely related to the comparison process of inner model theory.) The Ultrapower Axiom states that if \(U_0\) and \(U_1\) are countably complete ultrafilters and \(j_{U_0}: V\to M_{U_0}\) and \(j_{U_1}:V\to M_{U_1}\) are their ultrapowers, then \(M_{U_0}\) and \(M_{U_1}\) admit a comparison \(\langle k_0,k_1\rangle\) to a common model \(N\) that makes the following diagram commute: 

\begin{center}
\includegraphics[scale=.7]{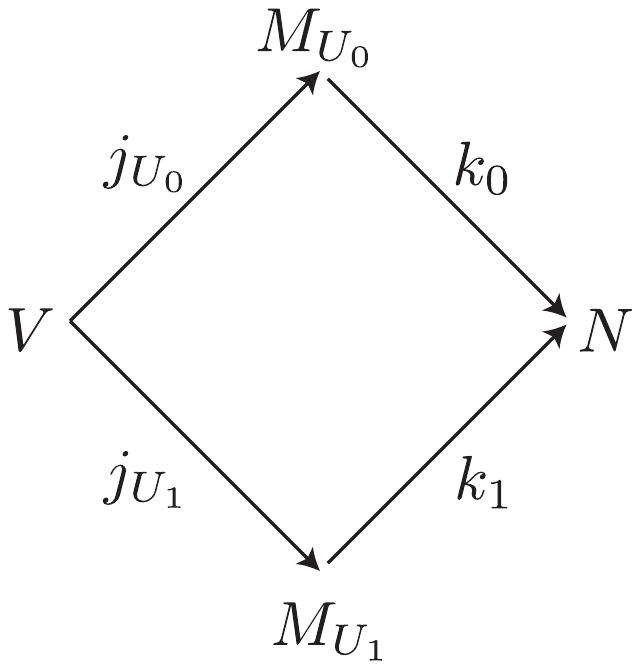}
\end{center}

Assuming the Ultrapower Axiom, we define the seed order on countably complete uniform ultrafilters \(U_0\) and \(U_1\) concentrating on ordinals by setting \(U_0 \wo U_1\) if for some comparison \(\langle k_0,k_1\rangle\) of the ultrapowers \(M_{U_0}\) and \(M_{U_1}\) to a common inner model \(N\), \(k_0(\alpha_0) \leq k_1(\alpha_1)\) where \(\alpha_0 = [\text{id}]_{U_0}\) and \(\alpha_1 = [\text{id}]_{U_1}\) are the {\it seeds} of \(U_0\) and \(U_1\) respectively. The first theorems of this paper state that if the Ultrapower Axiom holds then the relation \(\wo\) does not depend on the choice of comparison and in fact is a wellorder. 


We now describe the organization of the paper. In \cref{UltrapowerAxiomSection}, which is very short, we introduce the Ultrapower Axiom, the general comparison principle that will be the hypothesis of most of our theorems. In \cref{SeedOrderSection}, we introduce the seed order and establish its basic properties under the assumption of the Ultrapower Axiom. In \cref{CanonicalComparisonSection}, we show from the Ultrapower Axiom that any ultrapowers of \(V\) admit a unique minimal comparison, which we call their canonical comparison. Then in \cref{mitchellorder}, we explore the consequences of the Ultrapower Axiom for the Mitchell order, proving for example that the Mitchell order is linear on all Dodd solid ultrafilters. We also explore here the relationship between the seed order and the Mitchell order on countably complete ultrafilters that are not assumed to be normal or Dodd solid, proving under the Ultrapower Axiom and the additional assumption of GCH that the seed order in fact extends the Mitchell order.

Next, in \cref{MinimalSection}, we establish some basic facts about ultrapowers that are useful in our abstract comparison arguments. The high point is a proof of the pointwise minimality of definable embeddings, \cref{minult}. The minimality of definable embeddings suggests a version of the seed order, which we call the \(E\)-order, whose basic properties we put down in \cref{ZFCSection}. This order provably extends the seed order, and is equal to the seed order assuming the Ultrapower Axiom. Thus the assumption that the extended seed order is linear is a trivial consequence of the Ultrapower Axiom, and we prove here that in fact it is equivalent to the Ultrapower Axiom. We conclude this section by relating the \(E\)-order with the Rudin-Keisler order, the Lipschitz order, and the Mitchell order.

In \cref{IrreducibleSection}, we introduce the notion of an irreducible ultrafilter, and show that under the Ultrapower Axiom, every ultrafilter factors as an internal iteration of irreducible ultrafilters. Next, in \cref{RealmSection}, we provide an inductive analysis of the seed order that shows for example the following theorem, generalizing Kunen's theorem for \(L[U]\):
\begin{thm*}[Ultrapower Axiom] Any countably complete ultrafilter on a cardinal below the least \(\mu\)-measurable cardinal is equivalent to a finite iteration of normal ultrafilters.\end{thm*}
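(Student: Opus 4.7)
The plan is to combine the Factorization Theorem (\cref{FactorizationTheorem}) with the following structural lemma: under the Ultrapower Axiom, every irreducible countably complete ultrafilter whose critical point lies below the least $\mu$-measurable cardinal is Rudin--Keisler equivalent to a normal ultrafilter. Granted the lemma, let $\mu_0$ denote the least $\mu$-measurable cardinal and let $U$ be a countably complete ultrafilter on a cardinal $\lambda < \mu_0$. Applying \cref{FactorizationTheorem}, one writes $U$ as a finite internal iteration $U_1 * \cdots * U_n$ of irreducibles, where $U_1 \in V$ and each $U_{i+1}$ lives in the prior ultrapower model. The critical point of each factor sits below the appropriate image of $\mu_0$, since the iteration is assembled from data below $j(\lambda)$, and both the Ultrapower Axiom and the hypothesis of being below the least $\mu$-measurable pass to the relevant inner models. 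The structural lemma, applied factor by factor in each model, then shows each $U_i$ to be equivalent to a normal ultrafilter in that model, so $U$ is equivalent to a finite iteration of normal ultrafilters.

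For the structural lemma, let $W$ be irreducible with $\mathrm{crit}(j_W) = \kappa$ below $\mu_0$ and seed $\alpha = [\mathrm{id}]_W$; the goal is to show $\alpha = \kappa$, that is, $W$ is normal. Suppose toward contradiction $\alpha > \kappa$. Let $D$ be the normal ultrafilter on $\kappa$ derived from $j_W$, and $k : M_D \to M_W$ the canonical factor with $k \circ j_D = j_W$, $\mathrm{crit}(k) = \kappa$, and $k(\kappa) = \alpha$. Define $W'$ to be the ultrafilter on $\kappa$ in $M_D$ derived from $k$ using the seed $\alpha$. There are two cases. If $W' \in M_D$, then $k$ coincides with the internal ultrapower of $M_D$ by $W'$, so $W \equiv D * W'$ is a nontrivial internal iteration, contradicting the irreducibility of $W$. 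If $W' \notin M_D$, then $W'$ is a $\kappa$-complete nonprincipal ultrafilter on $\kappa$ in $V$ that escapes $M_D$, and this should be organized, via the canonical comparison of $W'$ against a measure in $M_D$, into a witness that $\kappa$ is $\mu$-measurable, contradicting the hypothesis.

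The main obstacle will be closing the case $W' \notin M_D$, since one must extract from the failure of $W'$ to lie in $M_D$ an honest witness to $\mu$-measurability at $\kappa$. This is the abstract analogue of Kunen's argument that in $L[U]$ every $\kappa$-complete ultrafilter on $\kappa$ is captured by the ultrapower; here it should follow from the precise form of $\mu$-measurability adopted in \cref{RealmSection} together with the canonical comparison machinery of \cref{CanonicalComparisonSection}, which allows one to compare $W'$ against a $D$-derived measure in $M_D$ and read off the desired measure-of-a-measure witness. The rest of the proof is mostly bookkeeping: verifying that irreducibility is preserved under translation into the ultrapower models appearing in the factorization, and checking that the non-$\mu$-measurability hypothesis correctly propagates along the iteration so that the structural lemma applies uniformly to each factor.
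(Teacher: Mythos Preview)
Your overall architecture---factorize into irreducibles, then show each irreducible below \(\mu_0\) is normal---matches the paper. But your proof of the structural lemma contains both technical errors and a genuine gap.

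First, the technical errors. The factor map \(k:M_D\to M_W\) satisfies \(k(\kappa)=\kappa\), not \(k(\kappa)=\alpha\): since \(D\) is derived from \(j_W\) using \(\kappa\), the factor map sends \([\mathrm{id}]_D=\kappa\) to \(\kappa\). Consequently \(\mathrm{crit}(k)>\kappa\), and the ultrafilter \(W'\) derived from \(k\) using \(\alpha\) lives on some \(\gamma\) with \(\kappa<\gamma\le j_D(\kappa)\), not on \(\kappa\).

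More seriously, your Case~2 is a real gap. Knowing \(W'\notin M_D\) does not by itself produce a \(\mu\)-ultrafilter: what you need is \(D\in M_W\), which is a statement about \(M_W\), not about \(M_D\). Your sketch---``compare \(W'\) against a measure in \(M_D\)''---does not obviously yield this. The paper avoids this detour entirely. Instead of analyzing whether the factor map \(k\) is internal to \(M_D\), the paper uses the Ultrapower Axiom to take a comparison \(\langle k_U,k_Z\rangle\) of \(\langle D,W\rangle\) to a common model. Since \(k(\kappa)=\kappa\), the minimality of definable embeddings (\cref{minult}) gives \(k_U(\kappa)\le k_Z(k(\kappa))=k_Z(\kappa)\). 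If equality holds, then \cref{trivialfact} shows \(D\) is a factor of \(W\), so \(W=D\) is normal by irreducibility. If \(k_U(\kappa)<k_Z(\kappa)\), then the argument of \cref{mitchellnormalmeasures} shows \(D\in M_W\), so \(W\) is a \(\mu\)-ultrafilter. Note that this argument invokes the Ultrapower Axiom directly in the structural lemma, which yours does not; that is why your Case~2 stalls.
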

In fact our analysis goes far past \(\mu\)-measures and well into the Radin hierarchy. This leads to \cref{UltrapowerLatticeSection}, in which we attempt to study the decomposition theory of ultrafilters. Assuming the Ultrapower Axiom, we show that the category of ultrapowers (of \(V\)) with internal ultrapower embeddings forms a lattice, with joins given by the canonical comparison. Moreover, we show this lattice is finitely generated: 
\begin{thm*}[Ultrapower Axiom]
Suppose \(M\) is an ultrapower. There are at most finitely many ultrapowers \(N\) of which \(M\) is an internal ultrapower.
\end{thm*}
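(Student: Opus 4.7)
The plan is to combine \cref{FactorizationTheorem} with the lattice structure on ultrapowers established earlier in this section. Let $\mathcal{L}(M) = \{N : M \text{ is an internal ultrapower of } N\}$; we need $|\mathcal{L}(M)| < \omega$. By \cref{FactorizationTheorem}, the embedding $V\to M$ is realized by a finite iteration of irreducibles, of some length $k$. The overall strategy is to reduce finiteness of $\mathcal{L}(M)$ to finiteness of its ``rank one'' elements (irreducible ultrapowers of $V$ sitting below $M$) by a lattice-theoretic generation-plus-counting argument.

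First I would establish the ascending chain condition on $\mathcal{L}(M)$ by assigning to each $N\in\mathcal{L}(M)$ the length $r(N)$ of an irreducible iteration realizing $V\to N$. Using the Factorization Theorem inside $N$ together with the internal ultrapower embedding $N\to M$, the function $r$ is bounded by $k$ and is strictly monotone on $\mathcal{L}(M)$: a proper internal ultrapower embedding $N_1\to N_2$ extends any irreducible factorization of $V\to N_1$ by at least one nontrivial irreducible factor coming from the witnessing ultrafilter in $N_1$. The same rank argument, run in reverse, also gives the descending chain condition on $\mathcal{L}(M)$.

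Let $\mathcal{I}(M)\subseteq\mathcal{L}(M)$ consist of the rank-one elements. Two further claims finish the proof: (i) $|\mathcal{I}(M)|<\omega$, and (ii) every $N\in\mathcal{L}(M)$ is expressible by lattice operations from $\mathcal{I}(M)$. For (i), given infinitely many distinct elements of $\mathcal{I}(M)$, ACC and DCC together allow one to extract an infinite antichain (by the standard minimal-element argument), whose successive joins in the ultrapower lattice form a strictly ascending chain in $\mathcal{L}(M)$, contradicting ACC. For (ii), the Factorization Theorem applied to $V\to N$ yields a finite iteration of irreducibles, and the canonical comparison developed in \cref{CanonicalComparisonSection} is used to realize each factor as a join of elements of $\mathcal{I}(M)$ inside the ultrapower lattice.

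The main obstacle is step (ii): the irreducible factors of $V\to N$ past the first live on intermediate models, not on $V$, so it is not immediate that they correspond to elements of $\mathcal{I}(M)$. Handling this requires canonical comparison to pull back these intermediate irreducible ultrapowers to irreducible ultrapowers of $V$ that lie below $M$, and the argument crucially uses the linearity of the seed order to ensure that the comparison lands inside $\mathcal{L}(M)$ rather than producing a new extension. Once (i) and (ii) are in place, standard lattice counting bounds $|\mathcal{L}(M)|$ by an explicit function of $|\mathcal{I}(M)|$ and $k$, completing the proof.
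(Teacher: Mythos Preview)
Your proposal has a genuine gap at the heart of step (i). You assert that successive joins of an infinite antichain of irreducible ultrapowers below $M$ form a \emph{strictly} ascending chain, but this is precisely the nontrivial content of the theorem. Concretely: if $N_1,N_2,\dots\in\mathcal I(M)$ are distinct and $J_m=N_1\vee\cdots\vee N_m$, why is $N_{m+1}\not\le J_m$? Nothing you have written rules out $N_{m+1}$ appearing as a new irreducible factor of $J_m$; indeed, the paper explicitly leaves open whether irreducible ultrafilters are \emph{prime} (whether $U_0\le U_1\vee U_2$ with $U_0$ irreducible forces $U_0\le U_1$ or $U_0\le U_2$), and your argument tacitly assumes something of this flavor. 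The paper's proof confronts exactly this difficulty head-on, and by a completely different mechanism: one takes the $\wo$-least counterexample $Z$, lists infinitely many factors $U_n$, and uses a pigeonhole argument on the finitely many coordinates of the minimum seed $b$ of $Z$ to pass to an infinite subset $A$ on which every finite join $F_m=\bigvee_{n\in A\cap m}U_n$ is generated over $j_Z[V]$ by a set of ordinals strictly below $b$, hence $F_m<_{\mathcal D}Z$. Minimality of $Z$ in the seed order then forces $\bigvee_m F_m=Z$, so the $F_m$ cannot stabilize, contradicting \cref{acc}. That seed computation is the real work, and your outline supplies no substitute for it.

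There are secondary problems as well. Your rank function $r(N)$ is not well-defined as stated, since irreducible factorizations are not unique; and even taking the minimum length, strict monotonicity along proper extensions does not follow, because a short factorization of $V\to N_2$ need not pass through a given $N_1<_{\mathcal D}N_2$. (ACC is available directly from \cref{acc}, so this is not fatal, but the argument as written does not establish it.) Step (ii) is also unsubstantiated: factoring $V\to N$ into irreducibles gives a chain $V\to P_1\to\cdots\to N$ where only $P_1$ lies in $\mathcal I(M)$; the later irreducible steps live over intermediate models, and the claim that canonical comparison ``pulls them back'' to elements of $\mathcal I(M)$ is not established anywhere in the paper and would itself require significant new argument.
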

This answers the question (raised in \cite{KanamoriUltrafilters}) of whether the existence of a cardinal \(\kappa\) that is \(2^\kappa\)-supercompact implies that there is an ultrafilter with infinitely many Rudin-Frolik predecessors: it does not, again assuming an iteration hypothesis. In \cref{comparisonsection}, we introduce and motivate Woodin's comparison principle, Weak Comparison, and prove that it implies the Ultrapower Axiom assuming \(V = \text{HOD}\) and large cardinals. (A proper class of strong cardinals is more than enough.) This forms the basis of the claim that the Ultrapower Axiom holds in fine structure models. In \cref{IndependenceSection}, we show by a forcing argument that the Ultrapower Axiom does not follow from the linearity of the Mitchell order on normal ultrafilters. Finally, \cref{QuestionSection} contains some open questions.

\section{The Ultrapower Axiom}\label{UltrapowerAxiomSection}
\begin{defn}\label{comparison}
Suppose \(U_0\) and \(U_1\) are countably complete ultrafilters. A {\it comparison} of \(\langle U_0,U_1\rangle\) {\it by internal ultrafilters} is a pair \(\langle W_0,W_1\rangle\) such that 
\begin{enumerate}[(1)]
\item \(W_0\in \textnormal{Ult}(V,U_0)\)
\item \(W_1\in \textnormal{Ult}(V,U_1)\)
\item \(\textnormal{Ult}(V,U_0)\vDash W_0\) is a countably complete ultrafilter
\item \(\textnormal{Ult}(V,U_1)\vDash W_1\) is a countably complete ultrafilter
\item \(\textnormal{Ult}(\textnormal{Ult}(V,U_0),W_0) = \textnormal{Ult}(\textnormal{Ult}(V,U_1),W_1)\)
\item \(j_{W_0}\circ j_{U_0} = j_{W_1}\circ j_{U_1}\)
\end{enumerate}
\end{defn}
\begin{figure}
\begin{center}\includegraphics[scale=.8]{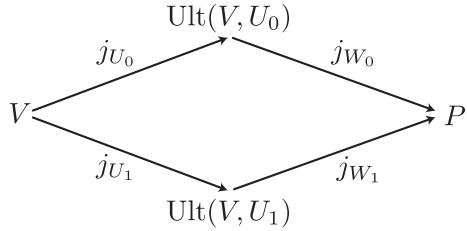}\end{center}
\caption{A comparison \(\langle W_0,W_1\rangle\) of \(\langle U_0,U_1\rangle\) to a common model \(P\).}
\end{figure}

\begin{example}
Suppose \(\kappa\) is a measurable cardinal and \(U_0\) and \(U_1\) are \(\kappa\)-complete ultrafilters on \(\kappa\) with \(U_0 \gmo U_1\). Then \(\langle U_1, j_{U_1}(U_0)\rangle\) is a comparison of \(\langle U_0, U_1\rangle\) by internal ultrafilters.
\end{example}

We note the following fact which implies that the commutativity requirement (6) in \cref{comparison} can be dropped if one assumes \(V = \text{HOD}\). We only sketch the proof. See also \cref{minult}.

\begin{lma}\label{defemb}
Any two elementary embeddings of \(V\) into the same model \(M\) that are definable from parameters agree on the ordinals.
\begin{proof}
Suppose the lemma is false. Fix a number \(n\) such that there are \(\Sigma_n\)-definable counterexamples. Suppose \(\alpha\) is the least ordinal such that there exist \(\Sigma_n\)-definable elementary embeddings \(j_0\) and \(j_1\) from \(V\) into the same model \(M\) that disagree on \(\alpha\). Then \(\alpha\) is definable {\it without parameters}; this is easy but tedious to check. It follows that any elementary embeddings from \(V\) to the same model \(N\) move \(\alpha\) to the same ordinal, namely the ordinal defined over \(N\) using the definition by which \(\alpha\) is defined over \(V\). But this contradicts that there exist exist elementary embeddings \(j_0\) and \(j_1\) from \(V\) into the same model \(M\) such that \(j_0(\alpha) \neq j_1(\alpha)\).
\end{proof}
\end{lma}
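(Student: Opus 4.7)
The plan is a minimal-counterexample argument leveraging the fact that a uniformly definable class of embeddings has a well-defined least ordinal of disagreement. Because ``definable from parameters'' is not itself a stratified notion, I would first fix complexity: for each \(n \in \omega\) consider pairs of elementary embeddings \(j_0, j_1 : V \to N\) into a common target that are each \(\Sigma_n\)-definable from parameters. Assuming the lemma fails, some \(n\) admits a counterexample, and I would pick the least such \(n\) along with the least ordinal \(\alpha\) on which two \(\Sigma_n\)-definable embeddings into a common model disagree.

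The key claim is that this \(\alpha\) is definable over \(V\) \emph{without parameters}. The statement ``there exist a model \(N\), parameter tuples \(\bar p_0,\bar p_1\), and \(\Sigma_n\)-formulas \(\varphi_0,\varphi_1\) such that \(\varphi_0(\cdot,\bar p_0)\) and \(\varphi_1(\cdot,\bar p_1)\) each define an elementary embedding \(V \to N\) and the two embeddings disagree on \(\xi\)'' can be expressed as a single set-theoretic formula in \(\xi\), using the (definable) partial satisfaction predicate for \(\Sigma_n\)-formulas. Taking the least \(\xi\) satisfying this formula gives a parameter-free definition of \(\alpha\).

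To finish, I would invoke elementarity: if \(\alpha\) is characterized in \(V\) as the unique ordinal satisfying some parameter-free formula \(\psi(x)\), then for any elementary embedding \(j : V \to N\), the image \(j(\alpha)\) is the unique ordinal in \(N\) satisfying \(\psi^N\). Applying this simultaneously to \(j_0\) and \(j_1\), both of which target \(M\), forces \(j_0(\alpha) = j_1(\alpha)\), contradicting how \(\alpha\) was chosen. The main obstacle is the bookkeeping in the second paragraph: one must formalize quantification over ``\(\Sigma_n\)-definable embeddings'' as quantification over formulas and parameter tuples (with the definability of \(\Sigma_n\)-satisfaction doing the work), rather than over the embeddings as proper classes. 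Once that is made precise, the rest of the argument is essentially a pigeonhole on the ordinals.
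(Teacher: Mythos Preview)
Your proposal is correct and follows essentially the same approach as the paper: fix a complexity level \(n\), take the least ordinal \(\alpha\) witnessing disagreement among \(\Sigma_n\)-definable embeddings into a common target, observe that \(\alpha\) is parameter-free definable (via the \(\Sigma_n\)-satisfaction predicate), and derive a contradiction from elementarity. The only cosmetic difference is that you also minimize \(n\), which is harmless but unnecessary; the paper simply fixes any \(n\) for which counterexamples exist.
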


\begin{defn}\label{UltrapowerAxiom}
The following statement is called {\it the Ultrapower Axiom}: every pair of countably complete ultrafilters admits a comparison by internal ultrafilters.
\end{defn}

We introduce some terminological variations of \cref{comparison} that will be useful. Suppose \(\langle W_0,W_1\rangle\) is a comparison of \(\langle U_0,U_1\rangle\) by internal ultrafilters. We will also say that the {\it embeddings} \(\langle j_{W_0},j_{W_1}\rangle\) are a comparison of \(\langle U_0, U_1\rangle\), or that they are a comparison of the {\it models} \(\langle \textnormal{Ult}(V,U_0),\textnormal{Ult}(V,U_1)\rangle\) {\it to a common model} \(P\), which means that \(P = \textnormal{Ult}(\textnormal{Ult}(V,U_0),W_0)\).

Finally, as a justification of the abuses of notation listed above, we mention a consequence of the Ultrapower Axiom related to \cref{defemb}: assuming the Ultrapower Axiom, if \(M\) is an ultrapower of \(V\), there is a unique ultrapower embedding of \(V\) into \(M\). Thus under the Ultrapower Axiom, we would not lose information if we simply called the common model \(P\) a comparison of \(M_0\) and \(M_1\)! This is the content of \cref{everythingcommutes}, which is not a deep fact, but whose proof we delay until we have some more terminology.  

We state without proof an obvious combinatorial equivalent of the notion of a comparison.

\begin{defn}\label{iterating}
Suppose \(U\) is a countably complete ultrafilter on \(X\) and \(W\) is a countably ultrafilter of \(\textnormal{Ult}(V,U)\). Let \(f:X \to V\) be such that \([f]_{U} = W\), and assume without loss of generality that for each \(x\in X\), \(f(x)\) is an ultrafilter on a set \(Y_x\). Let \(Z = \{(x,y) : y\in Y_x\}\). Then \((U,W)\) denotes the ultrafilter on \(Z\) containing all \(A\subseteq Z\) such that \(\{x \in X : \{y\in Y_x : (x,y)\in A\}\in f(x)\}\in U\).
\end{defn}

We note that \((U,W)\) depends in a trivial way on the choice of \(f\). Distinct choices result in ultrafilters that are equal when restricted to a common measure one set. In fact, \((U,W)\) is the ultrafilter derived from \(j_W\circ j_U\) using \((j_W([\text{id}]_U), [\text{id}]_W)\), see \cref{uniform} and the comments following it.

\begin{prp}\label{sigma2comp}
Suppose \(U_0\) and \(U_1\) are countably complete ultrafilters. Then \(\langle W_0,W_1\rangle\) is a comparison of \(\langle U_0,U_1\rangle\) if and only if \((U_0, W_0)\equiv_{\textnormal{RK}} (U_1, W_1)\). In particular, the existence of a comparison of \(\langle U_0,U_1\rangle\) is a \(\Sigma_2\) statement about \(\langle U_0,U_1\rangle\).
\end{prp}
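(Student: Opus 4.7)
The heart of the argument is the identification of the ultrapower by the iterated ultrafilter $(U,W)$ with the two-step iterated ultrapower $\textnormal{Ult}(\textnormal{Ult}(V,U),W)$. More precisely, if $W = [f]_U$ with each $f(x)$ an ultrafilter on $Y_x$, then the map
\[
[F]_{(U,W)} \longmapsto \bigl[\,x \mapsto [y \mapsto F(x,y)]_{f(x)}\,\bigr]_U
\]
is the canonical isomorphism from $\textnormal{Ult}(V,(U,W))$ onto $\textnormal{Ult}(\textnormal{Ult}(V,U),W)$, and under it the ultrapower embedding $j_{(U,W)}$ is identified with the composition $j_W \circ j_U$. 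I would verify this first, as a short lemma; it is a routine calculation from \cref{iterating} together with a double application of the theorem of {\L}o\'s.

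Granting this identification, the equivalence is nearly tautological. Given a comparison $\langle W_0,W_1\rangle$, conditions (1)--(4) of \cref{comparison} ensure that the set ultrafilters $(U_0,W_0)$ and $(U_1,W_1)$ are defined and countably complete. Condition (5) then translates into $\textnormal{Ult}(V,(U_0,W_0)) = \textnormal{Ult}(V,(U_1,W_1))$, and condition (6) asserts that the corresponding ultrapower embeddings $j_{W_0} \circ j_{U_0} = j_{(U_0,W_0)}$ and $j_{W_1} \circ j_{U_1} = j_{(U_1,W_1)}$ agree. Together these are precisely the statement that $(U_0,W_0)$ and $(U_1,W_1)$ yield the same ultrapower embedding, which is the definition of Rudin-Keisler equivalence. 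The converse direction runs the same equalities backwards.

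For the $\Sigma_2$ bound on the existence of a comparison of $\langle U_0,U_1\rangle$, I would use the iff to rewrite the assertion as: there exist $W_0,W_1$ with $W_i \in \textnormal{Ult}(V,U_i)$, each countably complete there, such that the iterated ultrafilters $(U_0,W_0)$ and $(U_1,W_1)$ are Rudin-Keisler equivalent. Membership of $W_i$ in the corresponding ultrapower is $\Sigma_1$ (witnessed by a representing function), and RK equivalence of two set ultrafilters is $\Sigma_1$ (witnessed by a bijection of measure-one sets); the countable completeness conditions are $\Pi_1$ (for every countable sequence of measure-one sets, the intersection has measure one). Existentially quantifying over $W_0$ and $W_1$ therefore yields an overall $\Sigma_2$ statement. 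I do not foresee any serious obstacle: the whole content of the proposition is the identification of the iterated ultrafilter's ultrapower with the two-step iterated ultrapower.
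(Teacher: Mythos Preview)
The paper states this proposition without proof, expressly introducing it as an ``obvious combinatorial equivalent'' (see the sentence preceding \cref{iterating}). Your proposal is correct and supplies exactly the standard argument the paper elides: the identification of \(\textnormal{Ult}(V,(U,W))\) with the two-step ultrapower, the matching of conditions (5)--(6) of \cref{comparison} with Rudin--Keisler equivalence (conditions (1)--(4) being presupposed by the very notation \((U_i,W_i)\)), and a routine complexity count for the \(\Sigma_2\) bound.
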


We close this section on a much less trivial note, stating the consistency result that makes the theory developed in this paper interesting.

\begin{thm}[Woodin]\label{finlev}
Assume the \(\mathcal E\)-Iteration Hypothesis. There is an inner model \(M\) with the following properties.
\begin{enumerate}[(1)]
\item  \(M\) satisfies \textnormal{GCH} and the Ultrapower Axiom.
\item Fix \(n < \omega\). Suppose there is a cardinal \(\kappa\) that is \(\beth_n(\kappa)\)-supercompact in \(V\). Then in \(M\) there is a cardinal \(\kappa\) that is \((\kappa^{+n})^M\)-supercompact.
\end{enumerate}
\end{thm}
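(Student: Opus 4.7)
The plan is to build the model $M$ via a fine-structural inner model construction and then extract the listed properties from standard fine-structural consequences together with the abstract comparison results developed later in this paper. First I would carry out a $K^c$-style construction producing an $L[\vec E]$ model, where the extenders placed on $\vec E$ are precisely those certified by background $V$-extenders via a Mitchell--Steel / Neeman--Steel style certification condition suitable for the relevant level of the large cardinal hierarchy. The role of the $\mathcal E$-Iteration Hypothesis is exactly to give the iterability of the sound premice that appear in this hierarchy, which is what makes the comparison lemma go through.

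Second, I would deduce (1) in two pieces. The GCH part of (1) is standard: fine-structural condensation for the $L[\vec E]$ hierarchy implies that every subset of a cardinal $\lambda$ in $M$ is constructed by stage $\lambda^+$ of the hierarchy, giving $2^\lambda = \lambda^+$ in $M$. The Ultrapower Axiom part I would not prove directly; instead I would appeal to \cref{comparisonsection}, where Woodin's Weak Comparison together with $V = \textnormal{HOD}$ and sufficiently many large cardinals is shown to imply the Ultrapower Axiom. The coiterability of premice of the construction gives Weak Comparison in $M$; the definable wellorder from the extender sequence gives $V = \textnormal{HOD}^M$; and part (2) will supply the large cardinals needed.

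Third, for (2), the idea is a standard copying argument. Given a supercompactness measure in $V$ witnessing that $\kappa$ is $\beth_n(\kappa)$-supercompact, one derives from its ultrapower embedding a background-certified extender at the appropriate stage of the construction and verifies that this extender (or rather the fine-structural object derived from it) is placed on $\vec E$ and in turn gives rise inside $M$ to a normal fine ultrafilter on $P_\kappa((\kappa^{+n})^M)$. This is the technical content of \cite{FiniteLevels} and \cite{NeemanSteel}; the key points are that the certification holds because of the hypothesis in $V$, and that the usual resurrection / copying apparatus allows the derived measure to be recognized inside $M$.

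The main obstacle is the fine-structural infrastructure that underwrites the whole argument: establishing condensation, the comparison lemma, and the copying construction at the level of extenders needed for supercompactness is highly nontrivial, which is precisely why the argument is run under the unproven $\mathcal E$-Iteration Hypothesis rather than in \textnormal{ZFC} alone. Everything else -- the GCH corollary of condensation, the deduction of the Ultrapower Axiom from Weak Comparison via \cref{comparisonsection}, and the supercompactness transfer via copying -- is, given the fine structure, relatively routine packaging.
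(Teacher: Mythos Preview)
The paper does not prove \cref{finlev}: it is stated without proof, attributed to Woodin, and the underlying constructions are deferred to \cite{FiniteLevels} and \cite{NeemanSteel}. Your sketch is a reasonable high-level outline of how those references proceed --- a background-certified $L[\vec E]$ construction whose iterability is supplied by the $\mathcal E$-Iteration Hypothesis, with GCH from condensation, the Ultrapower Axiom from the comparison process (packaged here via \cref{comparisonsection}), and the supercompactness transfer via the copying/resurrection machinery --- so there is no discrepancy with the paper to flag.

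One small caveat: your appeal to \cref{comparisonsection} for the Ultrapower Axiom is slightly circular as stated, since \cref{axcompultra} invokes the $\Sigma_2$ Closure Axiom, which you justify by part (2), while part (2) is only asserted conditionally on a large cardinal hypothesis in $V$. In the actual constructions one verifies the Ultrapower Axiom (or Weak Comparison) directly from the fine-structural comparison lemma rather than bootstrapping through large cardinals in $M$; this is closer to what the paper intends when it says that the models of \cite{FiniteLevels} and \cite{NeemanSteel} ``satisfy much stronger forms of comparison than the Ultrapower Axiom.''
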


The models built in \cite{FiniteLevels} and \cite{NeemanSteel} satisfy {\it much} stronger forms of comparison than the Ultrapower Axiom, but the Ultrapower Axiom alone becomes very interesting even just at a cardinal \(\kappa\) that is \(2^\kappa\)-supercompact (or just \(\kappa\)-compact), since there are so many \(\kappa\)-complete ultrafilters on \(\kappa\): every \(\kappa\)-complete filter on \(\kappa\) extends to a \(\kappa\)-complete ultrafilter.

\section{The Seed Order}\label{SeedOrderSection}
In this section we define the key notion of this article: the seed order, a relation that, assuming the Ultrapower Axiom, {\it wellorders} the class of uniform ultrafilters and generalizes the Mitchell order in a sense that will become clearer in \cref{mitchellorder}. We first define the class of uniform ultrafilters. This definition is slightly nonstandard. First of all, we allow ultrafilters concentrating on sets of the form \([\kappa]^n\) where \(\kappa\) is an ordinal and \(n\) is a number. We also demand that uniform ultrafilters be countably complete in order to avoid repeating this hypothesis in every theorem. Finally, and most importantly, we do not require of a uniform ultrafilter \(U\) that for all \(A\in U\), \(|A| = \textsc{sp}(U)\), but instead that \(U\) contain no bounded sets. In particular, principal ultrafilters can be uniform.

\begin{defn}
An ultrafilter \(U\) is {\it uniform} if \(U\) is a countably complete ultrafilter on \([\kappa]^n\) for some ordinal \(\kappa\) and number \(n\), and \(\kappa\) is the least ordinal such that \([\kappa]^n\in U\). The ordinal \(\kappa\) is the {\it space} of \(U\), denoted \(\textsc{sp}(U)\).
\end{defn}

Our focus on uniform ultrafilters throughout this paper is largely a matter of convenience. It {\it is} essential to the definition of the seed order that we consider ultrafilters on sets that are canonically wellordered, but uniformity is only used to ensure that a uniform ultrafilter \(U\) is uniquely determined by its ultrapower embedding \(j_U\) and its seed \([\text{id}]_U\). Without this property, for example, \cref{antisymmetric} would fail for trivial reasons.  

\begin{defn}
Suppose \(a\) and \(b\) are elements of \([\text{Ord}]^{<\omega}\). We say \(a < b\) if the largest element of \(a\cup b\) that is not an element of \(a\cap b\) lies in \(b\).
\end{defn} 
 
In other words, we view \([\text{Ord}]^{<\omega}\) as the collection of {\it descending} sequences of ordinals, and order it lexicographically. It is a standard fact that this is a wellorder. 
 
\begin{defn}[Seed Order]\label{seed} The {\it seed order} is a binary relation \(\wo\) on the class of uniform ultrafilters defined as follows: if \(U_0\) and \(U_1\) are uniform ultrafilters, then \(U_0\wo U_1\) if there exists a comparison \(\langle W_0, W_1\rangle\) of \(\langle U_0, U_1\rangle\) such that \(j_{W_0}([\text{id}]_{U_0}) \leq j_{W_1}([\text{id}]_{U_1})\).
\end{defn}

We in this case that the comparison \(\langle W_0,W_1\rangle\) {\it witnesses} that \(U_0\wo U_1\).

The most obvious connection between the Ultrapower Axiom and the seed order is worth pointing out:

\begin{prp}
The Ultrapower Axiom is equivalent to the statement that the seed order is total.
\end{prp}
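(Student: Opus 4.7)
The plan is to prove both implications, neither of which is deep. The forward direction is essentially definitional: assume the Ultrapower Axiom and let $U_0, U_1$ be uniform ultrafilters. Apply UA to obtain a comparison $\langle W_0, W_1 \rangle$ of $\langle U_0, U_1\rangle$ with common model $P$. Because each $U_i$ is uniform, its seed $[\text{id}]_{U_i}$ is an element of $[\text{Ord}]^{<\omega}$ in $\textnormal{Ult}(V, U_i)$, and hence $j_{W_i}([\text{id}]_{U_i}) \in [\text{Ord}]^{<\omega}$ as computed in $P$. The ordering on $[\text{Ord}]^{<\omega}$ defined just before \cref{seed} is linear (indeed a wellorder), so one of the two images is at most the other, witnessing either $U_0 \wo U_1$ or $U_1 \wo U_0$.

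For the reverse direction, suppose $\wo$ is total. For uniform $U_0, U_1$, totality immediately asserts $U_0 \wo U_1$ or $U_1 \wo U_0$, and unwinding \cref{seed}, each statement presupposes the existence of a comparison of $\langle U_0, U_1\rangle$. It remains to remove the uniformity hypothesis. Given any two countably complete ultrafilters $U_0, U_1$, one passes to Rudin-Keisler equivalent uniform ultrafilters $U_0^*, U_1^*$ via bijections between their underlying sets and suitable ordinals (wellordering each underlying set, pushing forward, and restricting to the least ordinal on which the pushforward concentrates). The resulting ultrapowers are canonically isomorphic to the original ones, and conditions (1)-(6) of \cref{comparison} are formulated entirely in terms of the ultrapower embeddings, so a comparison of $\langle U_0^*, U_1^*\rangle$ transfers through the canonical isomorphisms to a comparison of $\langle U_0, U_1\rangle$.

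Neither direction presents a serious obstacle. The forward direction unpacks the definition and uses only the linearity of the order on $[\text{Ord}]^{<\omega}$; the reverse direction rests on the routine fact that Rudin-Keisler equivalent ultrafilters are interchangeable from the perspective of comparisons by internal ultrafilters. The main temptation is to overbuild the RK-reduction, but the observation that comparisons are a property of the pair of ultrapower embeddings makes the transfer immediate.
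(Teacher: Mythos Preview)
Your proposal is correct and takes essentially the same approach as the paper. The paper treats this proposition as self-evident, offering only a parenthetical remark that for the reverse direction one uses the Axiom of Choice to replace an arbitrary countably complete ultrafilter by a Rudin-Keisler equivalent uniform one; your argument spells out precisely this, together with the (equally trivial) forward direction.
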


(Note in the reverse direction that every countably complete ultrafilter is equivalent to a uniform ultrafilter by a crude application of the Axiom of Choice.) It is somewhat less obvious that, assuming the Ultrapower Axiom, the seed order is a nonstrict wellorder. We note that these theorems are improved in \cref{ZFCSection}. There we show, for example, that \cref{welldefined} and \cref{antisymmetric} are provable in ZFC. These improvements involve introducing a second order, the \(E\)-order, whose introduction is far from well-motivated at this point in the discussion. The key is that in fact \(\E\) extends \(\wo\), and the analogs of all the theorems of this section are provable for \(\E\) in ZFC alone. For now, we prefer to work with the Ultrapower Axiom, so that the proofs are somewhat simpler. 

\begin{thm}[Ultrapower Axiom]\label{transitive}
The seed order is transitive.
\begin{proof}
Suppose \(U_0\wo U_1\) and \(U_1\wo U_2\). The idea is to compare \(\langle U_0,U_1\rangle\) to a common model \(M\), compare \(\langle U_1,U_2\rangle\) to a common model \(N\), compare \(M\) and \(N\) to a common model \(Q\), and then to show that the composite comparison is a comparison of \(\langle U_0,U_2\rangle\) witnessing \(U_0\wo U_2\). It is not too hard to chase through \cref{transitivityfig} to see that this works, but we now give the details. 

Fix a comparison \(\langle W_0,W_1\rangle\) of \(\langle U_0,U_1\rangle\) to a common model \(M\) witnessing \(U_0 \wo U_1\), and a comparison \(\langle Z_1,Z_2\rangle\) of \(\langle U_1,U_2\rangle\) to a common model \(N\) witnessing that \(U_1\wo U_2\). Finally, applying the Ultrapower Axiom in \(\textnormal{Ult}(V,U_1)\), fix an \(\textnormal{Ult}(V,U_1)\)-comparison \(\langle F_M,F_N\rangle\) of \(\langle W_1,Z_1\rangle\) to a common model \(Q\).

\begin{figure}
\begin{center}\includegraphics[scale=.8]{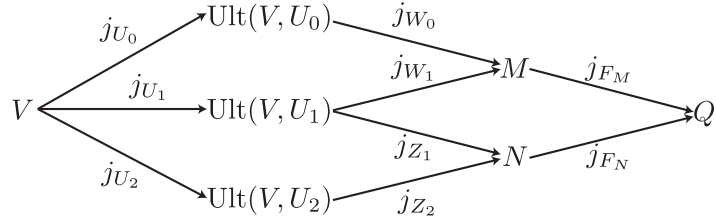}\end{center}
\caption{The transitivity of the seed order.}
\label{transitivityfig}
\end{figure}

Letting \(a_0 = [\text{id}]_{U_0}\), \(a_1 = [\text{id}]_{U_1}\), and \(a_2 = [\text{id}]_{U_2}\), \begin{equation}\label{transidentity} j_{F_M}(j_{W_0}(a_0)) \leq j_{F_M}(j_{W_1}(a_1)) = j_{F_N}(j_{Z_1}(a_1))\leq j_{F_N}(j_{Z_2}(a_2))\end{equation} The first inequality follows from \(\langle W_0,W_1\rangle\) witnessing that \(U_0\wo U_1\), and the last from \(\langle Z_1, Z_2\rangle\) witnessing \(U_1\wo U_2\). The inner equality follows from \cref{defemb} applied in \(\textnormal{Ult}(V,U_1)\), which implies \[j_{F_M}\circ j_{W_1} \restriction \text{Ord} = j_{F_N}\circ j_{Z_1}\restriction\text{Ord}\]

Finally, we note that the iterated ultrapower embedding \(j_{F_M}\circ j_{W_0}\) of \(\textnormal{Ult}(V,U_0)\) is the ultrapower of  \(\textnormal{Ult}(V,U_0)\) by the ultrafilter \((W_0,F_M)\), see \cref{iterating}. Similarly for \(j_{F_N}\circ j_{Z_2}\), \(\textnormal{Ult}(V,U_2)\), and \((W_1,F_N)\). Indeed \(\langle (W_0,F_M), (W_1,F_N)\rangle\) is a comparison of \(\langle U_0,U_1\rangle\): for this we must check the commutativity requirement of \cref{comparison}. This is routine:
\begin{align*}
j_{F_M}\circ j_{W_0}\circ j_{U_0} &= j_{F_M}\circ j_{W_1}\circ j_{U_1}
\\ &=  j_{F_N}\circ j_{Z_1}\circ j_{U_1}
\\ &=  j_{F_N}\circ j_{Z_2}\circ j_{U_2}
\end{align*}
By \cref{transidentity}, this comparison witnesses \(U_0\wo U_2\).
\end{proof}
\end{thm}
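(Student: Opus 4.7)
The plan is to combine the two given comparisons into a single comparison of $\langle U_0, U_2 \rangle$ by applying the Ultrapower Axiom once more to merge their target models. Concretely, fix a comparison $\langle W_0, W_1 \rangle$ of $\langle U_0, U_1 \rangle$ witnessing $U_0 \wo U_1$, with common model $M$, and a comparison $\langle Z_1, Z_2 \rangle$ of $\langle U_1, U_2 \rangle$ witnessing $U_1 \wo U_2$, with common model $N$. Since $W_1$ and $Z_1$ are both countably complete ultrafilters in $\textnormal{Ult}(V,U_1)$, I invoke the Ultrapower Axiom inside $\textnormal{Ult}(V,U_1)$ to obtain an internal comparison $\langle F_M, F_N \rangle$ of $\langle W_1, Z_1 \rangle$ to a common model $Q$.

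Using \cref{iterating}, the iterated embedding $j_{F_M} \circ j_{W_0}$ is the ultrapower embedding of $\textnormal{Ult}(V,U_0)$ by $(W_0, F_M)$, and $j_{F_N} \circ j_{Z_2}$ is the ultrapower embedding of $\textnormal{Ult}(V,U_2)$ by $(W_1, F_N)$. The candidate witness is the pair $\langle (W_0, F_M), (W_1, F_N) \rangle$, and the seed-order inequality will follow from the chain
\[
j_{F_M}(j_{W_0}([\textnormal{id}]_{U_0})) \leq j_{F_M}(j_{W_1}([\textnormal{id}]_{U_1})) = j_{F_N}(j_{Z_1}([\textnormal{id}]_{U_1})) \leq j_{F_N}(j_{Z_2}([\textnormal{id}]_{U_2})),
\]
whose outer inequalities come from applying $j_{F_M}$ and $j_{F_N}$ to the seed-order inequalities witnessed by the two input comparisons.

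The main obstacle is justifying the middle equality: a priori, the embeddings $j_{F_M} \circ j_{W_1}$ and $j_{F_N} \circ j_{Z_1}$ could disagree at $[\textnormal{id}]_{U_1}$. However, both are elementary embeddings of $\textnormal{Ult}(V,U_1)$ into $Q$ that are definable from parameters over $\textnormal{Ult}(V,U_1)$. Applying \cref{defemb} inside $\textnormal{Ult}(V,U_1)$, the two embeddings agree on the ordinals, which yields the equality on the seed $[\textnormal{id}]_{U_1}$. This is the one step that is not pure diagram chasing, and it is the reason \cref{defemb} was placed before the seed-order development.

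It remains to check that $\langle (W_0, F_M), (W_1, F_N) \rangle$ actually satisfies the six clauses of \cref{comparison}. Internality and countable completeness of each component are immediate from the construction, and the identity of the final ultrapowers on both sides is $Q$, coming from $\langle F_M, F_N \rangle$ being a comparison in $\textnormal{Ult}(V,U_1)$. Commutativity reduces to the diagram chase
\[
j_{F_M} \circ j_{W_0} \circ j_{U_0} = j_{F_M} \circ j_{W_1} \circ j_{U_1} = j_{F_N} \circ j_{Z_1} \circ j_{U_1} = j_{F_N} \circ j_{Z_2} \circ j_{U_2},
\]
where the three equalities use respectively the commutativity of the two input comparisons and the commutativity of $\langle F_M, F_N \rangle$. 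Combined with the displayed chain, this confirms $U_0 \wo U_2$.
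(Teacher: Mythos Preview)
Your proof is correct and follows essentially the same approach as the paper: the same application of the Ultrapower Axiom inside $\textnormal{Ult}(V,U_1)$ to compare $W_1$ with $Z_1$, the same use of \cref{defemb} for the middle equality, and the same commutativity chase. One notational slip (which the paper itself also makes): the second iterated ultrafilter should be $(Z_2, F_N)$ rather than $(W_1, F_N)$, since $Z_2$ is the internal ultrafilter applied to $\textnormal{Ult}(V,U_2)$ and $F_N$ then lives in $\textnormal{Ult}(\textnormal{Ult}(V,U_2),Z_2) = N$.
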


As a corollary of the proof of \cref{transitive} in the case \(U_2 = U_0\), we have the following useful fact, which says in a sense that the seed order is ``well-defined."
\begin{thm}[Ultrapower Axiom]\label{welldefined}
Suppose \(U_0\wo U_1\) and \(\langle Z_0,Z_1\rangle\) is a comparison of \(\langle U_0,U_1\rangle\). Then \(\langle Z_0,Z_1\rangle\) witnesses \(U_0\wo U_1\).
\begin{proof}
Suppose not, so there is a comparison \(\langle W_0,W_1\rangle\) witnessing \(U_0\wo U_1\), yet \(\langle Z_0, Z_1\rangle \) witnesses \(U_0\slwo U_1\). The proof of \cref{transitive} yields a single comparison that {\it simultaneously} witnesses that \(U_0\wo U_1\) and that \(U_0\slwo U_1\), which is a contradiction.
\end{proof}
\end{thm}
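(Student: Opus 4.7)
The plan is to specialize the transitivity argument of Theorem \ref{transitive} to the case $U_2 = U_0$, where the chain of inequalities becomes strict, and then use Lemma \ref{defemb} to force equality on the seed and derive a contradiction.

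Suppose for contradiction that the comparison $\langle W_0, W_1\rangle$ witnesses $U_0 \wo U_1$ while some other comparison $\langle Z_0, Z_1\rangle$ of $\langle U_0, U_1\rangle$ satisfies $j_{Z_0}(a_0) > j_{Z_1}(a_1)$, where $a_i = [\text{id}]_{U_i}$. Applying the Ultrapower Axiom inside $\textnormal{Ult}(V, U_1)$, I pick a comparison $\langle F_M, F_N\rangle$ of $\langle W_1, Z_1\rangle$ to a common model $Q$. The chain of inequalities from the proof of Theorem \ref{transitive}, with $Z_2$ replaced by $Z_0$, reads
\[j_{F_M}(j_{W_0}(a_0)) \leq j_{F_M}(j_{W_1}(a_1)) = j_{F_N}(j_{Z_1}(a_1)) < j_{F_N}(j_{Z_0}(a_0)),\]
so $j_{(W_0, F_M)}(a_0) < j_{(Z_0, F_N)}(a_0)$ in $Q$, using the notation of Definition \ref{iterating}.

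To obtain a contradiction, observe that $(W_0, F_M)$ and $(Z_0, F_N)$ are both internal countably complete ultrafilters of $\textnormal{Ult}(V, U_0)$ whose ultrapowers are $Q$. Hence their induced ultrapower embeddings $j_{(W_0, F_M)}$ and $j_{(Z_0, F_N)}$ from $\textnormal{Ult}(V, U_0)$ into $Q$ are two elementary embeddings definable from parameters inside $\textnormal{Ult}(V, U_0)$. By Lemma \ref{defemb} applied in $\textnormal{Ult}(V, U_0)$, they agree on ordinals, so in particular send $a_0$ to the same value, contradicting the strict inequality above.

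The main obstacle, such as it is, is the appeal to Lemma \ref{defemb}: the commutativity clause in Definition \ref{comparison} only forces the two composite embeddings into $Q$ to agree on $j_{U_0}[V]$, and the seed $a_0$ need not lie there. The rest is bookkeeping already performed in the proof of Theorem \ref{transitive}.
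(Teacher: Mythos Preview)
Your proof is correct and follows essentially the same route as the paper: specialize the transitivity construction to the case $U_2 = U_0$, obtaining two internal ultrapower embeddings of $\textnormal{Ult}(V,U_0)$ into the common model $Q$ that disagree on the seed $a_0$. The paper phrases the resulting contradiction tersely (a single comparison witnessing both $U_0\wo U_0$ and $U_0\slwo U_0$), whereas you make the underlying appeal to Lemma~\ref{defemb} explicit; this is the same argument, just with the final step unpacked.
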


The next lemma is useful since it reduces the study of the seed order to the study of its restriction to ultrafilters on a fixed space. (We note that the analog of this theorem is not provable in ZFC for the Mitchell order on arbitrary ultrafilters.) 
\begin{lma}\label{spaces}
Suppose \(U_0\) and \(U_1\) are uniform ultrafilters with \(U_0\wo U_1\). Then \(\textsc{sp}(U_0)\leq \textsc{sp}(U_1)\).
\begin{proof}
Let \(k_0:\textnormal{Ult}(V,U_0)\to P\) and \(k_1:\textnormal{Ult}(V,U_1)\to P\) be embeddings witnessing \(U_0\wo U_1\). Let \(a_0 = [\text{id}]_{U_0}\) and \(a_1=[\text{id}]_{U_1}\). Then \[k_0(a_0) \leq k_1(a_1) \subseteq k_1(j_{U_1}(\textsc{sp}(U_1))) = k_0(j_{U_0}(\textsc{sp}(U_1)))\] It follows that \(k_0(a_0)\subseteq k_0(j_{U_0}(\textsc{sp}(U_1)))\), so \(a_0 \subseteq j_{U_0}(\textsc{sp}(U_1))\). This implies \(\textsc{sp}(U_0)\leq \textsc{sp}(U_1)\) since \(\textsc{sp}(U_0)\) is the least \(\kappa\) such that \(a_0\subseteq j_{U_0}(\kappa)\).
\end{proof}
\end{lma}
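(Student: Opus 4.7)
My plan is to trace the defining inequality \(k_0(a_0)\le k_1(a_1)\) in the common model \(P\) back to a subset inclusion \(a_0\subseteq j_{U_0}(\textsc{sp}(U_1))\), from which the lemma falls out of the definition of uniformity. So I would start by fixing a comparison \(\langle W_0,W_1\rangle\) of \(\langle U_0,U_1\rangle\) witnessing \(U_0\wo U_1\), setting \(k_i = j_{W_i}\), and writing \(a_i = [\textnormal{id}]_{U_i}\) for the seeds; then \(a_0,a_1\in [\textnormal{Ord}]^{<\omega}\) and the hypothesis reads \(k_0(a_0)\le k_1(a_1)\) in the lex order.

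The main step, and the one requiring a separate argument, is a purely combinatorial observation about the order on \([\textnormal{Ord}]^{<\omega}\): if \(a\le b\) and \(b\subseteq \alpha\) for an ordinal \(\alpha\), then \(a\subseteq \alpha\). I would verify this by contradiction: were some \(\beta\in a\) to satisfy \(\beta\ge \alpha\), then \(\beta\notin b\) (since \(b\subseteq \alpha\)), so the largest element of \((a\cup b)\setminus(a\cap b)\) is at least \(\beta\) and lies in \(a\setminus b\); but the definition of the lex order stipulates that this largest element lies in \(b\), a contradiction. This is really the only nontrivial piece of content, and even calling it an ``obstacle'' is generous.

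Once this is in hand the rest is routine. Uniformity of \(U_1\) gives \(a_1\subseteq j_{U_1}(\textsc{sp}(U_1))\); applying \(k_1\) and invoking the commutativity \(k_1\circ j_{U_1}=k_0\circ j_{U_0}\) built into the notion of comparison, this becomes \(k_1(a_1)\subseteq k_0(j_{U_0}(\textsc{sp}(U_1)))\). Feeding this, together with the lex-order inequality \(k_0(a_0)\le k_1(a_1)\), into the combinatorial observation applied in \(P\) yields \(k_0(a_0)\subseteq k_0(j_{U_0}(\textsc{sp}(U_1)))\). Since \(k_0\) is elementary, this pulls back to \(a_0\subseteq j_{U_0}(\textsc{sp}(U_1))\). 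Finally, the uniformity of \(U_0\) — i.e., the characterization of \(\textsc{sp}(U_0)\) as the least \(\kappa\) with \(a_0\subseteq j_{U_0}(\kappa)\) — delivers \(\textsc{sp}(U_0)\le \textsc{sp}(U_1)\), as required.
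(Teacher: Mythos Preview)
Your proof is correct and follows essentially the same approach as the paper's: fix a witnessing comparison, use commutativity to rewrite \(k_1(j_{U_1}(\textsc{sp}(U_1)))\) as \(k_0(j_{U_0}(\textsc{sp}(U_1)))\), pass from the lex inequality \(k_0(a_0)\le k_1(a_1)\) to the inclusion \(k_0(a_0)\subseteq k_0(j_{U_0}(\textsc{sp}(U_1)))\), and pull back by elementarity. The only difference is that you make explicit (and prove) the combinatorial fact that \(a\le b\) and \(b\subseteq\alpha\) imply \(a\subseteq\alpha\), which the paper's display simply takes for granted.
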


\begin{cor}\label{setlike}
The seed order is setlike.
\begin{proof}



This is an immediate corollary of \cref{spaces}.
\end{proof}
\end{cor}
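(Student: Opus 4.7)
The plan is to bound the ``rank'' of any seed-order predecessor of a fixed uniform ultrafilter $U$ and then observe that there are only set-many objects of that bounded rank. Fix a uniform ultrafilter $U$ and set $\kappa = \textsc{sp}(U)$. By \cref{spaces}, every uniform ultrafilter $U'$ with $U' \wo U$ satisfies $\textsc{sp}(U') \leq \kappa$, so $U'$ is an ultrafilter on $[\lambda]^n$ for some $\lambda \leq \kappa$ and some $n < \omega$.

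The next step is to note that for each such pair $(\lambda,n)$, the set $[\lambda]^n$ is a set, so the collection of ultrafilters on $[\lambda]^n$ is a subset of the (double) power set $\mathcal P(\mathcal P([\lambda]^n))$, which is itself a set. Taking the union over the set-many pairs $(\lambda,n)$ with $\lambda \leq \kappa$ and $n < \omega$ yields a set containing every uniform ultrafilter $U'$ with $U' \wo U$. Hence the class $\{U' : U' \wo U\}$ is a set, which is exactly what setlikeness requires.

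There is essentially no obstacle here: once one has the space bound from \cref{spaces}, the remainder is a bookkeeping argument using only the fact that a uniform ultrafilter is a subset of the power set of a genuine set. I would write the proof in two sentences, appealing to \cref{spaces} and observing that the class of uniform ultrafilters on $[\lambda]^n$ for $\lambda \leq \textsc{sp}(U)$ and $n < \omega$ is a set.
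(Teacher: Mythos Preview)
Your proposal is correct and follows exactly the paper's approach: the paper's proof is the single sentence ``This is an immediate corollary of \cref{spaces},'' and you have simply spelled out the obvious bookkeeping that makes this immediate.
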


\begin{thm}[Ultrapower Axiom]\label{antisymmetric}
The seed order is antisymmetric.
\begin{proof}
Suppose \(U_0\wo U_1\) and \(U_1\wo U_0\). By \cref{welldefined}, any comparison \(\langle W_0, W_1\rangle\) of \(\langle U_0,U_1\rangle\) witnesses simultaneously that \(U_0\wo U_1\) and that \(U_1\wo U_0\). In other words, \(j_{W_0}(a_0) = j_{W_1}(a_1)\) where we let \(a_0 = [\text{id}]_{U_0}\) and \(a_1 = [\text{id}]_{U_1}\).  

Note that \(\textsc{sp}(U_0) = \textsc{sp}(U_1)\) by \cref{spaces}; we denote this ordinal by \(\kappa\). We also have \(|a_0| =|a_1|\); we denote this number by \(n\). For any \(X\subseteq [\kappa]^n\),
\begin{align*}X\in U_0 &\iff a_0\in j_{U_0}(X)\\
&\iff j_{W_0}(a_0)\in j_{W_0}\circ j_{U_0}(X)\\
&\iff j_{W_1}(a_1)\in j_{W_1}\circ j_{U_1}(X)\\
&\iff a_1\in j_{U_1}(X)\\
&\iff X\in U_1
\end{align*}
For the third equality, we use that \(j_{W_0}\circ j_{U_0} = j_{W_1}\circ j_{U_1}\) (note that \cref{defemb} does not seem to suffice here) and the fact that \(j_{W_0}(a_0) = j_{W_1}(a_1)\). Hence \(U_0 = U_1\).
\end{proof}
\end{thm}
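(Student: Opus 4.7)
The plan is to fix an arbitrary comparison $\langle W_0, W_1 \rangle$ of $\langle U_0, U_1 \rangle$, available by the Ultrapower Axiom, and extract from it enough information to conclude $U_0 = U_1$. The critical input is \cref{welldefined}: since $\langle W_0, W_1 \rangle$ is a comparison of $\langle U_0, U_1 \rangle$ and we are assuming $U_0 \wo U_1$, it must witness $U_0 \wo U_1$; by symmetry, the same pair (read as a comparison of $\langle U_1, U_0 \rangle$) witnesses $U_1 \wo U_0$. Writing $a_i = [\text{id}]_{U_i}$, the two witnessings unpack to the inequalities $j_{W_0}(a_0) \leq j_{W_1}(a_1)$ and $j_{W_1}(a_1) \leq j_{W_0}(a_0)$, and hence $j_{W_0}(a_0) = j_{W_1}(a_1)$.

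Next I would show that $U_0$ and $U_1$ live on the same space. By \cref{spaces} applied in both directions, $\textsc{sp}(U_0) = \textsc{sp}(U_1)$; call this ordinal $\kappa$. Moreover, $j_{W_0}(a_0)$ and $j_{W_1}(a_1)$ are literally the same finite set of ordinals, so $|a_0| = |a_1|$; call this number $n$. Then both $U_0$ and $U_1$ are uniform ultrafilters on $[\kappa]^n$, so it makes sense to compare them set by set.

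The final step is to use the fact that a uniform ultrafilter is recovered from its ultrapower embedding and its seed via $X \in U_i \iff a_i \in j_{U_i}(X)$. For any $X \subseteq [\kappa]^n$ I would compute
\begin{align*}
X \in U_0 &\iff a_0 \in j_{U_0}(X) \iff j_{W_0}(a_0) \in (j_{W_0}\circ j_{U_0})(X) \\
&\iff j_{W_1}(a_1) \in (j_{W_1}\circ j_{U_1})(X) \iff a_1 \in j_{U_1}(X) \iff X \in U_1,
\end{align*}
using the commutativity clause $j_{W_0}\circ j_{U_0} = j_{W_1}\circ j_{U_1}$ of \cref{comparison} together with $j_{W_0}(a_0) = j_{W_1}(a_1)$ in the transition across the middle.

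I do not expect a substantive obstacle. The only subtlety worth flagging is that \cref{defemb} alone is \emph{not} enough to bridge the two composite embeddings in the middle step, since $X$ is not an ordinal; one genuinely needs the full commutativity requirement (6) from the definition of comparison. This is in line with the author's comment that under the Ultrapower Axiom alone one cannot in general omit clause (6), and it is also why the uniformity hypothesis is essential: it is precisely what guarantees that the displayed derivation recovers $U_i$ from the pair $(j_{U_i}, a_i)$.
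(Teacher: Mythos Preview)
Your proposal is correct and follows essentially the same approach as the paper: invoke \cref{welldefined} to force a single comparison to witness both inequalities (hence $j_{W_0}(a_0)=j_{W_1}(a_1)$), use \cref{spaces} to align the spaces, and then run the chain of equivalences using the commutativity clause of \cref{comparison}. You even flag the same subtlety the paper does, namely that \cref{defemb} alone would not justify the middle step.
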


We remark that the proof of the next theorem bears a resemblance to the proof of the wellfoundedness of the mouse order assuming the Dodd-Jensen Lemma.
\begin{figure}
\begin{center}\includegraphics[scale=.8]{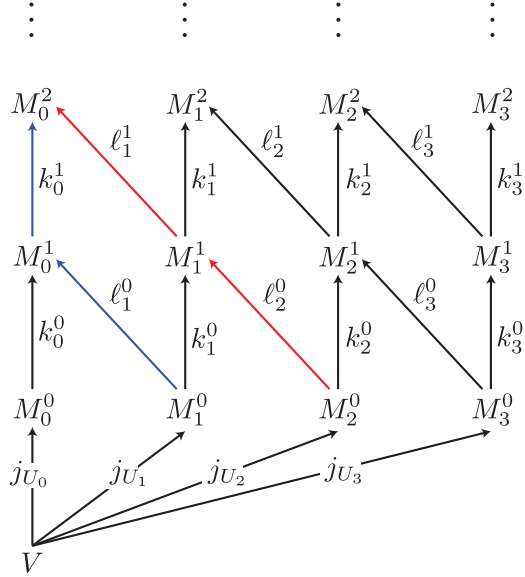}\end{center}
\caption{The wellfoundedness of the seed order, with \(j_1\) in blue and \(h_2\) in red.}
\end{figure}
\begin{thm}[Ultrapower Axiom]\label{wellfounded}
The seed order is wellfounded.
\begin{proof}
Suppose towards a contradiction that \[U_0\slwo U_1 \slwo U_2 \slwo\cdots\]
Let \(M^0_i\) denote \(\textnormal{Ult}(V,U_i)\). Fix ultrapower embeddings \[k^0_{0},k^0_1,k^0_{2},\dots,\ell^0_{1},\ell^0_2,\ell^0_{3},\dots\] and models \[M^1_0,M^1_1,M^1_2,\dots\] such that \(\langle k^0_i,\ell^0_{i+1}\rangle\) is a comparison of \(\langle M^0_i,M^0_{i+1}\rangle\) to the model \(M^1_i\). That is, \(k^0_{i}:M^0_i\to M^1_i\) and \(\ell^0_{i+1}:M^0_{i+1}\to M^1_i\) are internal ultrapower embeddings of \(M^0_i\) and \(M^0_{i+1}\) respectively. (See the remarks following \cref{UltrapowerAxiom} for the abuse of terminology we are  employing.) Next fix \[k^1_{0},k^1_1,k^1_{2},\dots,\ell^1_{1},\ell^1_2,\ell^1_{3},\dots\] and models \[M^2_0,M^2_1,M^2_2,\dots\] such that \(\langle k^1_{i},\ell^1_{i+1}\rangle\) are a comparison of \(\langle M^1_i,M^1_{i+1}\rangle\) to \(M^2_i\). Proceeding in this way, we obtain for each \(n < \omega\), models \(M^n_0,M^n_1,M^n_2,\dots\) and for each \(i< \omega\), internal ultrapower embeddings \begin{align*}k^n_i&: M^n_i\to M^{n+1}_i\\ 
\ell^n_{i+1}&: M^n_{i+1}\to M^{n+1}_i\end{align*}

For \(i < \omega\), define \(h_i: M^0_i\to M^i_0\) by \[h_i = \ell^{i-1}_1 \circ \cdots \circ \ell^2_{i-2}\circ \ell^1_{i-1} \circ \ell^0_i\] Setting \(j_i = k^{i}_{0} \circ h_{i}\), the embeddings 
\begin{align*}j_{i}&: M^0_i\to M^{i+1}_0\\ h_{i+1}&: M^0_{i+1}\to M^{i+1}_0\end{align*} constitute a comparison of \(M^0_i\) and \(M^0_{i+1}\), and so by \cref{welldefined} witness \(U_i\slwo U_{i+1}\).

Setting \(a_i = [\text{id}]_{U_i}\) for each \(i < \omega\), it follows that \[k^i_{0}(h_i(a_i)) = j_i(a_i) > h_{i+1}(a_{i+1})\] Thus the linear iteration \[V\to M^0_0 \to M^1_0 \to M^2_0\to \cdots\] of \(V\) by the internal ultrapower embeddings \(\langle k^i_0 : i < \omega\rangle\) has an illfounded direct limit. It is a standard fact that this is impossible, so we have obtained a contradiction.
\end{proof}
\end{thm}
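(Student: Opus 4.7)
The plan is to derive a contradiction from a hypothetical infinite descending chain $U_0 \slwo U_1 \slwo U_2 \slwo \cdots$ by producing a single linear iteration of $V$ by internally countably complete ultrafilters whose direct limit contains an infinite decreasing sequence of ordinals, contradicting the standard wellfoundedness result for countably complete iterations. The key leverage is \cref{welldefined}: once we put all the ultrapowers into a common framework, any comparison we manufacture must witness the strict seed-order inequality, so the descent in ordinals is forced upon us.

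First I would build, by iterating the Ultrapower Axiom, a grid of models $M^n_i$ and embeddings indexed by $n,i<\omega$, where $M^0_i = \textnormal{Ult}(V,U_i)$, and at stage $n+1$ one applies the Ultrapower Axiom in $M^n_i$ (or rather to the ultrafilters giving the relevant internal ultrapower embeddings) to compare $M^n_i$ and $M^n_{i+1}$ via internal embeddings $k^n_i: M^n_i\to M^{n+1}_i$ and $\ell^n_{i+1}: M^n_{i+1}\to M^{n+1}_i$ with $k^n_i \circ (\text{composition to } M^n_i) = \ell^n_{i+1}\circ (\text{composition to } M^n_{i+1})$. This produces a doubly-indexed diagram of internal ultrapower embeddings.

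Next I would exploit the diagram by defining diagonal maps $h_i: M^0_i \to M^i_0$ obtained by composing the $\ell$-arrows along the antidiagonal, together with vertical composites $j_i := k^i_0\circ h_i$. The pair $\langle j_i, h_{i+1}\rangle$ will be a comparison of $M^0_i$ and $M^0_{i+1}$ (its commutativity with $j_{U_i}$ and $j_{U_{i+1}}$ follows by chasing the commuting squares of the grid back up to $V$). By \cref{welldefined}, this new comparison must witness $U_i\slwo U_{i+1}$, which means, setting $a_i = [\textnormal{id}]_{U_i}$,
\[
k^i_0(h_i(a_i)) = j_i(a_i) > h_{i+1}(a_{i+1}).
\]
Thus the sequence of ordinals $h_i(a_i)$, transported into the tower $M^0_0 \to M^1_0 \to M^2_0 \to \cdots$ along the embeddings $k^i_0$, forms a strictly decreasing sequence.

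Finally, the vertical tower $V \to M^0_0 \to M^1_0 \to M^2_0 \to \cdots$ is a linear iterated ultrapower of $V$ by ultrafilters that are countably complete in their respective models (because the $k^i_0$ are internal ultrapower embeddings). Its direct limit is wellfounded by the classical result on countably complete iterations, so no infinite descending sequence of ordinals can survive through the tower; this contradicts the previous paragraph and completes the proof. The main obstacle I anticipate is the bookkeeping required to verify that $\langle j_i, h_{i+1}\rangle$ really is a comparison in the precise sense of \cref{comparison} — in particular, one must invoke \cref{defemb} (or its consequences) to match up the two sides of the commutativity requirement inside the intermediate models $M^n_1$, since the raw grid only gives commutativity ``up to moving ordinals to the same place.''
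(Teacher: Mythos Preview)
Your proposal is correct and essentially identical to the paper's proof: you build the same grid $M^n_i$ with embeddings $k^n_i$ and $\ell^n_{i+1}$, define the same diagonal composites $h_i$ and $j_i = k^i_0 \circ h_i$, invoke \cref{welldefined} to force $j_i(a_i) > h_{i+1}(a_{i+1})$, and obtain an illfounded direct limit of the internal iteration along the $k^i_0$. One small remark: the ``main obstacle'' you flag is not really an obstacle, and in particular \cref{defemb} is not needed here---requirement~(6) in the definition of a comparison already gives commutativity at each square of the grid, so the verification that $\langle j_i,h_{i+1}\rangle$ is a comparison is a pure diagram chase.
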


\subsection{Application: Normal Ultrafilters}
We give a quick proof that the Ultrapower Axiom implies that normal ultrafilters are linearly ordered by the Mitchell order, so that the idea does not get lost in the generality of \cref{doddlin}.

\begin{thm}[Ultrapower Axiom]\label{mitchellnormalmeasures}
Suppose \(U_0\) and \(U_1\) are normal ultrafilters on a measurable cardinal \(\kappa\). Then \(U_0\) and \(U_1\) are Mitchell comparable.
\begin{proof}
Let \(\langle W_0,W_1\rangle\) be a comparison of \(\langle U_0,U_1\rangle\) witnessing without loss of generality that \(U_0\swo U_1\). Since \(U_0\) is normal, \(X\in U_0\) if and only if \(\kappa\in j_{U_0}(X)\). Applying \(j_{W_0}\), \(X\in U_0\) if and only if \(j_{W_0}(\kappa)\in j_{W_0}(j_{U_0}(X))\). By the commutativity requirement of \cref{comparison}, \(j_{W_0}(j_{U_0}(X)) = j_{W_1}(j_{U_1}(X))\). So \[X\in U_0\iff j_{W_0}(\kappa)\in j_{W_1}(j_{U_1}(X))\]

Since \(\langle W_0,W_1\rangle\) witnesses \(U_0\swo U_1\), \(j_{W_0}(\kappa) < j_{W_1}(\kappa)\). It follows that \(j_{W_0}(\kappa)\in j_{W_1}(j_{U_1}(X))\) if and only if \(j_{W_0}(\kappa)\in j_{W_1}(j_{U_1}(X))\cap j_{W_1}(\kappa)\), but \[j_{W_1}(j_{U_1}(X))\cap j_{W_1}(\kappa) = j_{W_1}(j_{U_1}(X)\cap \kappa) = j_{W_1}(X)\]
We conclude that \(X\in U_0\) if and only if \(j_{W_0}(\kappa) \in j_{W_1}(X)\), and so since \(j_{W_1}\) is definable over \(\textnormal{Ult}(V,U_1)\), \(U_0\in \textnormal{Ult}(V,U_1)\).
\end{proof}
\end{thm}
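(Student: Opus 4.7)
The plan is to invoke the Ultrapower Axiom to obtain a comparison \(\langle W_0, W_1\rangle\) of \(\langle U_0, U_1\rangle\) by internal ultrafilters, and then extract the Mitchell order from the behavior of the comparison on the critical point. By the totality of the seed order granted by UA, I may assume \(U_0 \wo U_1\), so that using \cref{welldefined} the chosen comparison satisfies \(j_{W_0}([\text{id}]_{U_0}) \leq j_{W_1}([\text{id}]_{U_1})\). Normality forces \([\text{id}]_{U_i} = \kappa\), so \(j_{W_0}(\kappa) \leq j_{W_1}(\kappa)\). The case of equality collapses to \(U_0 = U_1\) by the argument of \cref{antisymmetric}; the interesting case is the strict inequality, which is what I will pursue.

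Assume therefore \(j_{W_0}(\kappa) < j_{W_1}(\kappa)\). The goal is to exhibit \(U_0\) as a derived ultrafilter of the internal embedding \(j_{W_1}\colon \textnormal{Ult}(V,U_1)\to P\). Normality of \(U_0\) says that \(U_0\) is the ultrafilter derived from \(j_{U_0}\) using the seed \(\kappa\). Pushing this characterization forward along \(j_{W_0}\) and then replacing \(j_{W_0}\circ j_{U_0}\) by \(j_{W_1}\circ j_{U_1}\) (the commutativity condition in \cref{comparison}) expresses membership in \(U_0\) as \(j_{W_0}(\kappa) \in j_{W_1}(j_{U_1}(X))\). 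The strict inequality then lets me strip the outer \(j_{U_1}\) by intersecting inside \(P\) with \(j_{W_1}(\kappa)\): since \(X \subseteq \kappa = \textnormal{crit}(j_{U_1})\), elementarity collapses the resulting expression to \(j_{W_0}(\kappa) \in j_{W_1}(X)\).

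Finally, since \(j_{W_1}\) is definable over \(\textnormal{Ult}(V,U_1)\) from the internal ultrafilter \(W_1\), and the seed \(j_{W_0}(\kappa)\) lies in the common model \(P\) and is therefore represented by some function in \(\textnormal{Ult}(V,U_1)\) via \(W_1\), the derived ultrafilter---which is \(U_0\)---belongs to \(\textnormal{Ult}(V,U_1)\), giving \(U_0 \mo U_1\). The step I expect to be the main obstacle is the middle one: recognizing that the strict seed-order inequality is precisely what makes the outer \(j_{U_1}\) strippable. Without this strict separation, the best one could conclude is that \(U_0\) lies in the common model \(P\), which is much weaker than Mitchell comparability; the whole argument hinges on converting a seed-order fact into a drop in the height at which the ultrafilter materializes.
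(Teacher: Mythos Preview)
Your proposal is correct and follows essentially the same route as the paper's proof: obtain a comparison witnessing the seed order, use normality to identify the seeds with \(\kappa\), push the derived-ultrafilter characterization of \(U_0\) through the commutativity requirement, and then use the strict inequality \(j_{W_0}(\kappa) < j_{W_1}(\kappa)\) to strip \(j_{U_1}\) and recognize \(U_0\) as definable over \(\textnormal{Ult}(V,U_1)\) from \(W_1\) and the ordinal \(j_{W_0}(\kappa)\). The only cosmetic difference is that you explicitly dispatch the equality case via the antisymmetric argument, whereas the paper simply assumes \(U_0 \swo U_1\) without loss of generality.
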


From this and \cref{finlev} we can dispense with Open Question 3.10 assuming an iteration hypothesis.

\begin{cor}
Assume the \(\mathcal E\)-Iteration Hypothesis and that there is a cardinal \(\kappa\) that is \(2^\kappa\)-supercompact. Then there is an inner model \(M\) in which there is a cardinal \(\kappa\) that is \(2^\kappa\) supercompact but carries a unique normal ultrafilter of Mitchell order \(0\).
\end{cor}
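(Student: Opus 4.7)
The plan is to apply Theorem \ref{finlev} with $n = 1$ to obtain the desired inner model, and then to combine it with Theorem \ref{mitchellnormalmeasures} together with the standard wellfoundedness of the Mitchell order.

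More precisely, starting from the hypothesis that some cardinal $\kappa$ is $2^\kappa$-supercompact in $V$, i.e.\ $\beth_1(\kappa)$-supercompact, Theorem \ref{finlev} with $n=1$ produces an inner model $M$ satisfying GCH and the Ultrapower Axiom, and in which there is a cardinal $\kappa$ that is $(\kappa^{+})^M$-supercompact. Since $M \models \text{GCH}$, we have $(\kappa^{+})^M = (2^\kappa)^M$, so in fact $\kappa$ is $2^\kappa$-supercompact in $M$. In particular $\kappa$ is measurable in $M$, so the class of normal ultrafilters on $\kappa$ in $M$ is nonempty.

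Working inside $M$, since UA holds there, Theorem \ref{mitchellnormalmeasures} applies and so the Mitchell order linearly orders the normal ultrafilters on $\kappa$. Combining linearity with the standard wellfoundedness of the Mitchell order on normal ultrafilters (a theorem of Mitchell, provable in ZFC), the collection of normal ultrafilters on $\kappa$ in $M$ has a least element $U$ under $\mo$. By definition $U$ has Mitchell order $0$, and linearity immediately rules out a second Mitchell-$0$ normal ultrafilter: any other normal ultrafilter $U'$ on $\kappa$ would be $\mo$-comparable with $U$, hence either $U \mo U'$ (so $U'$ does not have Mitchell order $0$) or $U' \mo U$ (contradicting minimality of $U$). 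Thus $U$ is the unique normal ultrafilter of Mitchell order $0$ on $\kappa$ in $M$.

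There is no real obstacle here; the corollary is essentially a bookkeeping exercise combining Theorem \ref{finlev}, the GCH identification $(\kappa^+)^M = (2^\kappa)^M$, Theorem \ref{mitchellnormalmeasures}, and the well-known wellfoundedness of $\mo$. The only point that requires a moment of care is verifying that $n=1$ is the right choice in Theorem \ref{finlev} so that the supercompactness degree transferred to $M$ is precisely $2^\kappa$ as computed in $M$.
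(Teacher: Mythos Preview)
Your proof is correct and follows precisely the approach the paper intends: the paper itself gives no details beyond pointing to \cref{mitchellnormalmeasures} and \cref{finlev}, and your argument fills in exactly those steps (taking $n=1$, using GCH in $M$ to identify $(\kappa^+)^M$ with $(2^\kappa)^M$, and then combining linearity with wellfoundedness of $\mo$).
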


\subsection{Application: HOD}
As an application of the basic facts about the seed order, we show that the Ultrapower Axiom implies that if there is a supercompact cardinal \(\kappa\), then \(V = \text{HOD}_A\) for a set \(A\subseteq \kappa\), and \(\kappa\) is supercompact in \(\text{HOD}\). The key is the following proposition, immediate from the fact that the seed order is a definable wellorder.

\begin{prp}[Ultrapower Axiom]\label{odults}
Every uniform ultrafilter is ordinal definable.
\begin{proof}
Suppose \(U\) is a uniform ultrafilter of seed rank \(\alpha\). Then \(U\) is the {\it unique} uniform ultrafilter of seed rank \(\alpha\), so \(U\) is ordinal definable.
\end{proof}
\end{prp}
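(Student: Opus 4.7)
The plan is to read the proposition off directly from the structural properties of the seed order already established in this section. Under the Ultrapower Axiom, the relation $\wo$ on uniform ultrafilters is total (from the definition of UA together with \cref{seed}), transitive (\cref{transitive}), antisymmetric (\cref{antisymmetric}), wellfounded (\cref{wellfounded}), and setlike (\cref{setlike}). So $\wo$ is a setlike wellorder on a proper class, and each uniform ultrafilter $U$ carries a well-defined ordinal rank, namely the seed rank $\alpha$ referenced in the statement.

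The only thing I would explicitly check is that $\wo$ is definable with no parameters, which is immediate from unpacking \cref{seed}: the clause ``there exist $W_0,W_1$ forming a comparison of $\langle U_0,U_1\rangle$ with $j_{W_0}([\text{id}]_{U_0}) \leq j_{W_1}([\text{id}]_{U_1})$'' is a first-order formula whose only free variables are $U_0$ and $U_1$. Combined with setlikeness, which ensures that $\{U' : U' \wo U\}$ is genuinely a set, this means the rank function $U \mapsto \alpha$ is a definable class function. Antisymmetry then gives that $U$ is the \emph{unique} uniform ultrafilter of seed rank $\alpha$, so $U$ is captured by that uniqueness clause applied to the ordinal parameter $\alpha$, which is precisely what it means for $U$ to be ordinal definable.

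There is no genuine obstacle: the proposition is a clean payoff of the wellorder machinery of the previous subsections, and essentially all the work has already been done in \cref{transitive}--\cref{wellfounded}. The only stylistic choice is whether to phrase the argument in terms of the rank function or more directly; I would favor the rank-function formulation because it makes transparent that the definition uses a single ordinal parameter, and because it matches the terminology ``seed rank'' already floating in the statement. The real interest of the proposition lies not in its proof but in its use as the input to the HOD analysis in the next subsection.
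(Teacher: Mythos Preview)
Your proposal is correct and takes essentially the same approach as the paper: both arguments observe that under the Ultrapower Axiom the seed order is a definable wellorder, so each uniform ultrafilter is uniquely determined by its seed rank and hence ordinal definable. You have simply made explicit the background facts (totality, transitivity, antisymmetry, wellfoundedness, setlikeness, parameter-free definability) that the paper's one-line proof leaves implicit.
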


\begin{thm}[Ultrapower Axiom]
Suppose \(\kappa\) is supercompact and \(A\subseteq \kappa\) is such that \(V_\kappa\subseteq \textnormal{HOD}_A\). Then \(V = \textnormal{HOD}_A\).
\begin{proof}
Recall that one can define a Laver function from any wellorder of \(V_\kappa\). In particular, if \(A\subseteq \kappa\) is such that \(V_\kappa\subseteq \text{HOD}_A\), then there is a function \(f:\kappa\to V_\kappa\) in \(\text{HOD}_A\) such that for every set \(x\), there is a uniform ultrafilter \(U\) such that \(x = j_U(f)(\kappa)\), and since \(U\) is \(\text{OD}\) by \cref{odults}, it follows that \(x\in \text{HOD}_A\). In other words \(V = \text{HOD}_A\).
\end{proof}
\end{thm}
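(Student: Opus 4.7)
The plan is to combine a Laver function with \cref{odults}: because every uniform ultrafilter is ordinal definable, the value $j_U(f)(\kappa)$ is automatically $\textnormal{OD}_A$ whenever $f\in\textnormal{HOD}_A$, and Laver's theorem guarantees every set appears in this form. Using the hypothesis $V_\kappa\subseteq \textnormal{HOD}_A$, I would first fix a wellorder of $V_\kappa$ lying in $\textnormal{HOD}_A$. Laver's construction depends only on such a wellorder together with the supercompactness of $\kappa$, so it yields a function $f\colon\kappa\to V_\kappa$ with $f\in\textnormal{HOD}_A$ and the usual anticipation property: for every set $x$ and every sufficiently large cardinal $\lambda$, there is a normal fine ultrafilter $W$ on $P_\kappa(\lambda)$ such that $j_W(f)(\kappa)=x$.

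Next, given any set $x$ and a witnessing ultrafilter $W$, I would apply the remark following \cref{UltrapowerAxiom} to replace $W$ by a Rudin--Keisler equivalent uniform ultrafilter $U$; since the ultrapower embedding is an invariant of the RK equivalence class, we still have $j_U(f)(\kappa)=x$. By \cref{odults}, $U$ is ordinal definable, hence $U\in\textnormal{HOD}_A$. The set $x=j_U(f)(\kappa)$ is defined in the language of set theory from the parameters $U$, $f$, and $\kappa$, each of which lies in $\textnormal{OD}_A$, so $x$ itself is in $\textnormal{OD}_A$. As $x$ was arbitrary, every set is $\textnormal{OD}_A$, which is exactly the assertion $V=\textnormal{HOD}_A$.

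The only step demanding any care is the translation from the supercompactness ultrafilter on $P_\kappa(\lambda)$ to an official uniform ultrafilter on some $[\mu]^n$, and this is a formal exercise in Rudin--Keisler equivalence that preserves the ultrapower embedding, and hence the identity $j_U(f)(\kappa)=x$. All the real content is supplied by \cref{odults}; the Laver function simply provides a standard mechanism for realizing every set as an ultrapower image from a fixed parameter in $\textnormal{HOD}_A$.
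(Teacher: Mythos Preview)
Your argument is correct and is essentially the paper's own proof, only with the details of passing from a supercompactness measure to a uniform ultrafilter and of the definability of $x$ from $U$, $f$, and $\kappa$ spelled out more explicitly. The one cosmetic slip is writing ``$U\in\textnormal{HOD}_A$'' where what you actually need (and immediately use) is that $U$ is $\textnormal{OD}_A$; this does not affect the argument.
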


\begin{cor}[Ultrapower Axiom]\label{hodgen}
Suppose \(\kappa\) is supercompact. Then \(V\) is a generic extension of \(\textnormal{HOD}\) for a forcing of size at most \(2^{2^\kappa}\).
\begin{proof}
This follows from Vopenka's theorem that every set of ordinals is generic over \(\textnormal{HOD}\).
\end{proof}
\end{cor}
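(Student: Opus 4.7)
The plan is to combine the preceding theorem with Vopenka's theorem. To apply the preceding theorem, I first need a subset \(A\subseteq\kappa\) with \(V_\kappa\subseteq\textnormal{HOD}_A\). Since \(\kappa\) is supercompact and hence inaccessible, \(|V_\kappa|=\kappa\), so fixing any bijection \(\kappa\to V_\kappa\) one can code the extensional well-founded structure \((V_\kappa,\in)\) as a subset \(A\subseteq\kappa\), from which \(V_\kappa\) itself is recoverable in \(\textnormal{HOD}_A\) by Mostowski collapse. The preceding theorem then delivers \(V=\textnormal{HOD}_A = \textnormal{HOD}[A]\).

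Next I would invoke Vopenka's theorem: every set of ordinals is set-generic over \(\textnormal{HOD}\) via the Vopenka algebra \(\mathbb{B}\in\textnormal{HOD}\), whose underlying set is (a quotient of) the \(\textnormal{OD}\) subsets of \(P(\kappa)\). Thus there is an \(\textnormal{HOD}\)-generic filter \(G\subseteq\mathbb{B}\) from which \(A\) can be recovered, giving \(V=\textnormal{HOD}[A]\subseteq\textnormal{HOD}[G]\subseteq V\), and hence \(V=\textnormal{HOD}[G]\). For the size bound, \(\mathbb{B}\) is a subset of \(P(P(\kappa))\) and so has cardinality at most \(2^{2^\kappa}\).

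The substantive content is already supplied by the preceding theorem and by the classical proof of Vopenka's theorem, so no real obstacle remains. The only mild point requiring attention is the \(2^{2^\kappa}\) bound on the Vopenka algebra, which is immediate from the description of its elements as \(\textnormal{OD}\) subsets of \(P(\kappa)\).
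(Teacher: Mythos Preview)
Your approach is correct and matches the paper's: apply the preceding theorem to get \(V=\textnormal{HOD}_A\) for some \(A\subseteq\kappa\), then invoke Vop\v{e}nka's theorem to realize \(V\) as a generic extension of \(\textnormal{HOD}\) by the Vop\v{e}nka algebra on \(P(\kappa)\), which has size at most \(2^{2^\kappa}\).

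One small point deserves care: the equality \(V=\textnormal{HOD}_A=\textnormal{HOD}[A]\) you assert after the preceding theorem is not immediate if \(\textnormal{HOD}[A]\) denotes the least inner model containing \(\textnormal{HOD}\cup\{A\}\); the preceding theorem only gives \(V=\textnormal{HOD}_A\). What closes the argument is the \emph{strong} form of Vop\v{e}nka's theorem, which says not merely that \(A\) is generic over \(\textnormal{HOD}\) but that the resulting generic extension \(\textnormal{HOD}[G]\) is exactly \(\textnormal{HOD}_A\). With that version in hand you get \(V=\textnormal{HOD}_A=\textnormal{HOD}[G]\) directly, and the detour through \(\textnormal{HOD}[A]\) is unnecessary.
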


\begin{cor}[Ultrapower Axiom]
Suppose \(\kappa\) is supercompact. Then \(\kappa\) is supercompact in \(\textnormal{HOD}\). In fact \(\textnormal{HOD}\) is a weak extender model for \(\kappa\) is supercompact.
\begin{proof}
We show that for arbitrarily large \(\lambda\), there is a normal fine \(\kappa\)-complete ultrafilter \(\mathcal U\) on \(P_\kappa(\lambda)\) such that \(\mathcal U\cap \text{HOD}\in \text{HOD}\) and \(\text{HOD}\cap P_\kappa(\lambda)\in \mathcal U\). For any normal fine \(\kappa\)-complete ultrafilter on \(P_\kappa(\lambda)\), \(\mathcal U\cap \text{HOD}\in \text{HOD}\), since \(\mathcal U\) is actually ordinal definable: \(\mathcal U\) is ordinal definable from any uniform ultrafilter \(U\equiv_\text{RK}\mathcal U\) (noting that \(\mathcal U\) is definable as the unique normal fine ultrafilter on \(P_\kappa(\lambda)\) that is Rudin-Keisler equivalent to \(U\)), and such an ultrafilter \(U\) is itself ordinal definable by \cref{odults}.

We now show that if \(\lambda\) is a sufficiently large regular cardinal, then any normal fine \(\kappa\)-complete ultrafilter \(\mathcal U\) on \(P_\kappa(\lambda)\) has \(\text{HOD}\cap P_\kappa(\lambda) \in \mathcal U\). For this it suffices to show that \(j_\mathcal U[\lambda]\in j_\mathcal U(\textnormal{HOD}) = \textnormal{HOD}^{M_\mathcal U}\). If \(\lambda\) is a sufficiently large regular cardinal then by \cref{hodgen}, \(\textnormal{HOD}\) is stationary correct at \(\lambda\). So fix a partition \(\langle S_\alpha : \alpha < \lambda\rangle\in \text{HOD}\) of \(\text{cof}(\omega)\cap \lambda\) into stationary sets. By Solovay's lemma, \(j_\mathcal U[\lambda]\) is definable in \(M_\mathcal U\) from \(j_\mathcal U(\langle S_\alpha : \alpha < \lambda\rangle)\) and \(\sup j_\mathcal U[\lambda]\). Since \(j_\mathcal U(\langle S_\alpha : \alpha < \lambda\rangle)\in  \textnormal{HOD}^{M_\mathcal U}\) it follows that \(j_\mathcal U[\lambda]\in \textnormal{HOD}^{M_\mathcal U}\) as desired.
\end{proof}
\end{cor}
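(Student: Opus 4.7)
The plan is to verify both of the defining conditions for a weak extender model at arbitrarily large \(\lambda\): namely, that there is a normal fine \(\kappa\)-complete ultrafilter \(\mathcal U\) on \(P_\kappa(\lambda)\) with \(\mathcal U\cap\textnormal{HOD}\in\textnormal{HOD}\) and with \(\textnormal{HOD}\cap P_\kappa(\lambda)\in\mathcal U\). Once this is done, the supercompactness of \(\kappa\) in \(\textnormal{HOD}\) is a standard consequence, since the restrictions of these ultrafilters to \(\textnormal{HOD}\) serve as supercompactness measures there.

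For the first condition, the strategy is to observe that any such \(\mathcal U\) is itself ordinal definable, and hence \(\mathcal U\cap\textnormal{HOD}\) is an element of \(\textnormal{HOD}\). The key input is \cref{odults}: every uniform ultrafilter is ordinal definable because it is the unique uniform ultrafilter of its seed rank. Since a normal fine ultrafilter on \(P_\kappa(\lambda)\) is Rudin-Keisler equivalent to a uniform ultrafilter \(U\), and since \(\mathcal U\) is the unique normal fine ultrafilter RK-equivalent to \(U\), ordinal definability of \(U\) transfers to \(\mathcal U\). This part is essentially bookkeeping on top of the already-established definability of the seed order.

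The second condition is where the real work lies, and it is what I expect to be the main obstacle. The plan is to apply Solovay's lemma: fix a partition \(\vec S = \langle S_\alpha : \alpha < \lambda\rangle\) of \(\{\xi<\lambda : \textnormal{cof}(\xi)=\omega\}\) into stationary sets, chosen to lie in \(\textnormal{HOD}\). Solovay's lemma then identifies \(j_\mathcal U[\lambda]\) as definable in \(M_\mathcal U\) from \(j_\mathcal U(\vec S)\) and \(\sup j_\mathcal U[\lambda]\). Since \(j_\mathcal U(\textnormal{HOD})=\textnormal{HOD}^{M_\mathcal U}\) and \(j_\mathcal U(\vec S)\in\textnormal{HOD}^{M_\mathcal U}\), this yields \(j_\mathcal U[\lambda]\in\textnormal{HOD}^{M_\mathcal U}\), which by elementarity is equivalent to \(\textnormal{HOD}\cap P_\kappa(\lambda)\in\mathcal U\). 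For this argument to succeed we need the chosen partition in \(\textnormal{HOD}\) to remain a partition into \(V\)-stationary sets, i.e.\ that \(\textnormal{HOD}\) is \emph{stationary correct} at \(\lambda\).

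To secure stationary correctness I will invoke \cref{hodgen}, which tells us that \(V\) is a generic extension of \(\textnormal{HOD}\) by a forcing of size at most \(2^{2^\kappa}\). Standard preservation theorems for small forcing imply that for any regular \(\lambda\) sufficiently larger than this bound (in particular \(\lambda>(2^{2^\kappa})^+\) suffices), a set \(S\subseteq\lambda\) is stationary in \(V\) iff it is stationary in \(\textnormal{HOD}\); hence any partition of \(\textnormal{cof}(\omega)\cap\lambda\) into \(\textnormal{HOD}\)-stationary pieces remains a partition into \(V\)-stationary pieces and Solovay's lemma applies. Since the regular \(\lambda\) above this threshold are unbounded, we obtain the required ultrafilters for arbitrarily large \(\lambda\), completing the proof that \(\textnormal{HOD}\) is a weak extender model for the supercompactness of \(\kappa\).
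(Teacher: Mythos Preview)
Your proposal is correct and follows essentially the same route as the paper's own proof: ordinal definability of \(\mathcal U\) via \cref{odults} and the Rudin-Keisler argument for the first condition, and Solovay's lemma applied to a \(\textnormal{HOD}\)-partition of \(\textnormal{cof}(\omega)\cap\lambda\), with stationary correctness coming from \cref{hodgen}, for the second. The only difference is cosmetic---you make explicit the size threshold on \(\lambda\) and the small-forcing preservation argument that the paper leaves implicit.
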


\section{The Canonical Comparison}\label{CanonicalComparisonSection}
In this section we develop further the basic comparison theory for uniform ultrafilters. Assuming the Ultrapower Axiom, we show that any pair of ultrafilters has a least comparison in a precise sense, which we call the {\it canonical comparison}. (The meaning of ``least" is made more precise in \cref{UltrapowerLatticeSection}, where we prove the universal property of the canonical comparison, see \cref{universalproperty}.) We begin with two definitions. The first is fairly standard, and we will use it throughout the paper.

\begin{defn}\label{uniform}
Suppose \(j:V\to M\) is an elementary embedding and \(a\) is a finite set of ordinals. The (uniform) {\it ultrafilter derived from \(j\) using \(a\)} is the unique uniform ultrafilter \(U\) such that for all \(X\subseteq [\textsc{sp}(U)]^{|a|}\), \(X\in U\) if and only if \(a\in j(X)\). 
\end{defn}

Note that \(\textsc{sp}(U)\) is determined by \(a\) and the requirement that \(U\) be uniform. There is a more general notion of derived ultrafilter: there is really no need to assume \(a\) is a finite set of ordinals, though in the general case one must also fix a set \(A\) such that \(a\in j(A)\) to serve as the space of the derived ultrafilter. We will occasionally use this more general notion as well. The point of \cref{uniform} is to demand that a derived ultrafilter be uniform whenever this demand makes sense. We will also relativize this definition to inner models of \(V\) without comment as in the next definition.

\begin{defn}
Suppose \(\langle W_0,W_1\rangle\) is a comparison of \(\langle U_0,U_1\rangle\) by internal ultrafilters. We call \(\langle W_0,W_1\rangle\) {\it canonical} if the following hold:
\begin{enumerate}[(1)]
\item \(W_0\) is derived from \(j_{W_0}:\textnormal{Ult}(V,U_0)\to \textnormal{Ult}(\textnormal{Ult}(V,U_0),W_0)\) using \(j_{U_1}([\textnormal{id}]_{U_1})\)
\item \(W_1\) is derived from \(j_{W_1}:\textnormal{Ult}(V,U_1)\to\textnormal{Ult}(\textnormal{Ult}(V,U_1),W_1)\) using \(j_{U_0}([\textnormal{id}]_{U_0})\)
\end{enumerate}
\end{defn}

The first proposition of this section to some extent explains why the Ultrapower Axiom is a reasonable axiom according to inner model theoretic considerations. (See \cref{comparisonsection} for a better explanation.) We require the following definition.

\begin{defn}
Suppose \(M\) and \(N\) are transitive models of \(\textnormal{ZFC}\). A cofinal elementary embedding \(j:M\to N\) is a {\it close embedding}, or equivalently is {\it close to} \(M\), if for every \(a\in [\text{Ord}]^{<\omega}\cap N\), the \(M\)-ultrafilter derived from \(j\) using \(a\) is an element of \(M\).
\end{defn}

Close embeddings arise as branch embeddings of maximal nondropping iteration trees on fine structural models of ZFC, such as the iteration trees that appear in the process of comparison by least disagreement, see \cite{MitchellSteel}. The next theorem says roughly that a ``close comparison" of two ultrapowers can be converted into a comparison by internal ultrapowers. 

\begin{prp}\label{closetoultra}
Suppose \(M\) is a transitive model of \textnormal{ZFC} and \(U_0\) and \(U_1\) are countably complete ultrafilters of \(M\). Let \(M_0 = \textnormal{Ult}(M,U_0)\) and \(M_1 = \textnormal{Ult}(M,U_1)\), and suppose that for some model \(N\), there are close embeddings \begin{align*}k_0&:M_0\to N\\ k_1&: M_1\to N\end{align*} such that \(k_0\circ j_{U_0} = k_1\circ j_{U_1}\). Then in \(M\), \(\langle U_0,U_1\rangle\) admits a canonical comparison \(\langle W_0,W_1\rangle\) to a common model \(P\). Moreover, the model \(P\) itself embeds in \(N\) by an elementary embedding \(h: P\to N\) such that \(h \circ j_{W_0} = k_0\) and \(h\circ j_{W_1} = k_1\).
\begin{rmk}
We do not assume that \(k_0\) and \(k_1\) are amenable to \(M\).
\end{rmk}
\begin{proof}
Let \(a_0= [\textnormal{id}]_{U_0}\) and \(a_1 = [\textnormal{id}]_{U_1}\). Let \[\ell:M\to N\] denote the common embedding \(k_0\circ j_{U_0} = k_1\circ j_{U_1}\). Let 
\[X = \{\ell(f)(k_0(a_0)\cup k_1(a_1)) :f\in M\}\]
By the proof of Los's Theorem, \(X\prec N\). Note that \(k_0[M_0]\subseteq X\) because every element of \(M_0\) is of the form \(j_{U_0}(f)(a_0)\), and so every element of \(k_0[M_0]\) is of the form \[k_0(j_{U_0}(f)(a_0)) = k_0\circ j_{U_0}(f)(k_0(a_0)) = \ell(f)(k_0(a_0))\in X\] Similarly, \(k_1[M_1]\subseteq X\). 
Let \(W_0\) be the uniform \(M_0\)-ultrafilter derived from \(k_0\) using \(k_1(a_1)\) and let \(W_1\) be the uniform \(M_1\)-ultrafilter derived from \(k_1\) using \(k_0(a_0)\). Since \(k_0\) is close to \(M_0\), \(W_0\in M_0\), and since \(k_1\) is close to \(M_1\), \(W_1\in M_1\). We claim \(\langle W_0,W_1\rangle\) is a canonical comparison of \(\langle U_0,U_1\rangle\).

We define an elementary embedding \(h_0:\textnormal{Ult}(M_0,W_0)\to N\) whose range is \(X\). This is just the factor map of \(\textnormal{Ult}(M_0,W_0)\) into \(N\), defined as usual for \(g\in M_0\) by \[h_0(j_{W_0}(g)([\text{id}]_{W_0})) = k_0(g)(k_1(a_1))\]
Since \(M_0 = \{j_{U_0}(f)(a_0) : f\in M\}\), we have 
\begin{align*}\textnormal{ran}(h_0) &= \{k_0(j_{U_0}(f)(a_0))(k_1(a_1)) : f\in M\} \\
&= \{\ell(f)(k_0(a_0))(k_1(a_1)) : f\in M\} \\
&= \{\ell(f^*)(k_0(a_0)\cup k_1(a_1)) : f^*\in M\}\\
&=X\end{align*}

Similarly we define \(h_1:\textnormal{Ult}(M_1,W_1)\to N\) with range \(X\). It follows that \[\textnormal{Ult}(M_0,W_0) = \textnormal{Ult}(M_1,W_1)\] since each is isomorphic to the transitive collapse of \(X\). It also follows that \(h_0 = h_1\), since each is equal to the inverse of the transitive collapse of \(X\). It is clear from the definitions that \(h_0\circ j_{W_0} = k_0\) and \(h_1\circ j_{W_1} = k_1\). Denoting the common model  \(\textnormal{Ult}(M_0,W_0) = \textnormal{Ult}(M_1,W_1)\) by \(P\) and the common embedding \(h_0 = h_1\) by \(h\), it remains only to see that \(\langle W_0,W_1\rangle\) is a canonical comparison, which is not hard.

Note first that \(\langle W_0,W_1\rangle\) is a comparison: this amounts to checking the commutativity requirement (6) of \cref{comparison}, which holds since \[h_0\circ j_{W_0} \circ j_{U_0} = k_0\circ j_{U_0} = k_1\circ j_{U_1} = h_1\circ j_{W_0} \circ j_{U_0}\] and so since \(h_0 = h_1\), \(j_{W_0} \circ j_{U_0} =  j_{W_1} \circ j_{U_1}\). To see \(W_0\) is the \(M_0\)-ultrafilter derived from \(j_{W_0}\) using \(j_{W_1}(a_1)\), take \(X\in P([\textsc{sp}(W_0)]^{|a_1|})\cap M_0\):
\begin{align*}
X\in W_0&\iff k_1(a_1)\in k_0(X)\\
&\iff h_1(j_{W_1}(a_1))\in h_0(j_{W_0}(X))\\
&\iff h(j_{W_1}(a_1))\in h(j_{W_0}(X))\\
&\iff j_{W_1}(a_1)\in j_{W_0}(X)
\end{align*}
That \(W_1\) is the \(M_1\)-ultrafilter derived from \(j_{W_1}\) using \(j_{W_0}(a_0)\) is shown in exactly the same way. This completes the proof.
\end{proof}
\end{prp}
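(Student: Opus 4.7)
The plan is to produce the canonical comparison $\langle W_0, W_1\rangle$ directly from the given close embeddings. Set $a_0 = [\textnormal{id}]_{U_0}$, $a_1 = [\textnormal{id}]_{U_1}$, and let $\ell = k_0\circ j_{U_0} = k_1\circ j_{U_1}$. The canonical choices are essentially forced by the form of the conclusion: take $W_0$ to be the $M_0$-ultrafilter derived from $k_0$ using the seed $k_1(a_1)$, and symmetrically let $W_1$ be the $M_1$-ultrafilter derived from $k_1$ using $k_0(a_0)$. Closeness of the $k_i$ (applied to the finite sets of ordinals $k_1(a_1)$ and $k_0(a_0)$) immediately gives $W_0\in M_0$ and $W_1\in M_1$.

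The heart of the proof is to identify the two internal ultrapowers $\textnormal{Ult}(M_0, W_0)$ and $\textnormal{Ult}(M_1, W_1)$ with a common submodel of $N$. Let
\[
X = \{\ell(f)(k_0(a_0)\cup k_1(a_1)) : f \in M\},
\]
an elementary substructure of $N$ by the standard Skolem-hull argument (in the style of the proof of Los's Theorem). I would then define a factor map $h_0 : \textnormal{Ult}(M_0, W_0) \to N$ in the usual way, sending $j_{W_0}(g)([\textnormal{id}]_{W_0})$ to $k_0(g)(k_1(a_1))$ for $g \in M_0$. Writing each $g \in M_0$ as $j_{U_0}(f)(a_0)$ and unfolding, one computes $k_0(g)(k_1(a_1)) = \ell(f)(k_0(a_0))(k_1(a_1))$, and after re-currying $f$ this ranges over exactly $X$ as $g$ varies. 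The symmetric argument produces a factor map $h_1 : \textnormal{Ult}(M_1, W_1) \to N$ with the same range $X$.

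Since both ultrapowers are transitive and each maps onto the hull $X \prec N$, composing with the inverse Mostowski collapse of $X$ identifies them with a single model $P$, and the factor maps $h_0, h_1$ collapse to a common embedding $h : P \to N$ satisfying $h \circ j_{W_i} = k_i$. The commutativity requirement $j_{W_0}\circ j_{U_0} = j_{W_1}\circ j_{U_1}$ from \cref{comparison} is then immediate: applying $h$ on the left turns both sides into $\ell$, so injectivity of $h$ cancels. Canonicity is a short direct check: $X \in W_0$ iff $k_1(a_1) \in k_0(X)$ iff (pushing both sides under $h^{-1}$ using $h \circ j_{W_i} = k_i$) $j_{W_1}(a_1) \in j_{W_0}(X)$, which is the derived-ultrafilter condition; the symmetric check works for $W_1$.

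The main obstacle I expect is the identification of $\textnormal{Ult}(M_0, W_0)$ with $\textnormal{Ult}(M_1, W_1)$: it requires showing that the ranges of the two factor maps both coincide with the single hull $X$. The key combinatorial observation is that the two seeds $a_0$ and $a_1$ can be absorbed symmetrically into a single evaluation $\ell(f)(k_0(a_0)\cup k_1(a_1))$ by re-currying the function $f$, so that the same hull is recovered whether we build it starting from $k_0[M_0]$ or from $k_1[M_1]$.
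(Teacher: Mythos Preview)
Your proposal is correct and follows essentially the same approach as the paper: define $W_i$ as the $M_i$-ultrafilter derived from $k_i$ using the opposite seed, build the common hull $X = \{\ell(f)(k_0(a_0)\cup k_1(a_1)) : f\in M\}$, show both factor maps have range exactly $X$, and read off equality of the ultrapowers, the common $h$, commutativity, and canonicity. The only difference is cosmetic ordering (the paper introduces $X$ before the $W_i$).
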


\begin{thm}[Ultrapower Axiom]\label{canonical}
Suppose \(U_0\) and \(U_1\) are uniform ultrafilters. There is a {\it unique} canonical comparison of \(\langle U_0,U_1\rangle\).
\begin{proof}
The existence of a canonical comparison follows from the Ultrapower Axiom and the special case of \cref{closetoultra} in which \(M = V\) and the embeddings \(k_0\) and \(k_1\) are ultrapower embeddings. It remains to prove uniqueness.

Set \(a_0 = [\textnormal{id}]_{U_0}\) and \(a_1=[\textnormal{id}]_{U_1}\). Suppose \(\langle W_0,W_1\rangle\) is a canonical comparison of \(\langle U_0,U_1\rangle\) to a common model \(P\) and \(\langle W_0',W_1'\rangle\) is a canonical comparison of \(\langle U_0,U_1\rangle\) to a common model \(P'\). We will show \(\langle W_0,W_1\rangle = \langle W_0',W_1'\rangle\).

Fix an \(\textnormal{Ult}(V,U_0)\)-comparison \(\langle Z,Z'\rangle\) of \(\langle W_0,W_0'\rangle\) to a common model \(N\). Then \(j_Z\circ j_{W_1}\) and \(j_{Z'}\circ j_{W_1'}\) agree on the ordinals by \cref{defemb} applied in \(\textnormal{Ult}(V,U_1)\). In particular, \(j_Z(j_{W_1}(a_1)) = j_{Z'}(j_{W_1'}(a_1))\). By canonicity, \(j_Z([\text{id}]_{W_0}) = j_{Z'}([\text{id}]_{W'_0})\). Applying \cref{antisymmetric} in \(\textnormal{Ult}(V,U_0)\), we obtain that \(W_0 = W_0'\).

By the same argument, \(W_1 = W_1'\). This completes the proof.
\end{proof}
\end{thm}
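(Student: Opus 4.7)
The plan is to handle existence and uniqueness separately. For existence, I would apply the Ultrapower Axiom to obtain some comparison $\langle W_0, W_1 \rangle$ of $\langle U_0, U_1 \rangle$ to a common model $P$, and then invoke \cref{closetoultra} with $M = V$ and $k_i = j_{W_i}$. An ultrapower embedding is close to its domain (any uniform ultrafilter derived from $j_{W_i}$ factors through $W_i$ and so lies in $\textnormal{Ult}(V,U_i)$), so \cref{closetoultra} upgrades the given comparison to a canonical one. The substantive work is in uniqueness.

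For uniqueness, suppose $\langle W_0, W_1 \rangle$ (to $P$) and $\langle W_0', W_1' \rangle$ (to $P'$) are both canonical comparisons of $\langle U_0, U_1\rangle$, and set $a_i = [\textnormal{id}]_{U_i}$. My strategy is to show $W_0 = W_0'$ by comparing them inside $\textnormal{Ult}(V, U_0)$; the symmetric argument then yields $W_1 = W_1'$. Apply the Ultrapower Axiom inside $\textnormal{Ult}(V, U_0)$ (where it holds by the elementarity of $j_{U_0}$) to get an $\textnormal{Ult}(V, U_0)$-comparison $\langle Z, Z' \rangle$ of $\langle W_0, W_0' \rangle$, with embeddings $j_Z : P \to N$ and $j_{Z'} : P' \to N$. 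The key observation is that both $j_Z \circ j_{W_1}$ and $j_{Z'} \circ j_{W_1'}$ are iterated ultrapower embeddings of $\textnormal{Ult}(V, U_1)$ into $N$, using the commutativity requirement to see that $P$ and $P'$ are themselves ultrapowers of $\textnormal{Ult}(V,U_1)$ via $W_1$ and $W_1'$. Hence both composites are definable over $\textnormal{Ult}(V, U_1)$, so \cref{defemb} relativized there forces them to agree on ordinals; in particular, $j_Z(j_{W_1}(a_1)) = j_{Z'}(j_{W_1'}(a_1))$.

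To finish, I translate this identity into one about seeds using canonicity. Because $W_0$ is the uniform ultrafilter derived from $j_{W_0}$ at $j_{W_1}(a_1)$, its seed satisfies $[\textnormal{id}]_{W_0} = j_{W_1}(a_1)$, and similarly $[\textnormal{id}]_{W_0'} = j_{W_1'}(a_1)$. Substituting into the displayed identity gives $j_Z([\textnormal{id}]_{W_0}) = j_{Z'}([\textnormal{id}]_{W_0'})$, so $\langle Z, Z'\rangle$ simultaneously witnesses $W_0 \wo W_0'$ and $W_0' \wo W_0$ in $\textnormal{Ult}(V, U_0)$. Then \cref{antisymmetric} relativized to $\textnormal{Ult}(V, U_0)$ delivers $W_0 = W_0'$, and by symmetry $W_1 = W_1'$. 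The main obstacle, and really the only step requiring care, is recognizing the two composites $j_Z \circ j_{W_1}$ and $j_{Z'} \circ j_{W_1'}$ as definable embeddings of $\textnormal{Ult}(V, U_1)$ so that \cref{defemb} applies; once that is in place, canonicity and antisymmetry close the argument mechanically.
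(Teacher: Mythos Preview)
Your proposal is correct and follows essentially the same route as the paper: existence via \cref{closetoultra} applied to a comparison supplied by the Ultrapower Axiom, and uniqueness by comparing $W_0$ with $W_0'$ inside $\textnormal{Ult}(V,U_0)$, using \cref{defemb} in $\textnormal{Ult}(V,U_1)$ on the composites $j_Z\circ j_{W_1}$ and $j_{Z'}\circ j_{W_1'}$, then canonicity and \cref{antisymmetric}. Your identification of the key step---verifying that those composites are definable (iterated ultrapower) embeddings of $\textnormal{Ult}(V,U_1)$ so that \cref{defemb} applies---matches exactly what the paper relies on.
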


We note that assuming the Ultrapower Axiom, by the proof of \cref{canonical}, if \(U_0\) and \(U_1\) are countably complete ultrafilters and \(\langle W_0,W_1\rangle\) is a comparison of \(\langle U_0,U_1\rangle\), then \(W_0\) is certified to be the ultrafilter of the canonical comparison by the mere fact that \([\text{id}]_{W_0} = j_{W_1}([\text{id}]_{U_1})\); one does not need any assumptions about \(W_1\).

The following proposition, generalizing \cref{defemb}, essentially expresses an assumption built in to the Ultrapower Axiom, ultimately tracing back to the commutativity requirement (6) in the definition of a comparison, \cref{comparison}.

\begin{prp}[Ultrapower Axiom]\label{everythingcommutes}
If \(M\) is an ultrapower of \(V\), then there is a unique ultrapower embedding \(j:V\to M\).
\begin{proof}
Suppose \(U\) and \(U'\) are countably complete ultrafilters such that \(M = \textnormal{Ult}(V,U) = \textnormal{Ult}(V,U')\). We must show \(j_U = j_{U'}\). Using the Axiom of Choice, we may assume \(U\) and \(U'\) are uniform ultrafilters. Let \(\langle W,W'\rangle\) be the canonical comparison of \(\langle U,U'\rangle\). Note that \(j_W\restriction \text{Ord} = j_{W'}\restriction \text{Ord}\) by \cref{defemb} applied in \(M\). It follows that \(j_{W}([\text{id}]_U) \in \text{ran}(j_{W'})\), since \(j_{W}([\text{id}]_U) = j_{W'}([\text{id}]_U)\). Since \(W'\) is derived from \(j_{W'}\) using \(j_{W}([\text{id}]_U)\), by the definition of the canonical comparison, \(W'\) is principal. Thus \(j_{W'}\) is the identity. Similarly \(j_W\) is the identity. But \(j_W\circ j_U = j_{W'}\circ j_{U'}\) by the definition of a comparison, and so \(j_U = j_{U'}\).
\end{proof}
\end{prp}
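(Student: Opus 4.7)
The plan is to leverage the uniqueness and definability built into the canonical comparison together with the agreement-on-ordinals of definable embeddings out of $M$. First, using the Axiom of Choice, I would reduce to the case where both $U$ and $U'$ are uniform: every countably complete ultrafilter is Rudin--Keisler equivalent to a uniform one, and the Rudin--Keisler equivalence respects the ultrapower embedding up to isomorphism of targets, so nothing is lost. Now invoke \cref{canonical} to obtain the canonical comparison $\langle W,W'\rangle$ of $\langle U,U'\rangle$ to a common model $P$.

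Next I would apply \cref{defemb} inside $M$: both $j_W$ and $j_{W'}$ are ultrapower embeddings from $M$ to the common model $P$, definable over $M$ from the parameters $W,W'\in M$, and hence must agree on the ordinals of $M$. In particular they agree on the finite set of ordinals $[\textnormal{id}]_U$, yielding the crucial identity $j_W([\textnormal{id}]_U) = j_{W'}([\textnormal{id}]_U)$.

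Here is the key move. By canonicity, $W'$ is the $M$-ultrafilter derived from $j_{W'}:M\to P$ using the seed $j_W([\textnormal{id}]_U)$. But by the previous paragraph this seed coincides with $j_{W'}([\textnormal{id}]_U)$, which plainly lies in the range of $j_{W'}$; an ultrafilter derived from an embedding using a point already in the range of that embedding is necessarily principal. So $W'$ is principal, and therefore $j_{W'}$ is the identity. By the symmetric argument $j_W$ is the identity as well, and the commutativity clause (6) of \cref{comparison} then yields
\[
j_U \;=\; j_W\circ j_U \;=\; j_{W'}\circ j_{U'} \;=\; j_{U'}.
\]

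The main step to watch is the passage from ``$j_W$ and $j_{W'}$ agree on ordinals'' to the stronger conclusion that the canonical ultrafilters collapse to principal ones; this is precisely where canonicity is indispensable, since \cref{defemb} alone would only deliver agreement on ordinals, not literal equality of the two embeddings. Everything else is bookkeeping, with the principal-seed observation and the commutativity requirement of the comparison combining to close the argument.
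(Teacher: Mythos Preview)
Your proposal is correct and follows essentially the same route as the paper: reduce to uniform ultrafilters, take the canonical comparison, use \cref{defemb} in $M$ to get $j_W\restriction\text{Ord}=j_{W'}\restriction\text{Ord}$, conclude from canonicity that $W$ and $W'$ are principal, and finish via the commutativity requirement. Your closing remark correctly isolates the role of canonicity as the ingredient that upgrades ordinal agreement to principality.
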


\section{The Mitchell Order}\label{mitchellorder}
In this section, we will prove that assuming the Ultrapower Axiom, the Mitchell order is linear on certain kinds of ultrafilters. Since the definition of the Mitchell order is often given only for normal ultrafilters, we state the definition we are using (following \cite{Kanamori}):

\begin{defn}
Suppose \(U_0\) and \(U_1\) are countably complete ultrafilters. Then \(U_0 <_M U_1\) if \(U_0\in \textnormal{Ult}(V,U_1)\).
\end{defn}

An important observation is that if \(U_0\mo U_1\) then \(P(\textsc{sp}(U_0))\subseteq \textnormal{Ult}(V,U_1)\). This is a sense in which the Mitchell order is less general than the seed order. In fact, one might take some of the results here to suggest that perhaps the ``right" generalization of the Mitchell order to ultrafilters \(U\) such that \(\textsc{sp}(U) \neq \textsc{crt}(U)\) is the seed order, or more accurately the \(E\)-order. (At the very least, the basic theory of the \(E\)-order seems to admit a more natural development than that of the generalized Mitchell order.)

We do not assume \(U_0\) concentrates on ordinals or finite sets of ordinals here, since we will be interested in the Mitchell order for ultrafilters that do not concentrate on ordinals (in particular supercompactness measures). Nevertheless, this fully general definition has the feature that it is not invariant under Rudin-Keisler equivalence of ultrafilters. For a trivial example, suppose \(U\) is a normal ultrafilter on \(\kappa\) and \(U' = \{A\subseteq \theta : A\cap \kappa\in U\}\) for some \(\theta > \kappa\). Obviously \(U\) and \(U'\) are Rudin-Keisler equivalent, yet there may be ultrafilters \(W\) such that \(U\in \textnormal{Ult}(V,W)\) while \(U'\notin \textnormal{Ult}(V,W)\) simply because \(P(\theta)\nsubseteq \textnormal{Ult}(V,W)\). On the other hand, the Mitchell order does respect the variant of Rudin-Keisler equivalence that also demands the spaces of equivalent ultrafilters have the same cardinality.

We remark that the Mitchell order is not linear on arbitrary ultrafilters. This is obvious from the example above, since there can be no Mitchell relation between the equivalent ultrafilters \(U\) and \(U'\), nor between the ultrafilters \(U'\) and \(W\). But there are less trivial examples here. Suppose \(U_0\) is a countably complete ultrafilter on \(X\) and \(U_1\) is the ultrafilter obtained by iterating \(U_0\) twice. (Thus \(\textnormal{Ult}(V,U_1) = \textnormal{Ult}(\textnormal{Ult}(V,U_0),j_{U_0}(U_0))\); that is, \(U_1\) is the product of \(U_0\) with itself.) Then there can be no Mitchell relation between \(U_0\) and \(U_1\) since \(U_0\) and \(U_1\) can be computed from one another (given the power of \(X\)). It follows from this example that if there is a measurable cardinal then there are uniform ultrafilters that bear no Mitchell relation to one another. We will see later that the counterexamples in the last two paragraphs are essentially the only provable counterexamples to the linearity of \(<_M\) below a superstrong cardinal. See \cref{shortland}.

We begin by pointing out that many of the properties of the Mitchell order on normal ultrafilters carry over to all nonprincipal ultrafilters. This is part of the folklore, but does not seem to have appeared in print. We begin by stating a very well-known fact, whose proof appears in \cite{Kanamori}.

\begin{prp}\label{mostrict}
The Mitchell order on nonprincipal ultrafilters is strict.
\end{prp}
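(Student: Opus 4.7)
The plan is to prove by contradiction that no nonprincipal countably complete ultrafilter $U$ satisfies $U \in \textnormal{Ult}(V, U)$. The argument is classical and closely follows the version in \cite{Kanamori}.

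Assume for contradiction that $U$ is such an ultrafilter, let $M := \textnormal{Ult}(V, U)$, and let $X$ denote the space of $U$. First I will establish the strong closure property $P(X) \subseteq M$. Since $M$ is transitive and $U \in M$, every element of $U$ lies in $M$, so $U \subseteq P(X)^M$. Combining this with the ultrafilter property of $U$ in $V$: for every $A \subseteq X$, either $A \in U$ or $X \setminus A \in U$, and whichever one is in $U$ lies in $M$; taking a complement inside $M$ if necessary, $A \in M$ too. Hence $P(X) \subseteq M$.

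Next I plan to exhibit a nontrivial elementary self-embedding of $M$. Using $U \in M$, one forms the internal ultrapower $\textnormal{Ult}(M, U)$ with its embedding $j_U^M$. I expect to identify $\textnormal{Ult}(M, U)$ with $M$ via the canonical factor map $[f]_U^M \mapsto [f]_U^V$, exploiting that the functions in $V^X$ needed for the Mostowski collapse are bounded in rank and so lie in $M$ by the closure just established. This identification should yield an elementary embedding $j : M \to M$ definable over $M$ from the parameter $U$, with critical point $\textnormal{crit}(j_U)$. The contradiction then comes from Kunen's inconsistency theorem applied inside $M$: no model of \textnormal{ZFC} admits a nontrivial elementary self-embedding definable from a set parameter.

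The main obstacle will be step two: showing rigorously that the factor map identifies $\textnormal{Ult}(M, U)$ with $M$, or at the very least that a nontrivial elementary embedding $M \to M$ definable in $M$ from $U$ genuinely arises. The closure $P(X) \subseteq M$ furnishes all bounded-range functions out of $X$, which are precisely the functions the Mostowski collapse of the ultrapower consumes, and a careful tracking of these functions is the technical heart of the classical argument.
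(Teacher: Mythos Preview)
Your argument has a genuine gap at the key step. From $U\in M$ you correctly deduce $P(X)\subseteq M$, but this only places \emph{subsets} of $X$ (equivalently, functions $X\to 2$) into $M$; it gives no closure of $M$ under arbitrary functions $X\to V$. Surjectivity of the factor map $[f]_U^M\mapsto[f]_U^V$ would require that every element of $M=\textnormal{Ult}(V,U)$ be representable by a function lying in $M$, and this does not follow from $P(X)\subseteq M$. Your phrase ``the functions in $V^X$ needed for the Mostowski collapse are bounded in rank and so lie in $M$'' conflates having set rank with membership in $M$: each representing function has some rank, of course, but that does not put it into $M$.

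There is a second problem even if one grants an elementary $j:M\to M$. To invoke Kunen \emph{inside} $M$, the embedding must be a class of $M$. The internal ultrapower map $j_U^M$ is definable in $M$, but its target is $\textnormal{Ult}(M,U)$, which you have not shown equals $M$. One might try to compose with the factor map into $M$, but that map is not elementary (its definition compares $M$-truth with $V$-truth pointwise), and in any case it is not a class of $M$. Externally, Kunen's theorem does not forbid nontrivial elementary self-maps of a proper inner model---witness $0^\sharp$ and $j:L\to L$. The paper itself does not give a proof of this proposition, simply citing \cite{Kanamori}; the standard argument there does not proceed by manufacturing a self-embedding of $M$.
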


In order to prove the transitivity and wellfoundedness of the Mitchell order, we will prove a very coarse bound on the relative size of one ultrafilter compared to another ultrafilter lying above it in the Mitchell order. 
\begin{prp}\label{annoying}
Suppose \(U\) on \(X\) and \(W\) on \(Y\) are nonprincipal ultrafilters and \(U <_M W\). Then \[|U|^{\textnormal{Ult}(V,W)}< j_{W}(|Y|)\] In particular, \(U\in \textnormal{Ult}(H_{|Y|^+},W)\).
\begin{proof}

Let \(\langle \kappa_n :  n < \omega\rangle\) denote the critical sequence of \(j_{W}\). We have \(P(X)\in \textnormal{Ult}(V,W)\), and so for some \(n < \omega\), \[\kappa_n \leq |X| < \kappa_{n+1}\] since otherwise \(P(\sup\kappa_n)\in  \textnormal{Ult}(V,W)\), contradicting Kunen's inconsistency theorem. Fixing such an \(n < \omega\), we have \(P(\kappa_n)\in \textnormal{Ult}(V,W)\), since \(P(|X|)\in \textnormal{Ult}(V,W)\). It follows that \(\kappa_n\leq |Y|\): otherwise, assuming \(|Y| < \kappa_n\), since \(\kappa_n\) is inaccessible in \(V\) and \(P(\kappa_n)\in \textnormal{Ult}(V,W)\), this implies \(W\in \textnormal{Ult}(V,W)\) contradicting \cref{mostrict}. It follows that \(\kappa_{n+1}\leq j_{W}(|Y|)\). Since \(\kappa_{n+1}\) is inaccessible in \(\textnormal{Ult}(V,W)\), we have \[|U|^{\textnormal{Ult}(V,W)} = (2^{|X|})^{\textnormal{Ult}(V,W)}< \kappa_{n+1} \leq j_{W}(|Y|)\] as desired.
\end{proof}
\end{prp}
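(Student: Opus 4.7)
The plan is to locate both $|X|$ and $|Y|$ inside the critical sequence $\langle \kappa_n : n < \omega \rangle$ of $j_W$ (where $\kappa_0 = \textnormal{crt}(W)$ and $\kappa_{n+1} = j_W(\kappa_n)$), and then exploit the inaccessibility of the $\kappa_n$ inside $\textnormal{Ult}(V,W)$ to bound $(2^{|X|})^{\textnormal{Ult}(V,W)}$. Since $U \subseteq P(X)$, we have $|U|^{\textnormal{Ult}(V,W)} \leq (2^{|X|})^{\textnormal{Ult}(V,W)}$, and moreover $P(X) \in \textnormal{Ult}(V,W)$, because $\textnormal{Ult}(V,W)$ can recover $P(X)$ from the fact that $U$ is an ultrafilter on $X$ (for instance by closing the principal filters on $X$ under Boolean operations inside $\textnormal{Ult}(V,W)$).

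First I would show that there is some $n < \omega$ with $\kappa_n \leq |X| < \kappa_{n+1}$. If no such $n$ existed, then $|X| \geq \kappa_\omega := \sup_n \kappa_n$, in which case $P(\kappa_\omega) \in \textnormal{Ult}(V,W)$; but this contradicts Kunen's inconsistency theorem applied to $j_W$, whose critical sequence has supremum $\kappa_\omega$. Fix such an $n$; then in particular $P(\kappa_n) \in \textnormal{Ult}(V,W)$.

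Next I would show $\kappa_n \leq |Y|$. Suppose toward a contradiction that $|Y| < \kappa_n$. Since $\kappa_n$ is inaccessible in $V$ and $|Y| < \kappa_n$, the ultrafilter $W$ is coded by a subset of $\kappa_n$ in $V$; but $P(\kappa_n) \in \textnormal{Ult}(V,W)$, so this coded copy of $W$ lies in $\textnormal{Ult}(V,W)$, from which $\textnormal{Ult}(V,W)$ decodes $W$ itself. This contradicts the strictness of the Mitchell order (\cref{mostrict}). Hence $\kappa_n \leq |Y|$, and applying $j_W$ yields $\kappa_{n+1} = j_W(\kappa_n) \leq j_W(|Y|)$.

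Finally, $\kappa_{n+1}$ is inaccessible in $\textnormal{Ult}(V,W)$ and $|X| < \kappa_{n+1}$, so $(2^{|X|})^{\textnormal{Ult}(V,W)} < \kappa_{n+1}$. Combining the inequalities gives $|U|^{\textnormal{Ult}(V,W)} < \kappa_{n+1} \leq j_W(|Y|)$, which is the main bound. The ``in particular'' clause follows because $|U|^{\textnormal{Ult}(V,W)} < j_W(|Y|)$ places $U$ inside $(H_{j_W(|Y|)^+})^{\textnormal{Ult}(V,W)}$, and this set equals $\textnormal{Ult}(H_{|Y|^+}, W)$ by the elementarity of $j_W$ (computing $H$ of the successor of $|Y|$ inside $H_{|Y|^+}$). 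The main obstacle I expect is the step $\kappa_n \leq |Y|$: one must combine the $V$-inaccessibility of $\kappa_n$ with the transfer of $P(\kappa_n)$ into the ultrapower to conclude that $W \in \textnormal{Ult}(V,W)$, and the coding argument deserves to be spelled out.
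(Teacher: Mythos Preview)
Your proposal is correct and follows essentially the same argument as the paper: locate $|X|$ in the critical sequence via Kunen, use strictness of the Mitchell order to get $\kappa_n \leq |Y|$, and conclude via the inaccessibility of $\kappa_{n+1}$ in the ultrapower. The only cosmetic differences are that the paper writes $|U|^{\textnormal{Ult}(V,W)} = (2^{|X|})^{\textnormal{Ult}(V,W)}$ where you write $\leq$, and neither you nor the paper explicitly dispatches the trivial case $|X| < \kappa_0$.
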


\begin{cor}\label{strongcor}
Suppose \(U\) and \(W\) are nonprincipal ultrafilters and \(M\) is a transitive model of \textnormal{ZFC}. If \(U<_M W\) and \(W\in M\), then \(U\in M\) and \(M\vDash U <_M W\). In particular, the Mitchell order on uniform ultrafilters is transitive.
\begin{proof}
This is essentially immediate from \cref{annoying}. Assume \(W\) is an ultrafilter on \(Y\). The point is just that since \(W\in M\), \(P(Y)\in M\) and hence \(H_{|Y|^+}\in M\) (and is equal to \((H_{|Y|^+})^M\)). It follows that \(\textnormal{Ult}(H_{|Y|^+}, W)\in M\) and so \(U\in M\) since \(U\in \textnormal{Ult}(H_{|Y|^+}, W)\) by \cref{annoying}. Moreover \[\textnormal{Ult}(H_{|Y|^+}, W)\subseteq \textnormal{Ult}(M,W)\] and so in fact \(U\in \textnormal{Ult}(M,W)\), which means \(M\vDash U <_M W\).
\end{proof}
\end{cor}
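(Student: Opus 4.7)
The plan is to treat the statement as essentially an absoluteness corollary of \cref{annoying}: since \(U\) is pinned down inside the small ultrapower \(\textnormal{Ult}(H_{|Y|^+}, W)\) (where \(Y\) is the space of \(W\)), it suffices to argue that this small ultrapower is visible to \(M\). First I would verify that \(P(Y)^V \in M\). Since \(W\) is a nonprincipal \(V\)-ultrafilter on \(Y\) and \(W \in M\), for every \(A \subseteq Y\) in \(V\) exactly one of \(A\) and \(Y \setminus A\) is a member of \(W\); hence \(P(Y)^V = W \cup \{Y \setminus A : A \in W\}\) is definable in \(M\) from the parameters \(W\) and \(Y\), so \(P(Y)^V \in M\), and by transitivity \(P(Y)^V = P(Y)^M\). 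With enough cardinal arithmetic absolute between \(V\) and \(M\), this lifts to \(H_{|Y|^+}^V \in M\) with \(H_{|Y|^+}^V = (H_{|Y|^+})^M\).

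Next, form \(\textnormal{Ult}(H_{|Y|^+}, W)\) inside \(M\): the ultrapower construction (functions, equivalence classes, transitive collapse) is first-order, so since \(H_{|Y|^+}\) and \(W\) are both in \(M\), this ultrapower is in \(M\) and coincides with the one formed externally. By \cref{annoying}, \(U \in \textnormal{Ult}(H_{|Y|^+}, W)\), so in particular \(U \in M\). To upgrade to \(M \vDash U <_M W\), observe that since \(H_{|Y|^+}^V \subseteq M\), every representing function \(f : Y \to H_{|Y|^+}\) used for an element of \(\textnormal{Ult}(H_{|Y|^+}, W)\) lies in \(M\). Hence \(\textnormal{Ult}(H_{|Y|^+}, W) \subseteq \textnormal{Ult}(M, W)^M\), and \(U\) has an \(M\)-representative on \(W\), giving \(U \in \textnormal{Ult}(M, W)^M\), i.e.\ \(M \vDash U <_M W\).

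Finally, transitivity of the Mitchell order on uniform ultrafilters falls out as a special case. If \(U_0 <_M U_1 <_M U_2\), apply the first assertion with \(W := U_1\) and \(M := \textnormal{Ult}(V, U_2)\): since \(U_1 \in M\) (because \(U_1 <_M U_2\)) and \(U_0 <_M U_1\) holds in \(V\), we conclude \(U_0 \in M = \textnormal{Ult}(V, U_2)\), i.e.\ \(U_0 <_M U_2\).

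The main obstacle I expect is the absoluteness step justifying \(H_{|Y|^+}^V = (H_{|Y|^+})^M\): \(M\) may compute cardinals differently from \(V\), so one must check that the reconstruction of \(P(Y)^V\) from \(W\) inside \(M\) propagates to identify the successor cardinal of \(|Y|\) and the hereditarily small sets below it. The explicit size bound \(|U|^{\textnormal{Ult}(V,W)} < j_W(|Y|)\) from \cref{annoying} is what makes this tractable — everything relevant happens strictly below the first inaccessible above \(|Y|\) in \(\textnormal{Ult}(V, W)\), a level at which \(V\) and \(M\) must agree because \(W\) recognizes \(P(Y)^V\).
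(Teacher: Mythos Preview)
Your proposal is correct and follows essentially the same approach as the paper: locate \(U\) inside \(\textnormal{Ult}(H_{|Y|^+},W)\) via \cref{annoying}, then argue this small ultrapower is computable in \(M\) and sits inside \(\textnormal{Ult}(M,W)\). You add welcome detail the paper leaves implicit---the reconstruction \(P(Y)^V = W \cup \{Y\setminus A : A\in W\}\) and the explicit deduction of transitivity---and your worry about \(H_{|Y|^+}^V = (H_{|Y|^+})^M\) is resolvable by the standard coding of elements of \(H_{|Y|^+}\) by subsets of \(Y\), which you have already placed in \(M\).
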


\begin{prp}\label{mowo}
The Mitchell order on nonprincipal ultrafilters is wellfounded.
\begin{proof}
Suppose not. Assume \(X_0\) is a set of least possible cardinality carrying an ultrafilter \(U_0\) below which the Mitchell order is illfounded. That is, there is a sequence of uniform ultrafilters \[U_0 >_M U_1 >_M U_2 >_M \cdots\] Suppose that \(U_1\) is an ultrafilter on \(X_1\).
By elementarity, in \(\textnormal{Ult}(V,U_0)\), \(j_{U_0}(X_0)\) is a set of least possible cardinality carrying a uniform ultrafilter below which the Mitchell order is illfounded. But by \cref{strongcor}, \(\textnormal{Ult}(V,U_0)\vDash U_1 >_M U_2 >_M \cdots\), and by \cref{annoying}, \(|X_1| < j_{U_0}(|X_0|)\). This is a contradiction.
\end{proof}
\end{prp}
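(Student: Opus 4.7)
The plan is to argue by contradiction using a minimal counterexample, combined with the two lemmas just proved: \cref{strongcor}, which lets us pull a Mitchell-descending sequence into an ultrapower once its top ultrafilter lies there, and \cref{annoying}, which gives a strict cardinality bound relating two ultrafilters in Mitchell relation.

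Suppose toward contradiction that the Mitchell order on nonprincipal ultrafilters is illfounded, and fix an infinite descending chain $U_0 >_M U_1 >_M U_2 >_M \cdots$. I would first observe that among all such chains, we may assume the underlying set $X_0$ of $U_0$ has least possible cardinality. Writing $X_i$ for the set on which $U_i$ lives, this minimality is the crucial hypothesis to reflect.

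Next, by \cref{strongcor} applied iteratively (using $U_1 <_M U_0$, then $U_2 <_M U_1$ which lives inside $\textnormal{Ult}(V,U_0)$ along with $U_1$, and so on), the entire tail $U_1 >_M U_2 >_M \cdots$ is an infinite Mitchell-descending sequence inside the transitive model $\textnormal{Ult}(V,U_0)$. In particular, the statement ``there is a nonprincipal ultrafilter whose space has cardinality $|X_1|$ and below which the Mitchell order is illfounded'' holds inside $\textnormal{Ult}(V,U_0)$, witnessed by $U_1$.

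Now I would invoke elementarity of $j_{U_0}:V\to \textnormal{Ult}(V,U_0)$ applied to the minimality statement: in $\textnormal{Ult}(V,U_0)$, the least cardinality of a set carrying a nonprincipal ultrafilter starting an infinite Mitchell-descending chain is precisely $j_{U_0}(|X_0|)$. Hence $|X_1| \geq j_{U_0}(|X_0|)$. But \cref{annoying} applied to $U_1 <_M U_0$ gives the strict inequality $|X_1| < j_{U_0}(|X_0|)$ (in fact it gives a sharper bound on the $\textnormal{Ult}(V,U_0)$-cardinality of $U_1$ itself), a contradiction. The main subtlety I anticipate is verifying that the minimum-cardinality set actually exists and that the elementarity step cleanly captures the right minimality statement, so that $j_{U_0}$ of the minimum equals the minimum as computed in $\textnormal{Ult}(V,U_0)$; this should be straightforward since ``least cardinality carrying such an ultrafilter'' is expressible over $V$ as a $\Sigma$-statement about $|X_0|$ with no awkward parameters.
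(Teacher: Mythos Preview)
Your proposal is correct and follows essentially the same approach as the paper: a least-cardinality counterexample, \cref{strongcor} to place the tail sequence inside \(\textnormal{Ult}(V,U_0)\), elementarity to transfer the minimality, and \cref{annoying} for the contradicting cardinality bound. Your parenthetical remark that \cref{annoying} actually bounds the \(\textnormal{Ult}(V,U_0)\)-cardinality of \(U_1\) is exactly the right observation, since the comparison with \(j_{U_0}(|X_0|)\) should take place inside the ultrapower.
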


Assuming the Ultrapower Axiom, the most general class of ultrafilters on which we can prove that the Mitchell order is linear is the class of Dodd solid ultrafilters. 

\begin{defn}\label{dodddef}
Suppose \(U\) is a uniform ultrafilter and \(a\in [\textnormal{Ord}]^{<\omega}\). The {\it extender of \(U\) below \(a\)} is the function \(E: P(\textsc{sp}(U)) \to V\) defined by 
\[E(X) = j_U(X) \cap \{b \in [\text{Ord}]^{<\omega} : b < a\}\] We will denote this extender by \(U|a\).
A nonprincipal uniform ultrafilter \(U\) is called {\it Dodd solid} if \(U| a\) is an element of \(\textnormal{Ult}(V,U)\) where \(a = [\text{id}]_U\).
\end{defn}

The following proposition shows one sense in which Dodd solid ultrafilters are related to supercompactness.

\begin{prp}\label{doddsoundsupercompact}
Suppose \(U\) is Dodd solid and \(\kappa = \textsc{sp}(U)\). Then \(\textnormal{Ult}(V,U)\) is closed under \(2^{<\kappa}\)-sequences.
\begin{proof}
Let \(S = \bigcup_{\alpha < \kappa} P(\alpha)\). Since the extender \(E = U| [\text{id}]_U\) is in \(\textnormal{Ult}(V,U)\) and \(\text{dom}(E) = P(\kappa)\), \(P(\kappa)\in \textnormal{Ult}(V,U)\), and hence \(S\in \textnormal{Ult}(V,U)\). In fact, \(j_U\restriction S\in \textnormal{Ult}(V,U)\), since for \(X\in S\), \(j_U(X) = E(X)\): since \(X\subseteq \alpha\) for some \(\alpha < \textsc{sp}(U)\) and \(\textsc{sp}(U)\) is the least ordinal that \(j_U\) maps above \([\text{id}]_U\), \[j(X)\subseteq \{a \in [\text{Ord}]^{<\omega} : a <[\text{id}]_U\}\] Thus \(j_U\restriction S\in \textnormal{Ult}(V,U)\). Now \(\textnormal{Ult}(V,U)\) is closed under \(S\)-sequences by the usual argument, and hence under \(2^{<\kappa}\)-sequences since \(|S| = 2^{<\kappa}\).
\end{proof}
\end{prp}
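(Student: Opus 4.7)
The plan is to reduce the closure statement to showing that $j_U\restriction S$ lies in $M := \textnormal{Ult}(V,U)$, where $S := \bigcup_{\alpha < \kappa} P(\alpha)$ has cardinality $2^{<\kappa}$. Granted this, the standard closure argument applies: any sequence $\langle x_s : s\in S\rangle \in V$ of elements of $M$ can be written as $x_s = j_U(H)(j_U(s))(a)$, where $a = [\textnormal{id}]_U$ and $H\in V$ is chosen in advance so that $H(s)$ is a function representing $x_s$ via $a$. Since $j_U(H)$, $j_U\restriction S$, and $a$ all lie in $M$, the sequence becomes definable inside $M$, so $M^S\subseteq M$. Because $|S| = 2^{<\kappa}$, this yields closure under $2^{<\kappa}$-sequences.

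To place $j_U\restriction S$ inside $M$, I would first invoke Dodd solidity to put the extender $E := U|a$ into $M$. Since the domain of $E$ is $P(\kappa) = P(\textsc{sp}(U))$, this already gives $P(\kappa)\in M$, and hence $S\in M$.

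The heart of the argument is then to identify $j_U(X)$ with $E(X)$ for each $X\in S$. Such an $X$ is contained in some $\alpha<\kappa$, so $j_U(X)$ is a set of ordinals below $j_U(\alpha)$. Viewing each ordinal as a singleton in $[\textnormal{Ord}]^{<\omega}$, I need these singletons to lie below $a$ in the lexicographic order used to define $E$. This is where uniformity comes in: minimality of $\textsc{sp}(U)$ forces $a\not\subseteq j_U(\beta)$ for every $\beta<\kappa$, so $\max(a)\geq \sup j_U[\kappa] > j_U(\alpha)$, and a direct check of the ordering on $[\textnormal{Ord}]^{<\omega}$ then confirms that every singleton $\{\gamma\}$ with $\gamma<j_U(\alpha)$ is strictly less than $a$. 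Hence $j_U(X) = E(X)$, and so $j_U\restriction S$ is uniformly definable in $M$ from $E$. I expect the only real technical nuisance to be this last comparison in the lex order, which requires keeping careful track of the convention identifying ordinals with their singletons, and of the case in which $\gamma$ happens to be an element of $a$.
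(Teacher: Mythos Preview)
Your proposal is correct and follows essentially the same route as the paper: use Dodd solidity to place $E=U\mid a$ in $M$, deduce $P(\kappa)\in M$ and hence $S\in M$, verify $j_U(X)=E(X)$ for $X\in S$ via the uniformity of $U$, and then invoke the standard argument that $j_U\restriction S\in M$ yields closure under $S$-sequences. You spell out the lexicographic comparison and the closure step in slightly more detail than the paper does, but the structure and key ideas are the same.
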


There is therefore a bit of GCH implicit in the definition of Dodd solidity.

\begin{cor}\label{doddgch}
Suppose \(U\) is Dodd solid and \(\kappa = \textsc{sp}(U)\). Then \(2^{<\kappa} = \kappa\).
\begin{proof}
By \cref{doddsoundsupercompact}, \(\textnormal{Ult}(V,U)\) is closed under \(2^{<\kappa}\)-sequences. If \(2^{<\kappa} > \kappa\), then \(\textnormal{Ult}(V,U)\) is closed under \(\kappa^+\)-sequences and hence computes that \(j_U(\kappa^+)\) has cofinality \(\kappa^+\), noting that \(j_U\) is continuous at \(\kappa^+\) since \(\textsc{sp}(U) = \kappa\). Since \(j_U(\kappa^+)\) is regular in \(\textnormal{Ult}(V,U)\), it follows that \(j_U(\kappa^+) = \kappa^+\). On the other hand since \(\text{Ult}(V,U)\) is closed under \(\kappa^+\)-sequences, \(P(\kappa^+)\subseteq \text{Ult}(V,U)\), which contradicts Kunen's inconsistency theorem since \(\kappa^+\) is a fixed point of \(j_U\) above \(\textsc{crt}(j_U)\). 
\end{proof}
\end{cor}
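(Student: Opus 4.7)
The plan is to derive a contradiction from the hypothesis $2^{<\kappa} > \kappa$ by showing $\kappa^+$ is a fixed point of $j_U$ above $\textsc{crt}(j_U)$ and invoking Kunen's inconsistency theorem. The starting point is \cref{doddsoundsupercompact}: it tells us $\textnormal{Ult}(V,U)$ is closed under $2^{<\kappa}$-sequences, and hence, under the contradictory assumption, closed under $\kappa^+$-sequences.

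The main step is to compute $j_U(\kappa^+) = \kappa^+$. First I would show $j_U$ is continuous at $\kappa^+$. Since $U$ is uniform with $\textsc{sp}(U) = \kappa$, it concentrates on $[\kappa]^n$ for some $n$, so every element of $\textnormal{Ult}(V,U)$ has the form $j_U(f)([\textnormal{id}]_U)$ for some $f$ with domain $[\kappa]^n$. Given $\alpha < j_U(\kappa^+)$, represent $\alpha$ by some such $f$ with range in $\kappa^+$; since $|[\kappa]^n| = \kappa$ and $\kappa^+$ is regular, $\sup(\textnormal{ran}(f)) < \kappa^+$, whence $\alpha \leq j_U(\sup(\textnormal{ran}(f))) \in j_U[\kappa^+]$. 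Thus $j_U(\kappa^+) = \sup j_U[\kappa^+]$, which has cofinality $\kappa^+$ in $V$. Because $\textnormal{Ult}(V,U)$ is $\kappa^+$-closed, the same cofinality is computed there, so $\textnormal{cf}^{\textnormal{Ult}(V,U)}(j_U(\kappa^+)) = \kappa^+$. But $j_U(\kappa^+)$ is regular in $\textnormal{Ult}(V,U)$, and a regular cardinal of cofinality $\kappa^+$ can only be $\kappa^+$ itself, so $j_U(\kappa^+) = \kappa^+$.

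With this fixed point in hand, the critical sequence $\kappa = \kappa_0 < \kappa_1 < \cdots$ of $j_U$ is trapped below $\kappa^+$: by induction, $\kappa_{n+1} = j_U(\kappa_n) < j_U(\kappa^+) = \kappa^+$. Setting $\lambda = \sup_n \kappa_n$, we get $\textnormal{cf}(\lambda) = \omega$ and therefore $\lambda^+ \leq \kappa^+$, so $\textnormal{Ult}(V,U)$ is in particular $\lambda^+$-closed. This contradicts Kunen's inconsistency theorem. The main obstacle is the continuity step at $\kappa^+$; once it is verified, the rest is a standard Kunen-style bookkeeping argument, and Dodd solidity enters only through its use in \cref{doddsoundsupercompact} to supply the requisite ultrapower closure.
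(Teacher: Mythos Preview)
Your proof is correct and follows essentially the same approach as the paper: invoke \cref{doddsoundsupercompact} for $\kappa^+$-closure of the ultrapower, use continuity of $j_U$ at $\kappa^+$ to get $j_U(\kappa^+)=\kappa^+$, and finish with Kunen's inconsistency. One minor slip: you write $\kappa_0=\kappa$ for the start of the critical sequence, but in general $\textsc{crt}(j_U)\leq\textsc{sp}(U)=\kappa$ rather than equality; this does not affect the argument since all you need is $\kappa_0<\kappa^+$.
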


One does not really need to cite Kunen's inconsistency theorem in the context of ultrapower embeddings, see \cite{Kanamori}. 

The following is the key consequence of the Ultrapower Axiom.

\begin{thm}[Ultrapower Axiom]\label{doddlin}
The Mitchell order wellorders the class of Dodd solid ultrafilters.
\end{thm}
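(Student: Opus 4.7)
The plan is to reduce the theorem to establishing the linearity of the Mitchell order on Dodd solid ultrafilters, since the Mitchell order on nonprincipal ultrafilters is already shown to be strict (\cref{mostrict}), transitive (\cref{strongcor}), and wellfounded (\cref{mowo}). Given Dodd solid $U_0$ and $U_1$, I will apply the Ultrapower Axiom together with the linearity of the seed order to assume without loss of generality that $U_0 \wo U_1$, and fix a comparison $\langle W_0, W_1\rangle$ witnessing this. Writing $a_i = [\textnormal{id}]_{U_i}$, I split into cases according to whether $j_{W_0}(a_0) = j_{W_1}(a_1)$ or $j_{W_0}(a_0) < j_{W_1}(a_1)$. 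In the former case, the antisymmetry computation in \cref{antisymmetric} already shows $U_0 = U_1$, which yields Mitchell comparability trivially. The content of the theorem lies in the strict case, where I will show $U_0 \mo U_1$.

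In the strict case, the input from Dodd solidity is that $E := U_1 | a_1$ is an element of $\textnormal{Ult}(V, U_1)$. The key computation, generalizing the one in the proof of \cref{mitchellnormalmeasures}, is that for each appropriate $X\in V$,
\[ X \in U_0 \iff a_0 \in j_{U_0}(X) \iff j_{W_0}(a_0) \in j_{W_1}(j_{U_1}(X)) \iff j_{W_0}(a_0) \in j_{W_1}(E(X)), \]
where the second equivalence applies $j_{W_0}$ to both sides and uses the commutativity requirement $j_{W_0}\circ j_{U_0} = j_{W_1}\circ j_{U_1}$ of a comparison, and the third uses that $j_{W_0}(a_0) < j_{W_1}(a_1)$, so intersecting with $j_{W_1}(\{b : b < a_1\}) = \{b : b < j_{W_1}(a_1)\}$ does not remove $j_{W_0}(a_0)$ from the right-hand side.

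To finish, I will observe that the predicate on the last line is definable inside $\textnormal{Ult}(V,U_1)$ from parameters available there: $E$ lives in $\textnormal{Ult}(V,U_1)$ by Dodd solidity, the map $Y \mapsto j_{W_1}(Y)$ is definable from the internal parameter $W_1\in\textnormal{Ult}(V,U_1)$, and the finite set of ordinals $j_{W_0}(a_0)$ sits in $\textnormal{Ult}(V,U_1)$ for the banal reason that every transitive inner model of ZFC contains each finite set of ordinals. Consequently $\{X \in P([\textsc{sp}(U_0)]^{|a_0|}) : j_{W_0}(a_0) \in j_{W_1}(E(X))\}$ is an element of $\textnormal{Ult}(V,U_1)$, and by the computation above it is precisely $U_0$; hence $U_0\mo U_1$.

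The step I expect to require the most care is the bookkeeping around the index sets, since $U_0$ and $U_1$ may have distinct spaces and distinct seed sizes. Here I will use $\textsc{sp}(U_0)\leq\textsc{sp}(U_1)$ from \cref{spaces} so that $E$ captures the relevant information about $j_{U_1}$ applied to subsets of $[\textsc{sp}(U_0)]^{|a_0|}$ (with the appropriate coding of finite tuples), and the closure of $\textnormal{Ult}(V,U_1)$ under $2^{<\textsc{sp}(U_1)}$-sequences from \cref{doddsoundsupercompact} to guarantee that $P([\textsc{sp}(U_0)]^{|a_0|})$ itself is an element of $\textnormal{Ult}(V,U_1)$, so that the defining set for $U_0$ is an internal set of $\textnormal{Ult}(V,U_1)$ rather than a merely definable class.
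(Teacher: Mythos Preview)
Your proposal is correct and follows essentially the same route as the paper: reduce to linearity, fix a comparison witnessing \(U_0\wo U_1\), and in the strict case use the Dodd solidity of \(U_1\) to replace \(j_{U_1}(X)\) by \(E(X)\) in the derived-ultrafilter computation, so that \(U_0\) becomes definable in \(\textnormal{Ult}(V,U_1)\) from \(j_{W_0}(a_0)\), \(W_1\), and \(E\). The paper packages this as the slightly stronger \cref{doddlin*} (only \(U_1\) need be Dodd solid), which your argument in fact also establishes since you never invoke the Dodd solidity of \(U_0\); your extra bookkeeping paragraph about \(P([\textsc{sp}(U_0)]^{|a_0|})\in\textnormal{Ult}(V,U_1)\) is a detail the paper leaves implicit.
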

In fact we prove the following strengthening of \cref{doddlin}.

\begin{thm}\label{doddlin*}
Suppose \(U_0\) is a uniform ultrafilter and \(U_1\) is a Dodd solid ultrafilter. If \(U_0 \swo U_1\), then \(U_0<_M U_1\).
\begin{proof}
Let \(\langle W_0, W_1\rangle\) be a comparison of \(\langle U_0, U_1\rangle\) to a common model \(M\) witnessing that \(U_0\swo U_1\). Let \(k_{0}: \textnormal{Ult}(V,U_0)\to M\) and \(k_1: \textnormal{Ult}(V,U_1)\to M\) be the ultrapower embeddings by \(W_0\) and \(W_1\) respectively, and let \(a_0 = [\text{id}]_{U_0}\) and \(a_1 = [\text{id}]_{U_1}\). By the definition of the seed order, \begin{equation}\label{pointy} k_0(a_0) < k_1(a_1)\end{equation}

For any \(X\subseteq [\textsc{sp}(U_0)]^{|a_0|}\), 
\begin{align}\label{nododd}
X\in U_0&\iff a_0\in j_{U_0}(X)\nonumber\\
&\iff k_0(a_0)\in k_0(j_{U_0}(X))\\
&\iff k_0(a_0)\in k_1(j_{U_1}(X))\nonumber
\end{align}

It follows from \cref{nododd} that \(U_0\) can be computed from the parameter \(k_0(a_0)\) and the classes \(k_1\) and \(j_{U_1}\). The parameter \(k_0(a_0)\) is in \(\text{Ult}(V,U_1)\), and the embedding \(k_1\) is definable over \(\text{Ult}(V,U_1)\) from the parameter \(W_1\). The key point is that because \(U_0\swo U_1\), one only needs a fragment of \(j_{U_1}\) to define \(U_0\) as in \cref{nododd}, and Dodd solidity implies that this fragment is in \(\textnormal{Ult}(V,U_1)\). 

Indeed, let \(E = U_1 | a_1\) be the extender of \(U_1\) below \(a_1\). We calculate:
\begin{align*}k_0(a_0)\in k_1(j_{U_1}(X))&\iff k_0(a_0)\in k_1(j_{U_1}(X))\cap \{c : c < k_1(a_1)\} \\
&\iff k_0(a_0)\in k_1(j_{U_1}(X) \cap \{c : c < a_1\})\\
&\iff k_0(a_0)\in k_1(E(X))
\end{align*}  
The first equivalence follows from \cref{pointy}. Again, since \(k_0(a_0)\) is in \(\textnormal{Ult}(V,U_1)\), \(k_1\) is definable over \(\textnormal{Ult}(V,U_1)\) from \(W_1\), and \(E\) is in \(\text{Ult}(V,U_1)\) by the Dodd solidity of \(U_1\), \(U_0\) is in \(\textnormal{Ult}(V,U_1)\), as desired.
\end{proof}
\end{thm}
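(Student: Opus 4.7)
The plan is to exhibit $U_0$ as an object definable over $\textnormal{Ult}(V, U_1)$ from parameters inside $\textnormal{Ult}(V, U_1)$, using the seed-order witness to keep the data needed to decode $U_0$-membership ``below'' $a_1 = [\textnormal{id}]_{U_1}$, and then appealing to Dodd solidity to place that initial segment of $j_{U_1}$ inside $\textnormal{Ult}(V, U_1)$.

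First I would fix a comparison $\langle W_0, W_1\rangle$ of $\langle U_0, U_1\rangle$ witnessing $U_0 \swo U_1$, write $k_0 = j_{W_0}$ and $k_1 = j_{W_1}$, and set $a_0 = [\textnormal{id}]_{U_0}$, $a_1 = [\textnormal{id}]_{U_1}$. The hypothesis supplies the strict inequality $k_0(a_0) < k_1(a_1)$, while the commutativity built into \cref{comparison} supplies $k_0 \circ j_{U_0} = k_1 \circ j_{U_1}$. Translating the derived-ultrafilter characterization $X \in U_0 \iff a_0 \in j_{U_0}(X)$ across $k_0$ and invoking commutativity yields
\[ X \in U_0 \iff k_0(a_0) \in k_1(j_{U_1}(X)) \]
for every $X \subseteq [\textsc{sp}(U_0)]^{|a_0|}$.

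The next step is to trim $j_{U_1}(X)$ down to a Dodd solid extender slice. Let $E = U_1 \mid a_1$, which lies in $\textnormal{Ult}(V, U_1)$ by hypothesis. Since $k_0(a_0) < k_1(a_1)$, intersecting $k_1(j_{U_1}(X))$ with $\{c : c < k_1(a_1)\}$ does not affect whether it contains $k_0(a_0)$; and by elementarity of $k_1$ applied to the definition of $E$, that intersection is exactly $k_1(E(X))$. Hence
\[ X \in U_0 \iff k_0(a_0) \in k_1(E(X)). \]
The right-hand side defines $U_0$ over $\textnormal{Ult}(V, U_1)$ from parameters internal to $\textnormal{Ult}(V, U_1)$: $k_0(a_0)$ is a finite set of ordinals and so automatically lies in $\textnormal{Ult}(V, U_1)$; the map $k_1 = j_{W_1}$ is definable over $\textnormal{Ult}(V, U_1)$ from the parameter $W_1 \in \textnormal{Ult}(V, U_1)$; and $E \in \textnormal{Ult}(V, U_1)$ by Dodd solidity. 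Hence $U_0 \in \textnormal{Ult}(V, U_1)$, i.e.\ $U_0 <_M U_1$.

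The only substantive obstacle is internalizing the relevant fragment of $j_{U_1}$: without Dodd solidity, the map $X \mapsto j_{U_1}(X)$ is only an external piece of information, and there is no reason for $U_0$ to end up inside $\textnormal{Ult}(V, U_1)$. Strictness of the seed inequality is precisely what guarantees that knowing $j_{U_1}$ below $a_1$ is enough to decode $U_0$-membership, so both the strictness of $U_0 \swo U_1$ and the Dodd solidity of $U_1$ enter the argument essentially.
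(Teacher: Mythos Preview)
Your proposal is correct and follows essentially the same approach as the paper's proof: fix a witnessing comparison, use commutativity to rewrite $U_0$-membership as $k_0(a_0)\in k_1(j_{U_1}(X))$, then use the strict seed inequality to trim $j_{U_1}(X)$ down to the Dodd solid extender slice $E(X)=U_1|a_1(X)$, and conclude from $E\in\textnormal{Ult}(V,U_1)$ and the definability of $k_1$ that $U_0\in\textnormal{Ult}(V,U_1)$. Your closing remarks on why both strictness and Dodd solidity are essential are a nice addition not present in the paper's version.
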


Note that \cref{doddlin*} implies that the Mitchell order and the seed order agree on Dodd solid ultrafilters. It is natural to ask whether, assuming the Ultrapower Axiom, the seed order actually {\it extends} the Mitchell order everywhere. In fact this is not true in complete generality for a trivial reason. Suppose \(U\) is a nonprincipal \(\kappa\)-complete ultrafilter on \(\kappa\) and \(W\) is the principal ultrafilter on \(\kappa+1\) concentrating on \(\{\kappa\}\). Then \(U \swo W\) by \cref{spaces}. Hence \(U\not\wo W\), but clearly \(W \mo U\). (Of course, \(U\mo W\) as well, which is part of the reason we restricted the lemmas regarding the Mitchell order to nonprincipal ultrafilters.) It is not clear whether there can be nontrivial counterexamples assuming the Ultrapower Axiom. Our next theorem says that there cannot be if one assumes in addition a bit of GCH, so this question is tied into the question of forcing the GCH to fail at a measurable cardinal while preserving the Ultrapower Axiom, see \cref{QuestionSection}.

We first note the following useful bound for canonical comparisons.
\begin{cor}\label{smallsp}
If \(\langle W_0,W_1\rangle\) is a canonical comparison of \(\langle U_0,U_1\rangle\) witnessing \(U_0\swo U_1\), then \(\{\textsc{sp}(W_1)\} \leq [\textnormal{id}]_{U_1}\) and in particular \(\textsc{sp}(W_1) < j_{U_1}(\textsc{sp}(U_1))\).
\begin{proof}
Note that \[\textsc{sp}(W_1) = \min \{\kappa : j_{W_0}([\textnormal{id}]_{U_0}) \subseteq j_{W_1}(\kappa)\}\] Since \(j_{W_0}([\textnormal{id}]_{U_0}) < j_{W_1}([\textnormal{id}]_{U_1})\), \(\{\textsc{sp}(W_1)\} \leq [\textnormal{id}]_{U_1}\), by simple properties of the canonical wellorder of \([\textnormal{Ord}]^{<\omega}\).
\end{proof}
\end{cor}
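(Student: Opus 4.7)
The plan is to unfold canonicity to identify the seed of $W_1$ and then read off the bound on $\textsc{sp}(W_1)$ from the lexicographic structure of the canonical wellorder on $[\textnormal{Ord}]^{<\omega}$.

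First I would unpack canonicity. Since $W_1$ is, by definition, the $\textnormal{Ult}(V,U_1)$-ultrafilter derived from its own ultrapower embedding $j_{W_1}$ using the finite set $j_{W_0}([\textnormal{id}]_{U_0})$ in the common model $P$, the factor map from $\textnormal{Ult}(\textnormal{Ult}(V,U_1),W_1)$ to $P$ is the identity, and so $[\textnormal{id}]_{W_1} = j_{W_0}([\textnormal{id}]_{U_0})$. From the definition of the space of a uniform ultrafilter applied in $\textnormal{Ult}(V,U_1)$, this gives the identity
\[\textsc{sp}(W_1) = \min\bigl\{\kappa : j_{W_0}([\textnormal{id}]_{U_0})\subseteq j_{W_1}(\kappa)\bigr\}\]
asserted in the first line of the proof.

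Next I would use the witness hypothesis: $U_0\swo U_1$ via $\langle W_0,W_1\rangle$ translates exactly to $j_{W_0}([\textnormal{id}]_{U_0}) < j_{W_1}([\textnormal{id}]_{U_1})$ in the canonical wellorder on $[\textnormal{Ord}]^{<\omega}$. The key lex-order fact I would invoke is: if $a<b$ in $[\textnormal{Ord}]^{<\omega}$ and $\lambda$ is the least ordinal with $a\subseteq\lambda$, then $\{\lambda\}\leq b$ in the canonical wellorder. Applying this inside the common model $P$ to $a=j_{W_0}([\textnormal{id}]_{U_0})$ and $b=j_{W_1}([\textnormal{id}]_{U_1})$, using that $j_{W_1}(\textsc{sp}(W_1))$ is the least ordinal of the form $j_{W_1}(\kappa)$ covering $a$, one obtains $\{j_{W_1}(\textsc{sp}(W_1))\} \leq j_{W_1}([\textnormal{id}]_{U_1})$; pulling back along the elementary embedding $j_{W_1}$ then yields $\{\textsc{sp}(W_1)\}\leq [\textnormal{id}]_{U_1}$ in $\textnormal{Ult}(V,U_1)$.

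For the ``in particular'' clause, I would just combine this with uniformity of $U_1$: $[\textnormal{id}]_{U_1}\subseteq j_{U_1}(\textsc{sp}(U_1))$ as a finite set of ordinals, so $\max([\textnormal{id}]_{U_1}) < j_{U_1}(\textsc{sp}(U_1))$; and since $\{\alpha\}\leq b$ in the canonical wellorder is equivalent to $\alpha\leq\max(b)$, the main inequality becomes $\textsc{sp}(W_1)\leq \max([\textnormal{id}]_{U_1})$, whence $\textsc{sp}(W_1) < j_{U_1}(\textsc{sp}(U_1))$.

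The main obstacle is the lex-order bookkeeping in the middle step: one must verify that strict inequality $a<b$ in the canonical wellorder really does force the minimal covering ordinal of $a$ to sit at or below $\max(b)$, carefully handling the case where $a$ and $b$ share some of their largest elements as descending sequences. Once this ``simple property'' of the canonical wellorder is in hand, the rest of the argument is purely formal.
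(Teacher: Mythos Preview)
Your proposal is correct and follows essentially the same line as the paper's own argument: identify $[\textnormal{id}]_{W_1}$ with $j_{W_0}([\textnormal{id}]_{U_0})$ via canonicity, read off the displayed formula for $\textsc{sp}(W_1)$, and then appeal to the lex-order structure together with the strict witness inequality. The detour through $P$ followed by a pullback along $j_{W_1}$ is unnecessary---since both $\{\textsc{sp}(W_1)\}$ and $[\textnormal{id}]_{U_1}$ already live in $\textnormal{Ult}(V,U_1)$, one can compare them there directly, as the paper does---but it is harmless by elementarity.
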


\begin{thm}[Ultrapower Axiom]\label{mitchextend0}
Suppose \(U_0\) is a uniform ultrafilter. Suppose \(U_1\) is a nonprincipal uniform ultrafilter such that \(U_0 <_M U_1\) and \(P\left(2^{<\textsc{sp}(U_0)}\right)\subseteq \textnormal{Ult}(V,U_1)\). Then \(U_0 \swo U_1\).
\begin{proof}
Let \(\delta = 2^{<\textsc{sp}(U_0)}\). We note that \(j_{U_0}\restriction H_{\delta^+}\) is in \(\textnormal{Ult}(V,U_1)\). Since \(P(\delta)\subseteq \textnormal{Ult}(V,U_1)\), we have \(H_{\delta^+}\subseteq \textnormal{Ult}(V,U_1)\). Working in \(\textnormal{Ult}(V,U_1)\), let \(j:H_{\delta^+}\to N\) be the ultrapower by \(U_0\). Then \(j = j_{U_0}|H_{\delta^+}\) and \(N = j_{U_0}(H_{\delta^+})\) (because \(H_{\delta^+}\) is closed under \(\textsc{sp}(U_0)\) sequences).

Suppose towards a contradiction that \(U_0>_S U_1\). Let \(\langle W_0, W_1\rangle\) be the canonical comparison of \(\langle U_0,U_1\rangle\). Thus \(W_0\) is derived from \(j_{W_0}\) using \(j_{W_1}([\text{id}]_{U_1})\) and \begin{align}\label{wbound}\textsc{sp}(W_0) < j_{U_0}(\textsc{sp}(U_0))\end{align} We claim \(W_0\in \textnormal{Ult}(V,U_1)\).  Note that by \cref{wbound} and elementarity, \[\textnormal{Ult}(V,U_0)\vDash |W_0| = 2^{\textsc{sp}(W_0)} \leq j_{U_0}(\delta)\] Hence \(W_0\in j_{U_0}(H_{\delta^+})  = N\). But \(N\subseteq \textnormal{Ult}(V,U_1)\), so \(W_0\in \textnormal{Ult}(V,U_1)\) as desired.

But as in \cref{doddlin}, \(U_1\) can be computed from \(W_0\) and \(j_{U_0}\restriction P(\textsc{sp}(U_0))\), and all these sets are in \(\textnormal{Ult}(V,U_1)\). We now give the details. Let \(a_0 = [\text{id}]_{U_0}\) and \(a_1 = [\text{id}]_{U_1}\). For any \(X\subseteq [\textsc{sp}(U_1)]^{|a_1|}\),
\begin{align*}
X\in U_1 &\iff a_1\in j_{U_1}(X)\\
&\iff k_1(a_1)\in j_{W_1}(j_{U_1}(X))\\
&\iff k_1(a_1)\in j_{W_0}(j_{U_0}(X))\\
&\iff j_{U_0}(X)\cap [\textsc{sp}(W_0)]^{|a_1|}\in W_0
\end{align*}
(The last equivalence follows from the fact that \(\langle W_0,W_1\rangle\) is a canonical comparison.) We have seen that \(j_{U_0}|H_{\delta^+}\in \textnormal{Ult}(V,U_1)\), and so since \[\textsc{sp}(U_1) \leq \textsc{sp}(U_0)\leq \delta\] \(j_{U_0}\restriction P(\textsc{sp}(U_1))\in \textnormal{Ult}(V,U_1)\). It follows that \(U_1\in \textnormal{Ult}(V,U_1)\), and this contradicts \cref{mostrict}.
\end{proof} 
\end{thm}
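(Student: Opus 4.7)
The plan is to argue by contradiction in a manner closely parallel to \cref{doddlin*}: assume the conclusion fails, take the canonical comparison $\langle W_0, W_1\rangle$ of $\langle U_0, U_1\rangle$, show that $W_0$ lies inside $\textnormal{Ult}(V,U_1)$ using the cardinal arithmetic hypothesis, and then read off $U_1 \in \textnormal{Ult}(V,U_1)$ from the derivation formulas for the canonical comparison, contradicting the strictness of the Mitchell order (\cref{mostrict}).

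Set $\delta = 2^{<\textsc{sp}(U_0)}$. Since $U_0 \mo U_1$ forces $U_0 \neq U_1$, a failure of $U_0 \swo U_1$ combined with the linearity of $\wo$ (the Ultrapower Axiom together with \cref{antisymmetric}) gives $U_1 \swo U_0$. Let $\langle W_0, W_1\rangle$ be the canonical comparison of $\langle U_0, U_1\rangle$, which therefore witnesses $U_1 \swo U_0$. Applying \cref{smallsp} with the roles of $U_0$ and $U_1$ swapped yields $\textsc{sp}(W_0) < j_{U_0}(\textsc{sp}(U_0))$, so working inside $\textnormal{Ult}(V,U_0)$ we bound $|W_0|\leq 2^{\textsc{sp}(W_0)} \leq j_{U_0}(\delta)$, giving $W_0 \in j_{U_0}(H_{\delta^+})$.

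The crux is to argue $j_{U_0}(H_{\delta^+}) \subseteq \textnormal{Ult}(V,U_1)$, which in particular places $W_0$ inside $\textnormal{Ult}(V,U_1)$. The hypothesis $P(\delta) \subseteq \textnormal{Ult}(V,U_1)$ gives $H_{\delta^+} \subseteq \textnormal{Ult}(V,U_1)$, and $U_0 \mo U_1$ supplies $U_0 \in \textnormal{Ult}(V,U_1)$, so $\textnormal{Ult}(V,U_1)$ can internally form the ultrapower $\textnormal{Ult}(H_{\delta^+}, U_0)$. Because $H_{\delta^+}$ is closed under $\textsc{sp}(U_0)$-sequences, this internal ultrapower coincides with $j_{U_0}\restriction H_{\delta^+}$ and has target equal to $j_{U_0}(H_{\delta^+})$ as computed externally in $V$. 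This is the most delicate step, and it is where the hypothesis $P(2^{<\textsc{sp}(U_0)})\subseteq \textnormal{Ult}(V,U_1)$ is genuinely needed rather than used merely for bookkeeping.

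To finish, I would decode $U_1$ inside $\textnormal{Ult}(V,U_1)$ along the lines of \cref{doddlin*}. Writing $a_1 = [\textnormal{id}]_{U_1}$, I would chain the equivalences
\[ X \in U_1 \iff a_1\in j_{U_1}(X) \iff j_{W_1}(a_1) \in j_{W_0}(j_{U_0}(X)) \iff j_{U_0}(X)\cap [\textsc{sp}(W_0)]^{|a_1|}\in W_0, \]
using in turn the definition of $U_1$, the commutativity $j_{W_1}\circ j_{U_1} = j_{W_0}\circ j_{U_0}$, and the canonicity clause that $W_0$ is derived from $j_{W_0}$ using $j_{W_1}(a_1)$; the restriction to $[\textsc{sp}(W_0)]^{|a_1|}$ loses no information because $j_{W_1}(a_1)\subseteq j_{W_0}(\textsc{sp}(W_0))$ by the very definition of $\textsc{sp}(W_0)$. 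The right-hand side of the displayed equivalence involves only $W_0$ and $j_{U_0}\restriction P(\textsc{sp}(U_1))$, both of which lie in $\textnormal{Ult}(V,U_1)$: the latter because $\textsc{sp}(U_1) \leq \textsc{sp}(U_0) \leq \delta$ by \cref{spaces}, so $P(\textsc{sp}(U_1))\subseteq H_{\delta^+}$ and $j_{U_0}\restriction P(\textsc{sp}(U_1))$ is a piece of $j_{U_0}\restriction H_{\delta^+}$. Hence $U_1 \in \textnormal{Ult}(V,U_1)$, contradicting \cref{mostrict}.
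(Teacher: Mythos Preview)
Your proposal is correct and follows essentially the same approach as the paper: assume $U_1\swo U_0$, use the canonical comparison and \cref{smallsp} to bound $\textsc{sp}(W_0)$, locate $W_0\in j_{U_0}(H_{\delta^+})\subseteq\textnormal{Ult}(V,U_1)$ via the internal ultrapower of $H_{\delta^+}$ by $U_0$, and then decode $U_1$ from $W_0$ and $j_{U_0}\restriction P(\textsc{sp}(U_1))$ to contradict \cref{mostrict}. The paper's argument is identical in structure and in its use of the cited lemmas.
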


As a corollary we have the following:
\begin{thm}[Ultrapower Axiom + GCH]\label{mitchextend}
The seed order extends the Mitchell order on the class of uniform ultrafilters \(U\) with \(\textsc{sp}(U)\) a cardinal.
\begin{proof}
Suppose \(U_0 \mo U_1\) and \(\textsc{sp}(U_0)\) is a cardinal. Then \(P(\textsc{sp}(U_0))\in \textnormal{Ult}(V,U_1)\), and since \(\textsc{sp}(U_0)\) is a cardinal, \(\textsc{sp}(U_0) = 2^{<\textsc{sp}(U_0)}\) by GCH. Thus the hypotheses of \cref{mitchextend0} are satisfied, and \(U_0 \swo U_1\).
\end{proof}
\end{thm}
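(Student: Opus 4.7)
The plan is to reduce this theorem to an immediate application of \cref{mitchextend0}, which has already been proved. Given $U_0 \mo U_1$ with $\textsc{sp}(U_0)$ a cardinal, I need to verify the two hypotheses of \cref{mitchextend0}: that $U_0 <_M U_1$ (given) and that $P\!\left(2^{<\textsc{sp}(U_0)}\right) \subseteq \textnormal{Ult}(V,U_1)$.

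First I would unpack what $U_0 \mo U_1$ gives us. Since $U_0 \in \textnormal{Ult}(V,U_1)$ and $U_0$ is an ultrafilter on $[\textsc{sp}(U_0)]^n$ for some $n$, the remark at the start of \cref{mitchellorder} gives $P(\textsc{sp}(U_0)) \subseteq \textnormal{Ult}(V,U_1)$. This is where the Mitchell hypothesis is used.

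Second, I would invoke GCH to collapse $2^{<\textsc{sp}(U_0)}$ down to $\textsc{sp}(U_0)$ itself. Writing $\kappa = \textsc{sp}(U_0)$: if $\kappa = \mu^+$ is a successor cardinal, then $2^{<\kappa} = 2^\mu = \mu^+ = \kappa$ by GCH, and if $\kappa$ is a limit cardinal, then $2^{<\kappa} = \sup_{\mu < \kappa} 2^\mu = \sup_{\mu < \kappa} \mu^+ = \kappa$. Either way, $P\!\left(2^{<\textsc{sp}(U_0)}\right) = P(\textsc{sp}(U_0)) \subseteq \textnormal{Ult}(V,U_1)$. (One should also note that $U_1$ may be taken nonprincipal: if $U_1$ were principal with $\textsc{sp}(U_1)$ a cardinal the situation trivializes, and \cref{mostrict} handles the degenerate cases.)

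With both hypotheses of \cref{mitchextend0} in hand, that theorem directly yields $U_0 \swo U_1$. There is really no obstacle here — all the work is absorbed into \cref{mitchextend0}; this corollary simply observes that GCH is precisely the extra hypothesis needed to guarantee $2^{<\textsc{sp}(U_0)} = \textsc{sp}(U_0)$ so that the power-set containment supplied by the Mitchell relation matches the closure requirement of \cref{mitchextend0}.
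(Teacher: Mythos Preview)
Your proposal is correct and follows essentially the same approach as the paper: reduce to \cref{mitchextend0} by observing that $U_0 \mo U_1$ gives $P(\textsc{sp}(U_0)) \subseteq \textnormal{Ult}(V,U_1)$, and GCH together with $\textsc{sp}(U_0)$ being a cardinal gives $2^{<\textsc{sp}(U_0)} = \textsc{sp}(U_0)$. Your added case analysis on successor versus limit cardinals and the remark about nonprincipality are extra details the paper omits, but the argument is the same.
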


We note that the restriction to uniform ultrafilters whose spaces are cardinals is necessary to avoid the trivial counterexamples involving principal ultrafilters mentioned after the proof of \cref{doddlin}. The proof of \cref{mitchextend0} actually yields the following stronger result, which does not require the Ultrapower Axiom:
\begin{thm}\label{optmitchextend}
Suppose \(U_0\E U_1<_M U_2\) and \(P\left(2^{<\textsc{sp}(U_1)}\right)\subseteq \textnormal{Ult}(V,U_2)\). Then \(U_0 <_M U_2\).
\end{thm}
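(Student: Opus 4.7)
The plan is to adapt the proof of \cref{mitchextend0}, replacing the appeal to the Ultrapower Axiom with the hypothesis $U_0 \E U_1$, which by the ZFC theory of the $E$-order (to be developed in \cref{ZFCSection} via \cref{closetoultra} and the ZFC analogs of \cref{canonical} and \cref{smallsp}) supplies a canonical comparison $\langle W_0, W_1\rangle$ of $\langle U_0, U_1\rangle$ by internal ultrafilters such that $j_{W_0}(a_0) \leq j_{W_1}(a_1)$, where $a_i = [\text{id}]_{U_i}$. Crucially, in \cref{mitchextend0} the Ultrapower Axiom was used only to produce such a canonical comparison from the assumption $U_0 \nleq_S U_1$; the rest of that argument was in ZFC.

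Setting $\delta = 2^{<\textsc{sp}(U_1)}$, I would first dispose of the trivial case $U_0 = U_1$ (where $U_0 <_M U_2$ is immediate from $U_1 <_M U_2$) and concentrate on $U_0 \neq U_1$. In this case the ZFC analog of \cref{smallsp} gives $\textsc{sp}(W_1) < j_{U_1}(\textsc{sp}(U_1))$, so $|W_1|^{\textnormal{Ult}(V,U_1)} \leq j_{U_1}(\delta)$ and therefore $W_1 \in j_{U_1}(H_{\delta^+})$. Since $P(\delta) \subseteq \textnormal{Ult}(V,U_2)$ gives $H_{\delta^+} \subseteq \textnormal{Ult}(V,U_2)$, and since $U_1 \in \textnormal{Ult}(V,U_2)$, the ultrapower of $H_{\delta^+}$ by $U_1$ can be carried out inside $\textnormal{Ult}(V,U_2)$; by the same computation as in \cref{mitchextend0}, this places both $j_{U_1}\restriction H_{\delta^+}$ and $j_{U_1}(H_{\delta^+})$ into $\textnormal{Ult}(V,U_2)$, and in particular $W_1 \in \textnormal{Ult}(V,U_2)$.

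The final step is to recover $U_0$ inside $\textnormal{Ult}(V,U_2)$. Since $W_1$ is derived from $j_{W_1}$ using $j_{W_0}(a_0)$ and $j_{W_0}\circ j_{U_0} = j_{W_1}\circ j_{U_1}$, for any $X \subseteq [\textsc{sp}(U_0)]^{|a_0|}$,
\[
X \in U_0 \iff j_{W_0}(a_0) \in j_{W_1}(j_{U_1}(X)) \iff j_{U_1}(X) \cap [\textsc{sp}(W_1)]^{|a_0|} \in W_1.
\]
Because $\textsc{sp}(U_0) \leq \textsc{sp}(U_1) \leq \delta$, every such $X$ is in $H_{\delta^+}$, so $j_{U_1}(X)$ is computed from $j_{U_1}\restriction H_{\delta^+}$, and every parameter on the right lies in $\textnormal{Ult}(V,U_2)$. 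Hence $U_0 \in \textnormal{Ult}(V,U_2)$, i.e., $U_0 <_M U_2$. The main obstacle is the opening step: one must verify that, in ZFC alone, $U_0 \E U_1$ supplies a canonical internal-ultrafilter comparison together with the bound $\textsc{sp}(W_1) < j_{U_1}(\textsc{sp}(U_1))$. Once the ZFC analogs of \cref{canonical} and \cref{smallsp} for $\E$ are in hand, the remainder is a direct reenactment of the final calculation of \cref{mitchextend0}, now carried out inside $\textnormal{Ult}(V,U_2)$ to positively construct $U_0$ rather than to reach a contradiction with \cref{mostrict}.
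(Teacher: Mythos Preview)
Your proposal is correct and is precisely what the paper intends: the paper does not supply a separate proof but merely notes that the argument of \cref{mitchextend0} yields this stronger statement without the Ultrapower Axiom, and your write-up makes explicit exactly how that adaptation goes. One minor terminological slip: from \(U_0\E U_1\) alone you obtain a \(1\)-internal canonical comparison (via \cref{Eclosetoultra}), not a comparison by internal ultrafilters on both sides; but since your argument only uses \(W_1\in\textnormal{Ult}(V,U_1)\), this does not affect correctness.
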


Here \(\E\) is the \(E\)-order, which we will introduce in \cref{ZFCSection}. We note that the theorem is {\it stronger} because it is stated using the \(E\)-order, and in particular implies the version of the theorem where the seed order is used instead. This statement takes into account the fact that we do not actually need both sides of the comparison involved in \cref{mitchextend} to be induced by internal ultrapowers.

\section{The Minimality of Definable Embeddings}\label{MinimalSection}
In this section, we prove some lemmas about ultrapower embeddings that will be of use in the inductive analysis of the seed order in \cref{RealmSection}. During this analysis, we will sometimes consider the restriction of the seed order to those ultrafilters \(U\) that constitute a minimal representative of their Rudin-Keisler equivalence class. We define the class of minimal ultrafilters now.

\begin{defn}\label{notationdef}
Suppose \(j: M \to N\) is an elementary embedding. A finite set of ordinals \(a\) generates \(N\) over \(j[M]\) if \(N = \{j(f)(a) : f\in M\}\).

A uniform ultrafilter \(U\) is {\it minimal} if \(U\) is the ultrafilter derived from \(j_U\) using the least sequence of ordinals that generates \(\textnormal{Ult}(V,U)\) over \(j_U[V]\). 
\end{defn}

We remark that (quite trivially) a minimal ultrafilter is the same thing as a uniform ultrafilter that is minimal in the seed order among all uniform ultrafilters giving rise to its ultrapower embedding. We are certainly not first to note the following two facts, which are attributed to Solovay in a slightly different form in \cite{Kanamori2}.

\begin{lma}[Rigid Ultrapowers Lemma]\label{rul}
Suppose \(j:V\to M\) is an ultrapower of \(V\). Then there is no nontrivial elementary embedding \[k:M\to M\] such that \(k\circ j = j\).
\begin{proof}
We show any such embedding \(k\) must be trivial. Let \(U\) be the minimal ultrafilter derived from \(j\). We claim \(k([\text{id}]_U) = [\text{id}]_U\). Suppose not. Then \(k([\text{id}]_U) > [\text{id}]_U\). Since \([\text{id}]_U\) generates \(M\) over \(j[V]\), there is a function \(f\in V\) such that \(k([\text{id}]_U) = j(f)([\text{id}]_U)\) Since \(k\circ j = j\), \(k(j(f)) = j(f)\). Thus \(M\) thinks that there is an \(a < k([\text{id}]_U)\) such that \(k(j(f))(a) = k([\text{id}]_U)\). By the elementarity of \(k\), there is an \(a < [\text{id}]_U\) such that \(j(f)(a) = [\text{id}]_U\). This contradicts the minimality of \(U\).

Thus \(k([\text{id}]_U) = [\text{id}]_U\). Along with our assumption that \(k\circ j = j\), this implies that \(k\) is surjective: \([\text{id}]_U\) generates \(M\) over \(j[V]\), and both \(j[V]\) and \([\text{id}]_U\) are contained in the range of \(k\). Since \(k\) is surjective, \(k\) is the identity.
\end{proof}
\end{lma}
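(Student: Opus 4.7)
The plan is to let $U$ be the minimal ultrafilter derived from $j$ and set $a = [\mathrm{id}]_U$, so that by \cref{notationdef} $a$ is the lex-least finite set of ordinals that generates $M$ over $j[V]$. Every element of $M$ has the form $j(f)(a)$ for some $f \in V$, and since $k \circ j = j$ the embedding $k$ fixes each such $j(f)$. Consequently, once I show $k(a) = a$, I immediately get $k = \mathrm{id}$, so the whole argument reduces to this single equality.

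The first step is to observe $k(a) \geq a$ in the lex order on $[\mathrm{Ord}]^{<\omega}$. A standard induction using only that $k \colon M \to M$ is elementary gives $k(\beta) \geq \beta$ for every ordinal $\beta$ of $M$, and since $k$ preserves the strict order on ordinals, it sends the descending enumeration of $a$ to that of $k(a)$ entrywise non-decreasingly, yielding $k(a) \geq a$.

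The heart of the argument is to derive a contradiction from $k(a) > a$. Since $a$ generates $M$ over $j[V]$, I can write $k(a) = j(f)(a)$ for some $f \in V$. Then in $M$ the formula $\phi(y, g) \equiv \exists x < y\,(g(x) = y)$ holds of the pair $(k(a), j(f))$, witnessed by $x = a$. The key move is to apply the elementarity of $k$ with parameters $a$ and $j(f)$: because $k(j(f)) = j(f)$, the biconditional $M \models \phi(a, j(f)) \iff M \models \phi(k(a), j(f))$ yields some $a' < a$ in $M$ with $j(f)(a') = a$. But then $a'$ also generates $M$ over $j[V]$, because any $c = j(g)(a) \in M$ equals $j(g \circ f)(a')$. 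This contradicts the lex-minimality of $a$, so $k(a) = a$, finishing the proof. The delicate point is the elementarity step: it is essential that $j(f)$ is a $k$-fixed parameter, so that the biconditional in the definition of elementarity can genuinely be applied in reverse. This is why presenting $k(a)$ as the image of $a$ under a $V$-side function $j(f)$, rather than under an arbitrary $M$-side function, is crucial.
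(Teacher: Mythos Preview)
Your proof is correct and follows essentially the same approach as the paper's: pick the minimal seed $a=[\mathrm{id}]_U$, write $k(a)=j(f)(a)$, use $k(j(f))=j(f)$ together with elementarity of $k$ to pull the witness $a$ down to some $a'<a$ with $j(f)(a')=a$, contradicting minimality. You are slightly more explicit than the paper about why $k(a)\geq a$ and why $a'$ still generates $M$, but the argument is the same.
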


As a corollary, we obtain the following obvious-sounding fact: for any countably complete ultrafilter \(U\), there is a unique \(a\in \textnormal{Ult}(V,U)\) such that \(U\) is the ultrafilter derived from from \(j_U\) using \(a\).

\begin{cor}
Assume \(U\) is a countably complete ultrafilter and \(a\in \textnormal{Ult}(V,U)\) is such that \(U\) is derived from its own ultrapower embedding \(j_U\) using \(a\). Then \(a = [\textnormal{id}]_U\).
\begin{proof}
Let \(M = \textnormal{Ult}(V,U)\). We have an elementary embedding \(k: M \to M\) with \(k([\textnormal{id}]_U) = a\), defined by \(k([f]_U) = j_U(f)(a)\). Note that \(k\circ j_U = j_U\). It follows from \cref{rul} that \(k\) is the identity, and in particular \([\textnormal{id}]_U = k([\text{id}]_U) = j_U(\text{id})(a) = a\).
\end{proof}
\end{cor}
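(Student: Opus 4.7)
The plan is to apply the Rigid Ultrapowers Lemma (\cref{rul}) to $j_U$, after exhibiting a self-map of $M = \textnormal{Ult}(V,U)$ built from the parameter $a$. Concretely, I would define $k : M \to M$ by $k([f]_U) = j_U(f)(a)$. The hypothesis that $U$ is the ultrafilter derived from $j_U$ using $a$ is precisely what is needed for a Lo\'s-style verification that $k$ is well-defined and elementary: for any formula $\varphi$ and functions $f_1,\ldots,f_n$, the set $A = \{\bar x : \varphi(f_1(\bar x),\ldots,f_n(\bar x))\}$ lies in $U$ iff $a \in j_U(A)$, which by the elementarity of $j_U$ is iff $M \vDash \varphi(j_U(f_1)(a),\ldots,j_U(f_n)(a))$.

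Next I would check the two properties of $k$ needed to close the argument. First, $k \circ j_U = j_U$: for any set $x$, applying $k$ to the constant-function class $[c_x]_U = j_U(x)$ gives $k(j_U(x)) = j_U(c_x)(a) = j_U(x)$. Second, by the definition of $k$, we have $k([\textnormal{id}]_U) = j_U(\textnormal{id})(a) = a$.

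With these in hand, \cref{rul} applied to $j_U : V \to M$ tells us that any elementary $k : M \to M$ satisfying $k \circ j_U = j_U$ must be the identity. Therefore $a = k([\textnormal{id}]_U) = [\textnormal{id}]_U$, as desired. There is essentially no obstacle once the evaluation map is set up correctly; the real content is already packaged in \cref{rul}, whose proof used the minimality of a particular derived ultrafilter to force the self-embedding to be surjective and hence trivial.
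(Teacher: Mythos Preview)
Your proposal is correct and follows essentially the same approach as the paper: define the evaluation map $k([f]_U) = j_U(f)(a)$, verify $k \circ j_U = j_U$ and $k([\textnormal{id}]_U) = a$, and invoke \cref{rul} to conclude $k$ is the identity. You even supply the Łoś-style justification for well-definedness and elementarity of $k$ and the constant-function check for $k \circ j_U = j_U$, which the paper leaves implicit.
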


We now prove a generalization of \cref{defemb} that shows in many situations that if \(M\) and \(N\) are transitive models of set theory and \(j:M\to N\) is an elementary embedding definable over \(M\) from parameters, then \(j\) is pointwise minimal on the ordinals among all elementary embeddings from \(M\) into \(N\). In particular, this is the case in the situation that \(M\) is an inner model of \(V\). For context, we point out the following open question, which is stated in the language of second order set theory, and is meant to be considered in the context of NBG with the Axiom of Choice plus large cardinals.

\begin{qst}
Suppose \(j_0\) and \(j_1\) are elementary embeddings of \(V\) into the same inner model. Must \(j_0\restriction \textnormal{Ord} = j_1\restriction \textnormal{Ord}\)?
\end{qst}

The answer is yes if \(j_0\) and \(j_1\) are definable by \cref{defemb}. More interestingly, a positive answer also follows from a strong form of Woodin's \(\text{HOD}\) Hypothesis. We sketch Woodin's proof of this here, since it has not been published.

\begin{defn}
The {\it Strong \textnormal{HOD} Hypothesis} is the statement that if \(\lambda\) is a strong limit singular cardinal of uncountable cofinality then \(\lambda^+\) is not \(\omega\)-strongly measurable in \(\text{HOD}\).
\end{defn}

\begin{thm}[Woodin]
Assume the Strong \(\textnormal{HOD}\) Hypothesis. Suppose \(j_0\) and \(j_1\) are elementary embeddings of \(V\) into the same inner model. Then \(j_0\restriction \textnormal{Ord} = j_1\restriction \textnormal{Ord}\).
\begin{proof}
Fix \(\delta\in \text{Ord}\) a common fixed point of \(j_0\) and \(j_1\). We will show \(j_0[\delta] = j_1[\delta]\). Fix a strong limit singular cardinal \(\lambda > \delta\) of uncountable cofinality that is also a common fixed point of \(j_0\) and \(j_1\). By the Strong \(\text{HOD}\) Hypothesis, \(\lambda^+\) is not \(\omega\)-strongly measurable in \(\text{HOD}\), and so since \(2^\delta < \lambda\), there is a \(<_{\text{HOD}}\)-least partition \(\mathcal S = \langle S_\alpha : \alpha < \delta\rangle\in \text{HOD}\) of \(\text{cof}(\omega)\cap \lambda^+\) into truly stationary sets. Let \(\mathcal T_0 = \langle T^0_\alpha : \alpha < j_0(\delta)\rangle = j_0(\mathcal S)\) and \(\mathcal T_1 = \langle T^1_\alpha : \alpha < j_1(\delta)\rangle = j_1(\mathcal S)\). 

We claim \(\alpha\in j_0[\delta]\) if and only if \(T^0_\alpha\) is truly stationary. This is just like the proof of Solovay's Lemma, so we omit the proof. Similarly \(\alpha\in j_1[\delta]\) if and only if \(T^1_\alpha\) is truly stationary. Note however that \(\mathcal T = j_0(\mathcal S) = j_1(\mathcal S) = \mathcal T'\) since \(\mathcal S\) is definable from common fixed points of the embeddings \(j_0\) and \(j_1\), which have the same target model. Thus \(j_0[\delta] = j_1[\delta]\).
\end{proof}
\end{thm}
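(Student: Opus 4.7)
The overall strategy is to prove that $j_0$ and $j_1$ agree on a proper class of ordinals, and then use order-preservation to fill in the rest. The fixed points of each $j_i$ form a club (closed under sups, and unbounded because $\sup_n j_i^n(\alpha)$ is always a fixed point), so the set of common fixed points is a club $F\subseteq\mathrm{Ord}$. For any $\delta\in F$, both $j_0\restriction\delta$ and $j_1\restriction\delta$ are order-preserving maps of $\delta$ into $\delta$, so to show $j_0\restriction\delta=j_1\restriction\delta$ it suffices to show the equality of images $j_0[\delta]=j_1[\delta]$. Since $F$ is unbounded this will yield $j_0\restriction\mathrm{Ord}=j_1\restriction\mathrm{Ord}$.

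Fix $\delta\in F$ and choose a strong limit singular cardinal $\lambda>\delta$ of uncountable cofinality that also lies in $F$. Since $\lambda$ is a strong limit, $2^\delta<\lambda$; by the Strong HOD Hypothesis, $\lambda^+$ is not $\omega$-strongly measurable in $\mathrm{HOD}$, so there is a partition $\mathcal S=\langle S_\alpha:\alpha<\delta\rangle\in\mathrm{HOD}$ of $\mathrm{cof}(\omega)\cap\lambda^+$ into truly stationary sets. Take $\mathcal S$ to be the $<_{\mathrm{HOD}}$-least such partition. The crucial point is that $\mathcal S$ is definable in $V$ using only the parameters $\delta$ and $\lambda$, which are fixed by both embeddings, and both embeddings share the target model $M$. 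Elementarity applied identically on each side therefore produces the same object in $M$: both $j_0(\mathcal S)$ and $j_1(\mathcal S)$ equal the $<_{\mathrm{HOD}^M}$-least partition $\mathcal T=\langle T_\alpha:\alpha<\delta\rangle$ of $\mathrm{cof}(\omega)\cap(\lambda^+)^M$ into $\delta$-many sets stationary in $M$. (Note $j_0(\lambda^+)=(\lambda^+)^M=j_1(\lambda^+)$ automatically, even though $\lambda^+$ itself need not be a common fixed point.)

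It remains to show, for each $\alpha<\delta$ and $i\in\{0,1\}$, that $T_\alpha$ is truly stationary in $(\lambda^+)^M$ if and only if $\alpha\in j_i[\delta]$; this Solovay-style characterization will immediately give $j_0[\delta]=\{\alpha:T_\alpha\ \text{is truly stationary}\}=j_1[\delta]$. For the forward direction, $T_\alpha=j_i(S_\beta)$ for $\alpha=j_i(\beta)$, and using the $\omega$-continuity of $j_i$ (so that $j_i[\lambda^+]$ is an $\omega$-closed cofinal subset of $(\lambda^+)^M$) one reflects the stationarity of $S_\beta$ in $\lambda^+$ to the stationarity of $j_i(S_\beta)$ in $(\lambda^+)^M$. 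For the reverse direction, if $\alpha\notin j_i[\delta]$ then $T_\alpha$ is disjoint from $j_i[\mathrm{cof}(\omega)\cap\lambda^+]$, and one extracts a genuine $V$-club in $(\lambda^+)^M$ missing $T_\alpha$ from the $\omega$-closed cofinal set $j_i[\lambda^+]$, leveraging $\mathrm{cf}(\lambda)>\omega$. The main obstacle of the whole argument is this Solovay-style characterization: one must carefully negotiate the gap between stationarity in $V$ and in $M$, and use the uncountable cofinality of $\lambda$ to turn the $\omega$-clubs that naturally arise into genuine $V$-clubs for both directions.
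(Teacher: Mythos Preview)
Your proposal is correct and follows essentially the same approach as the paper's proof: pick a common fixed point $\delta$, a strong limit singular common fixed point $\lambda>\delta$ of uncountable cofinality, use the Strong HOD Hypothesis to obtain the $<_{\mathrm{HOD}}$-least stationary partition $\mathcal S$, observe that $j_0(\mathcal S)=j_1(\mathcal S)$ by definability from common fixed points, and invoke the Solovay-style characterization of $j_i[\delta]$ via true stationarity of the pieces. You supply a bit more scaffolding (the club of common fixed points, the outline of both directions of the Solovay argument) where the paper simply writes ``this is just like the proof of Solovay's Lemma, so we omit the proof,'' but the route is the same.
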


For the proof of the minimality of definable embeddings, we need a simple iterability lemma for internally definable iterations of length \(\omega\), \cref{defwf}, which follows Kunen's proof of the wellfoundedness of iterated ultrapowers. First we introduce some notation for the definable embeddings in which we will be interested.

\begin{defn}
Suppose \(M\) is a transitive model of ZF, \(\alpha\in \text{Ord}\cap M\), and \(\ell \leq \omega\). Then a linear directed system \[\mathcal I = \langle M_m, j_{nm}:  n < m < \ell\rangle\] is a {\it \(\Sigma_k\) iteration of \(M\)} if \(M_0 = M\) and for all \(n\) with \(n + 1 < \ell\), \(M_n\) is a transitive model of ZF + DC and \(j_{n,n+1} : M_n\to M_{n+1}\) is an elementary embedding that is \(\Sigma_k\) definable over \(M_n\) from parameters. The {\it length} of \(\mathcal I\) is \(\ell\), and the objects associated with \(\mathcal I\) are denoted by \(M_n^\mathcal I\) and \(j_{nm}^\mathcal I\).

A sequence \(\vec x = \langle x_n : n + 1 < \ell\rangle\) is a \(k\)-code for \(\mathcal I\) if \(j_{n,n+1}\) is defined over \(M_n\) by the universal \(\Sigma_k\) formula using the parameter \(x_n\in M_n\). We denote the iteration coded by \(\vec x\) by \(\mathcal I_{\vec x}\), and we let \(M^{\vec x}_n = M^{\mathcal I_{\vec x}}_n\) and \(j^{\vec x}_{nm}= j^{\mathcal I_{\vec x}}_{nm}\). 

We say \(\vec x\) is \(M\)-bounded if \(\text{rank}(\vec x) \in \text{Ord}\cap M\), and we say a \(\Sigma_k\) iteration is \(M\)-bounded if it has an \(M\)-bounded \(k\)-code.
\end{defn}

Note that every \(\Sigma_k\) iteration has a \(k\)-code, but an infinite \(\Sigma_k\) iteration of \(M\) may not have a \(k\)-code in \(M\). The motivating example is an \(\omega\)-length iterated ultrapower, which is a \(\Sigma_2\) iteration of length \(\omega\). Such an iteration is \(M\)-bounded if and only if it is based on a rank initial segment of \(M\), a fact that we generalize in a lemma.

\begin{defn}
A \(k\)-code \(\vec x = \langle x_n : n + 1 < \ell\rangle\) is {\it based below} \(\alpha\in \text{Ord}\cap M^{\vec x}_0\) if for all \(n\) with \(n + 1 < \ell\), \(\textnormal{rank}(x_n) < j^{\vec x}_{0n}(\alpha)\). 
\end{defn}

The following lemma is fairly routine.

\begin{lma}
For any ordinal \(\alpha\) and number \(\ell \leq \omega\), the class \(C_\ell\) of \(k\)-codes for iterations of \(V\) of length \(\ell\) based below \(\alpha\) forms a set.
\begin{proof}
This is proved by induction on \(\ell < \omega\), noting that for any \(\vec y\in C_{\ell+1}\), \[\text{rank}(\vec y) \leq \sup \{ j^{\vec x}_{0\ell}(\alpha)+ 1:\vec x\in C_\ell\}\]
(Given that each \(C_\ell\) forms a set for \(\ell < \omega\), it is elementary to see that \(C_\omega\) forms a set.)
\end{proof}
\end{lma}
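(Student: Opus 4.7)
The plan is to induct on $\ell$, treating finite $\ell$ first and then deducing the $\ell = \omega$ case by a Replacement argument.

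For finite $\ell$, the base case $\ell \leq 1$ is trivial since $C_\ell$ is either empty or a singleton. For the successor step, suppose inductively that $C_\ell$ is a set. Any $\vec y \in C_{\ell+1}$ has the form $\vec x \,{}^\frown \langle x_\ell \rangle$ where $\vec x = \vec y \restriction \ell$ lies in $C_\ell$ and $x_\ell \in M^{\vec x}_\ell$ satisfies $\mathrm{rank}(x_\ell) < j^{\vec x}_{0\ell}(\alpha)$. For each fixed $\vec x \in C_\ell$, the collection of allowable extensions is contained in $(V_{j^{\vec x}_{0\ell}(\alpha)})^{M^{\vec x}_\ell}$, which is a set in $V$ because $M^{\vec x}_\ell$ is transitive. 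Since the assignment $\vec x \mapsto \{\vec x \,{}^\frown \langle x_\ell \rangle : x_\ell \text{ valid}\}$ is a class function defined on the set $C_\ell$, Replacement followed by a union yields $C_{\ell+1}$ as a set. Along the way, one obtains the uniform rank bound $\mathrm{rank}(\vec y) \leq \sup\{j^{\vec x}_{0\ell}(\alpha) + \omega : \vec x \in C_\ell\}$, which is an ordinal.

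For $\ell = \omega$, any $\vec y \in C_\omega$ is determined by the sequence of its finite initial segments $\vec y \restriction n \in C_n$, and $\mathrm{rank}(\vec y)$ is bounded by $\sup_{n < \omega} \mathrm{rank}(\vec y \restriction n)$ up to a finite shift. By the finite case and Replacement, the sequence $\langle \gamma_n : n < \omega \rangle$ with $\gamma_n = \sup\{\mathrm{rank}(\vec x) : \vec x \in C_n\}$ is a set of ordinals, so $\delta := \sup_{n < \omega} \gamma_n$ is an ordinal. Hence $C_\omega \subseteq V_{\delta + \omega}$, a set.

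The only point that requires care is verifying that $\vec x \mapsto M^{\vec x}_\ell$ and $\vec x \mapsto j^{\vec x}_{0\ell}$ are genuine class functions on $C_\ell$; this is built into the definition, since $\vec x$ unambiguously codes the iteration via the universal $\Sigma_k$ formula. Beyond this bookkeeping, no real obstacle arises, and the proof is essentially an exercise in applying Replacement finitely many times and then once more at the limit.
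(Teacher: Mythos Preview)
Your proof is correct and follows essentially the same approach as the paper's: both argue by induction on finite $\ell$, using that the last parameter of any $\vec y \in C_{\ell+1}$ has rank bounded by $j^{\vec x}_{0\ell}(\alpha)$ for its initial segment $\vec x \in C_\ell$, and then invoke Replacement; the $\omega$ case is handled identically by bounding ranks via the finite stages. Your version simply spells out the Replacement step and the rank bookkeeping more explicitly than the paper's one-line sketch.
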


\begin{lma}\label{basedbounded}
Suppose \(\mathcal I\) is an iteration of \(M\) of length \(\omega\) that has a \(k\)-code \(\vec x\) based below an ordinal of \(M\). Then \(\mathcal I\) is \(M\)-bounded.
\begin{proof}
Fix \(\alpha \in \text{Ord}\cap M\) such that \(\vec x\) is based below \(\alpha\). By the previous lemma applied in \(M\), the class of \(k\)-codes in \(M\) for iterations of length \(\omega\) based on \(M\cap V_\alpha\) forms a set \(C\) in \(M\). Let \(\xi\) be the rank of \(C\). Since \(M\) contains every finite initial segment of \(\vec x\), \[\text{rank}(\vec x) = \sup_{\ell < \omega} \text{rank}(\vec x\restriction \ell) \leq \text{rank}(C)\]
Thus \(\vec x\) is \(M\)-bounded.
\end{proof}
\end{lma}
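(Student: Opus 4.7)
The plan is to apply the previous lemma \emph{inside} $M$ to obtain a single set bounding $\vec x$ in rank. Fix $\alpha \in \text{Ord}\cap M$ witnessing that $\vec x$ is based below an ordinal of $M$. Relativizing the previous lemma to $M$ with this $\alpha$ and $\ell = \omega$, the collection $C$ of $k$-codes in $M$ for length-$\omega$ iterations of $M$ based below $\alpha$ is a set of $M$; let $\xi = \text{rank}(C) \in \text{Ord} \cap M$. The goal is to show $\text{rank}(\vec x) \leq \xi$, which gives $M$-boundedness.

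Since $\text{rank}(\vec x) = \sup_{\ell < \omega}\text{rank}(\vec x \restriction \ell)$, it suffices to bound each finite initial segment by $\xi$. The key step is to establish by induction on $\ell < \omega$ that $\vec x \restriction \ell \in M$: the base case is trivial because $x_0 \in M_0 = M$, and in the inductive step one uses that each iterate $M_n$ of a $\Sigma_k$ iteration is a definable class of $M$ from the parameters $x_0, \ldots, x_{n-1}$ (already in $M$ by induction), so $M_n \subseteq M$ and hence $x_n \in M_n \subseteq M$. Concatenating, $\vec x \restriction \ell \in M$. Then $\vec x \restriction \ell$ is a $k$-code in $M$ of a length-$(\ell+1)$ initial segment of an iteration of $M$ based below $\alpha$, which by the inductive construction behind the previous lemma can be extended in $M$ to a length-$\omega$ code in $C$; so $\text{rank}(\vec x \restriction \ell) \leq \xi$. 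Taking the supremum, $\text{rank}(\vec x) \leq \xi$.

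The main obstacle is the inductive claim that each $\vec x \restriction \ell$ lies in $M$. This rests on absoluteness of the $\Sigma_k$-definable iteration construction between $M$ and $V$: once $x_0, \ldots, x_{n-1}$ lie in $M$, the externally formed iterates $M_1, \ldots, M_n$ agree with the classes $M$ would internally define from those parameters, and so the next parameter $x_n$ cannot escape $M$. Verifying this absoluteness carefully is the real content; the ordinal bookkeeping that packages the bound $\xi$ is then routine.
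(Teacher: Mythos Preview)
Your approach is the same as the paper's: show each finite initial segment $\vec x\restriction\ell$ lies in $M$, then bound the ranks uniformly using the set provided (inside $M$) by the previous lemma. Your inductive argument that $x_n\in M_n\subseteq M$ is exactly what the paper leaves implicit when it writes ``$M$ contains every finite initial segment of $\vec x$.''

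One step is not justified as written. You claim $\vec x\restriction\ell$ ``can be extended in $M$ to a length-$\omega$ code in $C$,'' citing ``the inductive construction behind the previous lemma.'' But that lemma does not construct codes; it only shows that for each $\ell\le\omega$ the class $C_\ell$ of length-$\ell$ codes based below $\alpha$ is a set. There is no reason a given finite iteration must extend to an infinite one in $M$. The repair is immediate and renders extension unnecessary: applying the previous lemma in $M$ for each finite $\ell$ yields sets $C_\ell^M$ with $\vec x\restriction\ell\in C_{\ell+1}^M$, so $\text{rank}(\vec x\restriction\ell)\le\text{rank}(C_{\ell+1}^M)$; since $M$ can form the sequence $\langle C_m^M:m<\omega\rangle$, the ordinal $\sup_{m<\omega}\text{rank}(C_m^M)\in M$ bounds $\text{rank}(\vec x)$. (One could alternatively pad with codes for identity embeddings to land in $C_\omega^M$, but that needs its own small argument and is not what the previous lemma provides.)
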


\begin{lma}\label{defwf}
Suppose \(M\) is a transitive model of \textnormal{ZF + DC}. Then the direct limit of any \(M\)-bounded \(\Sigma_k\) iteration of \(M\) of length \(\omega\) is wellfounded.
\begin{proof}
Note that if a transitive model \(N\) of ZF + DC thinks that there is no \(k\)-code for an \(\omega\)-length iteration whose direct limit is illfounded below the image of \(\eta\), then in \(V\) there is no such iteration that is \(N\)-bounded, since for each \(\xi < \text{Ord}\cap M\), \(N\) ranks the tree of attempts to build a \(k\)-code of rank \(\xi\) for such an iteration.

We now use Kunen's proof that iterated ultrapowers are wellfounded. Suppose the theorem fails. By the preceding paragraph there is some least ordinal \(\eta\in \text{Ord}\cap M\) such that \(M\) {\it contains} a \(k\)-code for a \(\Sigma_k\) iteration \(\langle M_n, j_{nm}: n\leq m < \omega\rangle\) whose direct limit \(M_\omega\) is illfounded below \(j_{0\omega}(\eta)\). This is first order over \(M\), so in each \(M_n\), for any \(\xi < j_{0n}(\eta)\), \(M_n\) thinks that there is no \(k\)-code for an iteration whose direct limit is illfounded below the image of \(\xi\). 

Fix a descending sequence \(e_0 > e_1 > e_2 > \cdots\) in \(M_\omega\) with \(e_0 < j_{0\omega}(\eta)\). Then fix \(n_0 < \omega\) such that for some \(\xi \in \text{Ord}\cap M_n\), \(j_{n_0\omega}(\xi) = e_0\). Clearly \(\xi < j_{0n_0}(\eta)\). But \(\mathcal I = \langle M_n, j_{nm}: n_0 \leq n\leq m < \omega\rangle\) is then an \(\omega\)-length iteration whose direct limit is illfounded below the image of \(\xi\). (This does not immediately contradict the minimality of \(j_{0n_0}(\eta)\), since \(M_{n_0}\) may not contain a \(k\)-code for \(\mathcal I\).) Since \(\text{Ord}\cap M_{n_0} = \text{Ord}\cap M\), \(\mathcal I\) is \(M_{n_0}\)-bounded, and so by the first paragraph we have a contradiction.
\end{proof}
\end{lma}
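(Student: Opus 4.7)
The plan is to mimic Kunen's argument for the wellfoundedness of iterated ultrapowers, using a minimality argument on the "illfoundedness ordinal" combined with an absoluteness lemma that handles the fact that we are working with a general class of definable embeddings rather than ultrapowers.

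First I would establish a key absoluteness principle: if $N$ is a transitive model of ZF + DC that believes there is no $k$-code for an $\omega$-length $\Sigma_k$ iteration whose direct limit is illfounded below the image of some $\eta \in \text{Ord} \cap N$, then no $N$-bounded $\Sigma_k$ iteration in $V$ has a direct limit illfounded below the image of $\eta$ either. The idea is that, for each bound $\xi < \text{Ord} \cap N$, the collection of finite attempts at building a $k$-code of rank below $\xi$ together with a finite descending sequence in the image of $\eta$ forms a tree that lives in $N$; if $N$ thinks this tree has no branch, then $N$ ranks it, and the rank is absolute downward to $V$. (DC in $N$ ensures that absence of branches is equivalent to wellfoundedness.)

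Next, assume toward contradiction that some $M$-bounded $\Sigma_k$ iteration of $M$ of length $\omega$ has illfounded direct limit. Applying the absoluteness principle to $N = M$ itself, there must be a least $\eta \in \text{Ord}\cap M$ such that $M$ actually contains a $k$-code for an $\omega$-length $\Sigma_k$ iteration $\langle M_n, j_{nm} : n \leq m < \omega\rangle$ whose direct limit $M_\omega$ is illfounded below $j_{0\omega}(\eta)$. The statement "there is no $k$-code for an iteration illfounded below the image of $\xi$, for any $\xi < \eta$" is first order over $M$, so by elementarity each $M_n$ believes the analogous statement for all $\xi < j_{0n}(\eta)$.

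Now I would run Kunen's standard argument to derive the contradiction. Fix a descending sequence $e_0 > e_1 > e_2 > \cdots$ in $M_\omega$ with $e_0 < j_{0\omega}(\eta)$; by the construction of the direct limit, choose $n_0 < \omega$ and $\xi \in \text{Ord}\cap M_{n_0}$ with $j_{n_0\omega}(\xi) = e_0$, so in particular $\xi < j_{0 n_0}(\eta)$. The tail $\mathcal I = \langle M_n, j_{nm} : n_0 \leq n \leq m < \omega\rangle$ is an $\omega$-length $\Sigma_k$ iteration of $M_{n_0}$ whose direct limit is illfounded below $j_{n_0\omega}(\xi)$. Since the original iteration was $M$-bounded and $\text{Ord}\cap M_{n_0} = \text{Ord}\cap M$, the tail $\mathcal I$ is $M_{n_0}$-bounded. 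Applying the absoluteness principle with $N = M_{n_0}$ and bound $\xi$, $M_{n_0}$ must itself contain a $k$-code witnessing this illfoundedness, contradicting what $M_{n_0}$ believes.

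The main obstacle is the absoluteness step. One has to be careful that the property "this $k$-code gives rise to an iteration whose direct limit is illfounded below $j_{0\omega}(\eta)$" can be rewritten, via the finite-rank attempts, as a statement about the wellfoundedness of a tree that lies in $N$; the issue is that the $\Sigma_k$ definitions generate the embeddings $j_{n,n+1}$, but the direct limit $M_\omega$ itself need not lie in $N$. DC in $N$ is used to convert "every branch of the tree is finite" into "the tree is wellfounded," and the fact that the code has bounded rank in $N$ is what lets $N$ see the whole search tree. Once that lemma is secure, the rest is a routine Kunen-style minimization.
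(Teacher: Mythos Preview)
Your proposal is correct and follows essentially the same approach as the paper's proof: the absoluteness lemma via tree-ranking in $N$, followed by Kunen's minimization on $\eta$, then passing to a tail iteration and invoking $\text{Ord}\cap M_{n_0} = \text{Ord}\cap M$ to get $M_{n_0}$-boundedness and the contradiction. Your discussion of the absoluteness step is slightly more detailed than the paper's, but the argument is the same.
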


The following is one way that an elementary embedding \(i:M\to N\) can fail badly to be a class of \(M\), by which we mean that the Axiom of Replacement fails in the structure \((M,i)\).

\begin{defn}
Suppose \(M\) and \(N\) are transitive models of ZF with the same ordinals and \(i:M\to N\) is an elementary embedding. We say that \(i\) {\it iterates out of \(M\)} if for some \(\eta \in \text{Ord}\cap M\), \(\sup_{n<\omega} i^n(\eta) = \textnormal{Ord}\cap M\).
\end{defn}

An example of an amenable embedding that iterates out of a transitive model of ZFC is an \(I_3\)-embedding.

We next lemma should be interpreted to include the case \(\Omega = \text{Ord}\), in which case the following is formally a proposition in the second order language of set theory, proved (very easily) in NBG.

\begin{lma}
Suppose \(M\) and \(N\) are transitive models of \textnormal{ZF} of the same ordinal height \(\Omega\), and suppose \(\Omega\) has uncountable cofinality. Then no elementary embedding \(i:M\to N\) iterates out of \(M\).
\end{lma}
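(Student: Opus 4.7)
The plan is a direct cofinality argument. First I would establish that $i$ can be iterated meaningfully on ordinals: since $M$ and $N$ are transitive and have the same ordinal height $\Omega$, we have $\mathrm{Ord}\cap M = \mathrm{Ord}\cap N$. In particular, for any $\eta < \Omega$, the image $i(\eta)\in N$ is an ordinal below $\Omega$ and therefore also lies in $M$, so the iterate $i^{n+1}(\eta) = i(i^n(\eta))$ is well-defined for every $n<\omega$. By induction one gets a sequence $\langle i^n(\eta) : n<\omega\rangle$ of ordinals, each strictly less than $\Omega$.

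Now suppose toward contradiction that $i$ iterates out of $M$, as witnessed by some $\eta\in\mathrm{Ord}\cap M$; so $\sup_{n<\omega} i^n(\eta) = \Omega$. When $\Omega$ is a set ordinal, this immediately contradicts the hypothesis $\mathrm{cf}(\Omega)>\omega$, since the left-hand side is the supremum of a countable sequence of ordinals strictly below $\Omega$ and must therefore be strictly below $\Omega$. In the second-order case $\Omega=\mathrm{Ord}$ (the NBG formulation indicated by the remark before the lemma), the identical argument applies: by Replacement applied to the function $n\mapsto i^n(\eta)$ on $\omega$, the sequence of iterates forms a set, hence its supremum is a set ordinal and cannot equal the proper class $\mathrm{Ord}$.

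There is no real obstacle here; the only bookkeeping is to justify that the iterated application of $i$ to an ordinal of $M$ makes sense, which uses nothing more than transitivity of $M$ and $N$ together with the equality of their ordinal heights. This is presumably why the lemma is advertised as essentially immediate and why the earlier material (\cref{basedbounded}, \cref{defwf}) is needed only for the genuinely substantive iterability results rather than for this observation.
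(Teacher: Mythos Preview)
Your argument is correct and is exactly the easy cofinality argument the paper has in mind; the paper in fact omits the proof entirely, remarking only that it is ``proved (very easily) in NBG.'' Your handling of both the set-ordinal case and the class case $\Omega=\mathrm{Ord}$ is the intended one.
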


\begin{prp}\label{adhoc}
Suppose \(M\) and \(N\) are transitive models of \textnormal{ZF + DC}. Suppose \(j:M\to N\) is an elementary embedding definable over \(M\) from parameters.  Suppose \(i:M\to N\) is an elementary embedding that does not iterate out of \(M\). Then for any \(\xi\in \textnormal{Ord}\cap M\), \(j(\xi) \leq i(\xi)\).
\begin{proof}
Towards a contradiction fix \(\xi\in \text{Ord}\cap M\) such that \(i(\xi) < j(\xi)\). For \(n < \omega\), let \(M_n = i^n(M)\) and \(j_{n,n+1} = i^n(j)\). Thus \(j_{n,n+1}\) is an elementary embedding from \(i^n(M) = M_n\) to \(i^n(N) = i^n(i(M)) = M_{n+1}\).\begin{center}
\begin{figure}
\includegraphics[scale=1.05]{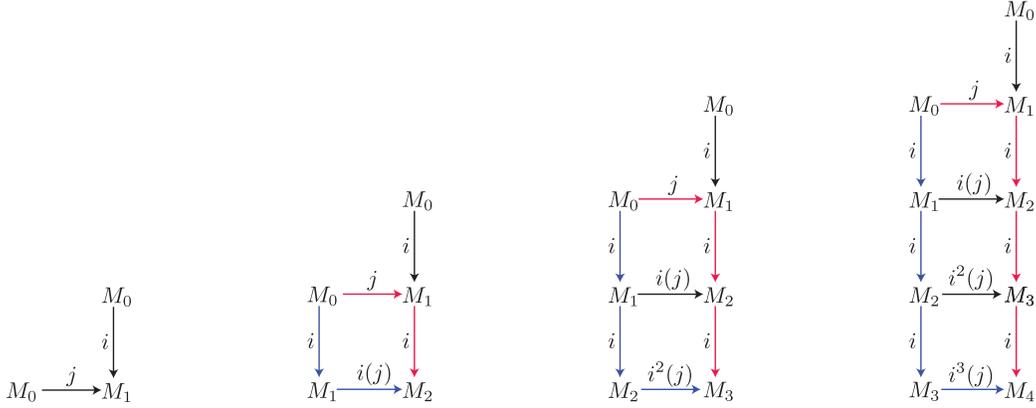}
\caption{Since \(j_{n,n+1}\circ i^n=i^n\circ j\) (shown in blue and red) and \(j(\xi) > i(\xi)\), we have \(j_{n,n+1}(i^n(\xi)) =i^n(j(\xi)) > i^{n+1}(\xi)\).}

\end{figure}
\end{center}
We claim that the direct limit \(M_\omega\) of the directed system \[\langle M_n,j_{nm}  : n \leq m < \omega\rangle\] generated by the embeddings \(j_{n,n+1}\) is illfounded. To see this, we claim the sequence \(\langle j_{n\omega}(i^n(\xi)) : n < \omega\rangle\) forms a descending sequence in \(M_\omega\). For this it suffices to see that \(i^{n+1}(\xi) < j_{n,n+1}(i^n(\xi))\). We have \[j_{n,n+1}(i^n(\xi)) = i^n(j)(i^n(\xi)) = i^n(j(\xi)) > i^{n}(i(\xi)) = i^{n+1}(\xi)\] Thus \(M_\omega\) is illfounded. To obtain a contradiction, it suffices by \cref{defwf} to verify that \(\langle M_n,j_{nm}  : n \leq m < \omega\rangle\) is \(M\)-bounded.

Suppose that \(j\) is defined over \(M\) by the \(\Sigma_k\) universal formula using the parameter \(x\in M\). Note that \(\vec x = \langle i^n(x) : n < \omega\rangle\) is a \(k\)-code for this iteration by construction, and \(\text{rank}(\vec x)\in\text{Ord}\cap M\) since \(\sup_{n<\omega} i^n(\eta)\in \textnormal{Ord}\cap M\) where \(\eta = \text{rank}(x)\). Thus \(\vec x\) is \(M\)-bounded.
\end{proof}
\end{prp}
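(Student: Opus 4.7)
The plan is to argue by contradiction: assuming some $\xi\in\textnormal{Ord}\cap M$ satisfies $i(\xi) < j(\xi)$, I will construct a $\Sigma_k$ iteration of $M$ of length $\omega$ whose direct limit is illfounded, yet which is $M$-bounded, contradicting \cref{defwf}. The construction uses $i$ to propagate the definable embedding $j$ along itself.

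First I would fix a $\Sigma_k$ formula $\phi$ and a parameter $x\in M$ such that $\phi(\cdot,x)$ defines $j$ over $M$. I would then set $M_0 = M$ and inductively define $j_{n,n+1}$ to be the embedding defined over $M_n$ by $\phi(\cdot,i^n(x))$, with target $M_{n+1}$. By elementarity of $i$ (and its iterates, viewed as sending the appropriate parameters to the appropriate parameters), each $M_n$ is a transitive model of ZF + DC and each $j_{n,n+1}$ is an elementary embedding $\Sigma_k$-definable over $M_n$, so this is a legitimate $\Sigma_k$ iteration of $M$. The sequence $\vec x = \langle i^n(x) : n < \omega\rangle$ is by design a $k$-code for this iteration.

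Next, I would show the direct limit $M_\omega$ is illfounded. The key identity, immediate from the definitions and elementarity, is that $j_{n,n+1}\circ i^n = i^n\circ j$ as maps on $M$. Hence from $j(\xi) > i(\xi)$, applying $i^n$ yields
\[
j_{n,n+1}(i^n(\xi)) \;=\; i^n(j(\xi)) \;>\; i^n(i(\xi)) \;=\; i^{n+1}(\xi),
\]
so after composing with $j_{n+1,\omega}$, the sequence $\langle j_{n\omega}(i^n(\xi)) : n < \omega\rangle$ is strictly descending in $M_\omega$.

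The main obstacle — and the place where the hypothesis that $i$ does not iterate out of $M$ is used — is verifying that the iteration is $M$-bounded, so that \cref{defwf} applies. For this I would compute $\textnormal{rank}(\vec x) = \sup_{n<\omega}\textnormal{rank}(i^n(x)) = \sup_{n<\omega} i^n(\textnormal{rank}(x))$, the second equality by elementarity of $i^n$ on ordinals. Setting $\eta = \textnormal{rank}(x)\in\textnormal{Ord}\cap M$, the hypothesis that $i$ does not iterate out of $M$ gives $\sup_n i^n(\eta) < \textnormal{Ord}\cap M$, so $\vec x$ is $M$-bounded. Then \cref{defwf} forces $M_\omega$ to be wellfounded, contradicting the descending sequence exhibited above and completing the proof.
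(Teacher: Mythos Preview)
Your proof is correct and follows essentially the same approach as the paper: both argue by contradiction, build the $\Sigma_k$ iteration $\langle M_n, j_{n,n+1}\rangle$ by propagating $j$ via the parameters $i^n(x)$, exhibit the descending sequence $\langle j_{n\omega}(i^n(\xi))\rangle$ in the direct limit using the identity $j_{n,n+1}\circ i^n = i^n\circ j$, and invoke the hypothesis that $i$ does not iterate out of $M$ to bound $\text{rank}(\vec x)$ and apply \cref{defwf}. The only cosmetic difference is that the paper phrases the definability via the universal $\Sigma_k$ formula (to match the definition of a $k$-code) rather than a specific $\phi$, but this is immaterial.
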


As a corollary, we have the following theorem.

\begin{thm}\label{minult}
Suppose \(M\) and \(N\) are transitive models of \textnormal{ZF + DC} of ordinal height \(\Omega\) where \(\textnormal{cf}(\Omega) \neq \omega\). Suppose \(j:M\to N\) is a definable elementary embedding, and \(i:M\to N\) is an arbitrary elementary embedding. Then for any ordinal \(\alpha < \Omega\), \(j(\alpha) \leq i(\alpha)\).
\end{thm}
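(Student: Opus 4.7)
The plan is to observe that \cref{minult} is essentially a packaging of \cref{adhoc} in which the technical non-iterating-out hypothesis on $i$ has been replaced by a global condition on the ordinal height of the models involved. This reformulation is the one that tends to be useful in practice, since the non-iteration condition can be hard to check directly.

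First I would note that, as the ordinal height of a transitive model of ZF, $\Omega$ is necessarily a limit ordinal and therefore admits a well-defined cofinality; the hypothesis $\textnormal{cf}(\Omega) \neq \omega$ then forces $\textnormal{cf}(\Omega)$ to be uncountable, since any cofinality is a regular cardinal. With uncountable cofinality in hand, I would appeal to the unnamed lemma stated immediately before \cref{adhoc}, which asserts that no elementary embedding between transitive ZF models of common ordinal height $\Omega$ can iterate out of $M$ when $\textnormal{cf}(\Omega)$ is uncountable. The point of that lemma is just that for any $\eta \in \textnormal{Ord} \cap M$, the sequence $\langle i^n(\eta) : n < \omega \rangle$ is a countable chain of ordinals of $M$, so by regularity of $\textnormal{cf}(\Omega)$ its supremum is strictly less than $\Omega$ and cannot exhaust all the ordinals of $M$.

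Thus the embedding $i$ does not iterate out of $M$, and \cref{adhoc} applies directly to yield $j(\alpha) \leq i(\alpha)$ for every ordinal $\alpha < \Omega$. There is no substantive obstacle here; the entire content of the theorem resides in \cref{defwf} (wellfoundedness of $M$-bounded $\Sigma_k$ iterations) and in \cref{adhoc} (the conversion of a hypothetical counterexample $j(\xi) > i(\xi)$ into an illfounded such iteration via the sequence $\langle i^n(j) : n < \omega \rangle$). The only mild interpretive point is the case $\Omega = \textnormal{Ord}$, in which the theorem becomes a statement in second-order set theory; but the preceding lemma is explicitly formulated to cover that case, so the argument goes through verbatim.
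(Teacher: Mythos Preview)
Your proposal is correct and matches the paper's approach exactly: the paper states \cref{minult} as an immediate corollary of \cref{adhoc}, obtained by using the preceding lemma (that uncountable cofinality of $\Omega$ prevents $i$ from iterating out of $M$) to discharge the technical hypothesis of \cref{adhoc}. Your write-up simply spells out the one-line reduction the paper leaves implicit.
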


In particular, this theorem holds in the case \(\Omega = \text{Ord}\); that is, the theorem holds for \(M\) an arbitrary inner model and \(i\) an arbitrary elementary embedding. 

It is a bit strange that we need to assume here that \(i\) does not iterate out of \(M\), which appears to be a smallness assumption, in order to show that \(i\) is {\it above} a definable embedding. We do not know if this is necessary, but the following shows that one can still get an asymptotic result without the smallness assumption.

\begin{cor}
Suppose \(M\) and \(N\) are transitive models of \textnormal{ZF + DC}. Suppose \(j:M\to N\) is an elementary embedding definable over \(M\) from a parameter in \(M\cap V_{\xi_0}\). Suppose \(i:M\to N\) is an arbitrary elementary embedding. Then for all \(\xi\in \textnormal{Ord}\cap M\), if \(\xi \geq \xi_0\) then \(j(\xi) \leq i(\xi)\).
\begin{proof}
We assume towards a contradiction that \(j(\xi) > i(\xi)\) for some ordinal \(\xi \geq\xi_0\). From here we return to the proof of \cref{adhoc}, now allowing that \(i\) might iterate out of \(M\). We claim in this case that the \(k\)-code \(\vec x = \langle i^n(x): n<\omega\rangle\) from that proof is \(M\)-bounded. In \cref{adhoc}, we proved something a bit stronger than \(i^n(\xi) < j_{0,n}(\xi)\), using the assumption that \(i(\xi) < j(\xi)\). Since \(\text{rank}(i^n(x)) \leq i^n(\xi)\), it follows that \(\vec x\) is based below \(\xi\), and hence \(\vec x\) is \(M\)-bounded by \cref{basedbounded}. This contradicts the fact that \(M_\omega\) is illfounded, by \cref{defwf}.
\end{proof}
\end{cor}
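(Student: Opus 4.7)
The plan is to follow the strategy of \cref{adhoc} essentially verbatim, the only new ingredient being a verification that the $k$-code for the iteration constructed there is $M$-bounded even when $i$ iterates out of $M$. So assume towards a contradiction that some $\xi \geq \xi_0$ satisfies $i(\xi) < j(\xi)$, and let $x\in M\cap V_{\xi_0}$ be a parameter from which $j$ is defined by some universal $\Sigma_k$ formula. Set $M_n = i^n(M)$ and $j_{n,n+1} = i^n(j)$, so that $\vec x = \langle i^n(x) : n < \omega\rangle$ is a $k$-code for the iteration $\langle M_n, j_{nm} : n \leq m < \omega\rangle$. Exactly as in \cref{adhoc}, the sequence $\langle j_{n\omega}(i^n(\xi)) : n < \omega\rangle$ is a strictly descending sequence of ordinals in $M_\omega$, witnessing that $M_\omega$ is illfounded.

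The remaining task is to exhibit an ordinal $\alpha \in \text{Ord}\cap M$ such that $\vec x$ is based below $\alpha$; then \cref{basedbounded} gives $M$-boundedness and \cref{defwf} gives the desired contradiction. The key step, to be proved by induction on $n$, is the inequality
\[i^n(\xi) \leq j^{\vec x}_{0,n}(\xi).\]
The base case $n = 0$ is trivial, and for the inductive step, using the monotonicity of $j_{n,n+1}$ on ordinals together with the inequality $i^{n+1}(\xi) < j_{n,n+1}(i^n(\xi))$ established in the proof of \cref{adhoc}, one obtains
\[i^{n+1}(\xi) < j_{n,n+1}(i^n(\xi)) \leq j_{n,n+1}(j^{\vec x}_{0,n}(\xi)) = j^{\vec x}_{0,n+1}(\xi).\]

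Having this, the verification that $\vec x$ is based below $\xi + 1$ is immediate: for each $n$,
\[\text{rank}(i^n(x)) = i^n(\text{rank}(x)) \leq i^n(\xi_0) \leq i^n(\xi) \leq j^{\vec x}_{0,n}(\xi) < j^{\vec x}_{0,n}(\xi + 1),\]
using that $\text{rank}(x) \leq \xi_0 \leq \xi$ and that $i^n$ preserves rank. Since $\xi + 1 \in \text{Ord}\cap M$, \cref{basedbounded} yields that $\vec x$ is $M$-bounded, and then \cref{defwf} contradicts the illfoundedness of $M_\omega$, completing the proof. The only potentially subtle step is the inductive comparison of $i^n(\xi)$ with $j^{\vec x}_{0,n}(\xi)$; everything else is a direct adaptation of the $i$-does-not-iterate-out case, with the smallness hypothesis $\xi \geq \xi_0$ replacing the global smallness of $i$ in exactly the place needed to bound the ranks of the parameters $i^n(x)$ by the images of a fixed ordinal of $M$.
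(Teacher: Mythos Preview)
Your proof is correct and follows essentially the same approach as the paper's own argument. You have simply made explicit the induction establishing $i^n(\xi) \leq j^{\vec x}_{0,n}(\xi)$ that the paper alludes to with the remark that ``in \cref{adhoc}, we proved something a bit stronger than $i^n(\xi) < j_{0,n}(\xi)$,'' and you use $\xi+1$ rather than $\xi$ as the bounding ordinal, an inconsequential difference.
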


\begin{defn}\label{fingen}
A model \(M\) of ZFC is {\it finitely generated over} \(\Gamma\) by a sequence of ordinals \(a\in [\textnormal{Ord}]^{<\omega} \cap M\) if every element of \(M\) is definable over \(M\) from parameters in \(\Gamma\cup a\).
\end{defn}

The following is a slight generalization of Woodin's Close Embeddings Lemma.

\begin{thm} \label{cel} Suppose \(M\) is finitely generated over \(\Gamma\) by \(a \in [\textnormal{Ord}]^{<\omega} \cap M\). Suppose \(j : M \to N\) is a close embedding and \(i : M \to N\) is an arbitrary elementary embedding such that \(j \restriction \Gamma = i  \restriction \Gamma\). Then \(j(a) \leq i(a)\).
\begin{proof} Towards a contradiction assume \(i(a) < j(a)\).
The key is to reduce to the case where \(j\) is an internal ultrapower embedding of \(M\). Let \(U\) be the ultrafilter derived from \(j\) using \(i(a)\). Since \(j\) is close, \(U \in M\). As usual, there is a factor embedding \(k : \textnormal{Ult}(M,U) \to N\) given by \(k([f]_U) = j(f)(i(a))\). It follows that
\[i[M] = H^N (i[\Gamma] \cup i(a)) = H^N (j[\Gamma] \cup i(a)) \subseteq k[\textnormal{Ult}(M, U)]\]
and so we can define an elementary embedding \(i_* : M \to \textnormal{Ult}(M,U)\) by \(i_* = k^{-1} \circ i\).

We claim \(i_*(a) < j_U (a)\). For this we show \(k(i_*(a)) < k(j_U (a))\). In fact, \(k(i_*(a)) = k(k^{-1} \circ i(a)) = i(a)\) and \(k(j_U (a)) = j(a)\), and so since we have assumed \(i(a) < j(a)\), we have \(k(i_*(a)) < k(j_U (a))\). We now replace \(j\) with \(j_U\) and \(i\) with \(i_*\) and \(N\) with \(\textnormal{Ult}(M,U)\). Thus we have reduced to the case where \(j\) is an internal ultrapower embedding of \(M\).

Note that \(\langle i^n(U) : n < \omega\rangle\) is an iteration of \(M\) that has an illfounded direct limit by the argument of \cref{minult}. But it is based on a rank initial segment of \(M\), for example, on \(M\cap V_\xi\) where \(\xi = \max (a) + \omega\). This is because \(\textsc{sp}(U) <\xi\) since \(j(a)> i(a)\), and so in general \(\textsc{sp}(i^n(U)) \leq i^n(\xi) \leq j_{0n}(\xi)\). The last inequality follows (by induction) from the fact that \(i^{n+1}(a) < j_{n,n+1}(i^n(a))\).
\end{proof}
\end{thm}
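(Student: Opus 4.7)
Assume toward contradiction that $i(a) < j(a)$. The plan is to first reduce to the case in which $j$ is the internal ultrapower embedding of $M$ by some ultrafilter $U \in M$, and then to derive a contradiction by constructing an illfounded internal iteration of $M$ in the style of the proofs of \cref{adhoc} and \cref{minult}.

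For the reduction, let $U$ be the $M$-ultrafilter derived from $j$ using $i(a) \in [\textnormal{Ord}]^{<\omega} \cap N$; by closeness of $j$, $U \in M$. Let $k : \textnormal{Ult}(M, U) \to N$ be the factor embedding given by $k([f]_U) = j(f)(i(a))$, so that $k \circ j_U = j$ and $k([\textnormal{id}]_U) = i(a)$. The next step is to check $i[M] \subseteq \textnormal{ran}(k)$, using finite generation: any $x \in M$ has the form $\varphi^M(\vec\gamma, a)$ for some formula $\varphi$ and some $\vec\gamma \in \Gamma^{<\omega}$, so by elementarity of $i$ together with the agreement $i \restriction \Gamma = j \restriction \Gamma$, $i(x) = \varphi^N(i(\vec\gamma), i(a)) = \varphi^N(j(\vec\gamma), i(a))$, and both $j(\vec\gamma)$ and $i(a) = k([\textnormal{id}]_U)$ lie in $\textnormal{ran}(k)$. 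Thus $i_* := k^{-1} \circ i$ is a well-defined elementary embedding $M \to \textnormal{Ult}(M, U)$, and applying $k$ to $k(i_*(a)) = i(a) < j(a) = k(j_U(a))$ yields $i_*(a) < j_U(a)$ by elementarity of $k$. Replacing $j, i, N$ respectively by $j_U, i_*, \textnormal{Ult}(M, U)$ reduces to the case where $j = j_U$ is internal.

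In this reduced setting, form the iteration of $M$ whose $n$-th step is the ultrapower by the appropriate iterate of $U$, exactly as in the proof of \cref{adhoc}: the strict inequality $i_*(a) < j_U(a)$ propagates through the iteration, producing a strictly descending sequence of finite sets of ordinals in the direct limit $M_\omega$ and so showing $M_\omega$ is illfounded. It remains to verify that the iteration is $M$-bounded, so that \cref{defwf} supplies the desired contradiction. The key bound is $\textsc{sp}^M(U) \leq \max(a) + 1$: from $i(a) < j(a)$ in the lexicographic order on $[\textnormal{Ord}]^{<\omega}$ one reads off $\max i(a) \leq j(\max a)$ (otherwise $\max i(a) \in i(a) \setminus j(a)$ would witness $i(a) > j(a)$), so the least $\kappa \in M$ with $i(a) \subseteq j(\kappa)$ is at most $\max(a) + 1$. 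Hence $U \in M \cap V_\xi$ for $\xi = \max(a) + \omega$, the iteration is based on a rank initial segment of $M$, and \cref{basedbounded} together with \cref{defwf} closes out the argument. The main technical obstacle will be the finite-generation step placing $i[M]$ inside $\textnormal{ran}(k)$ — this is where the hypothesis $i \restriction \Gamma = j \restriction \Gamma$ is essential and must be deployed precisely.
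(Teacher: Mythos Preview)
Your proposal is correct and follows essentially the same route as the paper's proof: reduce to the case where $j = j_U$ is an internal ultrapower by deriving $U$ from $j$ using $i(a)$, factoring through $\textnormal{Ult}(M,U)$, and pulling $i$ back along the factor map; then iterate by $\langle i^n(U) : n < \omega\rangle$, use the propagated inequality $i^{n+1}(a) < j_{n,n+1}(i^n(a))$ to get an illfounded direct limit, and observe that $\textsc{sp}(U) \leq \max(a)+1$ forces the iteration to be based below $\xi = \max(a)+\omega$, so \cref{basedbounded} and \cref{defwf} give the contradiction. The only place where you are slightly terser than the paper is the verification that the code stays based below $\xi$ at every stage (i.e.\ that $i^n(\xi) \leq j_{0n}(\xi)$), but this is exactly what your ``propagation'' remark yields by induction, just as in the paper.
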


\subsection{Application: the Rudin-Keisler order}
The minimality of definable embeddings yields an alternative definition of the seed order, assuming the Ultrapower Axiom, which on the face of it looks a bit weaker. (This will be the basis for lifting the basic theory of the seed order to the context in which the Ultrapower Axiom is not assumed in \cref{ZFCSection}.)

\begin{thm}[Ultrapower Axiom]\label{closecomp}
Suppose \(U_0\) and \(U_1\) are uniform ultrafilters. Let \(M_0 = \textnormal{Ult}(V,U_0)\) and \(M_1 = \textnormal{Ult}(V,U_1)\). Suppose \(W\) is an ultrafilter of \(M_1\) and \(k:M_0\to \textnormal{Ult}(M_1,W)\) is an elementary embedding such that \begin{align}k([\textnormal{id}]_{U_0})\leq j_{W}([\textnormal{id}]_{U_1})\label{Ewit}\end{align} Then \(U_0\wo U_1\).
\begin{proof}
Fix a comparison \(\langle i_0,i_1\rangle\) of \(\langle U_0,(U_1,W)\rangle\) by internal ultrapowers to a common model \(P\). Thus \(i_0:\textnormal{Ult}(V,U_0)\to P\), \(i_1: N\to P\), and \begin{align}i_0\circ j_{U_0} = i_1\circ (j_W\circ j_{U_1})\label{commy}\end{align} Note that \(i_0\) and \(i_1\circ j_W\) are a comparison of \(U_0\) and \(U_1\) by internal ultrafilters: the commutativity requirement \(i_0\circ j_{U_0} = (i_1\circ j_W) \circ j_{U_1}\) is an immediate consequence of \cref{commy}. But \[i_0\restriction \textnormal{Ord}\leq i_1 \circ k\restriction \textnormal{Ord}\] by \cref{minult}, so \[i_0([\textnormal{id}]_{U_0})\leq i_1\circ k([\textnormal{id}]_{U_0}) \leq i_1\circ j_W([\textnormal{id}]_{U_1})\]
The second inequality uses \cref{Ewit}. Thus \(\langle i_0,i_1\circ j_W\rangle\) witnesses \(U_0\wo U_1\).
\end{proof}
\end{thm}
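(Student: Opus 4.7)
The plan is to reduce the conclusion to the existence of a genuine comparison by internal ultrafilters witnessing $U_0 \wo U_1$, and then to use the hypothesis on $k$ together with the pointwise minimality of definable embeddings (\cref{minult}) to verify the seed-order inequality. The delicate point is that the given embedding $k$ is only assumed to be elementary, not internal, so $k$ itself cannot be used as one side of a comparison; we have to produce an internal ultrafilter on $M_0$ and use $k$ merely as a ``measuring stick'' for where the seed of $U_0$ gets sent.

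Concretely, I would first consider the composite ultrafilter $(U_1, W)$ on $V$ from \cref{iterating}, whose ultrapower embedding is $j_W \circ j_{U_1}$ and whose target is $\textnormal{Ult}(M_1, W)$. Applying the Ultrapower Axiom to the pair $\langle U_0, (U_1, W)\rangle$, I obtain an internal comparison $\langle i_0, i_1\rangle$ to a common model $P$, so that $i_0 \colon M_0 \to P$ and $i_1 \colon \textnormal{Ult}(M_1, W) \to P$ are internal ultrapower embeddings satisfying
\[
i_0 \circ j_{U_0} \;=\; i_1 \circ (j_W \circ j_{U_1}).
\]
The second step is to observe that $\langle i_0, i_1 \circ j_W\rangle$ is then a comparison of $\langle U_0, U_1\rangle$ by internal ultrafilters: $i_1 \circ j_W$ is a composition of internal ultrapower embeddings (hence itself of this form via \cref{iterating}), and the commutativity clause follows immediately by regrouping the displayed equation.

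The third and critical step is to verify the seed-order inequality $i_0([\textnormal{id}]_{U_0}) \leq i_1 \circ j_W([\textnormal{id}]_{U_1})$. Here $i_1 \circ k \colon M_0 \to P$ is some elementary embedding with the same source and target as $i_0$. Since $i_0$ is an internal ultrapower embedding of $M_0$, it is in particular definable over $M_0$ from parameters, so \cref{minult} applied inside the inner model $M_0$ yields $i_0(\alpha) \leq (i_1 \circ k)(\alpha)$ for every ordinal $\alpha \in M_0$. Specializing to $\alpha = [\textnormal{id}]_{U_0}$ and then applying the elementary embedding $i_1$ to the hypothesized inequality \cref{Ewit} gives
\[
i_0([\textnormal{id}]_{U_0}) \;\leq\; i_1(k([\textnormal{id}]_{U_0})) \;\leq\; i_1(j_W([\textnormal{id}]_{U_1})),
\]
which is exactly what is needed.

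The main obstacle I expect is conceptual rather than computational: recognizing that $k$ does not need to be internal, definable, or related to any ultrafilter at all, because its only role is to provide an upper bound on the ordinal $i_0([\textnormal{id}]_{U_0})$ via \cref{minult}. Once one sees that the minimality of definable embeddings converts any elementary embedding of $M_0$ into $\textnormal{Ult}(M_1,W)$ into a bound from above for the internal comparison map, the argument is essentially forced. The only other minor check is that the ultrafilter $(U_1, W)$ yields the embedding $j_W \circ j_{U_1}$ with the correct target, which is exactly the content of \cref{iterating}.
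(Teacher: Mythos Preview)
Your proposal is correct and follows essentially the same route as the paper: apply the Ultrapower Axiom to $\langle U_0,(U_1,W)\rangle$, observe that $\langle i_0,\,i_1\circ j_W\rangle$ is then an internal comparison of $\langle U_0,U_1\rangle$, and use \cref{minult} with the arbitrary embedding $i_1\circ k$ to bound $i_0([\text{id}]_{U_0})$. Your commentary on why $k$ need not be internal is exactly the point of the argument.
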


This yields the following simple fact which is not obvious from the definition of the seed order:

\begin{thm}[Ultrapower Axiom]\label{factorlemma}
Suppose \(U_0\) is a minimal ultrafilter, \(U_1\) is a uniform ultrafilter. Suppose \(k:\textnormal{Ult}(V,U_0)\to \textnormal{Ult}(V,U_1)\) is an elementary embedding such that \(k\circ j_{U_0} = j_{U_1}\).  Then \(U_0\wo U_1\). In particular, the seed order extends the Rudin-Keisler order on minimal ultrafilters.

\begin{proof}
By \cref{closecomp}, it suffices to show that \(k([\text{id}]_{U_0}) \leq [\text{id}]_{U_1}\). Let \(a_0 = [\text{id}]_{U_0}\) and \(a_1 = [\text{id}]_{U_1}\). Fix \(f \in V\) such that \(k(a_0) = j_{U_1}(f)(a_1)\). Since \(U_0\) is a minimal ultrafilter, \(a_0\neq j_{U_0}(f)(v)\) for any \(v < a_0\). By the elementarity of \(k\), \(k(a_0)\neq k(j_{U_0}(f))(v)\) for any \(v < k(a_0)\). By our commutativity assumption that \(k\circ j_{U_0} = j_{U_1}\), we have \(k(j_{U_0}(f)) = j_{U_1}(f)\). Thus \(k(a_0)\neq  j_{U_1}(f)(v)\) for any \(v < k(a_0)\), and so since \(k(a_0) = j_{U_1}(f)(a_1)\) it follows that \(a_1\not< k(a_0)\). In other words, \(k(a_0) \leq a_1\), as desired.
\end{proof}
\end{thm}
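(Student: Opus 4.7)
The plan is to apply the alternative characterization of the seed order established in \cref{closecomp}. That theorem says it suffices to produce an ultrafilter $W \in M_1 := \textnormal{Ult}(V,U_1)$ and an elementary embedding $k':M_0\to \textnormal{Ult}(M_1,W)$ with $k'([\textnormal{id}]_{U_0}) \leq j_W([\textnormal{id}]_{U_1})$. Since we are already handed an embedding $k:M_0\to M_1$, I would take $W$ to be the principal ultrafilter on some singleton in $M_1$: then $\textnormal{Ult}(M_1,W)=M_1$ and $j_W$ is the identity, so with $a_i := [\textnormal{id}]_{U_i}$ the task reduces to verifying the single inequality $k(a_0)\le a_1$.

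For this I would argue by contradiction: suppose $a_1<k(a_0)$. Since $a_1$ generates $M_1$ over $j_{U_1}[V]$, there is $f\in V$ with $k(a_0)=j_{U_1}(f)(a_1)$. The commutativity hypothesis $k\circ j_{U_0}=j_{U_1}$ then rewrites this identity as $k(a_0)=k(j_{U_0}(f))(a_1)$, putting both sides in the image of $k$ applied to objects of $M_0$.

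The heart of the argument leverages minimality. If some $v<a_0$ satisfied $a_0=j_{U_0}(f)(v)$, then every $y\in M_0$, written as $y=j_{U_0}(g)(a_0)$, could be rewritten as $y=j_{U_0}(g\circ f)(v)$, so $v$ would generate $M_0$ over $j_{U_0}[V]$, contradicting the minimality of $a_0$. Thus $M_0$ satisfies the first-order statement ``$a_0\ne j_{U_0}(f)(v)$ for every $v<a_0$,'' about the parameters $a_0$ and $j_{U_0}(f)$. Pushing this through $k$ by elementarity gives $k(a_0)\ne k(j_{U_0}(f))(v)$ for every $v<k(a_0)$. Instantiating $v=a_1$, which lies below $k(a_0)$ by the contradiction hypothesis, contradicts $k(a_0)=k(j_{U_0}(f))(a_1)$. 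So $k(a_0)\le a_1$, as required, and the ``in particular'' clause is immediate: a Rudin--Keisler reduction of $U_0$ to $U_1$ supplies precisely the embedding $k$ appearing in the hypothesis.

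The main obstacle I expect is conceptual rather than technical: one has to notice that the minimality of $U_0$, though phrased as a second-order minimization over all $v<a_0$ and all $g\in V$, can be applied to the \emph{specific} $f$ produced by the generation of $k(a_0)$ in $M_1$, and that once so applied it becomes a first-order sentence of $M_0$ transportable along the arbitrary elementary $k$ to yield the inequality feeding \cref{closecomp}.
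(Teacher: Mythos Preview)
Your proof is correct and follows essentially the same route as the paper's: reduce via \cref{closecomp} (with the trivial choice of $W$) to showing $k(a_0)\le a_1$, then use minimality of $a_0$ together with elementarity of $k$ and the commutativity $k\circ j_{U_0}=j_{U_1}$ to rule out $a_1<k(a_0)$. The only cosmetic differences are that you phrase the final step as a contradiction and supply an explicit justification (that $v$ would generate $M_0$) for the minimality step that the paper leaves implicit.
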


\subsection{Application: bounds on the canonical comparison}

Another application of the minimality of definable embeddings is the following theorem which classifies the canonical comparison of any pair of ultrafilters externally in terms of the seed orders of their ultrapowers.

\begin{thm}[Ultrapower Axiom]\label{canonicalclassification}
Suppose \(U_0\) and \(U_1\) are uniform ultrafilters. Let \(M_0 = \textnormal{Ult}(V,U_0)\) and \(M_1 = \textnormal{Ult}(V,U_1)\). Let \(\langle W_0,W_1\rangle\) be the canonical comparison of \(\langle U_0,U_1\rangle\). 

Then \(W_0\) is the least uniform ultrafilter \(W\) in the seed order of \(M_0\) such that there is an elementary embedding \(k:M_1\to \textnormal{Ult}(M_0,W)\) with \(k([\textnormal{id}]_{U_1}) = [\textnormal{id}]_{W}\). Similarly, \(W_1\) is the least uniform ultrafilter \(W\) in the seed order of \(M_1\) such that there is an elementary embedding \(k:M_0\to \textnormal{Ult}(M_1,W)\) with \(k([\textnormal{id}]_{U_0}) = [\textnormal{id}]_{W}\). 
\begin{proof}
Suppose \(W\) is an ultrafilter of \(M_0\) such that there is an elementary embedding \(k: M_1 \to \text{Ult}(M_0,W)\) with \(k([\text{id}]_{U_1}) =  [\textnormal{id}]_{W}\). We must show \(W_0\wo W\) in \(M_0\). Working in \(M_0\), fix a comparison \(\langle Z_0,Z\rangle\) of \(\langle W_0,W\rangle\) to a common model \(P\). We have 
\begin{align}
j_{Z_0}([\text{id}]_{W_0}) &= j_{Z_0}(j_{W_1}([\textnormal{id}]_{U_1}))\label{k3}\\
&\leq j_Z(k([\textnormal{id}]_{U_1}))\label{k2} \\
&= j_Z([\text{id}]_{W}) \label{k1} 
\end{align}
\begin{figure}
\begin{center}\includegraphics[scale=.8]{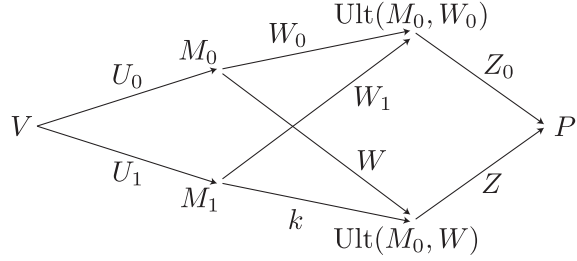}
\caption{The minimality of the canonical comparison}
\end{center}
\end{figure}
Here \cref{k3} follows from the fact that \(\langle W_0,W_1\rangle\) is canonical,  and \cref{k1} follows from the key property of \(k\). Finally, \cref{k2} is a consequence of the minimality of definable embeddings: since \(j_{Z_0}\circ j_{W_1}\) is definable in \(\text{Ult}(V,U_1)\), \[j_Z\circ k\restriction \text{Ord} \geq  j_{Z_0}\circ j_{W_1}\restriction \text{Ord}\] from which \cref{k2} follows. Thus \( j_{Z_0}([\text{id}]_{W_0})\leq j_Z([\text{id}]_{W})\), so the comparison \(\langle Z_0,Z\rangle\) witnesses \(W_0\wo W\) in \(M_0\), as desired.

The second half of the theorem involving \(W_1\) is exactly the same.
\end{proof}
\end{thm}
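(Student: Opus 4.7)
The plan is to verify two assertions about $W_0$: first, that it belongs to the class of ultrafilters under consideration, and second, that it is $\wo$-minimal in $M_0$ among such ultrafilters. The witness for membership is immediate: since $\langle W_0, W_1\rangle$ is a comparison, $\textnormal{Ult}(M_0, W_0) = \textnormal{Ult}(M_1, W_1)$, so $k := j_{W_1}$ maps $M_1$ into $\textnormal{Ult}(M_0, W_0)$, and the canonicity clause gives precisely $[\textnormal{id}]_{W_0} = j_{W_1}([\textnormal{id}]_{U_1}) = k([\textnormal{id}]_{U_1})$. The rest of the proof is to establish minimality.

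For the minimality part, let $W$ be any uniform ultrafilter of $M_0$ with an elementary embedding $k : M_1 \to \textnormal{Ult}(M_0, W)$ satisfying $k([\textnormal{id}]_{U_1}) = [\textnormal{id}]_W$. Working inside $M_0$ and applying the Ultrapower Axiom, I would fix a comparison $\langle Z_0, Z\rangle$ of $\langle W_0, W\rangle$ by internal ultrafilters to a common model $P$, and show that $\langle Z_0, Z\rangle$ witnesses $W_0 \wo W$ in $M_0$. The required inequality $j_{Z_0}([\textnormal{id}]_{W_0}) \leq j_Z([\textnormal{id}]_W)$ is obtained by chaining three facts. By canonicity, $j_{Z_0}([\textnormal{id}]_{W_0}) = j_{Z_0}(j_{W_1}([\textnormal{id}]_{U_1}))$. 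The composite $j_{Z_0} \circ j_{W_1}$ is the ultrapower embedding of $M_1$ by the internal iteration $(W_1, Z_0)$ in the sense of \cref{iterating}, hence is definable over $M_1$, while $j_Z \circ k$ is another elementary embedding of $M_1$ into the common target $P$; so \cref{minult} applied inside $M_1$ yields $j_{Z_0}(j_{W_1}([\textnormal{id}]_{U_1})) \leq j_Z(k([\textnormal{id}]_{U_1}))$. Finally the hypothesis on $k$ gives $j_Z(k([\textnormal{id}]_{U_1})) = j_Z([\textnormal{id}]_W)$, completing the chain.

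The analogous statement for $W_1$ is entirely symmetric, swapping the roles of $U_0$ and $U_1$ throughout. The subtle step, which I expect to be the main point to verify carefully, is that $j_{Z_0} \circ j_{W_1}$ really is a definable embedding of $M_1$ landing in the same model $P$ as $j_Z \circ k$; this requires observing that the common model of the $M_0$-comparison $\langle Z_0, Z\rangle$ is $\textnormal{Ult}(\textnormal{Ult}(M_0, W_0), Z_0) = \textnormal{Ult}(\textnormal{Ult}(M_0, W), Z) = P$, and then using $\textnormal{Ult}(M_0, W_0) = \textnormal{Ult}(M_1, W_1)$ to reinterpret $j_{Z_0}$ as taking $M_1$ into $P$ via $j_{W_1}$. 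Once this bookkeeping is in hand, \cref{minult} closes the argument.
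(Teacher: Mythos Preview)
Your proposal is correct and follows essentially the same approach as the paper: fix a comparison $\langle Z_0,Z\rangle$ of $\langle W_0,W\rangle$ in $M_0$, then chain canonicity, the minimality of definable embeddings (\cref{minult}) applied to $j_{Z_0}\circ j_{W_1}$ versus $j_Z\circ k$ as maps from $M_1$ into $P$, and the hypothesis on $k$. Your addition of the explicit check that $W_0$ itself lies in the class (via $k=j_{W_1}$) and your careful verification that both composites land in the same target $P$ are points the paper leaves implicit, but the argument is the same.
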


Regarding the previous theorem we mention an example as a kind of disclaimer. Suppose, in the situation of \cref{canonicalclassification}, that \(M_1\) is the ultrapower by a \(\mu\)-ultrafilter \(U_1\) and \(M_0\) is the ultrapower by its derived normal ultrafilter \(U_0\) on \(\kappa\), see \cref{mudef}. Thus \(M_0\) factors into \(M_1\). In other words there is an elementary embedding \(k:M_0\to \text{Ult}(M_1,F)\) where \(F\) is the principal ultrafilter whose seed is the empty sequence. Clearly \(F\) is the \(\wo\)-least uniform ultrafilter of \(M_1\), so one might misinterpret \cref{canonicalclassification} as asserting that \(W_1 \equiv_\text{RK} F\). This would imply, however, that \(M_1 = \text{Ult}(M_0,W_0)\), which is impossible in this case because \(M_1\nsubseteq M_0\): indeed, \(U_0\in M_1\setminus M_0\). In fact, \(W_1 = U_0\). The reason this does not contradict \cref{canonicalclassification} is that \(U_0 \swo F'\) where \(F'\) is the principal ultrafilter with seed \(\langle \kappa\rangle\). It is the ultrafilter \(F'\), and not \(F\), that according to \cref{canonicalclassification} lies above \(W_0\) in the seed order of \(M_0\), because of the demand there that \(k([\text{id}]_{F'}) = [\text{id}]_{U_0}\).

As a corollary of \cref{canonicalclassification}, we obtain the following theorem, which is quite useful for inductions along the seed order: it is used to show that one's inductive hypothesis actually holds of the comparison ultrafilters of the canonical comparison of the least counterexample. For an example of an application of \cref{bounding}, see \cref{decomp}.

\begin{cor}[Ultrapower Axiom]\label{bounding}
Suppose \(U_0\) and \(U_1\) are uniform ultrafilters. Let \(M_0 = \textnormal{Ult}(V,U_0)\) and \(M_1 = \textnormal{Ult}(V,U_1)\). Let \(\langle W_0,W_1\rangle\) be the canonical comparison of \(\langle U_0,U_1\rangle\). 

Then in \(M_0\), \(W_0 \wo j_{U_0}(W_1)\), and in  \(M_1\), \(W_1 \wo j_{U_1}(W_0)\).
\begin{proof}
We show that in \(M_0\), \(W_0 \wo j_{U_0}(W_1)\), since the second assertion has the same proof. For this, we apply \cref{canonicalclassification}: we claim there is an elementary embedding \(k : M_1\to \text{Ult}(M_0, j_{U_0}(W_1))\) such that \(k([\text{id}]_{U_1}) = [\text{id}]_{j_{U_0}(W_1)}\).

Indeed, let \(k = j_{U_0}\restriction M_1\). Then as usual, \begin{align*}k(M_1) &= j_{U_0}(M_1) \\&= j_{U_0}(\textnormal{Ult}(V,U_1)) \\&= \textnormal{Ult}(j_{U_0}(V),j_{U_0}(U_1)) \\&= \textnormal{Ult}(M_0,j_{U_0}(U_1))\end{align*} so \(k\) is an elementary embedding from \(M_1\) to \(\text{Ult}(M_0, j_{U_0}(W_1))\). Moreover \(k([\text{id}]_{U_1}) = j_{U_0}([\text{id}]_{U_1}) = [\text{id}]_{j_{U_0}(U_1)}\). Applying \cref{canonicalclassification}, we have that \(W_0 \wo j_{U_0}(W_1)\) in \(M_0\), as desired.
\end{proof}
\end{cor}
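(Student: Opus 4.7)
The plan is to deduce the corollary as an immediate application of the preceding Theorem \ref{canonicalclassification}. That theorem gives a variational characterization of the canonical comparison: $W_0$ is the $\wo$-least uniform ultrafilter $W$ of $M_0$ for which there exists an elementary embedding $k : M_1 \to \textnormal{Ult}(M_0, W)$ with $k([\text{id}]_{U_1}) = [\text{id}]_W$, and symmetrically for $W_1$. It therefore suffices to exhibit such a witness $k$ with the right seed behavior; the seed-order inequality will then be handed to us by canonicalclassification.

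The natural witness is the restricted ultrapower embedding $k := j_{U_0} \restriction M_1$. Since $j_{U_0} : V \to M_0$ is elementary, so is its restriction to $M_1$; applying $j_{U_0}$ to the identity $M_1 = \textnormal{Ult}(V, U_1)$ gives that $k$ sends $M_1$ into $j_{U_0}(M_1) = \textnormal{Ult}(M_0, j_{U_0}(U_1))$, an internal ultrapower of $M_0$. The seed identity is then immediate from elementarity applied to $j_{U_0}$: one has $k([\text{id}]_{U_1}) = j_{U_0}([\text{id}]_{U_1}) = [\text{id}]_{j_{U_0}(U_1)}$. Plugging this $k$ into the characterization provided by canonicalclassification yields the desired inequality $W_0 \wo j_{U_0}(W_1)$ in the seed order of $M_0$. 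The second half of the corollary, the inequality $W_1 \wo j_{U_1}(W_0)$ in $M_1$, follows by the symmetric argument with the roles of $U_0$ and $U_1$ interchanged.

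The main obstacle has really been overcome already in canonicalclassification, which reduces a seed-order inequality to the exhibition of a single elementary embedding with the correct seed behavior. Once that reduction is in hand, the content of this corollary is only the observation that the tautologically natural embedding $j_{U_0} \restriction M_1$ is a witness, which is essentially a one-line computation with the elementarity of $j_{U_0}$. What makes the corollary useful in practice, and worth isolating, is that it packages this triviality into a clean bound on the ultrafilters of the canonical comparison in terms of $j_{U_0}$-images and $j_{U_1}$-images, which is exactly the form one needs in later inductive arguments along the seed order.
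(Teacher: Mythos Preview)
Your proposal is correct and is essentially identical to the paper's own proof: both take $k = j_{U_0}\restriction M_1$, verify by elementarity that $k$ maps $M_1$ into $\textnormal{Ult}(M_0, j_{U_0}(U_1))$ with $k([\text{id}]_{U_1}) = [\text{id}]_{j_{U_0}(U_1)}$, and then invoke \cref{canonicalclassification}. (Note that, exactly as in the paper, the witness you exhibit lands in $\textnormal{Ult}(M_0, j_{U_0}(U_1))$ rather than $\textnormal{Ult}(M_0, j_{U_0}(W_1))$; the bound actually established is $W_0 \wo j_{U_0}(U_1)$, which is the form used downstream, e.g.\ in \cref{decomplemma}.)
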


We note that \cref{bounding} is best possible in the sense that \(W_0 = j_{U_0}(W_1)\) and \(W_1 = j_{U_1}(W_0)\) is possible simultaneously: this happens for example whenever \(U_0\) and \(U_1\) are minimal ultrafilters such that \(\textsc{sp}(U_0) < \textsc{crt}(U_1)\).

A different perspective on \cref{canonicalclassification} gives the best possible lower bound on the canonical comparison as well. For this we need a definition.

\begin{defn}
Suppose \(U_0\) is a uniform ultrafilter. The {\it translation function} \(t_{U_0}\) is the function from the class of uniform ultrafilters to the class of uniform ultrafilters in \(\textnormal{Ult}(V,U_0)\) that sends a uniform ultrafilter \(U_1\) to the unique ultrafilter \(W_0\) of \(\textnormal{Ult}(V,U_0)\) such that for some \(W_1\), \(\langle W_0,W_1\rangle\) is the canonical comparison of \(\langle U_0,U_1\rangle\).
\end{defn}

We note that if \(U \equiv_{\textnormal{RK}} U'\) then \(t_{U} = t_{U'}\). Moreover, if \(W \equiv_{\textnormal{RK}} W'\) then \(t_U(W) \equiv_{\textnormal{RK}} t_U(W')\). On the other hand, it can happen that \(W \leq_{\textnormal{RK}} W'\) but \(t_U(W) \nleq_{\textnormal{RK}} t_U(W')\): indeed \(t_U(U)\) is always the principal ultrafilter of \(\text{Ult}(V,U)\) derived from \([\text{id}]_U\), but if \(U\) is a \(\mu\)-ultrafilter, there are ultrafilters \(Z <_\text{RK} U\) such that \(t_U(Z)\) is a nonprincipal ultrafilter of \(\text{Ult}(V,U)\). 

While the function \(t_U\) does not preserve the Rudin-Keisler order, it {\it does} preserve the seed order. 

\begin{prp}\label{fttf}
Suppose \(U\) is a uniform ultrafilter. If \(U_0 \swo U_1\) then \(t_U(U_0)\swo t_U(U_1)\) in \(\textnormal{Ult}(V,U)\).
\begin{proof}
Assume \(U_0\swo U_1\). Let \(U_0^* = t_U(U_0)\) and \(U_1^* =  t_U(U_1)\). Let \(\langle U^*_0,W_0\rangle\) be the canonical comparison of \(\langle U,U_0\rangle\) to a common model \(N_0\). Let \(\langle U^*_1,W_1\rangle\) be the canonical comparison of \(\langle U,U_1\rangle\) to a common model \(N_1\). Let \(\langle i_0,i_1\rangle\) be a comparison of \(\langle N_0,N_1\rangle\). We must show \(i_0([\text{id}]_{U_0^*}) < i_0([\text{id}]_{U_1^*})\). But note that \([\text{id}]_{U_0^*} = j_{W_0}([\text{id}]_{U_0})\) by the definition of a canonical comparison, and similarly \([\text{id}]_{U_1^*} = j_{W_1}([\text{id}]_{U_1})\). Since \(\langle i_0\circ j_{W_0}, i_1\circ j_{W_1}\rangle\) is a comparison of \(\langle U_0,U_1\rangle\), it must witness that \(U_0\swo U_1\), so \(i_0\circ j_{W_0}([\text{id}]_{U_0}) < i_1\circ j_{W_1}([\text{id}]_{U_1})\). Replacing like terms, \(i_0([\text{id}]_{U_0^*}) <  i_1([\text{id}]_{U_1^*})\), as desired.
\end{proof}
\end{prp}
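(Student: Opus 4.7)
The plan is to pull the witnessing comparison for $U_0^* := t_U(U_0) \swo U_1^* := t_U(U_1)$ back to a comparison of $\langle U_0, U_1\rangle$ and then use the well-definedness of the seed order (\cref{welldefined}) to transport the strict inequality.

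First I would unpack the hypothesis and the definition of $t_U$. Let $\langle U_0^*, W_0\rangle$ be the canonical comparison of $\langle U, U_0\rangle$ and $\langle U_1^*, W_1\rangle$ be the canonical comparison of $\langle U, U_1\rangle$. By canonicity I get the two key identities
\[
[\textnormal{id}]_{U_0^*} = j_{W_0}([\textnormal{id}]_{U_0}), \qquad [\textnormal{id}]_{U_1^*} = j_{W_1}([\textnormal{id}]_{U_1}),
\]
which will be the bridge between inequalities in $\textnormal{Ult}(V,U)$ and inequalities at the top of the diagram.

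Next, working inside $\textnormal{Ult}(V,U)$, I would apply the Ultrapower Axiom to fix an arbitrary comparison $\langle i_0, i_1\rangle$ of $\langle U_0^*, U_1^*\rangle$. The crucial claim is that $\langle i_0 \circ j_{W_0}, \, i_1 \circ j_{W_1}\rangle$ is a (genuine, commuting) comparison of $\langle U_0, U_1\rangle$. The non-trivial part is commutativity requirement (6) of \cref{comparison}: since $\langle U_0^*, W_0\rangle$ and $\langle U_1^*, W_1\rangle$ are themselves comparisons, $j_{W_0}\circ j_{U_0} = j_{U_0^*}\circ j_U$ and $j_{W_1}\circ j_{U_1} = j_{U_1^*}\circ j_U$; composing with $i_0$ and $i_1$ respectively and then using that $\langle i_0, i_1\rangle$ is a comparison of $\langle U_0^*, U_1^*\rangle$ yields
\[
i_0\circ j_{W_0}\circ j_{U_0} \;=\; i_0\circ j_{U_0^*}\circ j_U \;=\; i_1\circ j_{U_1^*}\circ j_U \;=\; i_1\circ j_{W_1}\circ j_{U_1}.
\]

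Finally, since $U_0 \swo U_1$ by hypothesis, \cref{welldefined} forces the comparison $\langle i_0 \circ j_{W_0}, i_1\circ j_{W_1}\rangle$ to witness $U_0 \swo U_1$, i.e.\
\[
i_0\bigl(j_{W_0}([\textnormal{id}]_{U_0})\bigr) \;<\; i_1\bigl(j_{W_1}([\textnormal{id}]_{U_1})\bigr).
\]
Substituting the canonicity identities collapses this to $i_0([\textnormal{id}]_{U_0^*}) < i_1([\textnormal{id}]_{U_1^*})$, so $\langle i_0, i_1\rangle$ witnesses $U_0^* \swo U_1^*$ in $\textnormal{Ult}(V,U)$. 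I expect the only real obstacle to be bookkeeping: making sure the commutativity check is done cleanly enough that \cref{welldefined} actually applies, since everything else is purely formal once the canonicity identities are in hand.
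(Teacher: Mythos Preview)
Your proposal is correct and follows essentially the same argument as the paper: set up the two canonical comparisons, take a comparison \(\langle i_0,i_1\rangle\) of \(\langle U_0^*,U_1^*\rangle\) in \(\textnormal{Ult}(V,U)\), observe that \(\langle i_0\circ j_{W_0}, i_1\circ j_{W_1}\rangle\) is a comparison of \(\langle U_0,U_1\rangle\), invoke \cref{welldefined}, and substitute the canonicity identities. Your write-up is in fact slightly more careful than the paper's, since you explicitly verify the commutativity requirement and make clear that the comparison of \(\langle U_0^*,U_1^*\rangle\) is taken inside \(\textnormal{Ult}(V,U)\).
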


The preceding proposition lets us think of \(t_U(W)\) as a kind of copy of \(W\) in \(\textnormal{Ult}(V,U)\), and this leads to the following theorem, where we use \(|\cdot |_S\) to denote the rank function for the seed order.

\begin{cor}
Suppose \(U_0\) and \(U_1\) are uniform ultrafilters. Let \(M_0 = \textnormal{Ult}(V,U_0)\) and \(M_1 = \textnormal{Ult}(V,U_1)\). Let \(\langle W_0,W_1\rangle\) be the canonical comparison of \(\langle U_0,U_1\rangle\). Then \(|W_0|_S^{M_0}\geq |U_1|_S\) and \(|W_1|_S^{M_1}\geq |U_0|_S\).
\begin{proof}
To see for example that \(|W_0|_S^{M_0}\geq |U_1|_S\), note that \(t_{U_0}\) order embeds the initial segment of the seed order below \(U_1\) into the initial segment of the seed order of \(M_0\) below \(W_0\), by \cref{fttf}.
\end{proof}
\end{cor}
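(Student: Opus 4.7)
The plan is to read off the corollary directly from \cref{fttf} together with the observation that, by its very definition, the translation function satisfies $t_{U_0}(U_1) = W_0$.

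First I would note the identity $t_{U_0}(U_1) = W_0$: this is immediate because $\langle W_0,W_1\rangle$ is the canonical comparison of $\langle U_0,U_1\rangle$, and $t_{U_0}(U_1)$ was defined to be the first coordinate of this canonical comparison. Similarly $t_{U_1}(U_0) = W_1$.

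Next, by \cref{fttf}, the map $t_{U_0}$ is strictly order-preserving from $\wo$ to $\wo^{M_0}$. So if $V$ is any uniform ultrafilter with $V \swo U_1$, then $t_{U_0}(V) \swo^{M_0} t_{U_0}(U_1) = W_0$. Hence $t_{U_0}$ restricts to a strictly order-preserving map from the initial segment $\{V : V \swo U_1\}$ of the seed order into the initial segment $\{W : W \swo^{M_0} W_0\}$ of the seed order of $M_0$.

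Finally, since the seed order is a wellorder (both in $V$ and in $M_0$ under the Ultrapower Axiom), a strictly order-preserving map between two wellorders forces the rank of the domain's wellorder to be at most that of the target's. Therefore $|U_1|_S \leq |W_0|_S^{M_0}$. The second inequality $|U_0|_S \leq |W_1|_S^{M_1}$ follows by the symmetric argument applied to $t_{U_1}$.

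There is essentially no real obstacle, since the substantive content is already packaged into \cref{fttf}; the only thing to be careful about is the bookkeeping that identifies $t_{U_0}(U_1)$ with $W_0$ and the elementary fact that a strictly order-preserving map between wellorders gives an inequality of order types.
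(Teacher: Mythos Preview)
Your proposal is correct and follows essentially the same approach as the paper: both use \cref{fttf} to conclude that $t_{U_0}$ order-embeds the initial segment below $U_1$ into the initial segment of the seed order of $M_0$ below $W_0 = t_{U_0}(U_1)$, and then read off the rank inequality. One minor notational point: avoid using $V$ as a variable name for an ultrafilter, since $V$ denotes the universe throughout.
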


\section{The \(E\)-order}\label{ZFCSection}
Motivated by \cref{closecomp}, we define a variant of the seed order which is equal to the seed order assuming the Ultrapower Axiom. The advantage to the \(E\)-order is that its transitivity is provable in ZFC alone. As a corollary, we show in ZFC that the seed order itself is an antisymmetric wellfounded relation, though it is not clear it must be transitive.

We begin by generalizing the notion of a comparison, relaxing the requirement that the comparison ultrafilters be internal.

\begin{defn}
Suppose \(U_0\) and \(U_1\) are countably complete ultrafilters. The pair \(\langle W_0,W_1\rangle\) is an {\it external comparison} of \(\langle U_0,U_1\rangle\) if 
\begin{enumerate}[(1)]
\item \((\textnormal{Ult}(V,U_0),W_0)\vDash W_0\) is a countably complete ultrafilter
\item \((\textnormal{Ult}(V,U_1),W_1)\vDash W_1\) is a countably complete ultrafilter
\item \(\textnormal{Ult}(\textnormal{Ult}(V,U_0),W_0) = \textnormal{Ult}(\textnormal{Ult}(V,U_1),W_1)\)
\item \(j_{W_0}\circ j_{U_0} = j_{W_1}\circ j_{U_1}\)
\end{enumerate}
The external comparison \(\langle W_0,W_1\rangle\) is {\it \(0\)-internal} if \(W_0\in \text{Ult}(V,U_0)\) and {\it \(1\)-internal} if \(W_0\in \text{Ult}(V,U_1)\). A comparison that is either \(0\)-internal or \(1\)-internal is called a {\it semi-comparison}.
\end{defn}

The ultrapowers above are formed using only functions from their domain models. We will not make any use of fully external comparisons, so our vagueness about whether the ultrapowers must be wellfounded should not be an issue: note that if a comparison is \(0\)-internal or \(1\)-internal then all the models involved must be wellfounded. It is the comparisons that are \(0\)-internal or \(1\)-internal in which we will be interested. 

The various abuses of notations involving comparisons generalize to external comparisons. In particular, we will sometimes denote an external comparison \(\langle W_0,W_1\rangle\) by \(\langle k_0,k_1\rangle\) where \(k_0 = j_{W_0}\) and \(k_1 = j_{W_1}\). We point out that if \(k_0:\text{Ult}(V,U_0)\to N\) and \(k_1:\text{Ult}(V,U_1)\to N\) are elementary embeddings with \(k_0\circ j_{U_0} = k_1\circ j_{U_1}\), then if \(k_0\) is a the ultrapower embedding associated with the possibly external \(\text{Ult}(V,U_0)\)-ultrafilter \(W_0\), \(k_1\) must also be a (possibly external) ultrapower embedding, associated for example with the ultrafilter derived from \(k_1\) using \(\langle k_0([\text{id}]_{U_0}),[\text{id}]_{W_0}\rangle\).

\begin{example}\label{eexample} For any two countably complete ultrafilters \(U_0\) and \(U_1\), there are \(0\)-internal and \(1\)-internal comparisons of \(\langle U_0,U_1\rangle\), namely \(\langle j_{U_0}(j_{U_1}), j_{U_0}\rangle\) and  \(\langle j_{U_1}, j_{U_1}(j_{U_0})\rangle\).
\end{example}

The main point of the basic theory of the \(E\)-order is that a slight generalization of this trivial example, which appears implicitly in \cref{Etransitive}, can be used in place of some of the simpler applications of the Ultrapower Axiom. We state this as a lemma:

\begin{lma}
Suppose \(M\) is a transitive model of \textnormal{ZFC}, that \(U_0\) is an \(M\)-ultrafilter, and \(U_1\) is a countably complete ultrafilter of \(M\). Then \(\langle U_0,U_1\rangle\) admits a \(0\)-internal comparison relative to \(M\).
\end{lma}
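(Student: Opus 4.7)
The plan is to generalize \cref{eexample} to the relative setting by mimicking the construction $\langle j_{U_0}(j_{U_1}), j_{U_0}\rangle$ word for word, with $M$ in place of $V$. Since $U_1 \in M$, the map $j^M_{U_1}$ is a definable class of $M$ and $\text{Ult}(M,U_1)$ is a definable class of $M$ contained in $M$. Since $U_0$ is an $M$-ultrafilter, $j_{U_0} : M \to \text{Ult}(M,U_0)$ is defined by using only functions in $M$, so in particular $j_{U_0}$ acts on elements of $\text{Ult}(M,U_1)$, and by the elementarity of $j_{U_0}$ applied to the definition of $\text{Ult}(\cdot,\cdot)$, we have
\[ j_{U_0}(\text{Ult}(M,U_1)) = \text{Ult}(j_{U_0}(M), j_{U_0}(U_1)) = \text{Ult}(\text{Ult}(M,U_0), j_{U_0}(U_1)). \]

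With this in hand, I would set $P = \text{Ult}(\text{Ult}(M,U_0), j_{U_0}(U_1))$ and let $W_0 = j_{U_0}(U_1) \in \text{Ult}(M,U_0)$, so that $k_0 = j_{W_0}^{\text{Ult}(M,U_0)}:\text{Ult}(M,U_0)\to P$ is an internal ultrapower embedding of $\text{Ult}(M,U_0)$; this will give the $0$-internal side of the comparison. For the other side, I would take $k_1 = j_{U_0}\restriction \text{Ult}(M,U_1) : \text{Ult}(M,U_1) \to P$, which lands in $P$ by the display above. To identify $k_1$ as an ultrapower embedding associated to some (possibly external) $\text{Ult}(M,U_1)$-ultrafilter $W_1$, I would take $W_1$ to be derived from $k_1$ using $\langle k_0([\text{id}]_{U_0}), [\text{id}]_{W_0}\rangle$, following the remark right after the definition of external comparison.

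To finish, I would verify the remaining clauses of the definition of an external comparison. The commutativity clause $k_0\circ j_{U_0} = k_1\circ j_{U_1}$ is a direct calculation: for $x\in M$,
\[ k_0(j_{U_0}(x)) = j_{U_0}(j_{U_1})(j_{U_0}(x)) = j_{U_0}(j_{U_1}(x)) = k_1(j_{U_1}(x)), \]
where the middle equality is elementarity of $j_{U_0}$ applied to the defining property of $j_{U_1}(x)$. The agreement of the target models and the ultrafilter clauses then follow automatically from the fact that $k_0$ is an honest internal ultrapower embedding and $k_1$ is a genuine elementary embedding into the same transitive target $P$.

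The one place any care is required is the elementarity step that lets us transport $j_{U_1}$, $\text{Ult}(M,U_1)$, and "$U_1$ is a countably complete ultrafilter" across $j_{U_0}$; this is not really an obstacle, since $U_0$ being an $M$-ultrafilter is precisely the assumption needed to get \L o\'s's theorem for $j_{U_0}$ on statements about objects in $M$. Modulo that bookkeeping, no comparison hypothesis on $M$ is invoked, which is the whole point: the $0$-internal comparison is produced ``by hand'' from $j_{U_0}$ and its image of $U_1$, exactly as in the trivial \cref{eexample}.
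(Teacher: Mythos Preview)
Your proposal is correct and is exactly the argument the paper intends: the lemma is stated without proof, introduced as ``a slight generalization of this trivial example'' (\cref{eexample}) that ``appears implicitly in \cref{Etransitive}'', and your construction $\langle j_{U_0}(U_1), j_{U_0}\restriction \textnormal{Ult}(M,U_1)\rangle$ is precisely that generalization, with the commutativity check and the identification of $W_1$ carried out just as the paper would have.
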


The following definition is the natural generalization of the notion of a canonical comparison to the context of external comparisons.

\begin{defn}
Suppose \(U_0\) and \(U_1\) are countably complete ultrafilters. Let \(a_0 = [\text{id}]_{U_0}\) and \(a_1 = [\text{id}]_{U_1}\). Suppose \(\langle W_0,W_1\rangle\) is an external comparison of \(\langle U_0,U_1\rangle\). Then \(\langle W_0,W_1\rangle\) is \(0\)-canonical if \(W_0\) is the \(\text{Ult}(V,U_0)\)-ultrafilter derived from \(j_{W_0}\) using \(j_{W_1}(a_1)\), and \(1\)-canonical if \(W_1\) is the \(\text{Ult}(V,U_1)\)-ultrafilter derived from \(j_{W_1}\) using \(j_{W_0}(a_0)\). Finally, \(\langle W_0,W_1\rangle\) is canonical if it is \(0\)-canonical and \(1\)-canonical.
\end{defn}

We have the following analog of \cref{closetoultra}, whose proof we omit:

\begin{thm}\label{Eclosetoultra}
Suppose \(U_0\) and \(U_1\) are countably complete ultrafilters. Let \(M_0 = \textnormal{Ult}(V,U_0)\) and \(M_1 = \textnormal{Ult}(V,U_1)\), and suppose that for some model \(N\),  \begin{align*}k_0&:M_0\to N\\ k_1&: M_1\to N\end{align*} are elementary embeddings such that \(k_0\circ j_{U_0} = k_1\circ j_{U_1}\). 

Then \(\langle U_0,U_1\rangle\) admits an external canonical comparison \(\langle W_0,W_1\rangle\) to a common model \(P\), which itself embeds in \(N\) by an elementary embedding \(h: P\to N\) such that \(h \circ j_{W_0} = k_0\) and \(h\circ j_{W_1} = k_1\). Finally, if \(k_0\) is close to \(M_0\), then \(\langle W_0,W_1\rangle\) is \(0\)-internal, and if \(k_1\) is close to \(M_1\), then \(W_1\) is \(1\)-internal.
\end{thm}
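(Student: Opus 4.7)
The plan is to mimic the proof of \cref{closetoultra} essentially verbatim, observing that closeness of $k_0$ and $k_1$ was only used there to force the derived ultrafilters to lie inside $M_0$ and $M_1$. Dropping closeness, the same construction still produces an external comparison, and each closeness hypothesis that \emph{is} assumed in the last sentence contributes the corresponding internality.

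First I would set $a_0=[\text{id}]_{U_0}$, $a_1=[\text{id}]_{U_1}$, and $\ell=k_0\circ j_{U_0}=k_1\circ j_{U_1}$, and form the hull
\[X=\{\ell(f)(k_0(a_0)\cup k_1(a_1)) : f\in V\}.\]
By the usual \L os-style argument $X\prec N$, and the computations in \cref{closetoultra} (writing an element of $M_i$ as $j_{U_i}(f)(a_i)$ and pushing forward) show that $k_0[M_0]\cup k_1[M_1]\subseteq X$. Let $W_0$ be the (possibly external) $M_0$-ultrafilter derived from $k_0$ using $k_1(a_1)$, and $W_1$ the $M_1$-ultrafilter derived from $k_1$ using $k_0(a_0)$. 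The standard factor-map construction gives elementary embeddings
\[h_0:\textnormal{Ult}(M_0,W_0)\to N,\qquad h_1:\textnormal{Ult}(M_1,W_1)\to N\]
with $h_0\circ j_{W_0}=k_0$ and $h_1\circ j_{W_1}=k_1$; wellfoundedness of the two ultrapowers comes for free from the existence of these factor maps.

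Next I would run the range calculation from \cref{closetoultra} to show $\textnormal{ran}(h_0)=X=\textnormal{ran}(h_1)$. Since each ultrapower is then isomorphic to the Mostowski collapse of $X$, the two ultrapowers coincide and $h_0=h_1$; denote the common model by $P$ and the common factor by $h$. The commutativity requirement $j_{W_0}\circ j_{U_0}=j_{W_1}\circ j_{U_1}$ follows from injectivity of $h$ applied to $h\circ j_{W_0}\circ j_{U_0}=k_0\circ j_{U_0}=k_1\circ j_{U_1}=h\circ j_{W_1}\circ j_{U_1}$. Canonicity is then a four-line calculation copied directly from \cref{closetoultra}: for $Y\in P([\textsc{sp}(W_0)]^{|a_1|})\cap M_0$,
\[Y\in W_0\iff k_1(a_1)\in k_0(Y)\iff h(j_{W_1}(a_1))\in h(j_{W_0}(Y))\iff j_{W_1}(a_1)\in j_{W_0}(Y),\]
so $W_0$ is derived from $j_{W_0}$ using $j_{W_1}(a_1)$, and symmetrically for $W_1$.

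Finally, the two closeness clauses are immediate from the definition of closeness: since $k_1(a_1)$ is a finite tuple of ordinals in $N$, if $k_0$ is close to $M_0$ then the derived ultrafilter $W_0$ lies in $M_0$, yielding $0$-internality, and similarly for $W_1$. I do not anticipate any serious obstacle, since every step is a routine transcription of the argument in \cref{closetoultra}; the only conceptual point is that without closeness we must allow $W_0$ and $W_1$ to be external $M_0$- and $M_1$-ultrafilters, which is precisely the framework the preceding definitions have set up.
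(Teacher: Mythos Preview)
Your proposal is correct and is precisely the approach the paper has in mind: the paper explicitly omits the proof of \cref{Eclosetoultra} as being the direct analog of \cref{closetoultra}, and your write-up faithfully transcribes that argument while correctly isolating closeness as the sole ingredient responsible for internality.
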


By \cref{eexample}, semi-comparisons do not really compare anything, in general. A semi-comparison is only meaningful in the following context.

\begin{defn}
The {\it \(E\)-order} is a binary relation \(\E\) defined for uniform ultrafilters \(U_0\) and \(U_1\) by \(U_0\E U_1\) if and only if there is a \(1\)-internal comparison \(\langle W_0,W_1\rangle\) of \(\langle U_0,U_1\rangle\) such that \(j_{W_0}([\textnormal{id}]_{U_0}) \leq j_{W_1}([\textnormal{id}]_{U_1})\). We say in this case that \(\langle W_0,W_1\rangle\) {\it witnesses} \(U_0\E U_1\) or that \(\langle W_0,W_1\rangle\) witnesses the \(E\)-order.

The {\it strict \(E\)-order} is the binary relation \(\sE\) defined for uniform ultrafilters \(U_0\) and \(U_1\) by \(U_0\sE U_1\) if and only if there is a \(1\)-internal comparison \(\langle W_0,W_1\rangle\) of \(\langle U_0,U_1\rangle\) such that \(j_{W_0}([\textnormal{id}]_{U_0}) < j_{W_1}([\textnormal{id}]_{U_1})\).
\end{defn}

Thus we do not define the strict \(E\)-order to be the strict part of the \(E\)-order. We will instead prove this. By \cref{closecomp}, we have the following:

\begin{prp}[Ultrapower Axiom]\label{Eextend}
The \(E\)-order is equal to the seed order.
\end{prp}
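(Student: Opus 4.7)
The plan is to prove the two inclusions $\wo \subseteq \E$ and $\E \subseteq \wo$, where only the second will require the Ultrapower Axiom.

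For the inclusion $\wo \subseteq \E$: suppose $U_0 \wo U_1$ is witnessed by a comparison $\langle W_0, W_1\rangle$ in the sense of \cref{comparison}. By definition of comparison, $W_1 \in \textnormal{Ult}(V, U_1)$, so $\langle W_0, W_1\rangle$ is $1$-internal. Since $j_{W_0}([\textnormal{id}]_{U_0}) \leq j_{W_1}([\textnormal{id}]_{U_1})$, this same pair witnesses $U_0 \E U_1$.

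For the inclusion $\E \subseteq \wo$ (where UA is used): suppose $U_0 \E U_1$ is witnessed by a $1$-internal comparison $\langle W_0, W_1\rangle$. Let $M_0 = \textnormal{Ult}(V, U_0)$ and $M_1 = \textnormal{Ult}(V, U_1)$. By $1$-internality, $W_1 \in M_1$, and by the definition of a comparison, we have the common target model $\textnormal{Ult}(M_0, W_0) = \textnormal{Ult}(M_1, W_1)$. Set $k = j_{W_0}$, viewed as an elementary embedding from $M_0$ into $\textnormal{Ult}(M_1, W_1)$. The witnessing condition for $U_0 \E U_1$ gives
\[
k([\textnormal{id}]_{U_0}) = j_{W_0}([\textnormal{id}]_{U_0}) \leq j_{W_1}([\textnormal{id}]_{U_1}).
\]
We may therefore apply \cref{closecomp} with $W = W_1$ to conclude $U_0 \wo U_1$.

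Combining the two directions yields $\E = \wo$. There is no real obstacle here: \cref{closecomp} has already done all the work, having been proved precisely to permit exactly this kind of relaxation of the internality requirement on one side of a comparison. The only mild subtlety is noticing that the witnessing $1$-internal comparison for the $E$-order supplies both ingredients needed by \cref{closecomp} (namely an internal ultrafilter $W_1$ on the $U_1$-side together with an embedding of $M_0$ into the $W_1$-ultrapower that moves $[\textnormal{id}]_{U_0}$ below $j_{W_1}([\textnormal{id}]_{U_1})$), with no commutativity hypothesis about $k$ required beyond what is already built into the notion of external comparison.
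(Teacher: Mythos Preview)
Your proof is correct and follows essentially the same approach as the paper, which simply cites \cref{closecomp} for the nontrivial direction. Your write-up just makes explicit the trivial inclusion $\wo \subseteq \E$ and the verification that a $1$-internal comparison witnessing the $E$-order supplies exactly the data needed to invoke \cref{closecomp}.
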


We now show that the \(E\)-order is a transitive, wellfounded, antisymmetric relation.

\begin{thm}\label{Etransitive}
The \(E\)-order is transitive.
\begin{proof}
Suppose \(U_0\E U_1\) and \(U_1\E U_2\). Let \(\langle h_0,h_1\rangle\) be a \(U_1\)-internal comparison of \(\langle U_0, U_1\rangle\) to a common model \(M\) witnessing \(U_0\E U_1\), and \(\langle i_1,i_2\rangle\) a \(U_2\)-internal comparison of \(\langle U_1, U_2\rangle\) to a common model \(N\) witnessing \(U_1\E U_2\). Let \(Q = i_1(M)\), which is well defined since \(M\) is a definable subclass of \(\text{Ult}(V,U_0)\). Then \(i_1\) restricts to an elementary embedding of \(M\) into \(Q\), but also \(i_1(h_1)\) is an ultrapower embedding from \(N\) into \(Q\). Moreover \(i_1(h_1)\circ i_1 = i_1 \circ h_1\). It follows that \(\langle i_1\circ h_0, i_1(h_1)\circ i_2\rangle\) is a \(U_2\)-internal comparison of \(\langle U_0,U_2\rangle\). Moreover this comparison witnesses \(U_0\E U_1\): \[i_1\circ h_0([\text{id}]_{U_0}) \leq i_1\circ h_1([\text{id}]_{U_1}) = i_1(h_1)\circ i_1([\text{id}]_{U_1}) \leq i_1(h_1)\circ i_2([\text{id}]_{U_2})\]
The inequalities above follow from the fact that \(\langle h_0,h_1\rangle\) and  \(\langle i_1,i_2\rangle\) witness the \(E\)-order.
\begin{figure}
\begin{center}
\includegraphics[scale=.8]{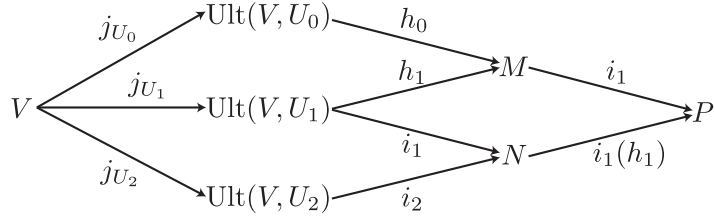}
\end{center}
\caption{The transitivity of the \(E\)-order}
\end{figure}
\end{proof}
\end{thm}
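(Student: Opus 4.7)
The plan is to adapt the proof of \cref{transitive} to the ZFC setting by exploiting the asymmetry of $1$-internal comparisons in place of the Ultrapower Axiom. Fix $1$-internal witnesses $\langle h_0, h_1 \rangle$ for $U_0 \E U_1$ targeting a common model $M$, and $\langle i_1, i_2 \rangle$ for $U_1 \E U_2$ targeting a common model $N$. The UA step in \cref{transitive} would compare $M$ and $N$ by internal ultrafilters over $\textnormal{Ult}(V, U_1)$; here I cannot, but the $1$-internality of the first comparison says that $h_1$ is an internal ultrapower of $\textnormal{Ult}(V, U_1)$ by some $W_1 \in \textnormal{Ult}(V, U_1)$, so the entire pair $(M, h_1)$ is a definable subclass of $\textnormal{Ult}(V, U_1)$ from $W_1$ and can therefore be pushed forward along $i_1$.

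Concretely, set $Q = i_1(M)$. Then $i_1$ restricts to an elementary map $M \to Q$, while $i_1(h_1) \colon N \to Q$ is the internal ultrapower of $N$ by $i_1(W_1) \in N$, and the two fit together via the elementarity identity $i_1(h_1) \circ i_1 = i_1 \circ h_1$. The candidate transitive comparison is
\[
\bigl\langle\, i_1 \circ h_0,\ i_1(h_1) \circ i_2 \,\bigr\rangle \colon \textnormal{Ult}(V, U_0),\ \textnormal{Ult}(V, U_2) \longrightarrow Q.
\]

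Two items need verification. First, this is genuinely a $1$-internal comparison of $\langle U_0, U_2 \rangle$: if $Z_2 \in \textnormal{Ult}(V, U_2)$ gives rise to $i_2$, then $i_1(W_1)$ lives in $N = \textnormal{Ult}(\textnormal{Ult}(V, U_2), Z_2)$, so $i_1(h_1) \circ i_2$ is the iterated ultrapower of $\textnormal{Ult}(V, U_2)$ by $(Z_2, i_1(W_1))$, packaged as a single internal ultrafilter by \cref{iterating}. Commutativity reduces to the chain $i_1 \circ h_0 \circ j_{U_0} = i_1 \circ h_1 \circ j_{U_1} = i_1(h_1) \circ i_1 \circ j_{U_1} = i_1(h_1) \circ i_2 \circ j_{U_2}$, using in turn the commutativity of the two input comparisons. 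Second, for the seed inequality I apply $i_1$ to $h_0([\textnormal{id}]_{U_0}) \leq h_1([\textnormal{id}]_{U_1})$, apply $i_1(h_1)$ to $i_1([\textnormal{id}]_{U_1}) \leq i_2([\textnormal{id}]_{U_2})$, and splice the two via $i_1 \circ h_1 = i_1(h_1) \circ i_1$, obtaining $i_1 \circ h_0([\textnormal{id}]_{U_0}) \leq i_1(h_1) \circ i_2([\textnormal{id}]_{U_2})$.

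The main obstacle I anticipate is purely bookkeeping: confirming that the pushforward object $i_1(h_1)$ really is an internal ultrapower of $N$ by an ultrafilter in $\textnormal{Ult}(V, U_2)$ (and not merely a close external embedding), so that $i_1(h_1) \circ i_2$ qualifies as a single $U_2$-internal ultrapower embedding rather than a mere composition of elementary maps. This reduces to tracking which model each ultrafilter lives in and invoking the internal-iteration bookkeeping of \cref{iterating}; it does not seem to require any idea beyond what the $1$-internality of the two input comparisons already supplies.
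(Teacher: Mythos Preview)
Your proposal is correct and follows essentially the same route as the paper: push the first comparison forward along $i_1$ to obtain $Q = i_1(M)$ and $i_1(h_1):N\to Q$, then take $\langle i_1\circ h_0,\ i_1(h_1)\circ i_2\rangle$ as the $1$-internal comparison of $\langle U_0,U_2\rangle$. You have in fact spelled out more of the bookkeeping than the paper does (the verification that $i_1(h_1)\circ i_2$ is the internal iterated ultrapower by $(Z_2,i_1(W_1))$ and the explicit commutativity chain), and you correctly identify $M$ as a definable subclass of $\textnormal{Ult}(V,U_1)$, which is what makes $i_1(M)$ meaningful.
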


\begin{prp}\label{Estrict}
Suppose \(U\) is a uniform ultrafilter. There is no \(1\)-internal comparison \(\langle k_0,k_1\rangle\) of \(\langle U,U\rangle\) such that \(k_0([\textnormal{id}]_{U}) < k_1([\textnormal{id}]_{U})\). In other words, the strict \(E\)-order is strict.
\begin{proof}
This is an immediate consequence of \cref{minult}.
\end{proof}
\end{prp}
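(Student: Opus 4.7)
The plan is to derive this directly from the minimality of definable embeddings, \cref{minult}. Suppose toward contradiction that $\langle k_0, k_1\rangle$ is a $1$-internal comparison of $\langle U,U\rangle$ with $k_0([\textnormal{id}]_U) < k_1([\textnormal{id}]_U)$, and let $N$ denote the common target model, so that $k_0,k_1 : \textnormal{Ult}(V,U) \to N$ are elementary embeddings with $k_0\circ j_U = k_1\circ j_U$.

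Because the comparison is $1$-internal, the witnessing ultrafilter $W_1$ lies in $\textnormal{Ult}(V,U)$, and so $k_1 = j_{W_1}$ is an ultrapower embedding definable over $\textnormal{Ult}(V,U)$ from the parameter $W_1$. The embedding $k_0$ is, by contrast, entirely arbitrary. Applying \cref{minult} to the definable embedding $k_1$ and the arbitrary embedding $k_0$, I conclude that $k_1(\alpha) \leq k_0(\alpha)$ for every ordinal $\alpha$ of $\textnormal{Ult}(V,U)$. Specializing to $\alpha = [\textnormal{id}]_U$ gives $k_1([\textnormal{id}]_U) \leq k_0([\textnormal{id}]_U)$, which directly contradicts the assumed strict inequality.

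No serious obstacle is anticipated; the proof is essentially one invocation of \cref{minult}. The only detail worth noting is that the hypothesis of \cref{minult} requiring the source and target to share an ordinal height of uncountable cofinality is automatically satisfied, since $\textnormal{Ult}(V,U)$ and $N$ are proper-class inner models of $V$, so their common ordinal height is $\textnormal{Ord}$ and no elementary embedding between them can iterate out.
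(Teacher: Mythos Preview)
Your argument is correct and is exactly the one-line invocation of \cref{minult} the paper intends. The only minor slip is treating $[\textnormal{id}]_U$ as a single ordinal $\alpha$: in this paper uniform ultrafilters live on $[\kappa]^n$, so $[\textnormal{id}]_U$ is a finite set of ordinals. This is harmless, since applying \cref{minult} coordinatewise yields $k_1(\xi)\leq k_0(\xi)$ for each $\xi\in[\textnormal{id}]_U$, and the descending-lexicographic order on $[\textnormal{Ord}]^{<\omega}$ then gives $k_1([\textnormal{id}]_U)\leq k_0([\textnormal{id}]_U)$.
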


Thus we obtain the following nice property of semi-comparisons, the analog of \cref{welldefined}:

\begin{prp}\label{Ewelldefined}
Suppose \(U_0\E U_1\). There is no \(0\)-internal comparison \(\langle i_0,i_1\rangle\) of \(\langle U_0,U_1\rangle\) such that  \(i_0([\textnormal{id}]_{U_0}) > i_1([\textnormal{id}]_{U_1})\). That is, \(U_1 \not \sE U_0\).
\begin{proof}
Suppose otherwise. Let \(\langle h_0,h_1\rangle\) be a \(1\)-comparison of \(\langle U_0,U_1\rangle\) witnessing \(U_0 \E U_1\). Then by the proof of \cref{Etransitive} (with \(\langle i_1, i_2\rangle\) there replaced by \(\langle i_1,i_0\rangle\)), there is a \(1\)-internal comparison \(\langle k_0,k_1\rangle\) of \(U_0\) with itself such that \(k_0([\text{id}]_{U_0}) < k_1([\text{id}]_{U_1})\), contradicting \cref{Estrict}.
\end{proof}
\end{prp}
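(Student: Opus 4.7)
The plan is to reduce to a direct application of \cref{Estrict} by slightly repurposing the construction in the proof of \cref{Etransitive}. The opening observation is that a \(0\)-internal comparison \(\langle i_0,i_1\rangle\) of \(\langle U_0,U_1\rangle\) is, when read in the opposite order, a \(1\)-internal comparison \(\langle i_1,i_0\rangle\) of \(\langle U_1,U_0\rangle\); and if \(i_0([\text{id}]_{U_0}) > i_1([\text{id}]_{U_1})\), then after swapping this witnesses \(U_1 \sE U_0\). Thus the conclusion we want, \(U_1 \not\sE U_0\), is literally the symmetric form of the claim.

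Given a \(1\)-internal comparison \(\langle h_0,h_1\rangle\) witnessing \(U_0\E U_1\) to a model \(M\), I would paste the two \(1\)-internal comparisons together exactly as in the proof of \cref{Etransitive} with \(U_2\) taken to be \(U_0\). Concretely, apply \(i_1\) to \(M\) (legitimate because \(h_1\) is internal and so \(M\) is a definable inner class of \(\textnormal{Ult}(V,U_1)\)), obtaining \(Q = i_1(M)\) together with an internal ultrapower embedding \(i_1(h_1) : N \to Q\). Then \(\langle i_1 \circ h_0,\ i_1(h_1) \circ i_0\rangle\) is a \(1\)-internal comparison of \(\langle U_0,U_0\rangle\): the second leg is an internal ultrapower embedding of \(\textnormal{Ult}(V,U_0)\) via the iterated ultrafilter construction of \cref{iterating}, and the commutativity requirement on comparisons is routine from \(i_1 \circ j_{U_1} = i_0 \circ j_{U_0}\) and \(h_0 \circ j_{U_0} = h_1 \circ j_{U_1}\).

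The remaining task is to verify that this composite self-comparison actually witnesses a \emph{strict} inequality. Chaining gives
\[i_1 \circ h_0([\text{id}]_{U_0}) \leq i_1 \circ h_1([\text{id}]_{U_1}) = i_1(h_1) \circ i_1([\text{id}]_{U_1}) < i_1(h_1) \circ i_0([\text{id}]_{U_0}),\]
where the middle equality uses that elementary embeddings commute with definable class operations (here \(h_1\) is definable over \(\textnormal{Ult}(V,U_1)\) from its internal ultrafilter), and the strict inequality applies \(i_1(h_1)\) to the strict inequality \(i_1([\text{id}]_{U_1}) < i_0([\text{id}]_{U_0})\). This contradicts \cref{Estrict}.

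I expect no substantive obstacle; the argument is essentially the transitivity construction fed its own output. The one step that deserves some care is the commutativity identity \(i_1 \circ h_1 = i_1(h_1) \circ i_1\) used in the chain above, but this is just the standard fact that internal ultrapowers are preserved by elementary embeddings, and it is already used implicitly in \cref{Etransitive}. The real content of the proposition is hidden in the strictness clause of \cref{Estrict}, which ultimately comes from the minimality of definable embeddings (\cref{minult}).
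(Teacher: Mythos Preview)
Your proof is correct and is precisely the paper's argument spelled out in full: the paper simply cites ``the proof of \cref{Etransitive} with \(\langle i_1,i_2\rangle\) replaced by \(\langle i_1,i_0\rangle\)'', and you have carried out exactly that substitution, arriving at the same composite \(1\)-internal self-comparison \(\langle i_1\circ h_0,\ i_1(h_1)\circ i_0\rangle\) and the same strict chain contradicting \cref{Estrict}. Your closing remark that the real content lies in \cref{Estrict} via \cref{minult} is also on target.
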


It is not true, however, that if \(U_0\E U_1\) then every \(1\)-internal comparison of \(\langle U_0,U_1\rangle\) witnesses \(U_0 \E U_1\). For example, if \(U_0 <_M U_1\) are normal measures on \(\kappa\), then \(\langle U_1, j_{U_1}(U_0)\rangle\) is a \(1\)-internal comparison of \(\langle U_0,U_1\rangle\), but \(j_{U_1}(\kappa) > \kappa = j_{U_1}(j_{U_0})(\kappa)\).

\begin{thm}\label{Eantisymmetric}
The \(E\)-order is antisymmetric.
\begin{proof}
Suppose \(U_0\E U_1\) and \(U_1\E U_0\). By \cref{Ewelldefined}, any \(0\)-internal comparison \(\langle k_0,k_1\rangle\) of \(\langle U_0,U_1\rangle\) witnessing \(U_1\E U_0\) must in fact be such that \(k_0([\text{id}]_{U_0}) = k_1([\text{id}]_{U_1})\). But then we can run the proof of \cref{antisymmetric}, using in particular the commutativity requirements of an external comparison.
\end{proof}
\end{thm}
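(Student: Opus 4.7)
The plan is to mirror the structure of \cref{antisymmetric}, but using a \(0\)-internal comparison that witnesses the \(E\)-order in the ``wrong'' direction, and then to invoke \cref{Ewelldefined} to upgrade the seed inequality to an equality of seeds. Suppose \(U_0 \E U_1\) and \(U_1 \E U_0\). Unpacking the second assumption (and relabeling), there is a \(0\)-internal comparison \(\langle k_0, k_1\rangle\) of \(\langle U_0, U_1\rangle\) such that \(k_1([\text{id}]_{U_1}) \leq k_0([\text{id}]_{U_0})\). Combined with \(U_0 \E U_1\), \cref{Ewelldefined} rules out \(U_1 \sE U_0\), so \(\langle k_0, k_1\rangle\) cannot be strict. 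Hence \(k_0([\text{id}]_{U_0}) = k_1([\text{id}]_{U_1})\).

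With this equality in hand, the goal is to run the chain of equivalences from \cref{antisymmetric} verbatim. Write \(a_0 = [\text{id}]_{U_0}\) and \(a_1 = [\text{id}]_{U_1}\). The proof of \cref{spaces} uses only the commutativity \(k_0 \circ j_{U_0} = k_1 \circ j_{U_1}\) together with the seed inequality, and so it applies to any external comparison; applied in both directions it gives \(\textsc{sp}(U_0) = \textsc{sp}(U_1)\). Call this ordinal \(\kappa\) and let \(n = |a_0| = |a_1|\). For any \(X \subseteq [\kappa]^n\),
\begin{align*}
X \in U_0 &\iff a_0 \in j_{U_0}(X) \\
&\iff k_0(a_0) \in k_0(j_{U_0}(X)) \\
&\iff k_1(a_1) \in k_1(j_{U_1}(X)) \\
&\iff a_1 \in j_{U_1}(X) \\
&\iff X \in U_1,
\end{align*}
where the crucial middle equivalence uses both \(k_0(a_0) = k_1(a_1)\) and clause (4) of the definition of an external comparison, \(k_0 \circ j_{U_0} = k_1 \circ j_{U_1}\). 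Therefore \(U_0 = U_1\).

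The only genuine point requiring care, and what I expect to be the main obstacle, is the asymmetry in the definitions of \(\E\) and \(\sE\): both are defined via \(1\)-internal comparisons, so converting ``\(U_1 \E U_0\)'' into a statement about a \(0\)-internal comparison of \(\langle U_0, U_1\rangle\) is a pure relabeling, but one has to verify that \cref{Ewelldefined} is stated in precisely the form needed to forbid strict reversal on a \(0\)-internal comparison. This is exactly what \cref{Ewelldefined} asserts, so the step goes through. Once the seeds agree, the rest is a transcription of the Łoś-style computation from \cref{antisymmetric}, relying on commutativity rather than on internality.
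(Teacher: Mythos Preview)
Your proposal is correct and follows essentially the same approach as the paper: use \cref{Ewelldefined} to upgrade the seed inequality in a \(0\)-internal comparison witnessing \(U_1\E U_0\) to an equality, then run the \L o\'s-style chain of equivalences from \cref{antisymmetric} using only commutativity. Your careful verification of the relabeling and of why \cref{spaces} applies to external comparisons is exactly the content the paper leaves implicit.
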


\begin{cor}
The strict \(E\)-order is the strict part of the \(E\)-order.
\begin{proof}
If \(U_0\E U_1\) and \(U_0\neq U_1\) then \(U_0\sE U_1\), since otherwise there is an external comparison \(\langle k_0,k_1\rangle\) of \(\langle U_0,U_1\rangle\) with \(k_0([\text{id}]_{U_0}) = k_1([\text{id}]_{U_1})\), which implies \(U_0 = U_1\) by the argument of \cref{antisymmetric}. Combined with \cref{Estrict}, this establishes the corollary. 
\end{proof}
\end{cor}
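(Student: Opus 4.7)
The plan is to verify the two directions of the biconditional separately. The easier direction, that $\sE$ implies the strict part of $\E$, follows immediately from unwinding definitions together with \cref{Estrict}: any $1$-internal comparison witnessing $U_0 \sE U_1$ also witnesses $U_0 \E U_1$, and if we had $U_0 = U_1$ then such a strictly inequal comparison would be a $1$-internal comparison of $\langle U_0, U_0\rangle$ with $j_{W_0}([\textnormal{id}]_{U_0}) < j_{W_1}([\textnormal{id}]_{U_0})$, contradicting the strictness of $\sE$ on a single ultrafilter proved in \cref{Estrict}.

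For the converse, I would proceed by contrapositive: suppose $U_0 \E U_1$ but $U_0 \not\sE U_1$, and try to derive $U_0 = U_1$. Fix a $1$-internal comparison $\langle W_0, W_1 \rangle$ of $\langle U_0, U_1 \rangle$ witnessing $U_0 \E U_1$, so $j_{W_0}([\textnormal{id}]_{U_0}) \leq j_{W_1}([\textnormal{id}]_{U_1})$. Since the inequality cannot be strict by hypothesis, equality must hold. At this point I would rerun the computation from the proof of \cref{antisymmetric} verbatim: using the commutativity requirement $j_{W_0} \circ j_{U_0} = j_{W_1} \circ j_{U_1}$ built into the definition of an external comparison, together with the equality of seeds just obtained, one checks that for any $X \subseteq [\textsc{sp}(U_0)]^{n}$,
\[
X \in U_0 \iff j_{W_0}([\textnormal{id}]_{U_0}) \in j_{W_0}(j_{U_0}(X)) \iff j_{W_1}([\textnormal{id}]_{U_1}) \in j_{W_1}(j_{U_1}(X)) \iff X \in U_1,
\]
forcing $U_0 = U_1$ and contradicting our assumption.

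There is no real obstacle here. The main observation is that the antisymmetry computation of \cref{antisymmetric} invokes only the commutativity of the comparison diagram, a feature that is preserved in the definition of external (in particular $1$-internal) comparisons, so no new argument is needed beyond combining \cref{Estrict} with the existing antisymmetry machinery.
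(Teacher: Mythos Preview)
Your proof is correct and follows essentially the same approach as the paper's: both directions are handled exactly as in the paper, using \cref{Estrict} for one implication and the observation that a non-strict witness for \(U_0 \E U_1\) forces equality of seeds, whence the computation of \cref{antisymmetric} (which relies only on commutativity, preserved by external comparisons) gives \(U_0 = U_1\).
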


By establishing that the \(E\)-order is a partial order, we have given a second, simpler proof of \cref{Eextend} (that the Ultrapower Axiom implies the seed order coincides with the extended seed order): the \(E\)-order is a partial order that by definition extends the seed order, and hence it must coincide with the seed order if the seed order is assumed to be linear.

\begin{thm}\label{Ewellfounded}
The \(E\)-order is wellfounded.
\begin{proof}
Suppose \[U_0\sgE U_1 \sgE U_2 \sgE\cdots\]
Let \(M^0_i = \textnormal{Ult}(V,U_i)\) and fix \(1\)-internal comparisons \(\langle \ell^0_{i+1},k^0_i\rangle\) of \(\langle M^0_{i+1},M^0_i\rangle\) to a common model \(M^1_i\) (so \(k^0_i\) is an internal ultrapower of \(M^0_i\)). Thus we have \[\ell^0_{i+1}([\text{id}]_{U_{i+1}}) < k^0_i([\text{id}]_{U_i})\] As in \cref{Etransitive}, we fix \(1\)-internal comparisons \(\langle \ell^1_{i+1},k^1_i\rangle\) of \(\langle M^1_{i+1},M^1_i\rangle\) to a common model \(M^2_i\). By construction \(\ell^1_1\circ k^0_1([\text{id}]_{U_1}) = k^1_0\circ \ell^0_1([\text{id}]_{U_1})\), and so \[\ell^1_1\circ \ell^0_2([\text{id}]_{U_2}) < \ell^1_1\circ k^0_1([\text{id}]_{U_1}) = k^1_0\circ \ell^0_1([\text{id}]_{U_1})\]
(Note that \(\langle \ell^1_1\circ \ell^0_2, k^1_0\circ \ell^0_1\rangle\) is {\it not} (necessarily) a \(1\)-internal comparison of \(\langle M^0_2,M^0_1\rangle\), since \(k^1_1\circ \ell^0_1\) is not necessarily an ultrapower embedding of \(M^0_1\), so we cannot cite \cref{Ewelldefined} here to prove \(\ell^1_1\circ \ell^0_2([\text{id}]_{U_2}) < k^1_0\circ \ell^0_1([\text{id}]_{U_1})\) as we cited in \cref{welldefined} at this point in \cref{wellfounded}.)

Continuing this way, we produce models \(M^n_i\) for all \(n,i<\omega\) and \(1\)-internal comparisons \(\langle \ell^n_{i+1},k^n_i\rangle\) of \(\langle M^n_{i+1},M^n_i\rangle\) to a common model \(M^{n+1}_i\). We also have  \[\ell^{i}_1\circ \ell^{i-1}_2 \cdots \ell^1_{i}\circ \ell^0_{i+1}([\text{id}]_{U_{i+1}}) < k^i_0\circ \ell^{i-1}_1\circ \ell^{i-2}_2\circ \cdots \ell^1_{i-1}\circ \ell^0_i([\text{id}]_{U_1})\]
Therefore the internal linear iterated ultrapower \[M^0_0\stackrel{k^0_0}{\longrightarrow} M^1_0\stackrel{k^1_0}{\longrightarrow}M^2_0\stackrel{k^2_0}{\longrightarrow}\cdots\] has an illfounded direct limit, and this is a contradiction.
\end{proof}
\end{thm}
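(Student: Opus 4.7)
The plan is to adapt the proof of \cref{wellfounded} to the ZFC setting, replacing appeals to the Ultrapower Axiom with the unconditional existence of semi-comparisons guaranteed by \cref{eexample} and the lemma following it. Suppose for contradiction that \(U_0 \sgE U_1 \sgE U_2 \sgE \cdots\), and set \(M^0_i = \textnormal{Ult}(V, U_i)\). For each \(i\), I would fix a \(1\)-internal comparison \(\langle \ell^0_{i+1}, k^0_i\rangle\) of \(\langle U_{i+1}, U_i\rangle\) witnessing \(U_{i+1}\sE U_i\), so that \(k^0_i : M^0_i \to M^1_i\) is an internal ultrapower embedding and \(\ell^0_{i+1} : M^0_{i+1} \to M^1_i\) is an ultrapower embedding satisfying \(\ell^0_{i+1}([\textnormal{id}]_{U_{i+1}}) < k^0_i([\textnormal{id}]_{U_i})\).

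Next I would iterate the construction, producing a grid \(\{M^n_i : n, i < \omega\}\) with \(1\)-internal comparisons \(\langle \ell^n_{i+1}, k^n_i\rangle\) of \(\langle M^n_{i+1}, M^n_i\rangle\) to \(M^{n+1}_i\), arranging each level-\(n\) comparison by applying the semi-comparison lemma in \(M^{n-1}_{i+1}\) to the ultrafilters derived from the two level-\((n-1)\) embeddings out of that model; this guarantees the commutativity identity \(k^n_i \circ \ell^{n-1}_{i+1} = \ell^n_{i+1} \circ k^{n-1}_{i+1}\). By induction on \(n\) I would then maintain the inequality
\[
\ell^n_1 \circ \ell^{n-1}_2 \circ \cdots \circ \ell^0_{n+1}([\textnormal{id}]_{U_{n+1}}) \;<\; k^n_0 \circ \ell^{n-1}_1 \circ \cdots \circ \ell^0_n([\textnormal{id}]_{U_n})
\]
in \(M^{n+1}_0\), by applying the ordinal-preserving embedding \(\ell^n_1\) to the previous inequality and using commutativity to rewrite \(\ell^n_1 \circ k^{n-1}_1 \circ \cdots\) as \(k^n_0 \circ \ell^{n-1}_1 \circ \cdots\). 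Telescoping these inequalities exhibits an infinite descending \(\omega\)-sequence in the direct limit of the linear internal iterated ultrapower \(M^0_0 \xrightarrow{k^0_0} M^1_0 \xrightarrow{k^1_0} M^2_0 \to \cdots\), contradicting the standard fact that such iterated ultrapowers have wellfounded direct limits.

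The hard part will be that \cref{Ewelldefined} is unavailable at intermediate stages: the assembled pairs \(\langle \ell^n_1 \circ \cdots \circ \ell^0_{n+1},\, k^n_0 \circ \ell^{n-1}_1 \circ \cdots \circ \ell^0_n\rangle\) need not form a \(1\)-internal comparison of \(\langle U_{n+1}, U_n\rangle\) (the long left-hand composition is not in general an ultrapower embedding whose target is internal), so one cannot collapse the grid into a single comparison and appeal to well-definedness of the seed order, as one does under UA in \cref{wellfounded}. Strictness must therefore be propagated through the grid using only the level-\(0\) strict inequalities together with the commutativity that has been built into the construction, while taking care that the horizontal iteration via the \(k^n_0\) is genuinely internal so that the standard wellfoundedness of linear internal iterated ultrapowers of length \(\omega\) applies to produce the desired contradiction.
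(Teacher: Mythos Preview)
Your proposal is correct and follows essentially the same route as the paper. The grid of models built from iterated semi-comparisons (via the \cref{Etransitive} construction), the commutativity identity \(k^n_i\circ \ell^{n-1}_{i+1}=\ell^n_{i+1}\circ k^{n-1}_{i+1}\), the resulting strict inequality in \(M^{n+1}_0\), and the contradiction from an illfounded internal linear iteration along the \(k^n_0\) are exactly the paper's argument; you have also correctly flagged that \cref{Ewelldefined} is unavailable and that strictness must be threaded through by hand. One small wording issue: when you say ``apply \(\ell^n_1\) to the previous inequality,'' the inequality you need lives in \(M^n_1\) (the shifted version starting from \(U_{n+1}\)), not in \(M^n_0\); equivalently, derive each level-\(n\) inequality by pushing the base inequality \(\ell^0_{n+1}([\text{id}]_{U_{n+1}})<k^0_n([\text{id}]_{U_n})\) up the diagonal via \(\ell^1_n,\ell^2_{n-1},\dots,\ell^n_1\), rewriting with commutativity at each step.
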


The assertion that the \(E\)-order is linear is an apparently weak version of the Ultrapower Axiom, since the \(E\)-order is an extension of the seed order. A natural question is  whether the linearity of the \(E\)-order implies that of the seed order. We now show this is the case. It is perhaps a bit surprising that the existence of \(1\)-internal comparisons {\it witnessing the \(\E\)-order} is enough to prove the Ultrapower Axiom given that by \cref{eexample}, ZFC proves that every pair of ultrafilters admits \(0\)-internal and \(1\)-internal comparisons. 

For the proof that the linearity of the \(E\)-order implies the Ultrapower Axiom, we need a definition.

\begin{defn}\label{solid}
An \(1\)-internal comparison \(\langle W_0,W_1\rangle\) of the uniform ultrafilters \(\langle U_0,U_1\rangle\) to a common model \(P\) is {\it \(1\)-unstable} if there is a countably complete ultrafilter \(Z\) of \(P\) and a \(1\)-internal comparison \(\langle W_0',W_1'\rangle\) of \(\langle U_0,U_1\rangle\) to a common model \(P'\) such that \(W_1' = (W_1,Z)\) and \(j_{W_0'}([\text{id}]_{U_0}) < j_Z(j_{W_0}([\text{id}]_{U_0}))\). If \(\langle W_0,W_1\rangle\) is \(1\)-internal and not \(1\)-unstable, then \(\langle W_0,W_1\rangle\) is a {\it \(1\)-stable comparison}.

An external comparison \(\langle W_0,W_1\rangle\) is a {\it \(0\)-stable} comparison of the uniform ultrafilters \(\langle U_0,U_1\rangle\) if \(\langle W_1,W_0\rangle\) is a {\it \(1\)-stable} comparison of \(\langle U_1,U_0\rangle\).
\end{defn}

By \cref{minult}, every comparison (by internal ultrafilters) is stable. However, there can be unstable semi-comparisons that witness the \(E\)-order: for example, the factor embedding of the normal ultrapower derived from a \(\mu\)-measure is unstable.

\begin{lma}\label{solidmin}
Suppose \(\langle W_0,W_1\rangle\) is a \(1\)-canonical \(1\)-internal comparison of the uniform ultrafilters \(\langle U_0,U_1\rangle\). Then \(\langle W_0,W_1\rangle\) is \(1\)-stable if and only if \(W_1\) is minimal in the \(E\)-order of \(\textnormal{Ult}(V,U_1)\) among \(F_1\in \textnormal{Ult}(V,U_1)\) such that for some \(F_0\), \(\langle F_0,F_1\rangle\) is a \(1\)-canonical \(1\)-internal comparison of \(\langle U_0,U_1\rangle\).
\begin{proof}
Let \(M\) denote \(\text{Ult}(V,U_1)\). Suppose first \(\langle W_0,W_1\rangle\) is \(1\)-stable. Suppose towards a contradiction that \(\langle F_0,F_1\rangle\) is a \(1\)-canonical comparison of \(\langle U_0,U_1\rangle\) such that \(M\vDash F_1\sE W_1\), then let \(\langle D,E\rangle\) be a \(1\)-internal \(M\)-comparison of \(\langle W_1',W_1\rangle\) witnessing the \(E\)-order of \(M\). Then \(j_{D}(j_{F_0}([\text{id}]_{U_0})) = j_{D}([\text{id}]_{F_1}) < j_E([\text{id}]_{W_1}) = j_Z(j_{W_0}([\text{id}]_{U_0}))\). Thus the ultrafilter \(Z\) of \(M\) and the \(1\)-internal comparison \(\langle (F_0,D),(W_1,Z)\rangle\) contradict the \(1\)-stability of \(\langle W_0,W_1\rangle\).

Suppose conversely that \(W_1\) is minimal in the \(E\)-order of \(\textnormal{Ult}(V,U_1)\) among \(F_1\in \textnormal{Ult}(V,U_1)\) such that for some \(F_0\), \(\langle F_0,F_1\rangle\) is a \(1\)-canonical \(1\)-internal comparison of \(\langle U_0,U_1\rangle\). Suppose that \(\langle W_0,W_1\rangle\) is not \(1\)-stable. Then there is an ultrafilter \(Z\) of \(M\) and a \(1\)-internal comparison \(\langle W_0',W_1'\rangle\) of \(\langle U_0,U_1\rangle\) to the common model \(N\) such that \(W_1' = (W_1,Z)\) and \(j_{W_0'}([\text{id}]_{U_0}) < j_Z(j_{W_0}([\text{id}]_{U_0}))\). Let \(\langle F_0,F_1\rangle\) be a \(1\)-canonical \(1\)-internal comparison of \(\langle U_0,U_1\rangle\) to a common model \(P\) derived from \(\langle W_0',W_1'\rangle\) as in \cref{Eclosetoultra}, and fix \(h:P\to N\) such that \(h\circ j_{F_0} =  j_{W_0'}\) and \(h\circ j_{F_1} = j_{W_1'}\). Then \(\langle h,j_Z\rangle\) is a \(1\)-internal comparison witnessing \(F_1\sE W_1\) since \(h([\text{id}]_{F_1}) = h(j_{F_0}([\text{id}]_{U_0})) = j_{W_0'}([\text{id}]_{U_0}) < j_Z(j_{W_1}([\text{id}]_{U_0}))\). This contradicts the minimality of \(W_1\) among such \(F_1\).
\end{proof}
\end{lma}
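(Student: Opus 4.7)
Both directions translate between $1$-instability of $\langle W_0,W_1\rangle$ and $E$-comparability with $W_1$, exploiting the fact that $1$-canonicity of a comparison $\langle F_0,F_1\rangle$ is the single identity $[\text{id}]_{F_1} = j_{F_0}([\text{id}]_{U_0})$, which lets one freely rewrite a seed as the image of the opposite seed.

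For the forward direction, I will argue contrapositively: assume some $1$-canonical $1$-internal comparison $\langle F_0,F_1\rangle$ of $\langle U_0,U_1\rangle$ has $F_1 \sE W_1$ in $M := \textnormal{Ult}(V,U_1)$, witnessed by a $1$-internal $M$-comparison $\langle D,E\rangle$ of $\langle F_1,W_1\rangle$. Setting $Z := E$, which is an ultrafilter of the common model of $\langle W_0,W_1\rangle$ by $1$-internality in $M$, I form the composite pair $\langle (F_0,D),(W_1,Z)\rangle$. The commutativity of the two source comparisons splices to show this is a $1$-internal comparison of $\langle U_0,U_1\rangle$; then the $1$-canonicity of $\langle F_0,F_1\rangle$ gives $j_{(F_0,D)}([\text{id}]_{U_0}) = j_D([\text{id}]_{F_1})$, and the $1$-canonicity of $\langle W_0,W_1\rangle$ gives $j_Z(j_{W_0}([\text{id}]_{U_0})) = j_E([\text{id}]_{W_1})$. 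The witnessing inequality $j_D([\text{id}]_{F_1}) < j_E([\text{id}]_{W_1})$ thus becomes exactly the inequality required by $1$-instability, contradicting the hypothesis that $\langle W_0,W_1\rangle$ is $1$-stable.

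For the reverse direction, assume $\langle W_0,W_1\rangle$ is $1$-unstable, witnessed by $Z$ and a $1$-internal $\langle W_0',W_1'\rangle$ with $W_1' = (W_1,Z)$. Since $W_1' \in M$, the embedding $j_{W_1'} = j_Z \circ j_{W_1}$ is close to $M$, whereas $j_{W_0'}$ is just some elementary embedding of $\textnormal{Ult}(V,U_0)$ and need not be close. Applying \cref{Eclosetoultra} to the pair $(j_{W_0'},\, j_Z\circ j_{W_1})$ yields a canonical external comparison $\langle F_0,F_1\rangle$ with $F_1$ internal to $M$, together with a factor embedding $h$ satisfying $h\circ j_{F_0} = j_{W_0'}$ and $h\circ j_{F_1} = j_Z\circ j_{W_1}$. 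Then $\langle h, j_Z\rangle$ is a $1$-internal $M$-comparison of $\langle F_1,W_1\rangle$, since commutativity is exactly the second factorization and $Z$ is internal to $\textnormal{Ult}(M,W_1)$. Rewriting both sides by $1$-canonicity as in the forward direction converts the instability inequality into $h([\text{id}]_{F_1}) < j_Z([\text{id}]_{W_1})$, witnessing $F_1\sE W_1$ in $M$ and contradicting the $E$-minimality of $W_1$.

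The only real subtlety is the closeness bookkeeping in the application of \cref{Eclosetoultra}: the reverse direction needs $j_{W_1'}$ close to guarantee that the produced $F_1$ lies in $M$, but crucially does not demand any closeness or internality of $j_{W_0'}$, which is important since the definition of a $1$-internal comparison imposes no such constraint on the $0$-th coordinate. Everything else is a symmetric exercise in spelling out $1$-canonicity and the commutativity squares.
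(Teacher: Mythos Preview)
Your proof is correct and follows essentially the same approach as the paper's: both directions set $Z=E$ (forward) or use \cref{Eclosetoultra} (reverse) to pass between a witness to $1$-instability and a $1$-internal $M$-comparison witnessing $F_1\sE W_1$, exploiting the $1$-canonicity identities $[\text{id}]_{F_1}=j_{F_0}([\text{id}]_{U_0})$ and $[\text{id}]_{W_1}=j_{W_0}([\text{id}]_{U_0})$ to match seeds. Your treatment of the closeness bookkeeping in the reverse direction is actually more explicit than the paper's, which has several minor typos in this lemma.
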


\begin{cor}\label{solidlemma}
Every pair of uniform ultrafilters \(\langle U_0,U_1\rangle\) admits a canonical \(1\)-stable comparison. Moreover, if \(\langle U_0,U_1\rangle\) admits a \(1\)-internal comparison witnessing \(U_0\E U_1\), then \(\langle U_0,U_1\rangle\) admits a \(1\)-stable canonical comparison witnessing \(U_0\E U_1\).
\begin{proof}
The first part follows from \cref{eexample}, \cref{solidmin}, and the wellfoundedness of the \(E\)-order. For the second part, it is easy to check that if \(\langle W_0,W_1\rangle\) is a canonical \(1\)-internal comparison of \(\langle U_0,U_1\rangle\) witnessing the \(E\)-order such that \(W_1\) is minimal in the \(E\)-order of \(\textnormal{Ult}(V,U_1)\) among \(F_1\in \textnormal{Ult}(V,U_1)\) such that for some \(F_0\), \(\langle F_0,F_1\rangle\) is a \(1\)-canonical \(1\)-internal comparison of \(\langle U_0,U_1\rangle\) witnessing the \(E\)-order, then \(\langle W_0,W_1\rangle\) is \(1\)-stable.
\end{proof}
\end{cor}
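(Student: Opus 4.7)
The plan is to combine three results that have already been established: the existence of some $1$-internal comparison from \cref{eexample}, the procedure for producing canonical comparisons in \cref{Eclosetoultra}, and the minimality characterization of $1$-stability in \cref{solidmin}. The core idea is that the wellfoundedness of the $E$-order (inside $\textnormal{Ult}(V,U_1)$, where it holds by elementarity or by rerunning the proof of \cref{Ewellfounded}) lets us minimize the ``second coordinate'' $W_1$ of a $1$-canonical $1$-internal comparison, and \cref{solidmin} then upgrades this minimality to $1$-stability.

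For the first assertion, I would proceed as follows. First, \cref{eexample} supplies the $1$-internal comparison $\langle j_{U_1}, j_{U_1}(j_{U_0})\rangle$ of $\langle U_0,U_1\rangle$. Feeding this pair into \cref{Eclosetoultra} (noting that the second embedding is an internal ultrapower of $\textnormal{Ult}(V,U_1)$, hence close to it) yields an external \emph{canonical} comparison that is $1$-internal. Consequently the class
\[
\mathcal{C} = \{F_1 \in \textnormal{Ult}(V,U_1) : \exists F_0,\ \langle F_0, F_1\rangle \text{ is a } 1\text{-canonical } 1\text{-internal comparison of } \langle U_0, U_1\rangle\}
\]
is nonempty. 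Applying the wellfoundedness of the $E$-order inside $\textnormal{Ult}(V,U_1)$, pick $W_1\in \mathcal{C}$ minimal in $\E^{\textnormal{Ult}(V,U_1)}$, and let $W_0$ be any witness so that $\langle W_0,W_1\rangle$ is $1$-canonical and $1$-internal. By \cref{solidmin}, $\langle W_0,W_1\rangle$ is $1$-stable. To get full canonicity (i.e., $0$-canonicity as well), I would run \cref{Eclosetoultra} once more on $\langle j_{W_0}, j_{W_1}\rangle$; this produces a canonical $1$-internal comparison whose ``$W_1$-coordinate'' is still the chosen $W_1$ (since $W_1$ is already derived from $j_{W_1}$ using $j_{W_0}([\textnormal{id}]_{U_0})$), so the resulting comparison is both canonical and $1$-stable.

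For the second assertion, the same scheme applies if we restrict $\mathcal{C}$ to those $F_1$ for which a witnessing $F_0$ can be chosen so that $\langle F_0,F_1\rangle$ additionally witnesses $U_0\E U_1$. Given the hypothesized $1$-internal comparison witnessing $U_0\E U_1$, \cref{Eclosetoultra} produces a canonical $1$-internal comparison that still witnesses the $E$-order: the factor map $h$ from the common model into the original target is elementary and satisfies $h\circ j_{W_0} = k_0$, $h\circ j_{W_1} = k_1$, so the seed inequality transfers back. Thus the restricted class is nonempty, and we may again minimize to obtain $W_1$, then use \cref{Eclosetoultra} to get a canonical $\langle W_0,W_1\rangle$. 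The main obstacle to watch for is verifying $1$-stability after this restriction, which requires re-examining the proof of \cref{solidmin}: if $\langle W_0,W_1\rangle$ were $1$-unstable, the destabilizing $Z$ and $\langle W_0',W_1'\rangle$ satisfy a \emph{strict} inequality $j_{W_0'}([\textnormal{id}]_{U_0}) < j_Z(j_{W_0}([\textnormal{id}]_{U_0}))$, so the canonicalization of $\langle W_0',W_1'\rangle$ via \cref{Eclosetoultra} yields a pair $\langle F_0,F_1\rangle$ that is again a $1$-canonical $1$-internal comparison \emph{witnessing the $E$-order}, and a quick chase shows $F_1 \sE W_1$ in $\textnormal{Ult}(V,U_1)$, contradicting the choice of $W_1$ in the restricted class. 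This last verification is the subtle step, but it is a direct adaptation of the argument already given in \cref{solidmin}.
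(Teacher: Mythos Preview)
Your proposal is correct and follows essentially the same approach as the paper: existence of a $1$-internal comparison via \cref{eexample}, canonicalization via \cref{Eclosetoultra}, minimization of the $W_1$-coordinate using wellfoundedness of the $E$-order, and the appeal to \cref{solidmin} (or its proof, in the restricted case) to conclude $1$-stability. The paper's proof is simply a terse summary of exactly this, and your extra care in checking that canonicalization preserves the $W_1$-coordinate and that the destabilizing pair in the second part still witnesses the $E$-order fills in details the paper leaves implicit.
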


An identical corollary holds for \(0\)-internal comparisons.

\begin{thm}
If the \(E\)-order is linear then the seed order is linear, or in other words, the Ultrapower Axiom holds.
\begin{proof}
Fix ultrafilters \(U_0\) and \(U_1\). We must show that \(\langle U_0, U_1\rangle\) admits a comparison by internal ultrafilters. We can reduce to the case that \(U_0\) and \(U_1\) are uniform ultrafilters concentrating on singletons, which is essentially the same as assuming they concentrate on ordinals, which what we actually assume. Let \(\xi_0 = [\text{id}]_{U_0}\) and \(\xi_1 = [\text{id}]_{U_1}\). We further assume that \(U_0 \neq U_1\).

Fix a canonical \(1\)-stable comparison \(\langle k_0,k_1\rangle\) of \(\langle U_0, U_1\rangle\) to a common model \(Q\) and a canonical \(0\)-stable comparison \(\langle i_0,i_1\rangle\) of \(\langle U_0,U_1\rangle\) to a common model \(P\). Let \(W\) be the uniform ultrafilter derived from \(k_1\circ j_{U_1}\) using \[\{k_0(\xi_0), k_1(\xi_1)\}\] and let \(Z\)  be the uniform ultrafilter derived from \(i_0\circ j_{U_0}\) using \[\{i_0(\xi_0), i_1(\xi_1)\}\] By the canonicity of \(\langle k_0,k_1\rangle\) and \(\langle i_0,i_1\rangle\), \(Q = \text{Ult}(V,W)\) and \(P = \text{Ult}(V,Z)\). By the linearity of the \(E\)-order, without loss of generality we may assume \(W\E Z\), and let \(\langle h_W,h_Z\rangle\) be a \(1\)-internal comparison of \(\langle W,Z\rangle\) witnessing \(W\E Z\). (The case \(Z\E W\) is handled symmetrically.) Thus \begin{equation}\label{WZ}h_W(\{k_0(\xi_0), k_1(\xi_1)\}) \leq h_Z(\{i_0(\xi_0), i_1(\xi_1)\})\end{equation}

We claim that \(h_W(k_0(\xi_0)) = h_Z(i_0(\xi_0))\). Note first that \begin{equation}\label{k0i0} h_W(k_0(\xi_0)) \geq h_Z(i_0(\xi_0))\end{equation} by the minimality of definable embeddings, \cref{minult}. On the other hand, by the stability of \(\langle i_0,i_1\rangle\), we must have \begin{equation}\label{i1k1} h_Z(i_1(\xi_1))\leq h_W(k_1(\xi_1))\end{equation} Otherwise, assuming \(h_W(k_1(\xi_1)) < h_Z(i_1(\xi_1))\), the \(0\)-internal comparison \[\langle h_Z\circ i_0, h_W\circ k_1\rangle\] along with the ultrafilter \(Z\) witnesses that \(\langle i_0,i_1\rangle\) is unstable. 

Combining \cref{WZ} and \cref{i1k1}, we see that \[h_W(k_0(\xi_0)) \leq h_Z(i_0(\xi_0))\] Otherwise, assuming \(h_W(k_0([\text{id}]_{U_0})) > h_Z(i_0([\text{id}]_{U_0}))\), \cref{i1k1} implies \[h_W(\{k_0(\xi_0),k_1(\xi_1)\}) > h_Z(\{i_0(\xi_0),i_1(\xi_1)\})\] which contradicts \cref{WZ}.

Thus \(h_W(k_0(\xi_0)) = h_Z(i_0(\xi_0))\), as claimed. Since \(\textnormal{Ult}(V,U_0)\) is generated by \(\xi_0\) over \(j_{U_0}[V]\) and since \(h_W\circ k_0\) and \(h_Z\circ i_0\) agree on \(j_{U_0}[V]\) and \(\xi_0\), we have that \(h_W\circ k_0 = h_Z\circ i_0\). Thus \(h_W\circ k_0\) is an internal ultrapower embedding of \(\text{Ult}(V,U_0)\), and it follows easily that \(k_0\) is close to \(\text{Ult}(V,U_0)\). Since \(k_0\) is an (external) ultrapower embedding by the definition of an external comparison, it follows from closeness that \(k_0\) is an {\it internal} ultrapower embedding. Hence \(\langle k_0,k_1\rangle\) is a comparison of \(\langle U_0, U_1\rangle\) by internal ultrafilters.
\end{proof}
\end{thm}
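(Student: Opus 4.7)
The plan is to start from two ultrafilters $U_0,U_1$, which we may assume concentrate on ordinals (say with seeds $\xi_0=[\text{id}]_{U_0}$ and $\xi_1=[\text{id}]_{U_1}$), and produce a comparison by internal ultrafilters by playing a canonical $1$-stable comparison against a canonical $0$-stable comparison. By \cref{solidlemma} and its symmetric version, fix a canonical $1$-stable comparison $\langle k_0,k_1\rangle$ of $\langle U_0,U_1\rangle$ to a common model $Q$, and a canonical $0$-stable comparison $\langle i_0,i_1\rangle$ to a common model $P$. Here $k_1$ and $i_0$ are guaranteed to be internal ultrapower embeddings, while $k_0$ and $i_1$ are a priori only external; our aim will be to show, using the linearity of $\E$, that one of the external embeddings is in fact close, and hence internal.

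To invoke $\E$-linearity we first package the two comparisons as single ultrafilters carrying both seeds. Let $W$ be the uniform ultrafilter derived from $k_1\circ j_{U_1}$ using the pair $\{k_0(\xi_0),k_1(\xi_1)\}$, and let $Z$ be the uniform ultrafilter derived from $i_0\circ j_{U_0}$ using $\{i_0(\xi_0),i_1(\xi_1)\}$. By the canonicity of the two comparisons, $\textnormal{Ult}(V,W)=Q$ and $\textnormal{Ult}(V,Z)=P$. Applying the hypothesized linearity of the $E$-order to $W$ and $Z$, we may assume without loss of generality $W\E Z$, witnessed by a $1$-internal comparison $\langle h_W,h_Z\rangle$ (the opposite case is handled symmetrically by swapping the roles of the $0$-stable and $1$-stable comparisons). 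Unwinding the definition of $\E$ via the lexicographic order on $[\textnormal{Ord}]^{<\omega}$, this gives the single inequality
\[
h_W(\{k_0(\xi_0),k_1(\xi_1)\})\ \leq\ h_Z(\{i_0(\xi_0),i_1(\xi_1)\}).
\]

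The heart of the argument is to extract from this the coordinatewise identity $h_W\circ k_0=h_Z\circ i_0$. Two separate inequalities are available. First, $h_Z\circ i_0$ is an internal ultrapower embedding of $\textnormal{Ult}(V,U_0)$ (both $i_0$ and $h_Z$ are), so by the pointwise minimality of definable embeddings \cref{minult}, $h_W\circ k_0\geq h_Z\circ i_0$ on ordinals; in particular $h_W(k_0(\xi_0))\geq h_Z(i_0(\xi_0))$. Second, if $h_W(k_1(\xi_1))<h_Z(i_1(\xi_1))$ held, then the $0$-internal comparison $\langle h_Z\circ i_0,\,h_W\circ k_1\rangle$ of $\langle U_0,U_1\rangle$, together with a suitable ultrafilter of $Q$, would witness that the canonical comparison $\langle i_0,i_1\rangle$ is $0$-unstable, contradicting the choice of $\langle i_0,i_1\rangle$; so $h_W(k_1(\xi_1))\geq h_Z(i_1(\xi_1))$. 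Feeding these two inequalities into the lexicographic inequality above forces equality on the first coordinate, namely $h_W(k_0(\xi_0))=h_Z(i_0(\xi_0))$.

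From this, the conclusion follows quickly. Since $\textnormal{Ult}(V,U_0)$ is generated by $\xi_0$ over $j_{U_0}[V]$, and since $h_W\circ k_0$ and $h_Z\circ i_0$ already agree on $j_{U_0}[V]$ (both equal the common map $k_1\circ j_{U_1}=i_0\circ j_{U_0}$ precomposed appropriately) and now also agree at $\xi_0$, we conclude $h_W\circ k_0=h_Z\circ i_0$. The right-hand side is an internal ultrapower embedding of $\textnormal{Ult}(V,U_0)$, hence so is the left-hand side. A standard factoring argument then shows that $k_0$ itself is close to $\textnormal{Ult}(V,U_0)$: every derived ultrafilter of $k_0$ is also derived from the internal composite $h_W\circ k_0$ via the same seed, hence lies in $\textnormal{Ult}(V,U_0)$. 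Since $k_0$ is in any case an external ultrapower embedding, closeness upgrades it to an internal one. Therefore $\langle k_0,k_1\rangle$ is a comparison of $\langle U_0,U_1\rangle$ by internal ultrafilters, verifying the Ultrapower Axiom. The step I expect to be the main obstacle is the stability argument yielding $h_W(k_1(\xi_1))\geq h_Z(i_1(\xi_1))$, since it requires interpreting the failure as producing an explicit destabilizing comparison of $\langle U_0,U_1\rangle$ relative to the $0$-stable choice, keeping careful track of which side of each comparison is internal.
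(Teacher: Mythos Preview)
Your proposal is correct and follows essentially the same approach as the paper's proof, step for step: choose canonical $1$-stable and $0$-stable comparisons, package them as single ultrafilters $W$ and $Z$, apply $E$-linearity, and combine the minimality and stability inequalities to force $h_W\circ k_0 = h_Z\circ i_0$. One small slip: the destabilizing ultrafilter in the stability argument lives in $P$ (it is the ultrafilter of $P=\textnormal{Ult}(V,Z)$ inducing $h_Z$, so that the $U_0$-side of the new comparison is $(i_0,\text{that ultrafilter})$), not in $Q$ as you wrote.
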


Our definition of the \(E\)-order was motivated by analogy with the seed order and therefore loosely by the mouse order. We close this section by trying to clarify the relationship between the \(E\)-order and other orderings on ultrafilters, in particular the Rudin-Keisler order, the Mitchell order, and perhaps somewhat surprisingly the Lipschitz order. We do this by putting down some obvious alternate definitions of the \(E\)-order.

\begin{defn}
Suppose \(U\) is a uniform ultrafilter and \(W\) is a uniform ultrafilter of \(\textnormal{Ult}(V,U)\). Then the {\it ultrafilter derived from \(U\) using \(W\)} is the uniform ultrafilter containing those \(X\) such that \(j_U(X)\cap [\textsc{sp}(W)]^{{<}\omega}\in W\).
\end{defn}

This is a variant on the notion of the \(U\)-limit of a sequence of ultrafilters, which is well-studied. We introduce this terminology only to point out the analogy between the Rudin-Keisler order and the \(E\)-order.

\begin{prp}\label{rkE}
Suppose \(U_0\) and \(U_1\) are uniform ultrafilters. Then \(U_0 \E U_1\) if and only if there is a uniform ultrafilter \(W\) of \(\textnormal{Ult}(V,U)\) such that \(\textsc{sp}(W) \leq [\textnormal{id}]_{U_1}\) and \(U_0\) is derived from \(U_1\) using \(W\).
\end{prp}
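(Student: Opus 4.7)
My plan is to derive both directions of the equivalence by a direct unpacking of the relevant definitions, relying only on the commutativity built into an external comparison. Throughout, write $a_0 = [\textnormal{id}]_{U_0}$, $a_1 = [\textnormal{id}]_{U_1}$, and $n = |a_0|$.

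For the forward direction, fix a $1$-internal comparison $\langle W_0, W_1\rangle$ witnessing $U_0 \E U_1$ to a common model $N$, so that $W_1 \in \textnormal{Ult}(V, U_1)$ and $j_{W_0}(a_0) \leq j_{W_1}(a_1)$. I would define $W$ to be the uniform $\textnormal{Ult}(V, U_1)$-ultrafilter derived from $j_{W_1}$ using $j_{W_0}(a_0) \in N$. To see that $W$ genuinely lies in $\textnormal{Ult}(V, U_1)$, write $j_{W_0}(a_0) = j_{W_1}(f)([\textnormal{id}]_{W_1})$ for some $f \in \textnormal{Ult}(V, U_1)$; then $W = f_*(W_1)$, the pushforward of $W_1$ by $f$, computed inside $\textnormal{Ult}(V, U_1)$. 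The bound $\textsc{sp}(W) \leq [\textnormal{id}]_{U_1}$ follows from $j_{W_0}(a_0) \leq j_{W_1}(a_1)$ by the same canonical-order argument used in the proof of \cref{smallsp}: $\textsc{sp}(W)$ is the least $\kappa$ with $j_{W_0}(a_0) \subseteq j_{W_1}(\kappa)$, which forces $\{\textsc{sp}(W)\} \leq a_1$. Finally, the derivation identity comes from the chain
\begin{align*}
X \in U_0
&\iff a_0 \in j_{U_0}(X) \iff j_{W_0}(a_0) \in j_{W_0}(j_{U_0}(X))\\
&\iff j_{W_0}(a_0) \in j_{W_1}(j_{U_1}(X)) \iff j_{U_1}(X) \cap [\textsc{sp}(W)]^n \in W,
\end{align*}
where the third step uses the commutativity $j_{W_0} \circ j_{U_0} = j_{W_1} \circ j_{U_1}$ and the last is the definition of $W$ as a derived ultrafilter.

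For the backward direction, suppose $W \in \textnormal{Ult}(V, U_1)$ satisfies the stated hypotheses. Let $M = \textnormal{Ult}(V, U_1)$ and $P = \textnormal{Ult}(M, W)$, and consider $j_W \circ j_{U_1}: V \to P$. Reversing the chain above (using that $[\textnormal{id}]_W \in [j_W(\textsc{sp}(W))]^n$ absorbs the intersection with $[\textsc{sp}(W)]^n$) shows that the $V$-ultrafilter derived from $j_W \circ j_{U_1}$ using $[\textnormal{id}]_W$ is precisely $U_0$. This gives a factor embedding $k: \textnormal{Ult}(V, U_0) \to P$ with $k \circ j_{U_0} = j_W \circ j_{U_1}$ and $k(a_0) = [\textnormal{id}]_W$. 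Letting $W_0$ be the (possibly external) $\textnormal{Ult}(V, U_0)$-ultrafilter with $k = j_{W_0}$, the pair $\langle W_0, W\rangle$ is then an external $1$-internal comparison of $\langle U_0, U_1\rangle$ since $W \in M$. The bound $\textsc{sp}(W) \leq a_1$ translates into $k(a_0) = [\textnormal{id}]_W \leq j_W(a_1)$ in the canonical order on $P$, which is exactly the inequality witnessing $U_0 \E U_1$.

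The only subtle step in either direction is the canonical-order bookkeeping around $\textsc{sp}(W) \leq [\textnormal{id}]_{U_1}$: passing between an ordinal-level bound on $\textsc{sp}(W)$ and the finite-set inequality $[\textnormal{id}]_W \leq j_W(a_1)$, and back. This is the same translation carried out for canonical comparisons in \cref{smallsp}, so I would simply transport that pattern.
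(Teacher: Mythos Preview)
The paper states this proposition without proof, treating it as a routine unpacking of the definitions of the \(E\)-order and of deriving an ultrafilter from \(U_1\) using \(W\). Your argument supplies exactly the details one would expect and is correct.

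One point worth making explicit in the backward direction: when you write ``letting \(W_0\) be the (possibly external) \(\textnormal{Ult}(V,U_0)\)-ultrafilter with \(k = j_{W_0}\)'', you are implicitly using that the factor map \(k\) is an ultrapower embedding with target exactly \(P\). This is true, and the paper's own remark following the definition of external comparison justifies it: since \(j_W\) is an ultrapower embedding of \(M_1\) and \(k\circ j_{U_0} = j_W\circ j_{U_1}\), the model \(P\) is generated over \(k[\textnormal{Ult}(V,U_0)]\) by \(\{j_W(a_1), [\text{id}]_W\}\), and since \([\text{id}]_W = k(a_0)\) already lies in the range of \(k\), in fact \(P\) is generated over \(k[\textnormal{Ult}(V,U_0)]\) by \(j_W(a_1)\) alone. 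So \(k\) is the ultrapower of \(\textnormal{Ult}(V,U_0)\) by the \(\textnormal{Ult}(V,U_0)\)-ultrafilter derived from \(k\) using \(j_W(a_1)\), and \(\langle W_0, W\rangle\) is a genuine \(1\)-internal comparison to the common model \(P\). With that detail spelled out, your proof is complete.
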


One must be a bit careful about the meaning of \(\textsc{sp}(W) \leq [\textnormal{id}]_{U_1}\) above. What we mean is that \([\text{id}]_W \leq j_W([\textnormal{id}]_{U_1})\) in the canonical wellorder. Thus the \(E\)-order appears as an extension of the Rudin-Keisler order: a version of the Rudin-Keisler order is given by restricting \(W\) to be principal in the proposition above, see the proof of \cref{factorlemma}.

We move on to the relationship between the \(E\)-order and the Lipschitz order. Here we restrict our attention to ultrafilters that concentrate on ordinals since this is somewhat easier to think about and we are not trying to develop the general theory.

\begin{defn}
Suppose \(\alpha\) is an ordinal. Suppose \(\tau : 2^{<\alpha}\to \{0,1\}\) and \(x\in 2^\alpha\). Then \(\tau * x\) is the element of \(2^\alpha\) given by \((\tau*x)(\xi) = \tau(x|\xi)\) for \(\xi < \alpha\).

Suppose \(A_0,A_1\subset 2^\kappa\). A function \(\tau : 2^{<\kappa}\to \{0,1\}\) is a {\it strong Lipschitz reduction} from \(A_0\) to \(A_1\) if for any \(x\in 2^\kappa\), \(x\in A_0\) if and only if \(\tau* x\in A_1\). If in addition, for each \(\alpha < \kappa\), \(F^\tau_\alpha = \{X\subseteq \alpha : \tau(X) = 1\}\) is a filter, then \(\tau\) is called a {\it filter reduction} from \(A_0\) to \(A_1\). If in addition \(F^\tau_\alpha\) is an ultrafilter, \(\tau\) is called an {\it ultrafilter reduction}. If in addition \(F^\tau_\alpha\) is countably complete, \(\tau\) is called an \(E\)-reduction.
\end{defn}

For clarity, we remark that in the definition of \(F^\tau_\alpha\), we identify a set \(X\subset \alpha\) with its characteristic function in \(2^\alpha\).

\begin{thm}
Suppose \(U_0\) and \(U_1\) are countably complete ultrafilters on a cardinal \(\kappa\). Then \(U_0 \sE U_1\) if and only if there is an \(E\)-reduction from \(U_0\) to \(U_1\).
\begin{proof}
Suppose \(\langle W_0,W_1\rangle\) is a canonical \(1\)-internal comparison of \(\langle U_0,U_1\rangle\). By the proof of \cref{smallsp}, \(\textsc{sp}(W_1) = [\text{id}]_{U_1}\). Let \(F:\kappa\to V\) be such that \([F]_{U_1} = W_1\). Then without loss of generality, for all \(\alpha < \kappa\), \(F(\alpha)\) is a countably complete ultrafilter on \(\alpha\). Moreover, by the usual argument, \(X\in U_0\) if and only if \(j_{U_1}(X)\cap [\text{id}]_{U_1}\in W_1\) if and only if \(\{\alpha < \kappa : X\cap \alpha\in F(\alpha)\}\in U_1\). Defining \(\tau : 2^{<\kappa}\to \{0,1\}\) by \(\tau(s) = F(\alpha)(s)\) for \(s\in 2^\alpha\), \(\tau\) is an \(E\)-reduction from \(U_0\) to \(U_1\). The converse is similar.
\end{proof}
\end{thm}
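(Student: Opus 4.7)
The plan is to link the $E$-reduction $\tau$ to the representing function of a $1$-internal comparison ultrafilter on the $U_1$ side. Write $\xi_0=[\text{id}]_{U_0}$ and $\xi_1=[\text{id}]_{U_1}$. Both directions revolve around the chain of equivalences
\[
X\in U_0 \iff j_{U_1}(X)\cap \xi_1\in W_1 \iff \{\alpha<\kappa : X\cap \alpha \in F(\alpha)\}\in U_1 \iff \tau\ast\chi_X\in U_1,
\]
where $W_1\in \textnormal{Ult}(V,U_1)$ is the comparison ultrafilter, $F:\kappa\to V$ represents $W_1$ with $F(\alpha)$ a countably complete ultrafilter on $\alpha$, and $\tau(s)=F(|s|)(s)$ for $s\in 2^{<\kappa}$ (identifying $s$ with the characteristic function of a subset of its domain, and $\chi_X$ with the characteristic function of $X\subseteq\kappa$).

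For the forward direction, assume $U_0\sE U_1$ and fix a canonical $1$-internal comparison $\langle W_0,W_1\rangle$ witnessing this; canonicalization of any given witness is provided by \cref{Eclosetoultra}. By \cref{smallsp}, $\textsc{sp}(W_1)\leq \xi_1$. After a mild padding step --- replacing $W_1$ by the countably complete (possibly non-uniform) ultrafilter $\{A\subseteq\xi_1 : A\cap \textsc{sp}(W_1)\in W_1\}$, which has the same ultrapower in $\textnormal{Ult}(V,U_1)$ --- we may pick $F:\kappa\to V$ with $[F]_{U_1}=W_1$ and $F(\alpha)$ a countably complete ultrafilter on $\alpha$. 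Define $\tau$ as above. Then $F^\tau_\alpha = F(\alpha)$, so the filter sections of $\tau$ are countably complete ultrafilters. The Lipschitz identity $X\in U_0 \iff \tau\ast\chi_X\in U_1$ reduces to the three displayed equivalences: the outer two are unwinding of definitions (using $(\tau\ast\chi_X)(\alpha)=1\iff X\cap\alpha\in F(\alpha)$ and the definition of the derived ultrafilter in $W_1$), while the middle equivalence uses canonicity of $W_1$ (as the $\textnormal{Ult}(V,U_1)$-ultrafilter derived from $j_{W_1}$ via $j_{W_0}(\xi_0)$) together with the strict inequality $j_{W_0}(\xi_0)<j_{W_1}(\xi_1)$ coming from $U_0\sE U_1$, to pass from $j_{W_0}(\xi_0)\in j_{W_1}(j_{U_1}(X))$ to $j_{W_0}(\xi_0)\in j_{W_1}(j_{U_1}(X)\cap \xi_1)$, invoking also the commutativity condition $j_{W_0}\circ j_{U_0}=j_{W_1}\circ j_{U_1}$.

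Conversely, given an $E$-reduction $\tau$, set $F(\alpha)=F^\tau_\alpha$ and $W_1=[F]_{U_1}\in\textnormal{Ult}(V,U_1)$, an internal countably complete ultrafilter. Let $\eta=[\text{id}]_{W_1}$ and $j=j_{W_1}\circ j_{U_1}:V\to N$, where $N=\textnormal{Ult}(\textnormal{Ult}(V,U_1),W_1)$; note $\eta<j_{W_1}(\xi_1)$ since the identity map on $\xi_1$ stays strictly below $\xi_1$. Running the chain of equivalences backwards yields $X\in U_0 \iff \eta\in j(X)$, so $U_0$ is the ultrafilter derived from $j$ using $\eta$. Thus the factor map $k_0:\textnormal{Ult}(V,U_0)\to N$ with $k_0(\xi_0)=\eta$ and $k_0\circ j_{U_0}=j$ is well-defined and elementary. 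Apply \cref{Eclosetoultra} to the pair $(k_0, j_{W_1})$: since $j_{W_1}$ is an internal ultrapower embedding of $\textnormal{Ult}(V,U_1)$ and hence close, the lemma produces a canonical $1$-internal comparison $\langle W_0',W_1'\rangle$ of $\langle U_0,U_1\rangle$ to a common model $P$ together with an embedding $h:P\to N$ satisfying $h\circ j_{W_0'}=k_0$ and $h\circ j_{W_1'}=j_{W_1}$. Since $h(j_{W_0'}(\xi_0))=\eta<j_{W_1}(\xi_1)=h(j_{W_1'}(\xi_1))$, elementarity of $h$ gives $j_{W_0'}(\xi_0)<j_{W_1'}(\xi_1)$, so $\langle W_0',W_1'\rangle$ witnesses $U_0\sE U_1$.

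The main obstacle is the forward direction's padding step: \cref{smallsp} gives $\textsc{sp}(W_1)\leq \xi_1$ but not equality, so the natural representing function $F$ has $F(\alpha)$ as an ultrafilter on an ordinal possibly smaller than $\alpha$, and one must extend it to an ultrafilter on all of $\alpha$ to fit the shape of an $E$-reduction $\tau:2^{<\kappa}\to\{0,1\}$. The reverse direction's main subtlety --- that the factor map $k_0$ need not have all of $N$ in its image, and so is not directly realized as an (external) ultrapower embedding of $\textnormal{Ult}(V,U_0)$ with common target $N$ --- is absorbed cleanly by \cref{Eclosetoultra}, which converts the factor map into an honest canonical $1$-internal comparison to a possibly smaller common model.
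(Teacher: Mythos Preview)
Your proof is correct and follows essentially the same approach as the paper's: both directions hinge on the same chain of equivalences linking an $E$-reduction $\tau$ to a representing function $F$ for the $1$-internal comparison ultrafilter $W_1$, and your use of \cref{Eclosetoultra} in the converse is exactly the mechanism the paper has in mind when it says ``the converse is similar.'' Your treatment is in fact more careful than the paper's on one point: the paper asserts $\textsc{sp}(W_1)=[\text{id}]_{U_1}$ outright, whereas \cref{smallsp} only yields $\textsc{sp}(W_1)\leq[\text{id}]_{U_1}$, and your padding step makes explicit the harmless adjustment needed to get $F(\alpha)$ to be an ultrafilter on all of $\alpha$.
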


Hence the transitivity and strictness of the \(E\)-order are manifestations of the transitivity and strictness of the generalized Lipschitz order. Moreover, the relations induced by filter reductions and ultrafilter reductions are easily seen to be partial orders on filters and (perhaps countably incomplete) ultrafilters respectively. These proofs go through in ZF alone. For example:

\begin{thm}
Suppose \(U_0\), \(U_1\), and \(U_2\) are filters on \(\kappa\). If \(U_0\) filter-reduces to \(U_1\) and \(U_1\) filter-reduces to \(U_2\) then \(U_0\) filter-reduces to \(U_2\).

\begin{proof} Fix a filter reduction \(\tau_0\) from \(U_0\) to \(U_1\) and a filter reduction \(\tau_1\) from \(U_1\) to \(U_2\). We must construct a filter reduction \(\sigma\) from \(U_0\) to \(U_2\). This is just the composition: for \(\alpha < \kappa\) and \(s\in 2^\alpha\), define \(\sigma\) by \(\sigma(s) = \tau_1(\tau_0 * s)\). It is easy to check that \(\sigma\) is a strong Lipschitz reduction from \(U_0\) to \(U_2\). 

We must show that \(\sigma\) is a filter reduction. Fix \(\alpha < \kappa\), and we verify that \(F^\sigma_\alpha\) is a filter. For \(X\subseteq \alpha\), it is easy to check that \(X\in F^\sigma_\alpha\) if and only if \(\{\beta < \alpha : X\cap\beta \in F^{\tau_0}_\beta\} \in F^{\tau_1}_\alpha\). Then if \(X\in F^\sigma_\alpha\) and \(X\subseteq Y\), \[\{\beta < \alpha : X\cap\beta \in F^{\tau_0}_\beta\}\subseteq \{\beta < \alpha : Y\cap\beta \in F^{\tau_0}_\beta\}\] since for all \(\beta\), \(F^{\tau_0}_\beta\) is a filter. Thus since \(F^{\tau_1}_\alpha\) is a filter, \(\{\beta < \alpha : Y\cap\beta \in F^{\tau_0}_\beta\}\in F^{\tau_1}_\alpha\). It follows that \(Y\in F^\sigma_\alpha\). Verifying that \(F^\sigma_\alpha\) is closed under intersections is identical.
\end{proof}
\end{thm}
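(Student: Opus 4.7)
The natural plan is to compose the two reductions. Given a filter reduction $\tau_0$ from $U_0$ to $U_1$ and a filter reduction $\tau_1$ from $U_1$ to $U_2$, I would define $\sigma : 2^{<\kappa}\to\{0,1\}$ by $\sigma(s) = \tau_1(\tau_0 * s)$ for $s \in 2^\alpha$ with $\alpha < \kappa$. Here $\tau_0 * s\in 2^\alpha$ is defined using the restriction of $\tau_0$ to $2^{<\alpha}$, and then $\tau_1$ applied to this element of $2^\alpha$ yields a value in $\{0,1\}$. The goal is then to verify that $\sigma$ is a filter reduction from $U_0$ to $U_2$.

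The verification splits into two parts. First, I would check that $\sigma$ is a strong Lipschitz reduction from $U_0$ to $U_2$. Applying the reduction properties of $\tau_0$ and $\tau_1$ in sequence, we get $x \in U_0 \iff \tau_0 * x \in U_1 \iff \tau_1 * (\tau_0 * x) \in U_2$, so it suffices to prove the associativity-style identity $\sigma * x = \tau_1 * (\tau_0 * x)$ for every $x \in 2^\kappa$. Unfolding both sides coordinate by coordinate, this reduces to the compatibility statement $(\tau_0 * x)|\xi = \tau_0 * (x|\xi)$ for every $\xi < \kappa$, which is immediate from the definition of the $*$ operation.

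The second and more substantive step is showing that $F^\sigma_\alpha$ is a filter for every $\alpha < \kappa$. The useful reformulation, identifying subsets of $\alpha$ with their characteristic functions in $2^\alpha$, is that $X \in F^\sigma_\alpha$ if and only if $\{\beta < \alpha : X \cap \beta \in F^{\tau_0}_\beta\} \in F^{\tau_1}_\alpha$. Closure under supersets then follows by chasing containment through both layers: if $X \subseteq Y$, then the $\beta$-set for $X$ is contained in the $\beta$-set for $Y$ because each $F^{\tau_0}_\beta$ is upward closed, and then upward closure of $F^{\tau_1}_\alpha$ finishes the job. Closure under finite intersections is essentially the same, using closure under intersections in both layers.

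I do not expect any real obstacle; the content of the theorem is that the obvious composition works. The only thing to be careful about is the bookkeeping relating $\tau * x$ to restrictions $x|\xi$, and in particular the compatibility identity mentioned above. There is no countable completeness or even ultrafilter hypothesis to preserve here, so the argument is purely combinatorial and goes through in ZF alone, as the statement claims; the same strategy, with slightly more care in the second step, should handle the ultrafilter and $E$-reduction variants by verifying the corresponding closure properties in the reformulation.
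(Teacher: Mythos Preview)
Your proposal is correct and follows essentially the same approach as the paper: define $\sigma(s) = \tau_1(\tau_0 * s)$, verify it is a strong Lipschitz reduction via the associativity identity, then use the reformulation $X \in F^\sigma_\alpha \iff \{\beta < \alpha : X\cap\beta \in F^{\tau_0}_\beta\} \in F^{\tau_1}_\alpha$ to check the filter properties layer by layer. If anything, you spell out the Lipschitz verification in slightly more detail than the paper, which simply declares it ``easy to check.''
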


For normal ultrafilters, an \(E\)-reduction from \(U_0\) to \(U_1\) is essentially the same as a function \(F:\kappa \to V\) such that \([F]_{U_1} = U_0\). In other words, the \(E\)-order directly generalizes the Mitchell order on normal ultrafilters. Combinatorially, we believe this is a more natural generalization to all ultrafilters of the Mitchell order on normal ultrafilters than is the generalized Mitchell order. In some sense, considering the \(E\)-order rather than the generalized Mitchell order is a combinatorial manifestation of the shift in perspective regarding the linearity phenomenon described in the introduction. 

From the perspective of \(E\)-reductions, the Ultrapower Axiom arises as a natural generalization of Wadge's Lemma. This raises the following questions:

\begin{qst}
Assume \textnormal{AD + DC}. Must the Mitchell order on normal ultrafilters be linear? Must the \(E\)-order on uniform ultrafilters be linear?
\end{qst}

In the choiceless context, without Los's Theorem, the \(E\)-order is the order given by \(E\)-reductions (and similarly for the Mitchell order on normal ultrafilters). One can prove in ZF that this is a partial order. Its wellfoundedness is less clear, even assuming DC. The issue is that it is harder to make sense of the wellfoundedness of internal iterated ultrapowers in ZF + DC alone, which one needs in order to carry out the proof of \cref{Ewellfounded}. In the context of AD + DC, we know two proofs of this fact. One way is to generalize the Martin-Monk proof that the Wadge order is wellfounded. This shows the wellfoundedness of the \(E\)-order assuming that every set of reals has the property of Baire. Another proof, which looks more like \cref{Ewellfounded}, was suggested by Woodin: one can replace the use of Los's Theorem in \cref{Ewellfounded} by Los's Theorem for external ultrapowers of HOD, noting that since AD implies every uniform ultrafilter is OD, internal iterations of HOD of length \(\omega\) must be wellfounded by \cref{defwf}.

Finally, we remark that the \(E\)-order is a structural feature of ultrafilters that was not isolated before the investigation of the Ultrapower Axiom, despite extensive research on the Mitchell order and on orderings on ultrafilters in general.

\section{Irreducible Ultrafilters}\label{IrreducibleSection}
In this section we isolate the notion of an irreducible ultrafilter, which is a bit like a prime number but much larger. We then prove that assuming the Ultrapower Axiom, every countably complete ultrafilter factors as a finite iteration of irreducible ultrafilters. The factorizations are clearly not unique, and we do not know to what extent they are unique up to some kind of reordering.

Given these theorems, the analysis of ultrafilters in the context of the Ultrapower Axiom can be broken into two problems. First, what are the irreducible ultrafilters? Second, how can they be combined? What kind of iterations are possible? We take up the first question in this section, and the second in the next, to a certain extent. A key question, related to both questions, is whether the minimal irreducible ultrafilters are linearly ordered by the Mitchell order. This would be the best possible result for the Mitchell order, since if an ultrafilter is not irreducible, it is Mitchell incomparable with one of its factors.

We show under the Ultrapower Axiom alone that this optimal result holds for an initial segment of the irreducible ultrafilters. In fact this leads to a characterization of this initial segment of the seed order: below the least cardinal \(\kappa\) that carries a Radin sequence of length \(\kappa^+\), every countably ultrafilter is Rudin-Keisler equivalent to a finite iteration of ultrafilters \(U\) that are \(\alpha\)-normal for some \(\alpha < \textsc{crt}(U)^+\). The notion of an \(\alpha\)-Radin ultrafilter is the transfinite generalization of the notion of a \(\mu\)-measure, and the notion of an \(\alpha\)-normal ultrafilter is the transfinite generalization of the notion of a normal ultrafilter, see \cref{radindef} and \cref{alphanormal}. In particular, below the least \(\mu\)-measurable cardinal, every countably complete ultrafilter is Rudin-Keisler equivalent to a finite iteration of normal ultrafilters.

We end this section by noting that in the Mitchell-Steel models, the irreducible ultrafilters are linearly ordered by the Mitchell order. In fact, we will show that in this context, the irreducible ultrafilters are precisely the total ultrafilters that lie on the extender sequence. This is just a restatement of a theorem of Schlutzenberg \cite{Schlutzenberg} that characterizes total ultrafilters in the short extender models in terms of the extender sequence.

\subsection{A factorization lemma}
\begin{defn}\label{primitivity}
We say \(Z\) {\it factors as an iterated ultrapower \((U,W)\)} if \(U\) is a countably complete ultrafilter, \(W\) is a countably complete ultrafilter of \(\text{Ult}(V,U)\), and \(Z\equiv_\text{RK} (U,W)\), see \cref{iterating}. We say in this situation that \(U\) is a {\it factor} of \(Z\).

A nonprincipal countably complete ultrafilter \(Z\) is called {\it irreducible} if for any factor \(U\) of \(Z\), either \(U\) is principal or \(U\equiv_{\text{RK}} Z\).
\end{defn}

We point out that \(Z\) factors as \((U,W)\) if and only if \[\text{Ult}(V,Z) = \textnormal{Ult}(\text{Ult}(V,U),W)\] and \(j_Z = j_W\circ j_U\). We give two obvious examples of irreducible ultrafilters to motivate the definition.

\begin{prp}
If \(Z\) is a Dodd solid ultrafilter then \(Z\) is irreducible. 
\begin{proof}
Suppose \(Z\) factors as \((U,W)\). We must show that either \(U\) is principal or \(U\equiv_\text{RK} Z\). We may assume without loss of generality that \(U\) is a minimal ultrafilter. Under this assumption, we show that either \(U = Z\) or else \(U\) is the minimal principal filter.

 Assume therefore that \(U\neq Z\). By the proof of \cref{factorlemma}, \(U \wo Z\), but we can avoid using the Ultrapower Axiom here since the embedding \[j_W: \text{Ult}(V,U)\to \text{Ult}(V,Z)\] is an internal ultrapower embedding, and therefore the comparison \(\langle j_W,\text{id}\rangle\) witnesses \(U\wo Z\). Since we assumed \(U\neq Z\), we have \(U\swo Z\), and so by \cref{doddlin*}, \(U\mo Z\). But \(\text{Ult}(V,Z)\subseteq \text{Ult}(V,U)\), since \(\text{Ult}(V,Z)\) is an internal ultrapower of \(\text{Ult}(V,U)\). Thus \(U\mo U\), and it follows from \cref{mostrict} that \(U\) is principal. 
\end{proof}
\end{prp}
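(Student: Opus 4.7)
The plan is to show the contrapositive form directly: assume $Z$ factors as an iterated ultrapower $(U,W)$ with $U$ a nonprincipal factor, and derive that $U\equiv_{\textnormal{RK}} Z$. Since irreducibility is Rudin--Keisler invariant, I may replace $U$ by a minimal representative of its RK equivalence class; this is convenient because the seed order behaves well on minimal ultrafilters (\cref{factorlemma}).

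The key structural observation is that the factorization itself hands us a comparison of $\langle U,Z\rangle$ by internal ultrafilters. Namely, $j_W:\textnormal{Ult}(V,U)\to\textnormal{Ult}(V,Z)$ is an internal ultrapower embedding of $\textnormal{Ult}(V,U)$, and $j_W\circ j_U=j_Z$. Pairing $W$ (in $\textnormal{Ult}(V,U)$) with the principal ultrafilter of $\textnormal{Ult}(V,Z)$ derived from the empty sequence therefore gives a genuine comparison $\langle W,\textnormal{id}\rangle$ of $\langle U,Z\rangle$. Using the minimality of $U$ exactly as in the proof of \cref{factorlemma}, I would show $j_W([\textnormal{id}]_U)\leq [\textnormal{id}]_Z$, so that this comparison witnesses $U\wo Z$. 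Importantly, this half of \cref{factorlemma} does not require the Ultrapower Axiom, because the comparison is produced by hand rather than extracted from UA.

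Now the dichotomy: either $k([\textnormal{id}]_U)=[\textnormal{id}]_Z$, in which case (by the standard derivation-of-ultrafilter argument, cf.\ the proof of \cref{antisymmetric}) $U$ and $Z$ have the same ultrapower embedding and seed and are therefore RK equivalent (giving what we want), or else $U\swo Z$ strictly. In the latter case I invoke \cref{doddlin*}, whose hypothesis is exactly that $Z$ is Dodd solid and $U\swo Z$ with a witnessing comparison already in hand; the conclusion is $U\mo Z$, i.e.\ $U\in\textnormal{Ult}(V,Z)$. But $\textnormal{Ult}(V,Z)=\textnormal{Ult}(\textnormal{Ult}(V,U),W)\subseteq\textnormal{Ult}(V,U)$ because $W$ is an internal ultrafilter of $\textnormal{Ult}(V,U)$. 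Hence $U\in\textnormal{Ult}(V,U)$, i.e.\ $U\mo U$, contradicting the strictness of the Mitchell order (\cref{mostrict}) unless $U$ is principal. This contradicts our standing assumption that $U$ is a nonprincipal factor, so the $U\swo Z$ case cannot occur, and we are done.

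The only place where anything delicate happens is the first step: verifying $U\wo Z$ without citing UA. The main obstacle is ensuring the minimality argument from \cref{factorlemma} is still available in ZFC in this special case, but this is immediate because the comparison $\langle W,\textnormal{id}\rangle$ is explicit and the minimality-of-seed computation is purely combinatorial. The rest of the argument is then a clean combination of Dodd solidity (via \cref{doddlin*}) and Mitchell strictness (via \cref{mostrict}).
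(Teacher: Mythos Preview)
Your proposal is correct and follows essentially the same route as the paper: reduce to a minimal $U$, observe that the explicit comparison $\langle j_W,\textnormal{id}\rangle$ witnesses $U\wo Z$ without invoking the Ultrapower Axiom (using the seed-minimality computation from \cref{factorlemma}), and then in the strict case apply \cref{doddlin*} and \cref{mostrict} to force $U$ principal. The only cosmetic difference is that you phrase the argument as a contrapositive and treat the equality case $j_W([\textnormal{id}]_U)=[\textnormal{id}]_Z$ explicitly, whereas the paper simply assumes $U\neq Z$ at the outset.
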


The following proposition is a consequence of the preceding one assuming GCH by \cref{soundult}, but using the proof of \cref{soundult}, one can avoid any cardinal arithmetic assumptions.

\begin{prp}\label{superirred}
If \(\mathcal U\) is a supercompactness measure then \(\mathcal U\) is irreducible. 
\begin{proof}
Assume without loss of generality that \(\mathcal U\) is a \(\kappa\)-supercompactness measure on \(\lambda\) where \(\text{cf}(\lambda)\geq \kappa\). By \cref{sollemma} and \cref{optimal}, \(\mathcal U\) is Rudin-Keisler equivalent to the ultrafilter \(Z\) derived from \(j_\mathcal U\) using the least generator \(\theta\) of \(j_\mathcal U\) above \(\lambda_* = \sup j_\mathcal U[\lambda]\). 

It suffices to show that \(Z\) is irreducible. Suppose that \(Z\) factors as \((U,W)\). Assuming without loss of generality that \(U\) is a minimal ultrafilter, we must show that either \(U = Z\) or else \(U\) is principal. 

Assume therefore that \(U \neq Z\). Let \(a = [\text{id}]_U\). Note then that \(j_W(a) \subseteq \theta\) by the proof of \cref{factorlemma}, since \(\theta\) is the minimum seed of \(Z\). It also follows from this proof that for any function \(f\in V\) and any \(b < j_W(a)\), \(j_W(a) \neq j_Z(f)(b)\). Since \(\theta\) is the least generator of \(Z\) above \(\lambda_*\), we must therefore have \(j_W(a) \subseteq \lambda_*\). Thus for some \(\delta < \lambda\), \(j_W(a)\subseteq j_Z(\delta)\), and so \(\textsc{sp}(U) < \lambda\) since \(U\) is the uniform ultrafilter derived from \(j_Z\) using \(j_W(a)\).

 On the other hand, \(\text{Ult}(V,Z)\subseteq \text{Ult}(V,U)\) since \(\text{Ult}(V,Z)\) is an internal ultrapower of \(\text{Ult}(V,U)\). Thus \(\text{Ord}^\lambda \subseteq \text{Ult}(V,U)\), since \(\text{Ord}^\lambda\subseteq\text{Ult}(V,Z)\), as \(Z\) is Rudin-Keisler equivalent to the supercompactness measure \(\mathcal U\) on \(\lambda\). Hence \(\text{Ult}(V,U)\) is closed under \(\lambda\)-sequences. Since \(\text{Ult}(V,U)\) is closed under \(\lambda\)-sequences and \(\textsc{sp}(U) < \lambda\), \(U\) is principal, as in \cref{doddgch}.
\end{proof}
\end{prp}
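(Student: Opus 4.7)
The plan is to reduce the irreducibility of $\mathcal U$ to that of a uniform ultrafilter derived from $j_\mathcal U$ via a single generator, and then to exploit the tension between the supercompactness-induced closure of the ultrapower and the smallness of the space that any putative nonprincipal factor would be forced to have.

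First, I would invoke Solovay's lemma together with an optimality result (the forward references \cref{sollemma} and \cref{optimal}) to replace $\mathcal U$ by the uniform ultrafilter $Z$ on an ordinal derived from $j_\mathcal U$ using $\{\theta\}$, where $\theta$ is the least generator of $j_\mathcal U$ strictly above $\lambda_\ast := \sup j_\mathcal U[\lambda]$. Assume without loss of generality that $\mathcal U$ is a $\kappa$-supercompactness measure on $\lambda$ with $\text{cf}(\lambda) \geq \kappa$. Since irreducibility is invariant under Rudin--Keisler equivalence, it suffices to show that $Z$ is irreducible.

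Now suppose $Z$ factors as $(U, W)$, i.e., $j_Z = j_W \circ j_U$, and take $U$ minimal. If $U \not\equiv_{\text{RK}} Z$, I aim to show $U$ is principal. Let $a = [\text{id}]_U$, so $j_W(a)$ is a candidate seed for $Z$ lying in $\textnormal{Ult}(V, Z)$. Repeating the argument underlying \cref{factorlemma}, $j_W(a) \subseteq \theta$ and moreover no $b < j_W(a)$ together with a function $f \in V$ satisfies $j_Z(f)(b) = j_W(a)$, by minimality of $U$. Since $\theta$ is the least generator of $j_Z$ above $\lambda_\ast$ and $j_W(a) \neq \{\theta\}$ (else $U \equiv_{\text{RK}} Z$, as $j_W$ would then be surjective and thus the identity), every element of $j_W(a)$ must lie strictly below $\lambda_\ast$. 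Finiteness of $a$ yields some $\delta < \lambda$ with $j_W(a) \subseteq j_Z(\delta)$, whence $\textsc{sp}(U) \leq \delta < \lambda$.

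Finally, $\textnormal{Ult}(V, Z) \subseteq \textnormal{Ult}(V, U)$, since $\textnormal{Ult}(V, Z)$ is an internal ultrapower of $\textnormal{Ult}(V, U)$ via $W$. Because $Z \equiv_{\text{RK}} \mathcal U$ and $\mathcal U$ is a $\kappa$-supercompactness measure on $P_\kappa(\lambda)$, we have $\text{Ord}^\lambda \subseteq \textnormal{Ult}(V, Z)$, and hence $\textnormal{Ult}(V, U)$ inherits closure under $\lambda$-sequences. Combined with $\textsc{sp}(U) < \lambda$, the closure-versus-smallness argument of \cref{doddgch} then places $U$ inside its own ultrapower, contradicting the strictness of the Mitchell order (\cref{mostrict}) unless $U$ is principal. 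The main obstacle I anticipate is the middle step --- establishing $j_W(a) \subseteq \lambda_\ast$ --- which requires coordinating the minimality of $U$ with the choice of $\theta$ as the \emph{least} generator of $j_\mathcal U$ above $\lambda_\ast$; once this bound is in hand, the closure argument in the last paragraph is routine.
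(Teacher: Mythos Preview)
Your proposal is correct and follows essentially the same route as the paper's proof: reduce to the uniform ultrafilter $Z$ derived from the least generator $\theta$ above $\lambda_*$, use the minimality argument from \cref{factorlemma} to force $j_W(a)\subseteq\lambda_*$ and hence $\textsc{sp}(U)<\lambda$, and finish with the closure-versus-space contradiction in the style of \cref{doddgch}. The only cosmetic difference is that you make the case $j_W(a)=\{\theta\}$ explicit, whereas the paper absorbs it into the hypothesis $U\neq Z$ together with the minimality of $U$.
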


While irreducibility may seem like a somewhat weak condition since it refers only to factorizations into ultrafilters, it is actually equivalent to a property that on first glance may appear quite a bit stronger.

\begin{prp}
Suppose \(Z\) is an irreducible ultrafilter. Suppose \(j: V\to M\) is an elementary embedding, \(k:M \to \textnormal{Ult}(V,Z)\) is a close embedding, and \(k\circ j = j_Z\). Then either \(M = V\) and \(k = \textnormal{id}\) or else \(M = \textnormal{Ult}(V,Z)\) and \(j = j_Z\).
\begin{proof}
The point is that (even without assuming \(Z\) is irreducible) one can show that \(j\) is an ultrapower embedding and \(k\) is an internal ultrapower embedding of \(M\). 

To see that \(k\) is an ultrapower embedding of \(M\), note that the \(M\)-ultrafilter \(W\) derived from \(k\) using \([\text{id}]_Z\) is in \(M\) by closeness, and \(\text{Ult}(V,Z) = \text{Ult}(M,W)\) since the natural factor map is surjective. To see that \(j\) is an ultrapower embedding, suppose \(x\in M\). Then \(j_W(x) \in \textnormal{Ult}(V,Z)\) and so is of the form \(j_Z(f)([\text{id}]_Z)\) for some \(f\in V\). It follows that \[x = j_W^{-1}(j_Z(f)([\text{id}]_Z)) = j_W^{-1}(j_W(j(f))([\text{id}]_Z))\] In particular, every element \(x\) of \(M\) is definable in \(M\) from \([\text{id}]_Z\), \(W\), and a point in the range of \(j\). Thus \(j\) is an ultrapower embedding.
\end{proof}
\end{prp}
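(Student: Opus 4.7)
The plan is to show that $j$ must be an ultrapower embedding of $V$ and that $k$ must be an internal ultrapower embedding of $M$. This will exhibit $Z$ as an iterated ultrapower factoring through $j$, and the irreducibility of $Z$ will then force one of the two embeddings to be essentially trivial, yielding the dichotomy.

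First I would verify that $k$ is an internal ultrapower embedding of $M$. Setting $a = [\text{id}]_Z$, I let $W$ be the $M$-ultrafilter derived from $k$ using $a$; by the closeness of $k$, $W \in M$. The natural factor map $h:\textnormal{Ult}(M,W) \to \textnormal{Ult}(V,Z)$ defined by $h([g]_W) = k(g)(a)$ is surjective, because every element of $\textnormal{Ult}(V,Z)$ has the form $j_Z(f)(a)$ for some $f \in V$, and $j_Z(f)(a) = k(j(f))(a) = h([j(f)]_W)$ using $k\circ j = j_Z$ and the fact that $j(f) \in M$. Hence $h$ is an isomorphism, and after identification $k = j_W^M$ while $[\text{id}]_W = a$.

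Next I would deduce that $j$ itself is an ultrapower embedding. For any $x \in M$, $j_W(x) \in \textnormal{Ult}(V,Z)$ can be written as $j_Z(f)(a) = j_W(j(f))([\text{id}]_W)$ for some $f \in V$. By Los's theorem applied to $W$ in $M$, this yields $\{c\in \textsc{sp}(W) : x = j(f)(c)\}\in W$, so every element of $M$ is of the form $j(f)(c)$ for some $f\in V$ and some $c$ in $\textsc{sp}(W) \in M$. Extracting a uniform seed from this family, one recovers a $V$-ultrafilter $U$ with $j = j_U$ and $M = \textnormal{Ult}(V,U)$. With $j = j_U$ and $k = j_W^M$ in hand, $j_Z = k \circ j = j_W\circ j_U$ says $Z$ factors as $(U,W)$ in the sense of \cref{primitivity}. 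Irreducibility of $Z$ then gives the dichotomy: either $U$ is principal---so $j$ is trivial, $M = V$, and $k = j_Z$---or $U \equiv_{\textnormal{RK}} Z$, giving $M = \textnormal{Ult}(V,Z)$. In the second case, I would invoke the Rigid Ultrapowers Lemma \cref{rul} applied to the self-embedding of $M$ obtained by composing the Rudin-Keisler projection $\textnormal{Ult}(V,Z)\to M$ with $k$ to conclude $k = \textnormal{id}$ and $j = j_Z$.

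The main obstacle I anticipate is uniformizing the per-$x$ witnesses $c$ obtained above into a single seed generating $M$ over $j[V]$, so that $j$ is witnessed as an ultrapower embedding by a single $V$-ultrafilter. The Los manipulation places the candidate seed $[\text{id}]_W$ in $\textnormal{Ult}(V,Z)$ rather than in $M$ itself, which is the point requiring care; once handled, the remainder of the argument is a routine unpacking of irreducibility together with an appeal to rigidity.
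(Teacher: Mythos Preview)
Your approach is essentially the same as the paper's: show $k$ is an internal ultrapower of $M$ by the derived ultrafilter $W$, then show $j$ is an ultrapower embedding, then invoke irreducibility. The paper's proof in fact stops after the second step, leaving the application of irreducibility (and the Rigid Ultrapowers Lemma in the second alternative) implicit, so your write-up is if anything more complete.

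The one place you diverge is the ``main obstacle'' you flag at the end, and this worry is unnecessary. You do not need to uniformize the per-$x$ witnesses $c \in \textsc{sp}(W)$ into a single ordinal seed. The identity you already derived, $j_W(x) = j_W(j(f))([\text{id}]_W)$, can be read in $M$ as
\[
x = j_W^{-1}\bigl(j_W(j(f))([\text{id}]_W)\bigr),
\]
which expresses $x$ using only $j(f)$ and the parameter $W$ (since $j_W$ and $[\text{id}]_W$ are definable in $M$ from $W$). Thus $M$ is generated over $j[V]$ by the single element $W$, and the $V$-ultrafilter $U$ derived from $j$ using $W$ (or any ordinal coding of it) witnesses that $j$ is an ultrapower embedding. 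This is exactly how the paper handles it, and it sidesteps the uniformization issue entirely.
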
 

Note if \(Z\) factors as an iterated ultrapower \((U,W)\), then \(\langle F,W\rangle\) is a comparison of \(\langle Z,U\rangle\), where \(F\) is any principal ultrafilter. We therefore call a factorization {\it canonical} if the corresponding comparison is canonical. Of course any factorization of \(Z\) is equivalent to a canonical one in a natural sense.

The following basic lemma is the key to much of our analysis. It seems to have no analogue in the context where the Ultrapower Axiom is not assumed.

\begin{lma}[Ultrapower Axiom]\label{decomplemma}
If \(Z\) is a uniform ultrafilter and \((U,W)\) is a canonical factorization of \(Z\) with \(U\) nonprincipal, then \(W\swo j_{U}(Z)\).
\begin{proof}
Since \((U,W)\) is canonical, by \cref{bounding}, \(W\wo j_{U}(Z)\). To finish we must show \(W\neq j_{U}(Z)\).

Assume to the contrary that \(W = j_{U}(Z)\). Note that \(j_{U}\) restricts to an elementary embedding \[j_U:M_Z\to \textnormal{Ult}(M_U,j_{U}(Z))\] We have assumed \(j_{U}(Z) = W\), and so \(\textnormal{Ult}(M_U,j_{U}(Z))\) is equal to \(M_Z\), since \(Z\) factors as \((U,W)\). Moreover, \[j_{U}\circ j_Z = j_{j_{U}(Z)}\circ j_{U} = j_{W}\circ j_{U} = j_Z\] Since \(U\) is nonprincipal, \(j_{U}\) is not the identity, and this contradicts the Rigid Ultrapowers Lemma, \cref{rul}.
\end{proof} 
\end{lma}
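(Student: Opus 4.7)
The plan is to split the statement into the weak inequality $W \wo j_U(Z)$ and the strict separation $W \neq j_U(Z)$, proving the first using the characterization of the canonical comparison in \cref{canonicalclassification} and the second using the Rigid Ultrapowers Lemma \cref{rul}.

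For the weak inequality, I would apply \cref{canonicalclassification} to the canonical comparison $\langle F,W\rangle$ of $\langle Z,U\rangle$, where $F$ is the trivial principal ultrafilter of $\textnormal{Ult}(V,Z)$ corresponding to the factorization. The second clause of that theorem identifies $W$ as the $\wo$-least ultrafilter $W'$ of $\textnormal{Ult}(V,U)$ for which there is an elementary embedding $k:\textnormal{Ult}(V,Z)\to \textnormal{Ult}(\textnormal{Ult}(V,U),W')$ sending $[\textnormal{id}]_Z$ to $[\textnormal{id}]_{W'}$. To see that $W' = j_U(Z)$ qualifies, note that by elementarity of $j_U$ we have $\textnormal{Ult}(\textnormal{Ult}(V,U), j_U(Z)) = j_U(\textnormal{Ult}(V,Z))$ and $[\textnormal{id}]_{j_U(Z)} = j_U([\textnormal{id}]_Z)$, so $k = j_U\restriction \textnormal{Ult}(V,Z)$ (which is elementary by the absoluteness of satisfaction for the set-definable inner model $\textnormal{Ult}(V,Z)$) does the job. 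Hence $W\wo j_U(Z)$ in $\textnormal{Ult}(V,U)$.

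For strictness, suppose towards a contradiction that $W = j_U(Z)$. Then $\textnormal{Ult}(V,Z) = \textnormal{Ult}(\textnormal{Ult}(V,U),W) = \textnormal{Ult}(\textnormal{Ult}(V,U),j_U(Z)) = j_U(\textnormal{Ult}(V,Z))$, combining the hypothesized factorization with elementarity. Consequently $k := j_U\restriction \textnormal{Ult}(V,Z)$ is an elementary self-embedding of $\textnormal{Ult}(V,Z)$. Using the naturality relation $j_U\circ j_Z = j_{j_U(Z)}\circ j_U$ together with $W = j_U(Z)$ and the factorization $j_W\circ j_U = j_Z$, I would compute $k\circ j_Z = j_U\circ j_Z = j_{j_U(Z)}\circ j_U = j_W\circ j_U = j_Z$. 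By \cref{rul} applied to $j_Z$, $k$ must be the identity on $\textnormal{Ult}(V,Z)$, but $k$ moves $\textnormal{crit}(U)\in \textnormal{Ord}\subseteq \textnormal{Ult}(V,Z)$ since $U$ is nonprincipal---contradiction.

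The main obstacle I anticipate is keeping the four ambient models $V$, $\textnormal{Ult}(V,U)$, $\textnormal{Ult}(V,Z)$, and $j_U(\textnormal{Ult}(V,Z))$ straight, and in particular performing carefully the elementarity identifications $\textnormal{Ult}(\textnormal{Ult}(V,U),j_U(Z)) = j_U(\textnormal{Ult}(V,Z))$ and $[\textnormal{id}]_{j_U(Z)} = j_U([\textnormal{id}]_Z)$, as well as verifying that $j_U\restriction \textnormal{Ult}(V,Z)$ really is elementary on $\textnormal{Ult}(V,Z)$. Once these identifications are in hand, both halves of the proof reduce to direct invocations of tools already established in the paper.
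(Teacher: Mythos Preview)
Your proof is correct and follows essentially the same route as the paper. The only cosmetic difference is that you invoke \cref{canonicalclassification} directly and verify that $j_U(Z)$ lies in the relevant class of ultrafilters, whereas the paper cites its corollary \cref{bounding}; your verification is precisely the content of the proof of \cref{bounding} specialized to this situation, and the strictness argument via \cref{rul} is identical.
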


We note that the second paragraph of \cref{decomplemma} is just a proof that \(U\times Z \not\equiv_{\textnormal{RK}} Z\) if \(U\) is nonprincipal. It is the first sentence of the proof that we do not know how to generalize to the ZFC setting: we do not know of any wellfounded invariant that can replace the seed order in \cref{decomplemma}.

We define by induction the more general notion of factoring as a finite iterated ultrapower in order to introduce some notation for the following theorem. Having defined the notion of factoring as an iterated ultrapower of length \(n\), we then say \(Z\) factors as the iterated ultrapower \((U_0,U_1,\dots,U_n)\) of length \(n+1\) if \(Z\) factors as the iterated ultrapower \((U_0,W)\) and in \(\text{Ult}(V,U_0)\), \(W\) factors as the iterated ultrapower \((U_1,\dots,U_n)\) of length \(n\). In this case, we will occasionally denote \(\textnormal{Ult}(V,Z)\) by \(\textnormal{Ult}(V,U_0,\dots,U_n)\).

\begin{thm}[Ultrapower Axiom]\label{decomp}
Every countably complete ultrafilter \(W_0\) factors as a finite iteration of irreducible ultrafilters. 

Moreover this iteration \((U_0,U_1,\dots,U_{n_0-1})\) can be chosen with the property that for each \(i < n_0\), \begin{align}\label{funny}\textnormal{Ult}(V,U_0,\dots,U_i)\vDash W_{i+1}\swo j_{U_i}(W_i)\end{align} where for \(i \leq n_0\), \(W_i\) is the unique ultrafilter of \(\textnormal{Ult}(V,U_0,\dots,U_{i-1})\) such that \(((U_0,\dots,U_{i-1}),W_i)\) is a canonical factorization of \(Z\).
\begin{proof}
This is proved by iterating \cref{decomplemma}. The iteration must terminate in finitely many steps: otherwise one obtains an illfounded internal iteration by countably complete ultrafilters, since for all \(i< \omega\), \(W_{i+1}\swo j_{U_i}(W_i)\). We give the details, and a picture.
\begin{figure}
\begin{center}\includegraphics[scale=.9]{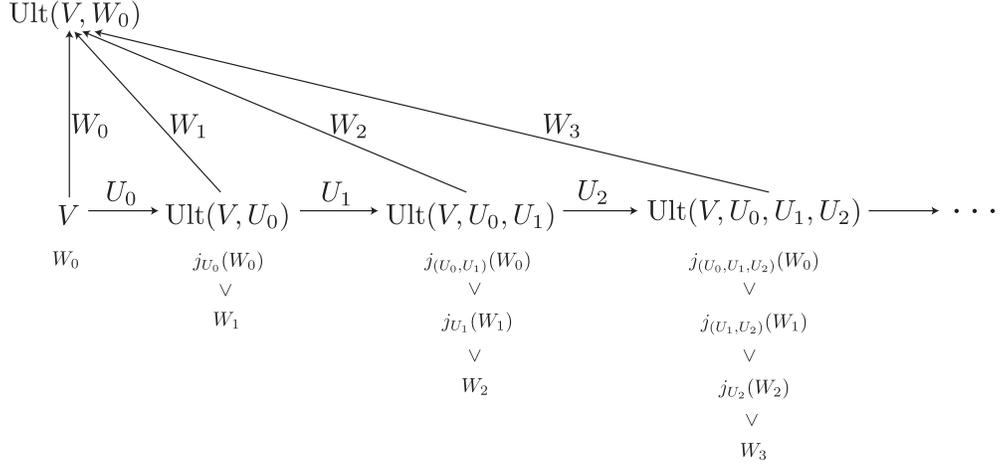}
\caption{The factorization algorithm.}
\end{center}
\end{figure}

Suppose \(W_0\) is nonprincipal. Let \(U_0\) be the \(\wo\)-least nonprincipal ultrafilter that is a factor of \(W_0\). By \cref{factorlemma}, \(U_0\) is irreducible: any factor of \(U_0\) precedes \(U_0\) in the seed order, yet is a factor of \(W_0\), hence is equal to \(U_0\). Let \(W_1\) be such that \((U_0,W_1)\) is a canonical factorization of \(W_0\). 

Now repeat this process, working in \(\textnormal{Ult}(V,U_0)\). Suppose \(W_1\) is nonprincipal. Let \(U_1\) be the \(\wo\)-least nonprincipal ultrafilter that is a factor of \(W_1\). By \cref{factorlemma}, \(U_1\) is irreducible. Let \(W_2\) be such that \((U_1,W_2)\) is a canonical factorization of \(W_1\).

Now repeat this process until the least \(n_0 \leq \omega\) such that \(W_{n_0}\) is nonprincipal or \(n_0 = \omega\). Thus we obtain for each finite \(n \leq n_0\) a sequence \((U_0,U_1,\dots,U_{n-1})\) and a sequence \((W_0,W_1,\dots, W_n)\) such that for each \(i < n\), \((U_i,W_{i+1})\) is a canonical factorization of \(W_i\) in \(\textnormal{Ult}(V,U_0,\dots,U_{i-1})\) and \(U_i\) is irreducible in \(\textnormal{Ult}(V,U_0,\dots,U_{i-1})\). By \cref{decomplemma}, \[\textnormal{Ult}(V,U_0,\dots,U_i)\vDash W_{i+1}\swo j_{U_i}(W_i)\]
A very easy induction shows that \(((U_0,\dots,U_{i-1}),W_i)\) is a canonical factorization of \(W_0\).

We claim \(n_0 < \omega\): otherwise the \(W_i\) form a descending sequence in the seed order of the direct limit of the internal linear iteration \(\langle U_0,U_1,U_2,\dots\rangle\). Thus \(W_{n_0}\) is principal, and so \((U_0,U_1,\dots,U_{n_0 - 1})\) is a factorization of \(Z\) into irreducible ultrafilters. By construction \((U_0,U_1,\dots,U_{n - 1})\) satisfies \cref{funny}.
\end{proof}
\end{thm}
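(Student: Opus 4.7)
The plan is to iterate \cref{decomplemma}, peeling off a $\wo$-minimal nonprincipal factor at each stage; termination will then be forced by the wellfoundedness of the seed order in a suitable direct limit.

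Start with $W_0 := Z$. If $W_0$ is principal, the empty factorization works. Otherwise I would let $U_0$ be a $\wo$-minimal nonprincipal factor of $W_0$; this exists because $\wo$ is wellfounded by \cref{wellfounded} and setlike by \cref{setlike}. A first sub-claim is that $U_0$ is automatically irreducible: any nonprincipal factor $V$ of $U_0$ composes with the factorization of $W_0$ through $U_0$ to give a nonprincipal factor of $W_0$; replacing $V$ by a minimal ultrafilter in its Rudin-Keisler class and applying \cref{factorlemma} to the internal ultrapower embedding $\textnormal{Ult}(V,V)\to \textnormal{Ult}(V,U_0)$ witnessing that $V$ factors $U_0$ yields $V\wo U_0$, so by minimality of $U_0$ we must have $V\equiv_{\textnormal{RK}} U_0$. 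Next I would take $W_1$ to be the ultrafilter such that $(U_0,W_1)$ is a canonical factorization of $W_0$; \cref{decomplemma} gives $W_1\swo j_{U_0}(W_0)$ in $\textnormal{Ult}(V,U_0)$, which is the required instance of \cref{funny} at $i=0$.

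I would then repeat this construction inside $\textnormal{Ult}(V,U_0)$ applied to $W_1$ and iterate, producing (so long as the current residual is nonprincipal) an irreducible $U_n\in M_n$ and a residual $W_{n+1}\in M_{n+1}$ satisfying $W_{n+1}\swo j_{U_n}(W_n)$ in $M_{n+1}$, where $M_n := \textnormal{Ult}(V,U_0,\ldots,U_{n-1})$. A straightforward induction shows that at each stage $((U_0,\ldots,U_{n-1}),W_n)$ is a canonical factorization of $W_0$ in $V$.

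The main obstacle is proving termination. Suppose for contradiction that the process never halts. Let $i_{n,m}:M_n\to M_m$ denote the composed internal ultrapower embeddings, and let $M_\infty$ be the direct limit of the linear system $\langle M_n, i_{n,m} : n\leq m<\omega\rangle$; this direct limit is wellfounded since it is a linear internal iteration by countably complete ultrafilters (standard, in the style of Gaifman's theorem). Set $\tilde W_n := i_{n,\infty}(W_n)\in M_\infty$. Applying $i_{i+1,\infty}$ to the $M_{i+1}$-statement $W_{i+1}\swo j_{U_i}(W_i) = i_{i,i+1}(W_i)$ and using elementarity together with the definability of the seed order, I obtain $\tilde W_{i+1}\swo \tilde W_i$ in $M_\infty$ for every $i<\omega$. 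Since the Ultrapower Axiom is preserved under internal ultrapower embeddings (as a statement without parameters pulled through $j_{U_n}$), and hence inherited by $M_\infty$ via elementarity of the direct limit map $V\to M_\infty$, \cref{wellfounded} applied inside $M_\infty$ shows that $\swo^{M_\infty}$ is wellfounded — contradicting the strictly descending chain $\tilde W_0, \tilde W_1, \tilde W_2, \ldots$. Therefore the iteration halts at some $n_0<\omega$ with $W_{n_0}$ principal, and $(U_0,\ldots,U_{n_0-1})$ is the desired factorization satisfying \cref{funny}.
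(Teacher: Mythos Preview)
Your proof is correct and follows essentially the same approach as the paper: choose the \(\wo\)-least nonprincipal factor at each stage, use \cref{factorlemma} to verify irreducibility, apply \cref{decomplemma} for the strict descent \cref{funny}, and derive a contradiction from an infinite descending sequence in the seed order of the direct limit. Your termination argument is spelled out in slightly more detail than the paper's (which simply says the \(W_i\) form a descending sequence in the seed order of the direct limit), but the content is identical. One cosmetic remark: you use \(V\) both for the universe and for a factor ultrafilter in your irreducibility argument, which is a bit jarring; renaming the factor would improve readability.
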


\subsection{Radin Ultrafilters}\label{RealmSection}
We begin this section with a little lemma that shows that assuming the Ultrapower Axiom, any irreducible ultrafilter is either a normal ultrafilter or a \(\mu\)-ultrafilter. Hence below the least \(\mu\)-measurable cardinal, every ultrafilter factors as a finite iteration of normal ultrafilters. The rest of the section is a generalization of this result which involves a more complicated induction, which is why we have separated the much simpler base case.
\begin{defn}\label{mudef}
Suppose \(Z\) is a countably complete ultrafilter. Let \(U\) be its derived normal ultrafilter. Then \(Z\) is a \(\mu\)-ultrafilter if \(U \mo Z\). A cardinal \(\kappa\) is a \(\mu\)-measurable cardinal if there is a \(\mu\)-ultrafilter \(Z\) such that \(\textsc{crt}(Z) = \kappa\).
\end{defn}

A standard argument shows that \(\kappa\) is \(\mu\)-measurable if and only if there is a \(\kappa\)-complete \(\mu\)-ultrafilter on \(\kappa\).

Because we will cite it a couple times, we prove the following trivial fact.

\begin{lma}\label{trivialfact}
Suppose \(U_0\) and \(U_1\) are countably complete ultrafilters. Then \(\textnormal{Ult}(V,U_1)\) is an internal ultrapower of \(\textnormal{Ult}(V,U_0)\) if and only if there is a comparison \(\langle k_0,k_1\rangle\) of \(\langle U_0,U_1\rangle\) such that \(k_0([\textnormal{id}]_{U_0})\in \textnormal{ran}(k_1)\).
\begin{proof}
The forward direction is obvious, since one can take \(k_1\) to be the identity. Conversely, by \cref{closetoultra}, take a canonical comparison \(\langle W_0,W_1\rangle\) of \(\langle U_0,U_1\rangle\) that factors into \(\langle k_0,k_1\rangle\) via \(h\). Then \(h(j_{W_0}([\textnormal{id}]_{U_0})) =k_0([\textnormal{id}]_{U_0})\in \text{ran}(k_1) = \text{ran}(h\circ j_{W_1})\) and hence \(j_{W_0}([\textnormal{id}]_{U_0})\in \text{ran}(j_{W_1})\). Since \(\langle W_0,W_1\rangle\) is canonical, \(W_1\) is derived from \(j_{W_1}\) using \(j_{W_0}([\textnormal{id}]_{U_0})\) and hence \(W_1\) is principal. It follows that \(\text{Ult}(V,U_1)\) is the internal ultrapower of \(\text{Ult}(V,U_0)\) by \(W_0\), as desired.
\end{proof}
\end{lma}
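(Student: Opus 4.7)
My plan is to handle the two directions separately, with the forward direction being essentially trivial and the reverse direction being the substantive content.

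For the forward direction, suppose $\textnormal{Ult}(V,U_1)$ is an internal ultrapower of $\textnormal{Ult}(V,U_0)$. Then there is some $W_0 \in \textnormal{Ult}(V,U_0)$, countably complete in the sense of $\textnormal{Ult}(V,U_0)$, such that $\textnormal{Ult}(\textnormal{Ult}(V,U_0),W_0) = \textnormal{Ult}(V,U_1)$ and $j_{W_0} \circ j_{U_0} = j_{U_1}$. Take $k_0 = j_{W_0}$ and $k_1 = \textnormal{id}_{\textnormal{Ult}(V,U_1)}$; formally, the latter is the ultrapower embedding of $\textnormal{Ult}(V,U_1)$ by the principal ultrafilter $F$ concentrating on the empty sequence. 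The pair $\langle W_0, F\rangle$ satisfies all the requirements of \cref{comparison}, and $k_0([\textnormal{id}]_{U_0}) \in \textnormal{Ult}(V,U_1) = \textnormal{ran}(k_1)$ trivially.

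For the reverse direction, suppose $\langle k_0,k_1\rangle$ is a comparison of $\langle U_0,U_1\rangle$ with $k_0([\textnormal{id}]_{U_0}) \in \textnormal{ran}(k_1)$. Since $k_0$ and $k_1$ are internal ultrapower embeddings (hence close to $\textnormal{Ult}(V,U_0)$ and $\textnormal{Ult}(V,U_1)$ respectively), I can apply \cref{closetoultra} with $M = V$ to obtain a canonical comparison $\langle W_0, W_1\rangle$ of $\langle U_0,U_1\rangle$ to a common model $P$, together with an elementary embedding $h: P \to N$ (where $N$ is the common target of $k_0,k_1$) satisfying $h \circ j_{W_0} = k_0$ and $h \circ j_{W_1} = k_1$. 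Then
\[
h(j_{W_0}([\textnormal{id}]_{U_0})) = k_0([\textnormal{id}]_{U_0}) \in \textnormal{ran}(k_1) = \textnormal{ran}(h \circ j_{W_1}),
\]
and since $h$ is injective, $j_{W_0}([\textnormal{id}]_{U_0}) \in \textnormal{ran}(j_{W_1})$.

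The key step is now to invoke canonicity: by definition, $W_1$ is the ultrafilter derived from $j_{W_1}$ using $j_{W_0}([\textnormal{id}]_{U_0})$. Since the derived object is computed from an element already in the range of $j_{W_1}$, the ultrafilter $W_1$ is principal, so $j_{W_1}$ is the identity and $P = \textnormal{Ult}(V,U_1)$. Thus $\textnormal{Ult}(V,U_1) = \textnormal{Ult}(\textnormal{Ult}(V,U_0), W_0)$, and $j_{W_0} \circ j_{U_0} = j_{W_1} \circ j_{U_1} = j_{U_1}$ by clause (6) of \cref{comparison}; that is, $\textnormal{Ult}(V,U_1)$ is an internal ultrapower of $\textnormal{Ult}(V,U_0)$ via $W_0$. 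The only mildly delicate point is verifying that \cref{closetoultra} applies without needing UA — here it does, because the hypothesis already hands us the comparison $\langle k_0,k_1\rangle$ we need to feed into the canonical-comparison construction.
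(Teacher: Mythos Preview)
Your proof is correct and follows essentially the same route as the paper's own argument: both directions are handled identically, invoking \cref{closetoultra} to extract a canonical comparison factoring through the given one, then using canonicity of $W_1$ together with $j_{W_0}([\textnormal{id}]_{U_0})\in\textnormal{ran}(j_{W_1})$ to conclude that $W_1$ is principal. Your closing remark that the Ultrapower Axiom is not needed here is also correct and worth making explicit.
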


The argument that leads to the structural analysis of the seed order turns out to be a simple variation on the argument of \cref{mitchellnormalmeasures}. 

\begin{thm}[Ultrapower Axiom]\label{muthm}
Suppose \(Z\) is an minimal irreducible ultrafilter. Either \(Z\) is a normal ultrafilter or \(Z\) is a \(\mu\)-ultrafilter.
\begin{proof}
Let \(U\) be the normal ultrafilter derived from \(Z\). Suppose \(U\) is an ultrafilter on \(\kappa\). Let \(\langle k_U,k_Z\rangle\) be a comparison of \(\langle U,Z\rangle\) to a common model \(M\). By \cref{factorlemma}, \(\langle k_U,k_Z\rangle\) witnesses \(U\wo Z\). 

Let \(i: \text{Ult}(V,U)\to \text{Ult}(V,Z)\) be the factor embedding. Then by \cref{minult}, \(k_U\restriction \text{Ord} \leq k_Z\circ i\restriction \text{Ord}\). Therefore since \(i(\kappa) = \kappa\), either \(k_U(\kappa) = k_Z(\kappa)\) or else \(k_U(\kappa) < k_Z(\kappa)\). If \(k_U(\kappa) = k_Z(\kappa)\) then by \cref{trivialfact}, \(U\) is a factor of \(Z\), so since \(Z\) is a minimal irreducible ultrafilter, \(U = Z\) and \(Z\) is normal. On the other hand, if \(k_U(\kappa) < k_Z(\kappa)\), then \(U \mo Z\) by the argument of \cref{mitchellnormalmeasures}: \(X\in U\) if and only if \(\kappa \in j_U(X)\) if and only if \(k_U(\kappa) \in k_U(j_U(X)) = k_Z(j_Z(X))\) if and only if \(k_U(\kappa)\in k_Z(j_Z(X))\cap k_Z(\kappa) = k_Z(j_Z(X)\cap \kappa) = k_Z(X)\), and since \(k_Z\) is definable over \(\textnormal{Ult}(V,Z)\), it follows that \(U\) is definable over \(\textnormal{Ult}(V,Z)\), so \(U\mo Z\). Thus in this case \(Z\) is a \(\mu\)-ultrafilter.
\end{proof}
\end{thm}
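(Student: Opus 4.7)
My plan is to follow the template established by \cref{mitchellnormalmeasures} and \cref{factorlemma}: introduce the derived normal ultrafilter, compare it with $Z$, and then dichotomize based on whether the seeds coincide or not. Concretely, let $U$ be the normal ultrafilter derived from $Z$, concentrating on $\kappa = \textsc{crt}(Z)$. Since $U$ is a minimal uniform ultrafilter and there is a factor embedding $i:\textnormal{Ult}(V,U)\to \textnormal{Ult}(V,Z)$ with $i\circ j_U = j_Z$, we have $U \wo Z$ (using \cref{factorlemma}, or directly noting that $\langle j_U, \textnormal{id}\rangle$-style comparisons witness it). Fix any comparison $\langle k_U, k_Z\rangle$ of $\langle U,Z\rangle$ to a common model $M$.

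The key observation to extract is that $k_U(\kappa) \leq k_Z(\kappa)$. This follows from \cref{minult}: since $k_Z \circ i$ and $k_U$ are both elementary embeddings from $\textnormal{Ult}(V,U)$ to $M$, and $k_U$ is an internal ultrapower embedding of $\textnormal{Ult}(V,U)$, definability gives $k_U\restriction \text{Ord} \leq k_Z\circ i\restriction \text{Ord}$; then use $i(\kappa)=\kappa$ since $U$ is the normal ultrafilter on $\kappa$ derived from $j_Z$.

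Now I split into the two cases that must yield the two conclusions. In the equality case $k_U(\kappa) = k_Z(\kappa)$, note that $[\textnormal{id}]_U = \kappa$, so $k_U([\textnormal{id}]_U) \in \textnormal{ran}(k_Z)$. By \cref{trivialfact}, $\textnormal{Ult}(V,Z)$ is an internal ultrapower of $\textnormal{Ult}(V,U)$, i.e.\ $U$ is a factor of $Z$. Since $Z$ is irreducible and $U$ is nonprincipal, $U \equiv_{\textnormal{RK}} Z$, and minimality of both (in their RK classes) gives $U = Z$, so $Z$ is normal. In the strict inequality case $k_U(\kappa) < k_Z(\kappa)$, I would replicate the computation from \cref{mitchellnormalmeasures} verbatim: for $X\subseteq \kappa$, using normality of $U$ and the commutativity $k_U\circ j_U = k_Z\circ j_Z$, one gets $X\in U \iff k_U(\kappa)\in k_Z(j_Z(X))\cap k_Z(\kappa) = k_Z(X)$. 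Hence $U$ is definable over $\textnormal{Ult}(V,Z)$ from $k_Z$ and the parameter $k_U(\kappa)$, giving $U \mo Z$ and making $Z$ a $\mu$-ultrafilter.

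The only nontrivial step is the application of \cref{minult} — one has to be sure the factor embedding $i$ really satisfies the hypotheses so that $k_Z\circ i$ can be compared to the definable embedding $k_U$ on the ordinals. Everything else is assembling pieces already in hand: \cref{trivialfact} converts a coincidence of seed images into a factorization, and irreducibility plus minimality then forces $U=Z$; in the complementary case the standard normal-ultrafilter Mitchell computation goes through unchanged because strictness is exactly what lets us intersect with $k_Z(\kappa)$.
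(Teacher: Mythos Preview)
Your proposal is correct and follows essentially the same approach as the paper's proof: derive the normal ultrafilter $U$, use the factor embedding together with \cref{minult} to obtain $k_U(\kappa)\leq k_Z(\kappa)$, and then split into the two cases, invoking \cref{trivialfact} plus irreducibility in the equality case and the computation of \cref{mitchellnormalmeasures} in the strict case. The structure, the cited lemmas, and the key observation are all the same.
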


\begin{cor}[Ultrapower Axiom]
Below the least \(\mu\)-measurable cardinal, every ultrafilter factors as a finite iteration of normal ultrafilters.
\begin{proof}
This is immediate from \cref{muthm} and \cref{decomp}.
\end{proof}
\end{cor}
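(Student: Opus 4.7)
The plan is to combine the factorization theorem (\cref{decomp}) with the preceding result \cref{muthm}, tracking the critical points of the factors through the iteration. Given a countably complete ultrafilter $Z$ with $\textsc{sp}(Z) = \kappa$ below the least $\mu$-measurable cardinal $\mu$ of $V$, I would apply \cref{decomp} to write $Z$ as an iterated ultrapower $(U_0,\ldots,U_{n-1})$ of minimal irreducible ultrafilters, with associated models $M_i = \textnormal{Ult}(V,U_0,\ldots,U_{i-1})$ and composite embeddings $j_i\colon V\to M_i$. It then suffices to show by induction on $i < n$ that each $U_i$ is normal in $M_i$.

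For the inductive step, since each $U_j$ for $j < i$ is an internal ultrapower, $j_i$ is elementary, so $M_i$ satisfies that $j_i(\mu)$ is its least $\mu$-measurable cardinal (or that there is no $\mu$-measurable at all, if $\mu$ did not exist in $V$). Applying \cref{muthm} inside $M_i$, $U_i$ is either normal or a $\mu$-ultrafilter in $M_i$. To rule out the latter case, I need to show that $\textsc{crt}(U_i) < j_i(\mu)$ as computed in $M_i$.

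The critical-point bound is the heart of the argument. Let $W_i$ be the remaining factor such that $((U_0,\ldots,U_{i-1}),W_i)$ is a canonical factorization of $Z$, so $\textnormal{Ult}^{M_i}(M_i,W_i) = \textnormal{Ult}(V,Z)$ and $j_{W_i}\circ j_i = j_Z$. By \cref{spaces} applied inside $M_i$ to the factorization of $W_i$ beginning with $U_i$, we have $\textsc{crt}(U_i) \leq \textsc{sp}(U_i) \leq \textsc{sp}(W_i)$. The seed $[\textnormal{id}]_Z$ is a finite set of ordinals below $j_Z(\kappa) = j_{W_i}(j_i(\kappa))$, and by canonicity $W_i$ is derived from $j_{W_i}$ using the portion of this seed lying outside the range of $j_i$; hence $\textsc{sp}(W_i) \leq j_i(\kappa)$ in $M_i$. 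Since $\kappa < \mu$, this gives $\textsc{crt}(U_i) \leq j_i(\kappa) < j_i(\mu)$, contradicting $\mu$-measurability of $\textsc{crt}(U_i)$ in $M_i$.

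I expect the main obstacle to be pinning down the bound $\textsc{sp}(W_i) \leq j_i(\kappa)$ cleanly from the definition of a canonical factorization and the structure of seeds of iterated ultrapowers; once this is established, the induction runs smoothly and the corollary follows.
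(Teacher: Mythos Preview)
Your proposal is correct and is essentially the natural unpacking of the paper's one-line proof. The paper simply says ``immediate from \cref{muthm} and \cref{decomp},'' and the critical-point tracking you describe is exactly what is needed to make this precise: factoring $Z$ via \cref{decomp}, then applying \cref{muthm} inside each $M_i$ and ruling out the $\mu$-ultrafilter alternative by bounding $\textsc{crt}(U_i)\leq\textsc{sp}(W_i)\leq j_i(\kappa)<j_i(\mu)$.

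One minor imprecision: in a canonical factorization $((U_0,\dots,U_{i-1}),W_i)$ of $Z$, the ultrafilter $W_i$ is derived from $j_{W_i}$ using the full seed $[\textnormal{id}]_Z$, not merely the portion outside $\textnormal{ran}(j_i)$. This does not affect your conclusion, since $[\textnormal{id}]_Z\subseteq j_Z(\kappa)=j_{W_i}(j_i(\kappa))$ already yields $\textsc{sp}(W_i)\leq j_i(\kappa)$. Alternatively, the same bound falls out inductively from the condition \cref{funny} in \cref{decomp} together with \cref{spaces}: $\textsc{sp}(W_{i+1})\leq\textsc{sp}(j_{U_i}(W_i))=j_{U_i}(\textsc{sp}(W_i))$, starting from $\textsc{sp}(W_0)=\textsc{sp}(Z)=\kappa$.
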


We now proceed with an inductive analysis of the seed order, which amounts to pushing the proof of \cref{muthm} as hard as we can. The following definition gives a sense of the analysis we are intending.

\begin{defn}\label{radindef}
Fix an embedding \(j:V\to M\) with critical point \(\kappa\). By recursion on \(\alpha < j(\kappa)\), we define simultaneously whether \(j\) is {\it \(\alpha\)-Radin} and, if \(j\) is \(\alpha\)-Radin, we define the {\it \(\alpha\)-normal ultrafilter \(F^j(\alpha)\) derived from \(j\)}. 

Any nontrivial elementary embedding \(j\) is {\it \(0\)-Radin}, and the {\it \(0\)-normal ultrafilter} \(F^j(0)\) derived from \(j\) is the normal ultrafilter derived from \(j\). Assume now that \(j:V\to M\) is \(\xi\)-Radin for all \(\xi < \alpha\) and \(\alpha > 0\). Then we say \(j\) is {\it \(\alpha\)-Radin} if the sequence \(\langle F^j(\xi): \xi < \alpha \rangle\) is an element of \(M\). If \(j\) is \(\alpha\)-Radin, then the {\it \(\alpha\)-normal ultrafilter} \(F^j(\alpha)\) derived from \(j\) is the ultrafilter  with space \(V_\kappa\) derived from \(j\) using \(\langle F^j(\xi) : \xi < \alpha\rangle\).
\end{defn}

We will say that \(j:V\to M\) is \({<}\alpha\)-Radin to mean that \(j\) is \(\xi\)-Radin for all \(\xi < \alpha\).

\begin{defn}\label{alphanormal}
We say an ultrafilter is {\it \(\alpha\)-normal} if it is the \(\alpha\)-normal ultrafilter derived from some elementary embedding.
\end{defn}

We mention that an \(\alpha\)-normal ultrafilter \(U\) is equal to the \(\alpha\)-normal ultrafilter derived from \(U\). We can now state the main theorem of this section:

\begin{thm}[Ultrapower Axiom]\label{radinanalysis}
Suppose \(U\) is an irreducible ultrafilter such that \(\textsc{crt}(U) = \kappa\). Either \(U\) is Rudin-Keisler equivalent to an \(\alpha\)-normal ultrafilter for some \(\alpha < \kappa^+\) or else \(U\) is \(\alpha\)-Radin for all \(\alpha < \kappa^+\).
\end{thm}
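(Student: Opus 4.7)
The plan is to prove this theorem by transfinite induction on $\alpha < \kappa^+$, establishing the contrapositive: if $U$ is not Rudin-Keisler equivalent to any $\xi$-normal ultrafilter for $\xi < \kappa^+$, then $U$ is $\alpha$-Radin for every $\alpha < \kappa^+$. The step up from $0$-Radin to $1$-Radin is essentially \cref{muthm}: if $U$ is irreducible, either $U$ is a normal ultrafilter (so $U \equiv_{\text{RK}} F^{j_U}(0)$) or $U$ is a $\mu$-ultrafilter, in which case $F^U(0) \in M_U$ and hence $j_U$ is $1$-Radin.

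\textbf{Successor step.} Assume $U$ is $\beta$-Radin and set $F = F^U(\beta)$; this is well-defined with seed $[\text{id}]_F = \vec{F}_{<\beta} := \langle F^U(\xi) : \xi < \beta\rangle \in M_U$ and factor embedding $i : M_F \to M_U$ satisfying $i \circ j_F = j_U$ and $i([\text{id}]_F) = \vec{F}_{<\beta}$. I would take a canonical comparison $\langle W_F, W_U\rangle$ of $\langle F, U\rangle$ to a common model $P$. Applying \cref{minult} in $M_F$ to $j_{W_F}$ and $j_{W_U} \circ i$, these two embeddings agree on the ordinals, and in particular $j_{W_F}([\text{id}]_F) = j_{W_U}(\vec{F}_{<\beta})$. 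The dichotomy of \cref{muthm} then splits into two cases. If $W_U$ is principal, then by \cref{trivialfact} the model $M_U$ is an internal ultrapower of $M_F$, making $F$ a factor of $U$; irreducibility forces $F \equiv_{\text{RK}} U$, contradicting the contrapositive hypothesis. Otherwise $W_U$ is nonprincipal, and the Kunen-style reconstruction
\[ X \in F \iff j_{W_U}(\vec{F}_{<\beta}) \in j_{W_F}(j_F(X)) \]
together with the facts that $j_{W_U}(\vec{F}_{<\beta}) \in M_U$ and that $j_{W_F}$ is definable over $M_U$ from $W_U$ and $i$ yields $F \in M_U$, so $U$ is $(\beta+1)$-Radin.

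\textbf{Limit step.} At a limit ordinal $\alpha < \kappa^+$ with $F^U(\xi) \in M_U$ for every $\xi < \alpha$, it remains to show $\vec{F}_{<\alpha} \in M_U$. Each $F^U(\xi)$ is determined in $M_U$ by the recursive specification of \cref{radindef} relative to $\vec{F}_{<\xi}$ (which inductively lies in $M_U$), so the assignment $\xi \mapsto F^U(\xi)$ is $M_U$-definable with parameters. Since $\alpha < \kappa^+ < j_U(\kappa)$ and $M_U$ satisfies Replacement, the sequence $\vec{F}_{<\alpha}$ is in $M_U$, completing the induction.

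\textbf{Main obstacle.} The main obstacle I anticipate is the nontrivial case of the successor step: generalizing the truncation argument of \cref{muthm} (where the single seed $\kappa$ lay strictly below $j_F(\kappa)$, enabling the Kunen-style recovery of $F^U(0)$ inside $M_U$) to the case where the seed $\vec{F}_{<\beta}$ is a transfinite sequence of $M_U$-ultrafilters rather than a finite set of ordinals. To place $j_{W_U}(\vec{F}_{<\beta})$ appropriately so that the reconstruction of $F$ is recognizable by $M_U$, I would rely on \cref{bounding} and the canonical comparison machinery of \cref{CanonicalComparisonSection}. A secondary subtlety is the verification that $\beta$-normal ultrafilters are minimal in the sense of \cref{notationdef}, enabling the use of \cref{factorlemma}; this should follow by induction from the canonical choice of the seed in \cref{radindef}.
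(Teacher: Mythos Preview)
Your outline has the right architecture—an induction on $\alpha$ mimicking \cref{muthm}—but both the successor and limit steps contain real gaps, and the paper in fact takes a rather different route.

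\textbf{Limit step.} Your claim that the assignment $\xi \mapsto F^U(\xi)$ is $M_U$-definable is not correct. By \cref{radindef}, $F^U(\xi)$ is the ultrafilter on $V_\kappa$ derived from $j_U$ using $\vec F_{<\xi}$; computing it from $\vec F_{<\xi}$ requires $j_U\restriction P(V_\kappa)$, which is essentially the extender of $U$ and is \emph{not} available inside $M_U$ (else $U\in M_U$). The paper's argument is much simpler and does not use definability at all: since $\textsc{crt}(U)=\kappa$, $M_U$ is closed under $\kappa$-sequences; since $\alpha<\kappa^+$, the sequence $\langle F^U(\xi):\xi<\alpha\rangle$ has length $\le\kappa$ and each entry lies in $M_U$ by induction, so the sequence itself is in $M_U$.

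\textbf{Successor step.} There are two problems. First, \cref{minult} only gives $j_{W_F}\restriction\textnormal{Ord}\le (j_{W_U}\circ i)\restriction\textnormal{Ord}$, not equality: equality via \cref{defemb} would require $j_{W_U}\circ i$ to be definable over $M_F$, but the factor map $i:M_F\to M_U$ is generally \emph{not} internal to $M_F$ (for instance when $\beta=0$ and $U$ is a $\mu$-ultrafilter, $M_U\nsubseteq M_F$). Second, and more seriously, $[\text{id}]_F=\vec F_{<\beta}$ is a transfinite sequence of ultrafilters, not a finite set of ordinals, so neither the seed-order machinery nor the truncation trick of \cref{muthm} (which uses $k_U(\kappa)<k_Z(\kappa)$ to replace $j_Z(X)$ by $j_Z(X)\cap\kappa=X$) applies. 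You identify this as the main obstacle, but do not indicate how to overcome it; invoking \cref{bounding} does not help, since that lemma also lives in the world of uniform ultrafilters on $[\kappa]^n$.

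The paper does not attack the $\beta$-normal ultrafilter $F^U(\beta)$ directly. Instead it first proves \cref{normalgens}, which shows (by a separate induction) that $F^U(\beta)$ is Rudin--Keisler equivalent to the ultrafilter $W_\beta$ derived from $j_U$ using the single ordinal $\nu_\beta$, the $\beta$th generator of $U$. This converts the problem entirely to ordinals. The successor case then reduces to an analysis of the \emph{Dodd fragment ordinals} of $U$: one shows they are of measure type (the only way they could fail to be is ruled out by \cref{doddfail1}, which would force more than $\kappa$ generators), and then \cref{doddfail2}—a direct generalization of \cref{muthm} to arbitrary finite seeds—shows that $W_\beta$ is a factor of $U$, so irreducibility gives $U\equiv_{\text{RK}} W_\beta\equiv_{\text{RK}} F^U(\beta)$.
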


Our proof of this proceeds by an analysis of the extenders of ultrafilters that do not have too many generators, showing by induction that these ultrafilters are Dodd solid. Once this analysis has been carried out, we show that the ultrafilters below \({<}\kappa^+\)-Radin coincide with the ultrafilters we have analyzed, and this yields a proof of \cref{radinanalysis}. From the proof of this theorem one can extract various local variations in the spirit of \cref{muthm}. 

We start with a generalization of \cref{muthm} which seems to be part of a (failed attempt at an) inductive proof of the Dodd solidity of irreducible ultrafilters. We conjecture that the Ultrapower Axiom alone is not strong enough to carry out such an induction. Before this lemma, we need the following definition.

\begin{defn}
Suppose \(U\) is a countably complete ultrafilter. The {\it Dodd fragment ordinals} of \(U\) are the least \(a\in [\text{Ord}]^{<\omega}\) such that \(U|a\notin \text{Ult}(V,U)\). The set \(a\) is  {\it of measure type} if it is a successor element of \([\text{Ord}]^{<\omega}\), in which case we let \(a^-\) denote its predecessor, and call \(a^-\) the {\it Dodd fragment parameter} of \(U\). If \(a\) is not of measure type then we let \(a^-\) denote \(a \setminus \{\min a\}\), and we call \(a^-\) the {\it Dodd fragment parameter} of \(U\) and \(\min a\) the {\it Dodd fragment projectum} of \(U\). 
\end{defn}

\begin{lma}\label{doddfail1}
Suppose \(U\) is a countably complete ultrafilter with \(\textsc{crt}(U) =\kappa\). Suppose the Dodd fragment ordinals \(a\) of \(U\) are not of measure type. Then the Dodd fragment projectum \(\min a\) is a limit of \(a^-\)-generators of \(U\) and \(\textnormal{cf}(\min a) > \kappa\).
\end{lma}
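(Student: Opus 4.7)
The plan is to prove both conclusions by contrapositive. Write $a = \{\alpha_0 > \cdots > \alpha_n\}$ with $\alpha_n = \min a$, so $a^- = a \setminus \{\alpha_n\}$ and $\alpha_n \geq \omega$ (the latter being exactly the condition that $a$ is not a successor in the lex order on $[\textnormal{Ord}]^{<\omega}$). If either conclusion fails, I aim to show $U|a \in \textnormal{Ult}(V,U)$, contradicting the defining minimality of the Dodd fragment ordinals.

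For the cofinality claim, suppose $\textnormal{cf}(\alpha_n) \leq \kappa$. Fix a cofinal sequence $\langle \beta_i : i < \textnormal{cf}(\alpha_n)\rangle$ in $\alpha_n$ and set $a_i = a^- \cup \{\beta_i\}$. Since $\beta_i < \alpha_n$, each $a_i < a$, so by minimality of $a$ each $U|a_i \in \textnormal{Ult}(V,U)$. A straightforward unpacking of the lex order shows that every $c < a$ lies in $\{c' : c' < a_i\}$ for some $i$: either $c$ deviates from $a$ strictly before the last coordinate (then $c < a_i$ for every $i$), or $c = a^- \cup c'$ with $c' \subseteq \alpha_n$ finite, in which case $\max c' < \beta_i$ for sufficiently large $i$. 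Hence $U|a$ is the pointwise union $\bigcup_i U|a_i$, and if $\textnormal{cf}(\alpha_n) < \kappa$ the sequence $\langle U|a_i\rangle$ belongs to $\textnormal{Ult}(V,U)$ by $\kappa$-completeness of $U$, so the union does too, contradiction.

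For the generator-limit claim, suppose $\alpha_n$ is not a limit of $a^-$-generators and fix $\xi_0 < \alpha_n$ so that every $\xi \in (\xi_0, \alpha_n)$ is of the form $j_U(f_\xi)(b_\xi)$ with $b_\xi \in [\xi \cup a^-]^{<\omega}$ and $f_\xi \in V$. Let $b = a^- \cup \{\xi_0\}$, so $U|b \in \textnormal{Ult}(V,U)$. Given $X$, the information in $U|a$ not already recovered from $U|b$ consists of the memberships $a^- \cup c' \in j_U(X)$ for finite $c' \subseteq [\xi_0,\alpha_n)$; substituting each $\xi \in c'$ by $j_U(f_\xi)(b_\xi)$ reduces such a question to $a^- \cup \bigcup b_\xi \in j_U(X')$ for an appropriately modified $X'$, and since each $b_\xi \subseteq \xi \cup a^-$ with $\xi < \xi_0$, this parameter is $< b$ in the lex order and is thus answered by $U|b$. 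Packaging this reduction uniformly in $\textnormal{Ult}(V,U)$, using that the table $\xi \mapsto (f_\xi, b_\xi)$ is coded by $j_U$ restricted to a bounded fragment, yields $U|a \in \textnormal{Ult}(V,U)$.

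The main obstacle is the subcase $\textnormal{cf}(\alpha_n) = \kappa$: since $U$ is not assumed normal, $\textnormal{Ult}(V,U)$ need not be closed under $\kappa$-sequences, so the sequence $\langle U|a_i\rangle$ may fail to be internal. I plan to resolve this by first establishing the generator-limit claim and then choosing the $\beta_i$ to be $a^-$-generators themselves; the uniform definability inherent in the generator condition should let Los's Theorem place $\langle U|a_i\rangle$ inside $\textnormal{Ult}(V,U)$, reducing the $\textnormal{cf}(\alpha_n) = \kappa$ case to the $< \kappa$ argument above.
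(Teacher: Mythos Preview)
Your overall strategy is right and matches the paper's intended argument (the paper states this lemma without proof, but a commented-out proof of a closely related earlier version uses exactly your approach). However, there is one genuine misconception and one soft spot.

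\textbf{The misconception.} Your ``main obstacle'' is not an obstacle at all: if \(\kappa = \textsc{crt}(j_U)\), then \(\textnormal{Ult}(V,U)\) \emph{is} closed under \(\kappa\)-sequences. Normality plays no role. Given \(\langle y_\alpha : \alpha < \kappa\rangle\) with \(y_\alpha = j_U(f_\alpha)(s)\), let \(g(x)(\alpha) = f_\alpha(x)\); since \(j_U\restriction \kappa = \textnormal{id}\), one checks \(j_U(g)(s)\restriction \kappa = \langle y_\alpha : \alpha<\kappa\rangle \in M\). So the case \(\textnormal{cf}(\alpha_n) = \kappa\) is handled by exactly the same union argument as \(\textnormal{cf}(\alpha_n) < \kappa\), and your proposed workaround via generators and Los's theorem is unnecessary.

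\textbf{The soft spot.} In the generator-limit argument you write ``\(b_\xi \subseteq \xi \cup a^-\) with \(\xi < \xi_0\)''; presumably you meant \(b_\xi \subseteq \xi_0 \cup a^-\). A single substitution using the non-generator witness for \(\xi\) only gives \(b_\xi \subseteq \xi \cup a^-\), which may still contain ordinals in \((\xi_0,\xi)\). You need the hull fact: by induction, every \(\eta \in [\xi_0,\alpha_n)\) lies in \(H^M(j_U[V] \cup a^- \cup \xi_0)\), so one may choose each \(b_\xi \subseteq a^- \cup \xi_0\) from the outset. With that correction your reduction does land strictly below \(b = a^- \cup \{\xi_0\}\) as you want. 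The ``uniform packaging'' step then amounts to the observation that each \(c\) with \(b \le c < a\) is of the form \(j_U(f_c)(d_c)\) for some \(d_c < b\), whence \(c \in j_U(X)\) iff \(d_c \in (U|b)(f_c^{-1}[X])\); organizing the choice \(c \mapsto (f_c,d_c)\) inside \(M\) requires a little care but is routine once you note that \(\textnormal{dom}(U|b) = P(\textsc{sp}(U))\in M\).
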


\begin{thm}[Ultrapower Axiom]\label{doddfail2}
Suppose \(U\) is a countably complete ultrafilter. Let \(a\) be the Dodd fragment ordinals of \(U\), and suppose \(a\) is of measure type. Let \(W\) be the ultrafilter derived from \(U\) using \(a^-\). Then \(\textnormal{Ult}(V,U)\) is an internal ultrapower of \(\textnormal{Ult}(V,W)\).
\begin{proof}
Suppose first that \(a\) is of measure type. Let \(W\) and \(b\) be as in the statement of the theorem and let \(i: \text{Ult}(V,W)\to \text{Ult}(V,U)\) be the factor embedding. Let \(\bar b\) be such that \(i(\bar b) = b\). Let \(\langle k_W,k_U\rangle\) be a comparison of \(\langle W,U\rangle\). Thus for some inner model \(N\), \(k_W: M_W\to N\) and \(k_U:M_U\to N\). Note that \(k_W \restriction \text{Ord}\leq k_U\circ i \restriction \text{Ord}\) by the minimality of definable embeddings. In particular, \(k_W(\bar b) \leq k_U(b)\). There are two cases. Suppose first that \(k_W(\bar b) = k_U(b)\). In this case, \(\textnormal{Ult}(V,U)\) is an internal ultrapower of \(\textnormal{Ult}(V,W)\) by \cref{trivialfact}.

Suppose second that \(k_W(\bar b) < k_U(b)\). In this case we claim \(W\) is in \(\text{Ult}(V,U)\). We have 
\begin{align*}X\in W&\iff \bar b\in j_W(X)\\
&\iff k_W(\bar b)\in k_W(j_W(X))\\
&\iff k_W(\bar b)\in k_U(j_U(X))\\
&\iff k_W(\bar b)\in k_U(j_U(X))\cap \{u : u < k_U(b)\}\\
&\iff k_W(\bar b)\in k_U(j_U(X)\cap \{u : u < b\})\end{align*}
By definition the function \(X\mapsto j_U(X)\cap \{u : u < b\}\) is equal to \(U|b\), and by the minimality of the Dodd fragment ordinals \(a\), \(U|b\in \text{Ult}(V,U)\). It follows that \(W\in \text{Ult}(V,U)\). But from \(W\) and \(U|b\) one can compute \(U|a\), and this contradicts that \(U|a\notin \text{Ult}(V,U)\).
\end{proof}
\end{thm}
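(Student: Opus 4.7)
The plan is to build a comparison of $\langle W, U\rangle$ in which the $W$-seed lands in the range of the embedding of $U$; by \cref{trivialfact}, this is exactly what is needed to conclude that $\textnormal{Ult}(V,U)$ is an internal ultrapower of $\textnormal{Ult}(V,W)$.

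First I would note the natural factor embedding $i : \textnormal{Ult}(V,W) \to \textnormal{Ult}(V,U)$, characterized by $i([f]_W) = j_U(f)(a^-)$; in particular $i([\textnormal{id}]_W) = a^-$ and $i \circ j_W = j_U$. Setting $\bar b = [\textnormal{id}]_W$, apply the Ultrapower Axiom to fix any comparison $\langle k_W, k_U\rangle$ of $\langle W, U\rangle$ to a common model $N$. The commutativity requirement of a comparison together with $j_U = i \circ j_W$ shows that $k_W$ and $k_U \circ i$ are two elementary embeddings of $\textnormal{Ult}(V,W)$ into $N$ agreeing on $\textnormal{ran}(j_W)$. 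Since $k_W$ is $\textnormal{Ult}(V,W)$-internal, it is definable over $\textnormal{Ult}(V,W)$ from its internal ultrafilter parameter, so \cref{minult} delivers $k_W \leq k_U \circ i$ pointwise on ordinals and hence $k_W(\bar b) \leq k_U(a^-)$ in the lexicographic order on $[\textnormal{Ord}]^{<\omega}$.

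The proof then splits on whether this inequality is strict. If $k_W(\bar b) = k_U(a^-)$, then $k_W(\bar b) \in \textnormal{ran}(k_U)$ and \cref{trivialfact} immediately gives the conclusion. Otherwise, the plan is to derive a contradiction by showing $W \in \textnormal{Ult}(V,U)$, exploiting both the strict inequality and the fact that $U|a^- \in \textnormal{Ult}(V,U)$ (by minimality of $a$ and $a^- < a$). The key computation is
\[X \in W \iff k_W(\bar b) \in k_W(j_W(X)) = k_U(j_U(X)) \iff k_W(\bar b) \in k_U\bigl(j_U(X) \cap \{u : u < a^-\}\bigr),\]
where the last equivalence uses $k_W(\bar b) < k_U(a^-)$ to pass to an initial segment; the right-hand set is $k_U((U|a^-)(X))$. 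Thus $W$ is the derived ultrafilter obtained from the $\textnormal{Ult}(V,U)$-internal embedding $k_U$ at the parameter $k_W(\bar b)$, applied to the function $U|a^-$. Since $k_W(\bar b)$ is a finite set of ordinals it lies in $\textnormal{Ult}(V,U)$ automatically, and $U|a^-$ is available there by the minimality assumption, so $W \in \textnormal{Ult}(V,U)$.

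The main obstacle is closing the loop in the strict case: one must bridge $W \in \textnormal{Ult}(V,U)$ back to $U|a \in \textnormal{Ult}(V,U)$ to contradict the choice of $a$. Because $a$ is of measure type with immediate lex-predecessor $a^-$, the only new data encoded in $U|a$ beyond $U|a^-$ is the column $X \mapsto (a^- \in j_U(X))$, which is exactly the derived ultrafilter $W$. Thus $U|a$ is computable in $\textnormal{Ult}(V,U)$ from $U|a^-$ and $W$, yielding the contradiction and completing the argument.
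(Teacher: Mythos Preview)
Your proof is correct and follows essentially the same approach as the paper: factor embedding, comparison, minimality of definable embeddings to get $k_W(\bar b) \leq k_U(a^-)$, then the same case split with \cref{trivialfact} in the equality case and the same derivation of $W \in \textnormal{Ult}(V,U)$ via $U|a^-$ in the strict case, closing with the observation that $U|a$ is recoverable from $U|a^-$ together with $W$.
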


\begin{cor}[Ultrapower Axiom]
Suppose \(U\) is an irreducible ultrafilter whose Dodd fragment ordinals are of measure type. Then \(U\) is Dodd solid.
\end{cor}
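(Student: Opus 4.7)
The plan is to deduce the corollary directly from \cref{doddfail2}, using irreducibility to close off one of the two cases it naturally produces. Write $a$ for the Dodd fragment ordinals of $U$, $a^-$ for its predecessor, and let $W$ be the uniform ultrafilter derived from $j_U$ using $a^-$. \cref{doddfail2} gives that $\textnormal{Ult}(V,U)$ is the internal ultrapower of $\textnormal{Ult}(V,W)$ by some $Z\in\textnormal{Ult}(V,W)$. The composite $j_Z\circ j_W$ is then an ultrapower embedding of $V$ into $\textnormal{Ult}(V,U)$, so by \cref{everythingcommutes} it must coincide with $j_U$; thus $U\equiv_{\textnormal{RK}}(W,Z)$, exhibiting $W$ as a factor of $U$. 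Irreducibility now forces $W$ to be either principal or Rudin--Keisler equivalent to $U$.

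The bulk of the remaining work is ruling out the principal case. If $W$ is principal, then $a^-\in\textnormal{ran}(j_U)$; write $a^- = j_U(\bar a^-)$. The immediate-successor operation on $[\textnormal{Ord}]^{<\omega}$ is $\emptyset$-definable, so by elementarity $a = j_U(\bar a)$, where $\bar a$ is the successor of $\bar a^-$ in $V$. For each $X\in P(\textnormal{sp}(U))$,
\[U|a(X) = j_U(X)\cap j_U(\{b:b<\bar a\}) = j_U(X\cap\{b:b<\bar a\}),\]
and since $\{b:b<\bar a\} = \{b:b<\bar a^-\}\cup\{\bar a^-\}$, this value is determined inside $\textnormal{Ult}(V,U)$ by $U|a^-(X)$ together with the single bit of whether $\bar a^-\in X$, information that is recoverable in $\textnormal{Ult}(V,U)$ from the parameter $a^-$. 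Because $U|a^-\in\textnormal{Ult}(V,U)$ by minimality of the Dodd fragment $a$, this yields $U|a\in\textnormal{Ult}(V,U)$, contradicting the defining property of $a$.

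Hence $W\equiv_{\textnormal{RK}} U$. In this case the factor embedding $i:\textnormal{Ult}(V,W)\to\textnormal{Ult}(V,U)$ sending $[f]_W$ to $j_U(f)(a^-)$ is surjective and hence the identity, so $\textnormal{Ult}(V,W)=\textnormal{Ult}(V,U)$ and $a^- = [\textnormal{id}]_W$ is a generator of $\textnormal{Ult}(V,U)$ over $j_U[V]$ realising the same Rudin--Keisler class as $[\textnormal{id}]_U$. A rigidity argument based on \cref{rul} applied to $U$ then produces $[\textnormal{id}]_U\leq a^-$ in the canonical wellorder on $[\textnormal{Ord}]^{<\omega}$, and restricting $U|a^-\in\textnormal{Ult}(V,U)$ to $\{b:b<[\textnormal{id}]_U\}$ places $U|[\textnormal{id}]_U$ inside $\textnormal{Ult}(V,U)$, which is precisely the Dodd solidity of $U$.

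I expect the principal-case computation together with the concluding inequality $[\textnormal{id}]_U \le a^-$ to be the only genuine obstacles; the rest of the argument is a clean extraction from \cref{doddfail2}, \cref{everythingcommutes}, and \cref{rul}.
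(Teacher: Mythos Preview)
Your approach matches the paper's implicit one-line argument, and your handling of the principal case is correct and more careful than anything the paper spells out. The genuine gap is the final step: you assert that ``a rigidity argument based on \cref{rul}'' yields $[\textnormal{id}]_U \le a^-$, but \cref{rul} only rules out nontrivial self-embeddings $k:\textnormal{Ult}(V,U)\to\textnormal{Ult}(V,U)$ with $k\circ j_U=j_U$. Here $[\textnormal{id}]_U$ and $a^-$ are seeds of \emph{different} (Rudin--Keisler equivalent) uniform ultrafilters $U$ and $W$, so there is no self-embedding available that interchanges them; the proof of \cref{rul} shows the \emph{minimal} seed lies below every generator, not that two arbitrary generators compare in a prescribed direction.

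In fact the inequality can fail, and with it the corollary as literally stated. Work in $L[U_0]$ with $U_0$ normal on $\kappa$, and let $U$ be the uniform ultrafilter derived from $j_{U_0}$ using $\kappa+1$. Then $U\equiv_{\textnormal{RK}}U_0$ is irreducible, and one checks that the Dodd fragment ordinals of $U$ are $\{\kappa,0\}$ (measure type) with predecessor $a^-=\{\kappa\}$, while $[\textnormal{id}]_U=\{\kappa+1\}>a^-$; moreover $U|\{\kappa+1\}$ encodes $U_0$, so $U$ is not Dodd solid. The corollary tacitly needs $U$ to be \emph{minimal}, a hypothesis the paper adds explicitly in the very next result, \cref{doddgens}. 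Under minimality your argument closes immediately: $[\textnormal{id}]_U$ is then the least generator and you have shown $a^-$ is a generator, so $[\textnormal{id}]_U\le a^-$; combined with $a^-\le[\textnormal{id}]_U$ (which holds because $U$ is recoverable from $U|b$ once $b$ exceeds $[\textnormal{id}]_U$, forcing $a\le[\textnormal{id}]_U^+$), one gets $a^-=[\textnormal{id}]_U$ and hence $U|[\textnormal{id}]_U=U|a^-\in\textnormal{Ult}(V,U)$.
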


It is the possibility of irreducible ultrafilters with Dodd fragment ordinals {\it not of measure type} that obstructs the attempt to prove that all irreducible ultrafilters are Dodd solid. Schlutzenberg's proof of \cref{shortland} shows that such pathological irreducible ultrafilters do not exist in the Mitchell-Steel models. Under the Ultrapower Axiom alone, it seems the following is the best we can do.

\begin{cor}[Ultrafilter Axiom]\label{doddgens}
Suppose \(U\) is a \(\kappa\)-complete minimal irreducible ultrafilter with at most \(\kappa\) generators. Then \(U\) is Dodd solid.
\begin{proof}
Let \(a\) be the Dodd fragment ordinals of \(U\). If \(a\) is of measure type, then letting \(W\) be derived from \(U\) using \(a^-\), by \cref{doddfail1}, \(W\) is a factor of \(U\), so \(U = W\) by irreducibility. It follows that \(U\) is Dodd solid, since \(U|a^-\in U\). If \(a\) is not of measure type, then \(\min a\) is a limit of \(a^-\)-generators of \(U\) and \(\textnormal{cf}(\min a) > \kappa\), but this contradicts that \(U\) has at most \(\kappa\) generators.
\end{proof}
\end{cor}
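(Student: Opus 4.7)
The plan is to examine the Dodd fragment ordinals $a$ of $U$ and split into two cases according to whether $a$ is of measure type, using the already-established structural lemmas \cref{doddfail1} and \cref{doddfail2} to handle each case. The two hypotheses on $U$ (irreducibility/minimality on the one hand, the generator bound on the other) will each do exactly one job.

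First, suppose $a$ is of measure type. Let $W$ be the ultrafilter derived from $U$ using $a^-$. By \cref{doddfail2}, $\textnormal{Ult}(V,U)$ is an internal ultrapower of $\textnormal{Ult}(V,W)$, which by the remark after \cref{primitivity} exhibits $W$ as a factor of $U$. Irreducibility then forces $W$ to be principal or Rudin-Keisler equivalent to $U$. I expect that $W$ is nonprincipal by the choice of $a$: if $a^- \in j_U[V]$ then $U|a^-$, and hence $U|a$, would be computable inside $\textnormal{Ult}(V,U)$, contradicting the defining property of $a$. Thus $W \equiv_{\textnormal{RK}} U$, and by minimality $W = U$. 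Since $W$ is derived from $U$ using $a^-$ while $U$ itself is derived from $U$ using $[\textnormal{id}]_U$, the uniqueness of the seed of a derived ultrafilter (the corollary following \cref{rul}) gives $a^- = [\textnormal{id}]_U$. The minimality of $a$ then yields $U|[\textnormal{id}]_U = U|a^- \in \textnormal{Ult}(V,U)$, which is Dodd solidity.

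Second, suppose $a$ is not of measure type. By \cref{doddfail1}, $\min a$ is a limit of $a^-$-generators of $U$ and $\textnormal{cf}(\min a) > \kappa$. In particular $U$ has at least $\textnormal{cf}(\min a) > \kappa$ generators, directly contradicting the hypothesis that $U$ has at most $\kappa$ generators. So this case does not occur, and the corollary follows.

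The main thing to watch carefully is the measure-type case: specifically, certifying that $W$ is not principal (so that irreducibility rules in the equation $W \equiv_{\textnormal{RK}} U$ rather than being satisfied trivially) and then passing cleanly from $W = U$ to the identification $a^- = [\textnormal{id}]_U$. Both steps are essentially bookkeeping on the definition of the Dodd fragment ordinals and the uniqueness of seeds, but they are the only places the argument could slip.
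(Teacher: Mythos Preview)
Your proposal is correct and follows exactly the paper's two-case split on whether the Dodd fragment ordinals are of measure type, invoking \cref{doddfail2} and \cref{doddfail1} just as the paper does; you are in fact more careful than the paper in explicitly ruling out the possibility that $W$ is principal. Regarding your flagged worry: the equality $a^- = [\textnormal{id}]_U$ does hold, since minimality of $U$ gives $[\textnormal{id}]_U \leq a^-$ (as $a^-$ generates the ultrapower once $W \equiv_{\textnormal{RK}} U$), while minimality of the Dodd fragment ordinals gives $a \leq ([\textnormal{id}]_U)^+$ (because $U|([\textnormal{id}]_U)^+$ already determines $U$ and so cannot lie in $\textnormal{Ult}(V,U)$), whence $a^- \leq [\textnormal{id}]_U$.
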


We can give a fairly complete analysis of the ultrafilters of \cref{doddgens} using the notion of \(\alpha\)-normal and \(\alpha\)-Radin ultrafilters.

\begin{defn}
Suppose \(E\) is an extender with critical point \(\kappa\), and let \(M = \text{Ult}(V,E)\). Let \(\delta = (2^\kappa)^M\). A generator \(\xi\) of \(E\) is called {\it local} if \(\xi < \delta^{+M}\). The extender \(E\) is called {\it local} if all its generators are local.
\end{defn}

We will use the following lemma repeatedly:

\begin{lma}\label{locallemma}
Suppose \(U\) is an ultrafilter and \(\xi\) is a local generator of \(U\). Then \(U | \xi + 1\) is equivalent to the ultrafilter \(W\) derived from \(U\) using \(\xi\).
\begin{proof}
Let \(N = \text{Ult}(V,W)\) and let \(M = \text{Ult}(V,U)\). Let \(k : N\to M\) be the factor embedding. It suffices to show that \(\xi\subseteq \text{ran}(k)\). Let \(\kappa = \textsc{crt}(U)\). We have \(k(\kappa) = \kappa\) and \(P(\kappa) \cap N = P(\kappa)\cap M = P(\kappa)\) so \(k(P(\kappa)) = P(\kappa)\). Note that \(M\) thinks there is a surjection from \(P(\kappa)\) to \(\xi\) since \(\xi\) is a local generator. By elementarity, \(\text{Ult}(V,W)\) thinks there is a surjection \(\bar f: P(\kappa) \to k^{-1}(\xi)\), so \(k(\bar f)\) is a surjection from \(k(P(\kappa)) = P(\kappa)\) to \(\xi\). For any \(\alpha < \xi\), there is some \(A\in P(\kappa)\) such that \(k(\bar f)(A) = \alpha\), and \(k(\bar f)(A) = k(\bar f)(k(A)) = k(\bar f(A))\), so \(\alpha\in \text{ran}(k)\), as desired.
\end{proof}
\end{lma}

\begin{prp}[Ultrapower Axiom]\label{normalgens}
Suppose \(U\) is a \(\kappa\)-complete ultrafilter that is \({<}\alpha\)-Radin for some \(\alpha \leq \kappa^+\). Let \(\langle \nu_i : i < \alpha\rangle\) be the first \(\alpha\) generators of \(U\). 
Let \(W_i\) denote the ultrafilter derived from \(U\) using \(\nu_i\). Then for all \(i < \alpha\), \(W_i\) is irreducible, \(\nu_i\) is a local generator, \(F^U(i)\) is Rudin-Keisler equivalent \(W_i\), and if \(i\) is a successor, say \(i = \bar i +1\), then \(\nu_i = o(W_{\bar i})\).
\begin{proof}
Fix \(i_0 < \alpha\). Suppose the statement is true for all \(i < i_0\). We show it is true for \(i_0\). 

Let \(k: M_{W_{i_0}} \to M_U\) be the factor embedding. Let \(\bar \nu = k^{-1}(\nu_{i_0})\), so \(\bar \nu\) is a seed for \(W_{i_0}\). To see that \(W_{i_0}\) is irreducible, suppose towards a contradiction \(W_{i_0}\) decomposes as a nontrivial iterated ultrapower \((F_0,F_1)\). Suppose \(F_0\) is minimal, and let \(a = [\text{id}]_{F_0}\). It is easy to see that \(j_{F_1}(a) < \{\bar \nu\}\), and in fact this follows from \cref{factorlemma}. Thus \(k\circ j_{F_1}\) embeds \(M_{F_0}\) into \(M_U\) sending \(a\) below \(\nu_{i_0}\). By the locality of the generators of \(U\) below \(\nu_{i_0}\), it follows that \(F_0 = W_i\) for some \(i < i_0\). But by our induction hypothesis, \(F^U(i)\) is Rudin-Keisler equivalent \(W_i\). We have that \(F^U(i)\in M_{W_{i_0}}\subseteq M_{F_0}\), and this implies \(F_0\in M_{F_0}\), a contradiction. So \(W_{i_0}\) is irreducible.

Carrying along the rest of the induction hypotheses now requires breaking into cases based on whether \(i_0\) is a limit or a successor ordinal.

Let \(\delta = (2^\kappa)^{M_U}\). Suppose \(i_0\) is a limit ordinal. Note that \(\delta^{+M_{U|\nu_{i_0}}} = \sup_{i < i_0} \delta^{+M_{U|\nu_i}}\), and so \(\text{cf}(\delta^{+M_{U|\nu_{i_0}}}) \leq \kappa\): by our locality induction hypothesis, \(\langle \delta^{+M_{U|\nu_i}} : i < i_0\rangle = \langle \nu_i : i < i_0\rangle\) is increasing, and hence its limit has cofinality \(\text{cf}(i_0) \leq \kappa\). Thus \(\delta^{+M_{U|\nu_{i_0}}} < \delta^{+M_U}\) since \(M_U\) is closed under \(\kappa\)-sequences. It follows that \(\nu_{i_0} = \delta^{+M_{U|\nu_{i_0}}}\), since \(\nu_{i_0}\) is the critical point of the factor map \(\text{Ult}(V,U|\nu_{i_0})\to \text{Ult}(V,U)\), which must be greater than or equal to \(\delta^{+M_{U|\nu_{i_0}}}\) and hence must equal \(\delta^{+M_{U|\nu_{i_0}}}\) since \(\delta^{+M_{U|\nu_{i_0}}} < \delta^{+M_U}\).

By induction, \(F^U(i)\in M_{U|\nu_{i+1}}\) for all \( i < i_0\), and so \(F^U(i) \in M_{U|\nu_{i_0}}\) for all \(i < i_0\). Since \(\nu_{i_0}\) is a local generator, \(U|\nu_{i_0}+1\) is equivalent to \(W_{i_0}\), the ultrafilter derived from \(U\) using \(\nu_{i_0}\). Hence \(M_{U|\nu_{i_0} + 1} = M_{W_{i_0}}\) is closed under \(\kappa\)-sequences, so \(\langle F^U(i) :  i < i_0\rangle\in M_{W_{i_0}}\). Therefore there is a factor embedding \(k : M_{F^U(i_0)}\to M_{W_{i_0}}\). We must show that \(k\) is the identity. But since \(\langle F^U(i) :  i < i_0\rangle\) is in the range of \(k\), \(\nu_{i_0}\) is in the range of \(k\), since \(\nu_{i_0} = \sup_{i < i_0} \nu_i\) and the sequence \(\langle \nu_i: i < i_0\rangle\) is definable from \(\langle F^U(i) :  i < i_0\rangle\). Since \(W_{i_0}\) is derived from \(\nu_{i_0}\), it follows that \(k\) is surjective, hence \(k\) is the identity. This finishes the case that \(i_0\) is a limit ordinal.

Suppose next that \(i_0\) is a successor ordinal, so \(i_0 = \bar i + 1\). Denote \(W_{\bar i}\) by \(W\). Let \(\gamma = o(W)\). Since \(o(W) = o^{M_U}(W)\), \(\gamma < \delta^{+M_U}\). By \cref{doddgens}, \(W\) is Dodd solid, and so by \cref{doddlin}, \(W\) is the unique Dodd solid ultrafilter of Mitchell rank \(\gamma\) in \(M_U\). Let \(k: M_W\to M_U\) be the factor embedding. Note that \(\textsc{crt}(k) = \nu_i\). We will show \(\textsc{crt}(k) = \gamma = \delta^{+M_W}\), which shows that \(\nu_i\) is local and \(\nu_i = o(W_{\bar i})\). First, note that \(k(\gamma) \neq \gamma\) so \(\textsc{crt}(k) \leq \gamma\): otherwise, in \(M_W\), there is a unique Dodd solid ultrafilter \(\bar W\) of Mitchell rank \(\gamma\), and \(k(\bar W) = W\), so that \(\bar W = W\), and hence \(W \in M_{W}\) contradicting the strictness of the Mitchell order. On the other hand, \(\gamma \leq \delta^{+M_W}\) since every ultrafilter \(W'\mo W\) is in \(M_W\) and its Mitchell rank is correctly computed there. It follows that \(\textsc{crt}(k) \leq \gamma \leq \delta^{+M_W}\leq \textsc{crt}(k)\), so all these ordinals are equal, as desired. 

Finally we must show that \(F^U(i_0)\) is Rudin-Keisler equivalent to \(W_{i_0}\). Clearly \(F^U(i_0)\) is Rudin-Keisler equivalent to the ultrafilter \(F\) derived from \(U\) using \(F^U(\bar i)\). Also \(F\) is Rudin-Keisler equivalent to the ultrafilter \(W'\) derived from \(U\) using \(W_{\bar i}\), since \(W_{\bar i}\) is Rudin-Keisler equivalent to \(F^U(\bar i)\) by induction. But then \(W'\) is Rudin-Keisler equivalent to \(W\) since \(W\) is derived \(U\) using from \(\gamma\), the Mitchell rank of \(W_{\bar i}\), which is interdefinable in \(M_U\) with \(W_{\bar i}\): clearly \(\gamma\) is definable from \(W_{\bar i}\) in \(M_U\), and conversely \(W_{\bar i}\) is the unique Dodd solid ultrafilter of Mitchell rank \(\gamma\) in \(M_U\).
\end{proof}
\end{prp}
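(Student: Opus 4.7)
The plan is to prove all four claims by simultaneous induction on \(i_0 < \alpha\): assuming irreducibility of \(W_i\), locality of \(\nu_i\), the Rudin-Keisler equivalence \(F^U(i)\equiv_\textnormal{RK} W_i\), and the Mitchell-rank identity (at successors) for every \(i<i_0\), I would derive the four statements for \(i_0\) in the order (irreducibility, locality + Mitchell identity, then Rudin-Keisler equivalence), splitting the middle step into limit and successor subcases.

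For irreducibility of \(W_{i_0}\), I would suppose toward a contradiction that \(W_{i_0}\) factors nontrivially as \((F_0,F_1)\) with \(F_0\) minimal, and argue as in \cref{factorlemma} that the seed \(a=[\textnormal{id}]_{F_0}\) satisfies \(j_{F_1}(a)<\{\bar\nu\}\), where \(\bar\nu\) is the seed of \(W_{i_0}\). Composing with the factor embedding into \(M_U\), I get a minimal ultrafilter whose seed lies strictly below \(\nu_{i_0}\); the inductive locality of the \(\nu_i\) for \(i<i_0\) should force \(F_0\equiv_\textnormal{RK} W_i\) for some such \(i\), because every seed below \(\nu_{i_0}\) is reducible to \(\nu_i\) for some earlier \(i\). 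Then \(F^U(i)\in M_U\) by the Radin hypothesis, and using the inductive \(F^U(i)\equiv_\textnormal{RK} W_i\) together with closure of \(M_{W_{i_0}}\subseteq M_{F_0}\) under \(\kappa\)-sequences, I conclude \(F_0\in M_{F_0}\), contradicting \cref{mostrict}.

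For locality, in the limit case \(\nu_{i_0}=\sup_{i<i_0}\nu_i\); since \(i_0<\kappa^+\) has cofinality at most \(\kappa\) and \(M_U\) is closed under \(\kappa\)-sequences, \(\textnormal{cf}^V(\delta^{+M_U})>\kappa\), so \(\nu_{i_0}<\delta^{+M_U}\). In the successor case \(i_0=\bar i+1\), I invoke \cref{doddgens} to obtain Dodd solidity of \(W_{\bar i}\) (noting that \(W_{\bar i}\) has only \(\bar i + 1\leq\kappa\) generators, namely \(\nu_0,\dots,\nu_{\bar i}\)), and then \cref{doddlin} to pin \(W_{\bar i}\) down as the unique Dodd solid ultrafilter in \(M_U\) of its Mitchell rank. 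Studying the factor embedding \(k:M_{W_{\bar i}}\to M_U\), I show \(\textsc{crt}(k)\leq o(W_{\bar i})\): otherwise \(W_{\bar i}\) would be fixed in the sense that the unique Dodd solid ultrafilter of Mitchell rank \(o(W_{\bar i})\) in \(M_{W_{\bar i}}\) maps to \(W_{\bar i}\), placing \(W_{\bar i}\in M_{W_{\bar i}}\) and contradicting \cref{mostrict}. The reverse inequality \(o(W_{\bar i})\leq\delta^{+M_{W_{\bar i}}}\leq\textsc{crt}(k)\) is immediate from closure under \(\kappa\)-sequences, so \(\nu_{i_0}=\textsc{crt}(k)=o(W_{\bar i})<\delta^{+M_U}\), giving both locality and the Mitchell-rank identity.

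For the Rudin-Keisler equivalence \(F^U(i_0)\equiv_\textnormal{RK} W_{i_0}\), I would examine the natural factor embedding \(k:M_{F^U(i_0)}\to M_{W_{i_0}}\) and show it is the identity (equivalently, surjective). The inductive hypothesis lets me recover \(\langle\nu_i:i<i_0\rangle\) from \(\langle F^U(i):i<i_0\rangle\), either as Mitchell ranks (at successors) or as suprema (at limits); then \(\nu_{i_0}\) itself is either \(\sup_{i<i_0}\nu_i\) or \(o(W_{\bar i})\), all in \(\textnormal{ran}(k)\), and since \(\nu_{i_0}\) generates \(M_{W_{i_0}}\) over \(j_U[V]\subseteq\textnormal{ran}(k)\), \(k\) is surjective. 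The main obstacle will be the successor subcase of locality: tying \(\nu_{i_0}\) to \(o(W_{\bar i})\) hinges on the full Dodd-solid uniqueness from \cref{doddgens} and \cref{doddlin}, and the closure arguments needed to place \(F^U(i)\) inside the various internal models must be executed carefully to avoid circularity with the induction.
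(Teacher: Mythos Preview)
Your proposal is correct and follows essentially the same approach as the paper: the same simultaneous induction, the same irreducibility argument via \cref{factorlemma} and the inductive locality, the same appeal to \cref{doddgens} and \cref{doddlin} for the successor case, and the same surjectivity-of-the-factor-map argument for the Rudin-Keisler equivalence. The only cosmetic difference is that you treat the RK equivalence uniformly at the end (recovering \(\nu_{i_0}\) from \(\langle F^U(i):i<i_0\rangle\) recursively), whereas the paper handles it separately within the limit and successor cases, using in the successor case the interdefinability of \(W_{\bar i}\) with its Mitchell rank \(\gamma\) directly; these amount to the same computation.
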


One can actually also use the seed order where we used the Mitchell order above (in the Dodd solid case they are essentially the same thing). The analysis of \cref{normalgens} leads to the proof of the main theorem of this section, \cref{radinanalysis}.

\begin{proof}[Proof of \cref{radinanalysis}]
Let \(\alpha\) be least such that \(U\) is not \(\alpha\)-Radin. We assume \(\alpha < \kappa^+\) and show \(U\) is \(\alpha\)-normal. Let \(\langle \nu_i : i < \alpha\rangle\) be the first \(\alpha\) generators of \(U\). There are two cases. First suppose \(\alpha\) is a limit ordinal. Then \(F^{U}(i)\in M_U\) for all \(i < \alpha\) but since \(U\) is not \(\alpha\)-Radin, \(\langle F^U(i) : i < \alpha\rangle\notin M_U\). Since \(\alpha < \kappa^+\) and \(M_U\) is closed under \(\kappa\)-sequences, this is a contradiction.

Second suppose \(\alpha\) is a successor ordinal, say \(\alpha = i + 1\). Then the fact that \(U\) is not \(\alpha\)-Radin means that \(F^U(i)\notin M_U\). By \cref{normalgens}, \(F^U(i)\) is Rudin-Keisler equivalent to the ultrafilter derived from \(U\) using \(\nu_{i}\), or in other words to \(U|\nu_i+1\). It follows that the Dodd fragment ordinals of \(U\) are just \(\{\nu_i+1\}\): \(U|\nu_i\in M_U\) and \(U|\nu_i + 1\) is not. Thus \(U\) is of measure type with Dodd fragment parameter equal to \(\nu_i\). Hence by \cref{doddfail2},  the ultrafilter derived from \(U\) using \(\nu_i\) is a factor of \(U\), and so since \(U\) is irreducible, \(F^U(i) \equiv_{\text{RK}} U\), as desired.
\end{proof}

\begin{cor}[Ultrapower Axiom]\label{radinthm}
Suppose the seed rank of \(W\) is strictly below that of the \(\wo\)-least \(Z\) that is \({<}\textsc{crt}^+\)-Radin, if there is such a \(Z\). Then \(W\) factors as a finite iteration of ultrafilters \(U\) that are \(\alpha\)-normal for some \(\alpha < \textsc{crt}(U)^+\).
\end{cor}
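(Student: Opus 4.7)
The plan is to combine Theorem \ref{decomp} with Theorem \ref{radinanalysis}. Apply \ref{decomp} to $W$ to obtain a factorization into irreducible ultrafilters $(U_0,\ldots,U_{n-1})$, together with intermediate ultrafilters $W_i$ (where $W_0 = W$) such that, setting $M_i = \textnormal{Ult}(V,U_0,\ldots,U_{i-1})$ and $j_i:V\to M_i$ the associated iterated ultrapower embedding, $(U_i,W_{i+1})$ is a canonical factorization of $W_i$ inside $M_i$ and $M_{i+1}\vDash W_{i+1}\swo j_{U_i}(W_i)$. It will then suffice to apply Theorem \ref{radinanalysis} inside each $M_i$ to the irreducible ultrafilter $U_i$ and show the $\alpha$-normal alternative must hold for some $\alpha < \textsc{crt}(U_i)^+$.

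If no ${<}\textsc{crt}^+$-Radin ultrafilter exists in $V$, then none exists in any $M_i$ by elementarity, so the dichotomy of \ref{radinanalysis} immediately forces the $\alpha$-normal alternative for each $U_i$. Otherwise, let $Z$ be the $\wo$-least ${<}\textsc{crt}^+$-Radin ultrafilter of $V$ and proceed by induction on $i$ to prove the auxiliary claim $M_i\vDash W_i\swo j_i(Z)$. The base case is the hypothesis $W\swo Z$. For the induction step, elementarity of $j_{U_i}$ applied to the induction hypothesis gives $j_{U_i}(W_i)\swo j_{i+1}(Z)$ in $M_{i+1}$, which combined with $W_{i+1}\swo j_{U_i}(W_i)$ from \ref{decomp} yields $W_{i+1}\swo j_{i+1}(Z)$. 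Now \ref{factorlemma}, applied inside $M_i$ to the minimal factor $U_i$ of $W_i$, gives $U_i\wo W_i$, so $U_i\swo j_i(Z)$ in $M_i$. Since ``being the $\wo$-least ${<}\textsc{crt}^+$-Radin ultrafilter'' is a definable property, $j_i(Z)$ plays exactly that role in $M_i$; hence $U_i$ cannot be ${<}\textsc{crt}(U_i)^+$-Radin, and \ref{radinanalysis} provides an $\alpha<\textsc{crt}(U_i)^+$ with $U_i\equiv_{\textnormal{RK}}$ an $\alpha$-normal ultrafilter.

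The main obstacle is purely bookkeeping: verifying that the seed order, the property of being $\alpha$-Radin (uniformly in $\alpha$), and the Ultrapower Axiom itself all transfer through the iterated embedding $j_i$, so that Theorems \ref{factorlemma} and \ref{radinanalysis} are legitimately applicable inside each $M_i$ and $j_i(Z)$ really does witness $\wo$-leastness among ${<}\textsc{crt}^+$-Radin ultrafilters of $M_i$. One should also note that the $\wo$-least nonprincipal factor selected in the proof of \ref{decomp} is minimal in the sense of \ref{notationdef}, as required by \ref{factorlemma}; this is automatic since every uniform ultrafilter is $\wo$-equivalent to a unique minimal representative sharing its ultrapower embedding.
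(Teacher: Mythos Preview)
Your proposal is correct and is exactly the argument the paper leaves implicit: the corollary is stated without proof, and the intended derivation is precisely to combine \cref{decomp} with \cref{radinanalysis}, pushing the hypothesis $W\swo Z$ along the factorization via elementarity and the inequality $W_{i+1}\swo j_{U_i}(W_i)$ so that \cref{radinanalysis} applies in each $M_i$. Your bookkeeping remarks (that UA transfers to each $M_i$, that $j_i(Z)$ is the $\wo$-least ${<}\textsc{crt}^+$-Radin ultrafilter of $M_i$, and that the $\wo$-least nonprincipal factor is minimal) are all correct and are exactly the points a careful reader would need to check.
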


What lies beyond \cref{radinthm} seems to rest on the answer to the following question.

\begin{qst}
Assume the Ultrapower Axiom. Is the \(\wo\)-least \({<}\kappa^+\)-Radin ultrafilter \(\kappa^+\)-Radin?
\end{qst}

In the short extender models, the answer is {\it yes}. In fact, much more is true.

\subsection{In the Mitchell-Steel Models}
The following theorem is due to Schlutzenberg in a much stronger form with a slightly different statement.
\begin{thm}[Schlutzenberg]\label{shortland}
Suppose \(\mathcal M\) is an iterable Mitchell-Steel premouse satisfying \textnormal{ZFC}. Then the minimal irreducible ultrafilters of \(\mathcal M\) are precisely the total ultrafilters coded by extenders on the sequence of \(\mathcal M\).
\end{thm}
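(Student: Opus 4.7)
The plan is to prove the theorem in two directions, with the easy direction handled from results already available in this paper and the hard direction essentially reduced to Schlutzenberg's theorem as cited.

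First I would handle the direction that every total ultrafilter $U$ coded by an extender $E$ on the sequence of $\mathcal{M}$ is a minimal irreducible ultrafilter. Minimality is immediate from the construction of Mitchell-Steel indexing: the seed of $U$ corresponds to the least sequence of ordinals generating $\textnormal{Ult}(\mathcal{M}, E)$ over the image of $j_E$, which is exactly the parameter used to index $E$ on the sequence. For irreducibility, I would cite Steel's theorem (noted in the introduction, see \cite{Schimmerling}) that extenders on the sequence of a Mitchell-Steel premouse are Dodd solid, and then invoke the proposition proved earlier in this paper that every Dodd solid ultrafilter is irreducible. So this direction requires no real work beyond assembling existing results.

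The main obstacle is the converse: showing that every minimal irreducible ultrafilter $U$ of $\mathcal{M}$ is coded by an extender on the sequence. My plan here is to run the comparison argument from inner model theory. Since $\mathcal{M}$ is iterable and $j_U:\mathcal{M}\to\textnormal{Ult}(\mathcal{M},U)$ is an internal ultrapower (hence close), one can form iteration trees $\mathcal{T}$ on $\mathcal{M}$ and $\mathcal{U}$ on $\textnormal{Ult}(\mathcal{M},U)$ realizing a successful coiteration. By the Dodd-Jensen lemma applied against the close embedding $j_U$, the $\mathcal{M}$-side $\mathcal{T}$ is nondropping on its main branch, and in fact one can arrange that $\mathcal{U}$ is trivial (no extenders applied) so that $\mathcal{T}$ produces an embedding of $\mathcal{M}$ into $\textnormal{Ult}(\mathcal{M},U)$ which dominates $j_U$ pointwise on the ordinals. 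The irreducibility of $U$ should then force $\mathcal{T}$ to consist of a single extender application $E$ from the sequence of $\mathcal{M}$: a longer tree would exhibit $U$ as a nontrivial iteration, contradicting irreducibility, as in the factorization theory of \cref{IrreducibleSection}. Minimality of $U$ pins down that the seed used from $E$ is the canonical one, so $U$ is literally the ultrafilter coded by $E$.

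The hard part will be the last step, converting the shape of the comparison tree into the conclusion that the comparison is a single extender application and that the resulting ultrafilter matches $U$ on the nose. This is precisely the content of Schlutzenberg's characterization in \cite{Schlutzenberg}, which analyzes total ultrafilters of short extender models in terms of the extender sequence; I would therefore conclude the proof by appealing to his theorem directly, rather than reconstructing the fine structural analysis, since the paper explicitly frames this theorem as a restatement of his result.
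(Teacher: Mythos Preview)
The paper does not prove this theorem at all: it is stated with attribution to Schlutzenberg and a reference to \cite{Schlutzenberg}, and no argument is given. Your final move---to appeal directly to Schlutzenberg's cited result rather than reconstruct the fine structural analysis---is therefore exactly what the paper does, and in that sense your proposal lands in the right place. Your easy direction is also fine: Steel's Dodd solidity for extenders on the sequence combined with the proposition here that Dodd solid ultrafilters are irreducible handles it.

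However, the intermediate sketch you give for the hard direction has real gaps, and you should not present it as if it nearly works modulo a citation. First, the claim that in the coiteration of $\mathcal M$ with $\textnormal{Ult}(\mathcal M,U)$ one can ``arrange that $\mathcal U$ is trivial'' is not a consequence of Dodd--Jensen applied to the close embedding $j_U$. Dodd--Jensen tells you the $\mathcal M$-side does not drop on its main branch and that the branch embedding is pointwise at most $j_U$ on ordinals; it does not by itself force the $\textnormal{Ult}(\mathcal M,U)$-side to be trivial. That triviality is part of the substance of Schlutzenberg's analysis, not a preliminary step. Second, even granting a nontrivial tree $\mathcal T$ on the $\mathcal M$-side using more than one extender, this does not directly exhibit $U$ as a nontrivial \emph{linear} iterated ultrapower in the sense of \cref{primitivity}: iteration trees are not linear iterations, and the main branch embedding of $\mathcal T$ need not factor through an intermediate model of the form $\textnormal{Ult}(\mathcal M,U')$ for a countably complete $U'\in\mathcal M$. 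Showing that the tree is in fact linear (so that such a factorization exists) is again part of Schlutzenberg's argument. So the honest statement is simply that both directions beyond the Dodd solid $\Rightarrow$ irreducible implication are Schlutzenberg's theorem, which is how the paper treats it.
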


\begin{cor}
Suppose \(\mathcal M\) is an iterable Mitchell-Steel premouse satisfying \textnormal{ZFC}. Then in \(\mathcal M\), the Mitchell order wellorders the class of minimal irreducible ultrafilters of \(\mathcal M\).
\end{cor}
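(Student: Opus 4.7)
The plan is to bootstrap directly from Schlutzenberg's theorem. By \cref{shortland}, in $\mathcal M$ the class of minimal irreducible ultrafilters coincides with the class $\mathcal C$ of total ultrafilters coded by extenders appearing on the Mitchell--Steel sequence $\vec E^{\mathcal M}$ of $\mathcal M$. Since $\vec E^{\mathcal M}$ is indexed by ordinals, $\mathcal C$ is already wellordered by the index function, so the task reduces to showing that this canonical index wellorder agrees with the Mitchell order on $\mathcal C$.

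First I would check that the passage from an extender on $\vec E^{\mathcal M}$ to its coded total ultrafilter is injective on $\mathcal C$, so that the index order and the Mitchell order are actually comparing the same objects. This is immediate since the index of a Mitchell--Steel extender is recoverable from the extender itself (from its length and generators), which in turn is recoverable from the associated total ultrafilter and its ultrapower embedding.

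The main step would be to show that if $E, F$ are extenders on $\vec E^{\mathcal M}$ coding total ultrafilters $U_E, U_F \in \mathcal C$ with $\textnormal{index}(E) < \textnormal{index}(F)$, then $U_E <_M U_F$. This is an immediate consequence of the coherence property built into the Mitchell--Steel construction: the initial segment of $\vec E^{\mathcal M}$ strictly below $\textnormal{index}(F)$ is preserved by $j_F$, so $E$ itself appears on the extender sequence of $\textnormal{Ult}(\mathcal M, F)$. Hence $U_E$, being definable over $\textnormal{Ult}(\mathcal M, F)$ from the coded extender, belongs to $\textnormal{Ult}(\mathcal M, F)$, which is the definition of $U_E <_M U_F$.

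Combining this with the strictness of the Mitchell order on nonprincipal ultrafilters (\cref{mostrict}) yields that the Mitchell order on $\mathcal C$ coincides with the index wellorder, and in particular is itself a wellorder (wellfoundedness is also given directly by \cref{mowo}). The main obstacle is not really an obstacle but a citation: one must invoke the coherence of $\vec E^{\mathcal M}$ with the correct index convention, since Mitchell--Steel indexing (on the generators, rather than on the critical point) is what makes the coherence statement ``$E_\alpha$ is on $j_{E_\beta}(\vec E)$ when $\alpha < \beta$'' hold in the form needed here.
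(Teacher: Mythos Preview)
Your argument is correct and is essentially the intended one: the paper states the corollary without proof, treating it as immediate from Schlutzenberg's theorem together with the standard coherence property of the Mitchell--Steel sequence, which is exactly what you spell out. One could alternatively route the argument through Dodd solidity (Steel's theorem that total extenders on the sequence are Dodd solid) together with \cref{doddlin} and the fact that the Ultrapower Axiom holds in $\mathcal M$, but your direct coherence argument is cleaner and avoids those detours.
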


The minimal irreducible ultrafilters are the largest class of ultrafilters that can be wellordered by the Mitchell order in the following sense. Obviously no such class can contain two Rudin-Keisler equivalent ultrafilters, since such ultrafilters are Mitchell incomparable. This is one reason for the minimality constraint: we must restrict to a canonical class of Rudin-Keisler representatives. Another reason is that for nonminimal ultrafilters, two ultrafilters \(U\) and \(W\) may be Mitchell incomparable even though for some \(U'\equiv_\textnormal{RK} U\), \(U'\mo W\). This is impossible if \(U\) is assumed to be minimal. 

Finally the reason for the irreducibility constraint is that if \(Z\) factors as an iterated ultrapower \((U,W)\), then \(Z\) and \(U\) bear no Mitchell relation with each other. On the one hand, \(\textnormal{Ult}(V,Z)\subseteq \textnormal{Ult}(V,U)\) and hence \(U\notin \textnormal{Ult}(V,Z)\) since \(U\notin \textnormal{Ult}(V,U)\) by \cref{mostrict}. On the other hand, assume towards a contradiction that \(Z\in \textnormal{Ult}(V,U)\). Assume without loss of generality that \(U\) is minimal. One can compute \(j_Z\restriction P(\textsc{sp}(Z))\) in \(\textnormal{Ult}(V,U)\) from \(Z\) alone as in \cref{annoying}, and since \(\textsc{sp}(Z)\geq \textsc{sp}(U)\) by \cref{spaces}, this suffices to compute \(j_Z\restriction P(\textsc{sp}(U))\). Then \(j_U\restriction P(\textsc{sp}(U))\) is in \(\textnormal{Ult}(V,U)\), since \(j_U\restriction P(\textsc{sp}(U))  = (j_W)^{-1}\circ j_Z \restriction P(\textsc{sp}(U))\) and \(j_W\) is definable in \(\textnormal{Ult}(V,U)\). But from \(j_U\restriction P(\textsc{sp}(U))\), one easily computes \(U\), contradicting \cref{mostrict}. Thus \(Z\notin \textnormal{Ult}(V,U)\).

\section{The Ultrapower Lattice}\label{UltrapowerLatticeSection}
Let \(\mathcal D\) be the category of ultrapowers of \(V\) with morphisms the internal ultrapower embeddings. In this section we study this category under the assumption of the Ultrapower Axiom. In a sense it turns out to be quite simple.

\begin{thm}[Ultrapower Axiom]\label{latticethm}
Let \(\mathcal D\) be the category of ultrapowers of \(V\) with morphisms the internal ultrapower embeddings. Then \(\mathcal D\) is a locally finite lattice with joins given by canonical comparisons. Its partial order is equal to the partial order of reverse inclusion.
\end{thm}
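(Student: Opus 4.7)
The plan is to establish, in order, that $\mathcal D$ is a poset (verifying uniqueness of morphisms and antisymmetry), that canonical comparisons give joins, that local finiteness yields meets, and that the partial order coincides with reverse inclusion. First I would show that morphisms in $\mathcal D$ between any two objects are unique: any internal ultrapower embedding $M_0 \to M_1$ realizes $M_1$ as $\textnormal{Ult}(M_0, W)$ for some $W \in M_0$, and \cref{everythingcommutes} applied inside $M_0$ yields uniqueness of the embedding. For antisymmetry, writing $M_i = \textnormal{Ult}(V, U_i)$ and supposing $i: M_0 \to M_1$ and $j: M_1 \to M_0$ are internal ultrapower embeddings, each of $i \circ j_{U_0}$ and $j \circ j_{U_1}$ is an ultrapower embedding of $V$, hence equals $j_{U_1}$ and $j_{U_0}$ respectively by \cref{everythingcommutes}. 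Thus $(j \circ i) \circ j_{U_0} = j_{U_0}$, so the Rigid Ultrapowers Lemma \cref{rul} forces $j \circ i = \textnormal{id}$, and symmetrically $i \circ j = \textnormal{id}$, giving $M_0 = M_1$.

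Next, with $M_i = \textnormal{Ult}(V, U_i)$ as above, I would let $\langle W_0, W_1\rangle$ be the canonical comparison to $P$ provided by \cref{canonical}. The embeddings $j_{W_0}, j_{W_1}$ realize $P$ as an upper bound of $\{M_0, M_1\}$ in $\mathcal D$. For the universal property, invoke \cref{closetoultra}: any $N \in \mathcal D$ admitting internal ultrapower embeddings $k_i: M_i \to N$ (automatically close, and compatible in the sense $k_0 \circ j_{U_0} = k_1 \circ j_{U_1}$ by the uniqueness just established) admits a factor embedding $h: P \to N$ with $h \circ j_{W_i} = k_i$, and the construction in the proof of \cref{closetoultra} realizes $h$ itself as an internal ultrapower embedding. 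So $P$ is the join. Local finiteness then follows from the separately-stated theorem of the introduction (proved later in \cref{UltrapowerLatticeSection}): every $M \in \mathcal D$ has only finitely many ultrapowers $N$ of which $M$ is an internal ultrapower, i.e., the principal down-set of $M$ is finite. Existence of meets follows from a standard lattice-theoretic argument: the common lower bounds of $M_0, M_1$ form a finite subset of the down-set of $M_0$, closed under the binary join of the previous step by the universal property, so this finite set has a maximum, which is the meet.

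Finally, the identification of $\leq_{\mathcal D}$ with reverse inclusion is immediate in one direction, since an internal ultrapower $\textnormal{Ult}(M_0, W)$ is the transitive collapse of a class of $M_0$, hence $\subseteq M_0$. For the converse, suppose $M_1 \subseteq M_0$ and form the canonical comparison $\langle W_0, W_1\rangle$ to $P$; then $P \subseteq M_0 \cap M_1 = M_1$. It suffices to show $W_1$ is principal, for then $P = M_1$ and $j_{W_0}: M_0 \to M_1$ is the required internal ultrapower embedding. By canonicity $[\textnormal{id}]_{W_1} = j_{W_0}([\textnormal{id}]_{U_0})$ lies in $P \subseteq M_1$, and the task is to promote membership in $M_1$ to membership in $j_{W_1}[M_1]$, which is equivalent (again by canonicity) to principality of $W_1$. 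This rigidity step --- combining canonicity with an application of \cref{rul} or \cref{minult} to $j_{W_1}: M_1 \to P$ set against the inclusion $P \subseteq M_1$ --- is the main technical obstacle of the proof.
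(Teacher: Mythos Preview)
Your outline is broadly correct on the poset structure and the meets/local finiteness parts, but there are two genuine gaps.

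First, for joins: \cref{closetoultra} only shows that the factor map $h: P \to N$ is an \emph{elementary} embedding, not that it is an \emph{internal ultrapower} embedding of $P$. Your assertion that ``the construction in the proof of \cref{closetoultra} realizes $h$ itself as an internal ultrapower embedding'' is not established there, even when $k_0, k_1$ are internal. The paper closes this gap differently (see \cref{joins}): it applies the Ultrapower Axiom once more to compare the iterated ultrapowers $Z = (U_0, W_0)$ and $Z' = (U_0, W_0')$, then uses \cref{defemb} to see that the canonical seed $j_{W_0}([\text{id}]_{U_0}) \cup j_{W_1}([\text{id}]_{U_1})$ lands in the range of the comparison map on the $Z'$-side, whence \cref{trivialfact} yields the internal ultrapower embedding $P \to N$. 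The upgrade of $h$ to a close embedding is then proved as a corollary (\cref{universalproperty}), not as an input.

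Second, and more seriously, your treatment of reverse inclusion leaves the crux unresolved. You correctly isolate the obstacle---showing $j_{W_0}([\text{id}]_{U_0}) \in j_{W_1}[M_1]$---but the hint to invoke \cref{rul} or \cref{minult} does not work: those results compare two embeddings with the \emph{same} source and target, whereas here you have $j_{W_1}: M_1 \to P$ and an inclusion $P \subseteq M_1$ going the opposite way, with no evident embedding $P \to M_1$ to play against $j_{W_1}$. The paper's argument is genuinely different: it proves a standalone lemma (\cref{defnfactor}) stating that if $N \subseteq V$ and $\textnormal{Ult}(V, U_0) = \textnormal{Ult}(N, U_1)$ then $j_{U_0}[\text{Ord}] \subseteq j_{U_1}[\text{Ord}]$, via a least-ordinal-counterexample definability trick in the style of \cref{defemb}. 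Applied inside $M_0$ (with $N = M_1$ as the inner model), this gives $k_{M_0}[\text{Ord}] \subseteq k_{M_1}[\text{Ord}]$ for the comparison maps $k_{M_0}, k_{M_1}$ into $P = M_0 \vee M_1$, and canonicity then forces $k_{M_1}$ to be the identity.
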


Let \(\mathcal I\) be the partial order consisting of minimal ultrafilters under the relation \(U <_\mathcal I Z\) if there is a countably complete ultrafilter \(W\) of \(M_U\) such that \(\textnormal{Ult}(M_U,W) = M_Z\) and \(j_W\circ j_U = j_Z\). (This is sometimes called the Rudin-Frolik order, but that term has also been used to refer to a certain suborder.) In this case we will say that \(U\) is a factor of \(Z\). Under the Ultrapower Axiom, the second requirement (commutativity) holds automatically, and so the partial order \(\mathcal I\) is isomorphic to the partial order \(\mathcal D\).

We prove in a series of propositions that \(\mathcal D\) forms a lattice.

\begin{prp}[Ultrapower Axiom]\label{joins}
Suppose \(U_0\) and \(U_1\) are uniform ultrafilters and \(\langle W_0,W_1\rangle\) is their canonical comparison to a common model \(N\). Suppose \(\langle W_0',W_1'\rangle\) is another comparison of \(\langle U_0,U_1\rangle\) to a common model \(N'\). Then there is an internal ultrapower embedding \(i:N \to N'\).
\begin{proof}
We are trying to show that the ultrafilter \(Z = (U_0,W_0) \equiv_{\textnormal{RK}} (U_1,W_1)\) is a factor of the ultrafilter \(Z' = (U_0,W_0') \equiv_{\textnormal{RK}} (U_1,W_1')\). Fix a comparison \(\langle F,F'\rangle\) of \(\langle Z,Z'\rangle\). It suffices to show that \(j_F([\text{id}]_Z) \in \text{ran}(j_{F'})\). Since \(\langle W_0,W_1\rangle\) is canonical, \([\text{id}]_Z = j_{W_0}([\text{id}]_{U_0}) \cup j_{W_1}([\text{id}]_{U_1})\). Thus 
\begin{align*}
j_F([\text{id}]_Z) &= j_F(j_{W_0}([\text{id}]_{U_0}) \cup j_{W_1}([\text{id}]_{U_1})) \\
&= j_{F}(j_{W_0}([\text{id}]_{U_0})) \cup j_F(j_{W_1}([\text{id}]_{U_1}))\\
&=j_{F'}(j_{W'_0}([\text{id}]_{U_0})) \cup j_{F'}(j_{W'_1}([\text{id}]_{U_1}))\in \text{ran}(j_{F'})
\end{align*}
The final equality follows from \cref{defemb}.
\end{proof}
\end{prp}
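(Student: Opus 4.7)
The plan is to reformulate the assertion as a factorization statement for the derived product ultrafilters. Set $Z = (U_0, W_0)$ and $Z' = (U_0, W_0')$, so that $N = \text{Ult}(V, Z)$ and $N' = \text{Ult}(V, Z')$ by \cref{iterating}. An internal ultrapower embedding $i : N \to N'$ exists exactly when $Z$ is a factor of $Z'$ in the sense of \cref{primitivity}, and by \cref{trivialfact} this in turn reduces to exhibiting a single comparison $\langle F, F'\rangle$ of $\langle Z, Z'\rangle$ such that $j_F([\text{id}]_Z) \in \text{ran}(j_{F'})$.

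By the Ultrapower Axiom I would fix any comparison $\langle F, F'\rangle$ of $\langle Z, Z'\rangle$ to a common target $Q$. The key step is to unpack $[\text{id}]_Z$ using the canonicity of $\langle W_0, W_1\rangle$: since $W_0$ is by definition derived from $j_{W_0}$ using $j_{W_1}([\text{id}]_{U_1})$, combined with the description of the seed of a product ultrafilter in the remarks after \cref{iterating}, one obtains the decomposition
\[ [\text{id}]_Z = j_{W_0}([\text{id}]_{U_0}) \cup j_{W_1}([\text{id}]_{U_1}). \]
It now suffices to place each term on the right into $\text{ran}(j_{F'})$.

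For the first term, note that both $j_F \circ j_{W_0}$ and $j_{F'} \circ j_{W_0'}$ are definable ultrapower embeddings from $\text{Ult}(V, U_0)$ into $Q$, so by \cref{defemb} applied inside $\text{Ult}(V, U_0)$ they agree on ordinals; hence $j_F(j_{W_0}([\text{id}]_{U_0})) = j_{F'}(j_{W_0'}([\text{id}]_{U_0})) \in \text{ran}(j_{F'})$. The symmetric argument inside $\text{Ult}(V, U_1)$ gives $j_F(j_{W_1}([\text{id}]_{U_1})) = j_{F'}(j_{W_1'}([\text{id}]_{U_1})) \in \text{ran}(j_{F'})$. Taking the union finishes the verification.

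The main obstacle, if any, lies in the initial reformulation: one has to recognize that the existence of an internal embedding $N \to N'$ is controlled entirely by a single seed-containment condition on an arbitrary comparison, and that canonicity of $\langle W_0, W_1\rangle$ has been engineered precisely so that the seed of $Z$ in $N$ decomposes into contributions from $U_0$ and $U_1$ separately. After these two observations, the minimality of definable embeddings closes the argument routinely, and remarkably no particular choice of the comparison $\langle F, F'\rangle$ is required — any comparison whatsoever works.
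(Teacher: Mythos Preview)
Your proposal is correct and follows essentially the same approach as the paper: reformulate as a factorization via \cref{trivialfact}, decompose $[\text{id}]_Z$ using canonicity, and conclude by applying \cref{defemb} in $\textnormal{Ult}(V,U_0)$ and $\textnormal{Ult}(V,U_1)$ to match the two pieces with elements of $\textnormal{ran}(j_{F'})$. One terminological quibble: in your closing paragraph you refer to ``the minimality of definable embeddings,'' but what you actually used (and what the paper uses) is \cref{defemb}, the \emph{agreement} of definable embeddings on ordinals, not \cref{minult}.
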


It follows that \(\mathcal D\) has a join operation: \(M\vee N\) is the common model of the canonical comparison. Similarly, \(\mathcal I\) has a join operation. We note that \cref{joins} can be easily extended to an interesting universal property for the canonical comparison, improving \cref{closetoultra} under the assumption of the Ultrapower Axiom.

\begin{prp}[Ultrapower Axiom]\label{universalproperty}
Suppose \(U_0\) and \(U_1\) are uniform ultrafilters. Let \(M_0 = \textnormal{Ult}(V,U_0)\) and \(M_1 = \textnormal{Ult}(V,U_1)\). Suppose \(\langle W_0,W_1\rangle\) is the canonical comparison of \(\langle U_0,U_1\rangle\) to a common model \(P\). Suppose that for some model \(N\), there are close embeddings \begin{align*}k_0&:M_0\to N\\ k_1&: M_1\to N\end{align*} such that \(k_0\circ j_{U_0} = k_1\circ j_{U_1}\). Then there is a close embedding \(h: P\to N\) such that \(h \circ j_{W_0} = k_0\) and \(h\circ j_{W_1} = k_1\).
\begin{proof}
By \cref{closetoultra} and the uniqueness of canonical comparisons, there is an elementary embedding \(h:P\to N\) such that \(h \circ j_{W_0} = k_0\) and \(h\circ j_{W_1} = k_1\). It suffices to show that \(h\) is close. Suppose \(a\in N\), and let \(F\) be the \(P\)-ultrafilter derived from \(h\) using \(a\). We must show that \(F\in P\). Then \(\langle (W_0,F),(W_1,F)\rangle\) is a comparison of \(\langle U_0,U_1\rangle\) by {\it internal} ultrafilters to the common model \(\textnormal{Ult}(P,F)\): this is because for example \((W_0,F)\) is derived from the close embedding \(k_0: M_0\to N\) and therefore is an internal ultrafilter of \(M_0\). It follows from \cref{joins} that \(\textnormal{Ult}(P,F)\) is an internal ultrapower of \(P\). Let \(i:P \to  \textnormal{Ult}(P,F)\) be the internal ultrapower embedding. We claim \(i = j_F\). This is because \(i\circ j_{W_0} = j_F\circ j_{W_0}\) and \(i\circ j_{W_1} = j_F\circ j_{W_1}\) by the uniqueness of ultrapower embeddings, \cref{everythingcommutes}, and \(P\) is generated by \(\text{ran}(j_{W_0}) \cup \text{ran}(j_{W_1})\) by the definition of the canonical comparison. Since \(i = j_F\), \(F\in P\). 
\end{proof}
\end{prp}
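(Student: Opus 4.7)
The plan is to derive the existence of $h$ by combining \cref{closetoultra} with the uniqueness of canonical comparisons (\cref{canonical}), and then to reduce the closeness of $h$ to the universal property for \emph{internal} comparisons proved in \cref{joins}.

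First, I would apply \cref{closetoultra} to the pair of close embeddings $k_0, k_1$: this produces a canonical comparison $\langle W_0^*, W_1^*\rangle$ of $\langle U_0, U_1\rangle$ to some common model $P^*$, together with an elementary $h: P^*\to N$ satisfying $h\circ j_{W_0^*} = k_0$ and $h\circ j_{W_1^*} = k_1$. By \cref{canonical}, canonical comparisons are unique, so $\langle W_0^*, W_1^*\rangle = \langle W_0, W_1\rangle$ and $P^* = P$. This yields the commutativity part of the conclusion for free; the entire content of the proposition is therefore that this $h$ is close.

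To prove closeness, fix $a\in N$ and let $F$ be the $P$-ultrafilter derived from $h$ using $a$; the goal is $F\in P$. The idea is to assemble $F$ into the \emph{second} stages of two iterated comparisons of $\langle U_0,U_1\rangle$, observe that both second stages end up internal, invoke \cref{joins}, and then identify the resulting internal embedding with $j_F$. Concretely, consider the iterated ultrafilters $(W_0, F)$ and $(W_1, F)$ in the sense of \cref{iterating}. Their ultrapowers equal $\textnormal{Ult}(P,F)$ and $j_F\circ j_{W_0}\circ j_{U_0} = j_F\circ j_{W_1}\circ j_{U_1}$, so $\langle (W_0,F),(W_1,F)\rangle$ is a comparison of $\langle U_0,U_1\rangle$. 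Crucially, $(W_0,F)$ is the $M_0$-ultrafilter derived from $h\circ j_{W_0} = k_0$ using an appropriate finite set of ordinals involving $a$ and $j_{W_1}([\textnormal{id}]_{U_1})$, and similarly for $(W_1,F)$; since $k_0$ and $k_1$ are close to $M_0$ and $M_1$ respectively, both these derived ultrafilters are internal. So the comparison is by internal ultrafilters.

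By \cref{joins}, there is then an internal ultrapower embedding $i: P\to \textnormal{Ult}(P,F)$. I would finish by arguing $i = j_F$. Both $i\circ j_{W_0}$ and $j_F\circ j_{W_0}$ are ultrapower embeddings $M_0\to \textnormal{Ult}(P,F)$, and likewise with $W_1$, so by \cref{everythingcommutes} we get $i\circ j_{W_0} = j_F\circ j_{W_0}$ and $i\circ j_{W_1} = j_F\circ j_{W_1}$. Since $\langle W_0, W_1\rangle$ is canonical, every element of $P$ has the form $j_{W_0}(f)(j_{W_1}([\textnormal{id}]_{U_1}))$, so $P$ is generated by $\textnormal{ran}(j_{W_0})\cup \textnormal{ran}(j_{W_1})$, whence $i = j_F$. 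Because $i$ is an internal ultrapower embedding, $j_F$ is too, and then $F$, being the $P$-ultrafilter derived from the close map $j_F$ using $a$, lies in $P$. The main obstacle is step 7 — confirming that $\textnormal{ran}(j_{W_0})\cup \textnormal{ran}(j_{W_1})$ really does generate $P$ (which is where canonicity, rather than mere comparison, is essential) and correctly setting up the two applications of \cref{everythingcommutes} so that they legitimately force $i = j_F$ rather than merely that they agree on ordinals.
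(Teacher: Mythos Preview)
Your proposal is correct and follows essentially the same route as the paper: obtain $h$ from \cref{closetoultra} plus uniqueness of canonical comparisons, then for closeness derive the $P$-ultrafilter $F$ from $h$, observe that $(W_0,F)$ and $(W_1,F)$ are internal to $M_0$ and $M_1$ because they are derived from the close maps $k_0,k_1$, apply \cref{joins} to get an internal $i:P\to\textnormal{Ult}(P,F)$, and identify $i=j_F$ via \cref{everythingcommutes} together with the fact that $P$ is generated by $\textnormal{ran}(j_{W_0})\cup\textnormal{ran}(j_{W_1})$. Your worry about step~7 is the right place to be careful, and your justification (via canonicity, $[\textnormal{id}]_{W_0}=j_{W_1}([\textnormal{id}]_{U_1})$) is exactly what is needed.
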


By the proof of Schlutzenberg's theorem \cref{shortland}, in the Mitchell-Steel models, the main branch embeddings of the comparison of two ultrapowers of \(V\) by least disagreement is equal to embeddings of the canonical comparison. It is conceivable that comparisons by least disagreement have abstract universality properties in general, but it is not at all clear this is the case. 

We now prove that \(\mathcal D\) satisfies the ascending chain condition below any point. 

\begin{prp}[Ultrapower Axiom]\label{acc}
Every infinite \(<_\mathcal D\)-increasing chain of ultrapowers is unbounded in \(\mathcal D\).
\begin{proof}
The proof is essentially the same as that of \cref{decomp}. Suppose towards a contradiction that \[M_0 <_\mathcal D M_1 <_\mathcal D M_2 <_\mathcal D\cdots\] is bounded by \(N\in \mathcal D\). Fix \(U_i\in M_i\) such that \(M_{i+1} = \textnormal{Ult}(V,U_i)\). Since each \(M_i <_\mathcal D N\), we may also fix \(W_i\in M_i\) such that \(N = \textnormal{Ult}(M_i,W_i)\). By \cref{decomplemma}, we have \(W_{i+1} \swo j_{U_i}(W_i)\), since \(W_i\) factors as the iterated ultrapower \((U_i,W_{i+1})\). Thus the direct limit of the \(M_i\) is illfounded, which is a contradiction since this is an internal iteration by the definition of \(\mathcal D\).
\end{proof}
\end{prp}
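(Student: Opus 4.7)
The plan is to mimic the factorization argument of \cref{decomp}. Suppose toward a contradiction that some infinite $<_\mathcal D$-increasing chain $M_0 <_\mathcal D M_1 <_\mathcal D M_2 <_\mathcal D \cdots$ is bounded above by $N\in\mathcal D$; by passing to an initial subchain we may assume the chain has length $\omega$. For each $i$, fix a countably complete ultrafilter $U_i\in M_i$ so that $M_{i+1}=\textnormal{Ult}(M_i,U_i)$ with internal ultrapower embedding $j_{U_i}$, and fix $W_i\in M_i$ so that $N=\textnormal{Ult}(M_i,W_i)$. Both $j_{W_i}$ and $j_{W_{i+1}}\circ j_{U_i}$ are internal ultrapower embeddings of $M_i$ into the same model $N$, so by the uniqueness of ultrapower embeddings (\cref{everythingcommutes} applied inside $M_i$) they coincide, and hence $W_i$ factors as the iterated ultrapower $(U_i,W_{i+1})$ in $M_i$. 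Replacing each $W_{i+1}$ by the unique $M_{i+1}$-ultrafilter that makes the factorization canonical (an adjustment that leaves both $M_{i+1}$ and $N$ untouched) puts us in position to apply \cref{decomplemma} inside $M_i$, yielding
\[
M_{i+1}\vDash W_{i+1}\swo j_{U_i}(W_i).
\]

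Now form the internal linear iteration $M_0\stackrel{j_{U_0}}{\to}M_1\stackrel{j_{U_1}}{\to}M_2\to\cdots$ with direct limit $M_\omega$ and embeddings $j_{i\omega}:M_i\to M_\omega$. Applying $j_{i+1,\omega}$ to the displayed inequality gives
\[
j_{i+1,\omega}(W_{i+1})\swo j_{i+1,\omega}(j_{U_i}(W_i))=j_{i\omega}(W_i)
\]
in $M_\omega$. Thus $\langle j_{i\omega}(W_i):i<\omega\rangle$ is an infinite strictly descending sequence in the seed order of $M_\omega$. Feeding this descending sequence into the construction from the proof of \cref{wellfounded} (which requires only UA in $M_\omega$, and this is preserved from $V$ through every ultrapower in sight and through the direct limit) yields an $\omega$-length internal iteration of $M_\omega$ with illfounded direct limit. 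Splicing it onto $V\to M_0\to M_1\to\cdots\to M_\omega$ produces an $(\omega+\omega)$-length internal iteration of $V$ with illfounded direct limit, contradicting the wellfoundedness of internal iterations of $V$ (Kunen's theorem).

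The main technical point to be careful about is the canonicalization step: we need to know that any factorization of $W_i$ whose first factor is $U_i$ can be replaced by a canonical one with the same target model $N$, so that \cref{decomplemma} genuinely applies. This is automatic because the canonical factorization of $W_i$ over $U_i$ shares the same iterated ultrapower with any other factorization over $U_i$, namely $\textnormal{Ult}(V,W_i)=N$. Everything else is bookkeeping, and the heart of the proof is the strict inequality $W_{i+1}\swo j_{U_i}(W_i)$ supplied by \cref{decomplemma}, which is precisely the input that forces the iteration of the $M_i$ to terminate.
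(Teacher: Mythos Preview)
Your proposal is correct and follows essentially the same approach as the paper: set up the bounded chain, observe that each $W_i$ factors as $(U_i,W_{i+1})$ (the paper leaves the commutativity check implicit, whereas you invoke \cref{everythingcommutes}), apply \cref{decomplemma} to obtain $W_{i+1}\swo j_{U_i}(W_i)$, and derive a contradiction from the wellfoundedness of internal iterations. The only difference is cosmetic: the paper concludes directly that the direct limit $M_\omega$ is illfounded (since the seed ranks $|j_{i\omega}(W_i)|_S^{M_\omega}$ would descend in $V$), whereas you run the construction of \cref{wellfounded} inside $M_\omega$ and splice to get an $(\omega+\omega)$-length iteration of $V$; your route is valid but a bit longer than necessary. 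One small typo: you write $\textnormal{Ult}(V,W_i)=N$ where you mean $\textnormal{Ult}(M_i,W_i)=N$.
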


\begin{prp}[Ultrapower Axiom]\label{meets}
Suppose \(X\subseteq \mathcal D\) is a nonempty class. Then \(X\) has a greatest lower bound in \(\mathcal D\).
\begin{proof}
By \cref{decomplemma}, there are no bounded infinite \(<_\mathcal D\)-increasing chains. Thus the collection \(C = \{N : \forall M\in X\ N \leq_\mathcal D M\}\) has a maximal element. In fact, \(C\) has a {\it maximum} element: for this, it suffices to show that \(C\) is directed under \(\leq_\mathcal D\): then any two maximal elements of \(C\) are equal. Suppose that \(N_0,N_1\in C\). Let \(Q\) be the canonical comparison of \(\langle N_0,N_1\rangle\). By \cref{joins}, any comparison of \(\langle N_0,N_1\rangle\) is an internal ultrapower of \(Q\). But note that any \(M\in X\) itself constitutes a comparison of \(\langle N_0,N_1\rangle\), so \(Q\leq_\mathcal D M\). It follows that \(Q\in C\), and in particular \(\langle N_0,N_1\rangle\) has an upper bound in \(C\), so \(C\) is directed. This completes the proof. 
\end{proof}
\end{prp}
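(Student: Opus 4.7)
The plan is to show that the class $C = \{N \in \mathcal D : N \leq_\mathcal D M \text{ for every } M \in X\}$ of lower bounds of $X$ admits a maximum element, which will automatically be the greatest lower bound of $X$ in $\mathcal D$. Note $C$ is nonempty, since $V \in \mathcal D$ lies $\leq_\mathcal D$-below every ultrapower of $V$.

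To locate a maximal element of $C$, I would invoke \cref{acc}: every infinite $<_\mathcal D$-increasing chain is unbounded in $\mathcal D$, while every chain in $C$ is bounded above by any fixed $M \in X$. Since $X$ is nonempty, no such chain can be infinite, so a Zorn-style argument yields maximal elements of $C$. The real content is upgrading ``maximal'' to ``maximum'', i.e., showing $C$ is directed under $\leq_\mathcal D$. Given $N_0, N_1 \in C$, I would form their canonical comparison and let $Q = N_0 \vee N_1$ be its common model, which by construction lies $\leq_\mathcal D$-above both $N_0$ and $N_1$; the key claim is that $Q \in C$.

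To verify $Q \leq_\mathcal D M$ for an arbitrary $M \in X$, the hypothesis $N_0, N_1 \leq_\mathcal D M$ supplies internal ultrapower embeddings $k_0 : N_0 \to M$ and $k_1 : N_1 \to M$. I would argue these form a comparison of $\langle N_0, N_1\rangle$: the commutativity $k_0 \circ j_0 = k_1 \circ j_1$ of the ultrapower embeddings $j_i : V \to N_i$ follows from \cref{everythingcommutes}, since both composites are ultrapower embeddings of $V$ into $M$ and therefore agree under the Ultrapower Axiom. Then \cref{joins} tells us that any comparison of $\langle N_0, N_1\rangle$ factors through the canonical comparison, so $M$ is an internal ultrapower of $Q$, i.e., $Q \leq_\mathcal D M$. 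Hence $Q \in C$, any two maximal elements of $C$ have a common upper bound in $C$, and so they coincide, giving a unique maximum. The main obstacle I foresee is this commutativity step: one has two internal ultrapower embeddings but needs them to fit the framework of \cref{comparison}, which is exactly where \cref{everythingcommutes} (and therefore the Ultrapower Axiom) enters the argument.
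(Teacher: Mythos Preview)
Your proposal is correct and follows essentially the same route as the paper: find a maximal element of the class $C$ of lower bounds via the ascending chain condition, then upgrade to a maximum by showing $C$ is directed using the canonical comparison $Q = N_0 \vee N_1$ and \cref{joins}. You are in fact slightly more explicit than the paper at the one delicate point, correctly invoking \cref{everythingcommutes} to justify that the internal ultrapower embeddings $k_0, k_1$ into $M$ satisfy the commutativity requirement needed for $M$ to count as a comparison of $\langle N_0, N_1\rangle$.
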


In particular, \(\mathcal D\) has a meet operation \(\wedge\) so that \((\mathcal D,\vee,\wedge)\) is a lattice compatible with the order \(\leq_\mathcal D\). This proves part of \cref{latticethm}. We now turn to local finiteness. It follows abstractly from \cref{meets} that if one adjoins a formal top element to \(\mathcal D\), one obtains a complete lattice. This may seem interesting, but in fact the reason for the conditional completeness of the ultrapower lattice is that there are no infinite joins to take: no infinite subclass of \(\mathcal D\) has an upper bound. In the terminology of lattice theory, the ultrapower lattice is locally finite. This is a strengthening of the ascending chain condition below a point. We note that it is perhaps the only significant application of treating seeds as sequences of ordinals rather than single ordinals. We also note that it settles Question 5.11 of \cite{KanamoriUltrafilters} negatively, in a very strong sense, under the Ultrapower Axiom.

\begin{lma}[Ultrapower Axiom]\label{locallyfinite}
A countably complete ultrafilter has only finitely many factors up to Rudin-Keisler equivalence.
\begin{proof}
Suppose not, and let \(Z\) be the \(\wo\)-least uniform ultrafilter with infinitely many factors. List \(\omega\) of them as \(U_0, U_1,U_2,\dots\).  For each \(n\), fix \(W_n\in M_{U_n}\) such that \(Z\) factors as the iteration \((U_n,W_n)\).  Let \(b\) be the minimum seed of \(Z\), and for \(n< \omega\), let \(a_n\) be the minimum seed of \(U_n\). We then have \(j_{W_n}(a_n) < b\) for all \(n\) by \cref{factorlemma}. For each \(n\), let \(\xi_n = \max b\setminus j_{W_n}(a_n)\). Since \(b\) is finite, we may fix an infinite set \(A\subseteq \omega\) such that \(\xi_n\) takes the constant value \(\xi\) for \(n\in A\). 

Fix \(m < \omega\). Let \(F_m = \bigvee_{n\in A\cap m} U_n\). Then \(F_m \leq_\mathcal I Z\). The key point is that in fact \(F_m <_\mathcal I Z\). For this let \(i : \textnormal{Ult}(V,F_m) \to \textnormal{Ult}(V,Z)\) be the internal ultrapower embedding. We will show that \(i\) is not surjective. For each \(n\in A\cap m\), let \(i_n: \textnormal{Ult}(V,U_n)\to \textnormal{Ult}(V,F_m)\) be the internal ultrapower embedding. Then \(i \circ i_n = j_{W_n}\) by the uniqueness of ultrapower embeddings, and so \(i\circ i_n(a_n) = j_{W_n}(a_n)\). Let \(u_m = i(\bigcup_{n\in A\cap m}i_n(a_n)) = \bigcup_{n\in A\cap m} j_{W_n}(a_n)\). By our choice of \(A\), \(u_m \setminus \xi = b \setminus \xi + 1\) since \( j_{W_n}(a_n)\setminus \xi = b \setminus \xi + 1\) for all \(n \in A\). Thus \(u_m < b\). Being the canonical comparison of the ultrafilters \(U_n\) for \(n \in A\cap m\), \(\textnormal{Ult}(V,F_m)\) is generated over \(j_{F_m}[V]\) by \(\bigcup_{n\in A\cap m}i_n(a_n)\). Hence the range of \(i\) is generated over \(j_Z[V]\) by \(u_m\). It follows that \(i\) is not surjective, since its range is generated over \(j_Z[V]\) by a set of ordinals lexicographically below the minimum seed \(b\) of \(Z\).

\begin{figure}
\begin{center}
\includegraphics[scale=.8]{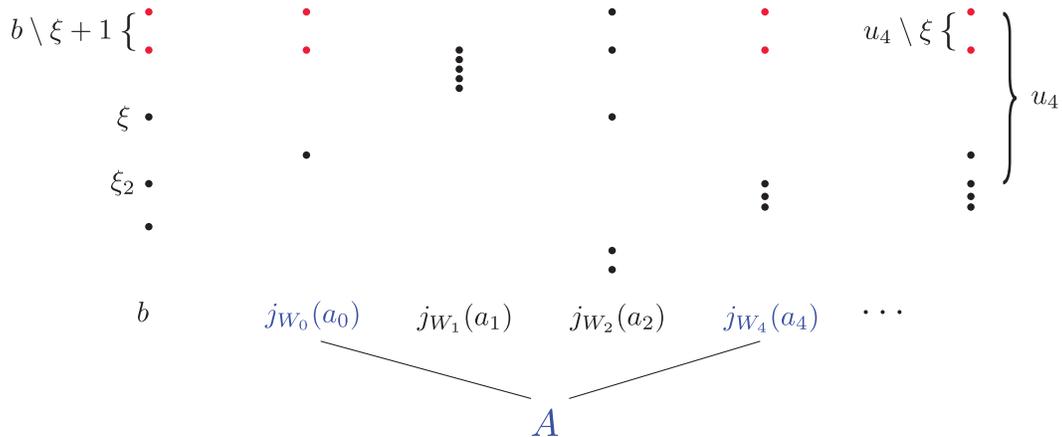}
\end{center}
\caption{Artist's rendering of \cref{locallyfinite}.}
\end{figure}

Now consider the sequence \(\langle F_m : m < \omega\rangle\). This sequence is weakly \(\leq_\mathcal D\)-increasing. The key point is that its supremum must be \(Z\). To see this, note that for any infinite set \(B\subseteq\omega\), \(Z = \bigvee_{n\in B} U_n\), since this join is a factor of \(Z\) and has infinitely many factors, while \(Z\) is the \(\wo\)-least ultrafilter with infinitely many factors. In particular \(\bigvee_{m < \omega} F_m =  \bigvee_{n\in A} U_n = Z\). Since \(F_m \neq Z\) for all \(m < \omega\), the sequence \(\langle F_m : m < \omega\rangle\) does not stabilize, and this contradicts the ascending chain condition below \(Z\), \cref{acc}.
\end{proof}
\end{lma}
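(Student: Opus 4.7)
The plan is to argue by contradiction using the seed order together with the ascending chain condition of \cref{acc}. Suppose the lemma fails and let $Z$ be a $\wo$-least uniform ultrafilter with infinitely many factors up to Rudin-Keisler equivalence. Enumerate $\omega$-many pairwise Rudin-Keisler inequivalent factors as $U_0,U_1,U_2,\dots$, taken to be minimal representatives of their RK-classes, and for each $n$ fix $W_n\in \textnormal{Ult}(V,U_n)$ such that $(U_n,W_n)\equiv_{\textnormal{RK}} Z$, giving an internal ultrapower embedding $j_{W_n}:\textnormal{Ult}(V,U_n)\to \textnormal{Ult}(V,Z)$.

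The idea is to build from these factors an infinite strictly increasing chain $F_0<_{\mathcal D} F_1<_{\mathcal D}\cdots$ bounded above by $Z$, which directly contradicts \cref{acc}. Write $b=[\textnormal{id}]_Z$ and $a_n=[\textnormal{id}]_{U_n}$. Since minimality of $U_n$ together with \cref{factorlemma} gives $U_n\swo Z$, we obtain $j_{W_n}(a_n)<b$ in the canonical wellorder on $[\textnormal{Ord}]^{<\omega}$. Because $b$ is a \emph{finite} set of ordinals, each $j_{W_n}(a_n)$ differs from $b$ by an element of $b$ of largest rank; by the pigeonhole principle there is an infinite $A\subseteq\omega$ and a fixed ordinal $\xi$ such that $\xi = \max\bigl(b\setminus j_{W_n}(a_n)\bigr)$ for all $n\in A$. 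Thus on $A$ the seeds $j_{W_n}(a_n)$ all agree with $b$ above $\xi$ and lie strictly below $b$ because of a disagreement occurring at position $\xi\in b$.

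Now set $F_m=\bigvee_{n\in A\cap m}U_n$, the canonical comparison in $\mathcal D$ (which exists by \cref{joins}). Each $F_m\le_{\mathcal D} Z$ because $Z$ itself is an upper bound for all the $U_n$ in $\mathcal D$. The crucial point is that $F_m<_{\mathcal D} Z$ strictly: the common model $\textnormal{Ult}(V,F_m)$ is generated over $j_{F_m}[V]$ by the union of the internal seeds $i_n(a_n)$ for $n\in A\cap m$ (where $i_n:\textnormal{Ult}(V,U_n)\to\textnormal{Ult}(V,F_m)$ is the canonical internal embedding). Applying the internal ultrapower embedding $i:\textnormal{Ult}(V,F_m)\to\textnormal{Ult}(V,Z)$ and using uniqueness of ultrapower embeddings (\cref{everythingcommutes}), $i\circ i_n=j_{W_n}$, so the image is generated over $j_Z[V]$ by $u_m=\bigcup_{n\in A\cap m}j_{W_n}(a_n)$. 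By the choice of $A$, each $j_{W_n}(a_n)$ agrees with $b$ strictly above $\xi$ and omits $\xi$, so $u_m$ still lies lexicographically strictly below $b$; hence $i$ is not surjective and $F_m\lneq_{\mathcal D} Z$.

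Finally, $\bigvee_m F_m$ equals $Z$: for any infinite $B\subseteq\omega$, the join $\bigvee_{n\in B}U_n$ is a factor of $Z$ that itself has infinitely many distinct $U_n$ as factors, so by the minimality of $Z$ in the seed order among ultrafilters with infinitely many factors, this join must coincide with $Z$. Taking $B=A$ gives $\bigvee_m F_m=Z$, so the sequence $\langle F_m:m<\omega\rangle$ does not stabilize, producing an infinite strictly $<_{\mathcal D}$-increasing chain bounded by $Z$. This contradicts \cref{acc} and finishes the proof. The main obstacle I expect is the bookkeeping that shows $F_m\lneq_{\mathcal D} Z$: one must exploit the finiteness of seeds of uniform ultrafilters together with the pigeonhole stabilization to ensure that the generators $u_m$ of $\text{ran}(i)$ remain strictly lexicographically below the minimum seed $b$ of $Z$, and this is what essentially forces us to take seeds to be finite sequences rather than single ordinals.
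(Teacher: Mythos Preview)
Your proposal is correct and follows essentially the same approach as the paper: the same minimal counterexample $Z$, the same pigeonhole stabilization on $\xi=\max(b\setminus j_{W_n}(a_n))$, the same finite joins $F_m=\bigvee_{n\in A\cap m}U_n$, the same argument that $u_m<b$ forces $F_m<_{\mathcal D}Z$, and the same appeal to \cref{acc}. Even your closing remark about why seeds must be finite sequences rather than single ordinals matches the paper's own comment.
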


We note that \(M \leq_\mathcal D N\) implies that \(N \subseteq M\). It is natural to ask whether \(\leq_\mathcal D\) is the reverse inclusion order. This turns out to be true. We use the following general fact.

\begin{lma}\label{defnfactor}
Suppose \(N\) is an inner model of \(V\). Suppose \(U_0\in V\) and \(U_1\in N\) are such that \(\textnormal{Ult}(V,U_0) = \textnormal{Ult}(N,U_1)\). Then \(j_{U_0}[\textnormal{Ord}]\subseteq j_{U_1}[\textnormal{Ord}]\).
\begin{proof}
Note that such an inner model \(N\) is definable from the parameters \(U_0\) and \(U_1\). This is because for any regular cardinal \(\delta > \textsc{sp}(U_1)\) and any transitive set \(S\) satisfying \(\text{ZFC}^-\) and containing \(U_1\), \(S = N \cap H_\delta\) if and only if \(\textnormal{Ult}(S,U_1) = (H_{\delta^*})^{\textnormal{Ult}(V,U_0)}\) where \(\delta^* =\sup j^S_{U_1}[\delta]\) and the ultrapower is computed using functions in \(S\). This is easily proved by induction on cardinals.

Assume towards a contradiction that \cref{defnfactor} fails. Let \(\alpha\) be the least ordinal such that for some ultrafilter \(U_0\) and set \(U_1\), the definition of \(N\) from \(U_0\) and \(U_1\) as indicated above succeeds in defining an inner model \(N\) of \(V\) relative to which \(U_1\) is an ultrafilter and such that \(\textnormal{Ult}(V,U_0) = \textnormal{Ult}(N,U_1)\) but \(j_{U_0}(\alpha) \notin j_{U_1}[N]\). Then \(\alpha\) is definable without parameters, and so \(j_{U_0}(\alpha)\) is definable without parameters in \(\textnormal{Ult}(V,U_0)\). Since \(j_{U_1}[N]\prec \textnormal{Ult}(V,U_0)\), \(\alpha\in j_{U_1}[N]\), which contradicts the definition of \(\alpha\).
\end{proof}
\end{lma}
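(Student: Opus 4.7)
The plan is to reduce the lemma to a definability claim for $N$ and then to apply the same trick as in the proof of \cref{defemb}: among all counterexamples, the smallest one will be parameter-free definable, and hence preserved under any elementary embedding into the common target model.

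Assume for the moment that $N$ is uniformly definable in $V$ from the parameters $U_0$ and $U_1$. Suppose toward a contradiction that the conclusion fails, and let $\alpha$ be the least ordinal for which $j_{U_0}(\alpha)\notin j_{U_1}[\textnormal{Ord}]$ for some triple $(U_0,U_1,N)$ satisfying the hypotheses. Because $N$ is recoverable from $U_0,U_1$, this failure is first-order over $V$ with parameters $U_0,U_1$, and so $\alpha$ is itself definable in $V$ without parameters as the least such ordinal across all triples. Consequently $j_{U_0}(\alpha)$ is defined in $\textnormal{Ult}(V,U_0)=\textnormal{Ult}(N,U_1)$ by the corresponding parameter-free formula, and hence lies in every elementary submodel of this common model. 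Since $j_{U_1}\colon N\to\textnormal{Ult}(N,U_1)$ is elementary, $j_{U_0}(\alpha)\in j_{U_1}[N]\subseteq j_{U_1}[\textnormal{Ord}]$, contradicting the choice of $\alpha$.

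The hard part is therefore the definability of $N$ from $U_0,U_1$. My approach would be to identify, for each regular cardinal $\delta>|U_1|$, the set $N\cap H_\delta$ as the unique transitive $S$ of cardinality less than $\delta$ that models $\textnormal{ZFC}^-$, contains $U_1$, and satisfies
\[\textnormal{Ult}(S,U_1)=(H_{\delta^*})^{\textnormal{Ult}(V,U_0)},\qquad \delta^*=\sup j^S_{U_1}[\delta],\]
where the ultrapower on the left is formed using only functions in $S$. The forward direction, that $S=N\cap H_\delta$ works, is a routine local computation of the rank-initial segments of $\textnormal{Ult}(N,U_1)$. The content is in the uniqueness direction, which I would prove by induction on $\delta$: a competing $S'$ would yield the same ultrapower initial segment on the right-hand side and so, applying the inductive hypothesis at each smaller cardinal, would agree with $N\cap H_\delta$ everywhere below $\delta$, forcing $S'=N\cap H_\delta$. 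This inductive uniqueness step is where I expect the real technical work; the parameter-free-definability argument above is then essentially boilerplate.
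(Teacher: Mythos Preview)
Your proposal is correct and follows essentially the same approach as the paper: first show that $N$ is uniformly definable from $U_0$ and $U_1$ via the characterization $S=N\cap H_\delta \iff \textnormal{Ult}(S,U_1)=(H_{\delta^*})^{\textnormal{Ult}(V,U_0)}$, then run the minimal-counterexample argument of \cref{defemb} to get a parameter-free definable $\alpha$ whose image must lie in the elementary submodel $j_{U_1}[N]$. The paper in fact dispatches the uniqueness direction of the characterization with the single phrase ``easily proved by induction on cardinals,'' so your instinct that this is the only point requiring real care is right, though the paper evidently regards it as routine.
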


It is not clear if it is necessary that \(j_{U_0}[V]\subseteq j_{U_1}[N]\) under the assumption of the Ultrapower Axiom. (Without the Ultrapower Axiom it is consistent with ZFC that \(j_{U_0}[V]\subseteq j_{U_1}[N]\) fail in the special case \(N = V\), but \cref{everythingcommutes} rules out this particular failure in the context of the Ultrapower Axiom.)

\begin{thm}[Ultrapower Axiom]
The factor ordering \(\leq_\mathcal D\) is the same as the order of reverse inclusion on \(\mathcal D\).
\begin{proof}
We must show that if \(N\subseteq M\) and \(N,M\in \mathcal D\), then \(M \leq_\mathcal D N\). Let \(P = N\vee M\) and let \(k_N:N\to P\) and \(k_M:M\to P\). By \cref{defnfactor} applied to \(M\), \(k_M[\text{Ord}]\subseteq k_N[\text{Ord}]\). In particular, if \(a_M\) is the seed of \(M\), then \(k_M(a_M) \in \text{ran}(k_N)\). Since \(k_N\) comes from the canonical comparison it follows that \(k_N\) is the identity and \(N = P\). Hence \(k_M: M \to N\) witnesses that \(M\leq_\mathcal D N\).
\end{proof}
\end{thm}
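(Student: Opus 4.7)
The plan is to establish the nontrivial direction: if \(N \subseteq M\) with \(N, M \in \mathcal D\), then \(M \leq_\mathcal D N\), i.e.\ there is an internal ultrapower embedding from \(M\) to \(N\). First I would form \(P = M \vee N\), the common model of the canonical comparison of \(M\) and \(N\), producing internal ultrapower embeddings \(k_M : M \to P\) and \(k_N : N \to P\) arising from ultrafilters \(W_M \in M\) and \(W_N \in N\) respectively; so \(P = \textnormal{Ult}(M,W_M) = \textnormal{Ult}(N,W_N)\).

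The key step is to apply \cref{defnfactor} from the internal perspective of \(M\) rather than of \(V\). Since \(N \subseteq M\), \(N\) is an inner model of \(M\); and since \(W_M \in M\) and \(W_N \in N\) satisfy \(\textnormal{Ult}(M,W_M) = \textnormal{Ult}(N,W_N)\), the lemma (a ZFC result, applied inside \(M\)) yields
\[ k_M[\textnormal{Ord}] \;=\; j_{W_M}^M[\textnormal{Ord}] \;\subseteq\; j_{W_N}^N[\textnormal{Ord}] \;=\; k_N[\textnormal{Ord}].\]

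To finish I mimic the argument at the end of \cref{joins}. Let \(U_M\) be the unique uniform ultrafilter of \(V\) with \(\textnormal{Ult}(V,U_M) = M\) (using \cref{everythingcommutes}), and let \(a_M = [\textnormal{id}]_{U_M}\) be its seed. Then \(k_M(a_M)\) is a finite tuple of ordinals in \(P\), each of whose entries lies in \(k_N[\textnormal{Ord}]\); hence \(k_M(a_M) \in \textnormal{ran}(k_N)\). Since \(\langle W_M,W_N\rangle\) is the canonical comparison of \(\langle U_M,U_N\rangle\), the ultrafilter \(W_N\) is derived from \(k_N\) using \(k_M(a_M)\). Because that seed lies in the range of \(k_N\), \(W_N\) must be principal, so \(k_N\) is the identity and \(P = N\). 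Then \(k_M : M \to N\) is the desired internal ultrapower embedding witnessing \(M \leq_\mathcal D N\).

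The main obstacle is not computational but conceptual: one has to realize that \cref{defnfactor} is really a general ZFC statement about an inner-model pair, and that the correct instantiation here has \(M\) playing the role of the ambient universe with \(N\) as its inner model, rather than instantiating against \(V\) directly. Once this relativization is recognized, the coordinate-wise inclusion \(k_M[\textnormal{Ord}] \subseteq k_N[\textnormal{Ord}]\) of the two factor embeddings of the canonical comparison reduces the theorem to the canonicity bookkeeping already developed.
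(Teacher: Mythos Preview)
Your proposal is correct and follows essentially the same route as the paper: form the canonical comparison, relativize \cref{defnfactor} to \(M\) to obtain \(k_M[\text{Ord}]\subseteq k_N[\text{Ord}]\), and then use canonicity to conclude \(k_N\) is the identity. One small inaccuracy: \cref{everythingcommutes} gives a unique ultrapower \emph{embedding}, not a unique uniform ultrafilter, so you should simply fix any uniform \(U_M\) with \(\textnormal{Ult}(V,U_M)=M\); the argument is unaffected.
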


Many of the structural consequences of the Ultrapower Axiom for \(\mathcal D\) conceivably could be consequences of ZFC alone. For example, the question of whether an ultrafilter has only finitely many factors has been open for about 40 years. It is probably best to be clear about exactly what assumptions beyond ZFC we need to prove these consequences. Of course, the Ultrapower Axiom suffices. But let us state some weakened forms that work just as well. We will try to be brief.

\begin{defn}
The {\it Product Lemma} is the statement that \((U,W)\sE (U,W')\) implies \(\textnormal{Ult}(V,U) \vDash W \sE W'\).
\end{defn}

The converse of the Product Lemma is provable in ZFC:

\begin{lma}\label{productlemma}
Assume \(U\) is a uniform ultrafilter and \(W,W'\) are uniform ultrafilters of \(\textnormal{Ult}(V,U)\). An \(\textnormal{Ult}(V,U)\)-lopsided comparison \(\langle k,k'\rangle\) witnessing \(W\sE W'\) in \(\textnormal{Ult}(V,U)\) is a lopsided comparison of \(\langle(U,W), (U',W)\rangle\) witnessing \((U,W)\sE (U',W)\).
\end{lma}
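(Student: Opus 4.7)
The plan is to show that the same pair $\langle k,k'\rangle$, once re-read as embeddings whose sources are the iterated ultrapowers $\textnormal{Ult}(V,(U,W))$ and $\textnormal{Ult}(V,(U,W'))$, automatically satisfies all the requirements of an external comparison of $\langle(U,W),(U,W')\rangle$ witnessing $(U,W)\sE(U,W')$. The argument is entirely formal, with three items to check: the source models match, commutativity lifts, and the strict seed inequality lifts.

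By \cref{iterating} and the remark following it, $\textnormal{Ult}(V,(U,W))=\textnormal{Ult}(\textnormal{Ult}(V,U),W)$ and $j_{(U,W)}=j_W\circ j_U$, and analogously for $W'$. So $k$ and $k'$ already live over the right source models, and the lopsided (semi-internal) status of $\langle k,k'\rangle$ transports unchanged, since being internal to its source depends only on the source model itself. Commutativity of the lifted pair is immediate:
\[
k\circ j_{(U,W)} = k\circ j_W\circ j_U = k'\circ j_{W'}\circ j_U = k'\circ j_{(U,W')},
\]
where the middle equality uses the commutativity hypothesis $k\circ j_W=k'\circ j_{W'}$ built into the fact that $\langle k,k'\rangle$ is a comparison of $\langle W,W'\rangle$ in $\textnormal{Ult}(V,U)$.

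The remaining task is the seed inequality. By the remark immediately after \cref{iterating}, $[\textnormal{id}]_{(U,W)}$ is identified with the finite set of ordinals $j_W([\textnormal{id}]_U)\cup[\textnormal{id}]_W$ (arranged as a descending sequence), and similarly $[\textnormal{id}]_{(U,W')}=j_{W'}([\textnormal{id}]_U)\cup[\textnormal{id}]_{W'}$. Applying the two legs of the comparison,
\[
k([\textnormal{id}]_{(U,W)}) = k(j_W([\textnormal{id}]_U))\cup k([\textnormal{id}]_W),\quad k'([\textnormal{id}]_{(U,W')}) = k'(j_{W'}([\textnormal{id}]_U))\cup k'([\textnormal{id}]_{W'}).
\]
By commutativity the two ``$U$-parts'' coincide, giving a common finite set $b := k(j_W([\textnormal{id}]_U)) = k'(j_{W'}([\textnormal{id}]_U))$, and the comparison reduces to comparing $b\cup k([\textnormal{id}]_W)$ with $b\cup k'([\textnormal{id}]_{W'})$ in the canonical wellorder of $[\textnormal{Ord}]^{<\omega}$ (the order set up before \cref{seed}).

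The only genuinely subtle point, and what I expect to be the main (mild) obstacle, is making sure that adding the common set $b$ to both sides leaves the strict inequality untouched: concretely, one must check that the largest element of the symmetric difference of the two enriched seeds still lies among the $W$-parts. This is a one-line inspection of the lexicographic definition, since the symmetric difference of $b\cup k([\textnormal{id}]_W)$ and $b\cup k'([\textnormal{id}]_{W'})$ is contained in $k([\textnormal{id}]_W)\triangle k'([\textnormal{id}]_{W'})$ and hence the hypothesis $k([\textnormal{id}]_W) < k'([\textnormal{id}]_{W'})$ promotes directly to $k([\textnormal{id}]_{(U,W)}) < k'([\textnormal{id}]_{(U,W')})$, completing the proof.
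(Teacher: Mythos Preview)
The paper omits the proof of this lemma, so there is no approach to compare against. Your overall plan is the right one: the source models, the $1$-internality of $k'$, and the commutativity $k\circ j_{(U,W)}=k'\circ j_{(U,W')}$ all transfer for exactly the reasons you give.

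The gap is in the final step. Your claim that $c<c'$ implies $b\cup c<b\cup c'$ because $(b\cup c)\triangle(b\cup c')\subseteq c\triangle c'$ is false in general: the containment is correct, but the largest element of $c\triangle c'$ may lie in $b$, in which case it drops out of the new symmetric difference and a smaller element on the wrong side can become decisive. Concretely, with $b=\{5\}$, $c=\{3\}$, $c'=\{5,2\}$ one has $c<c'$ but $b\cup c=\{5,3\}>\{5,2\}=b\cup c'$. So the ``one-line inspection'' does not go through as written.

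The repair is to stop treating the seed of $(U,W)$ as the \emph{union} $j_W([\text{id}]_U)\cup[\text{id}]_W$ and instead keep it as the \emph{pair} $(j_W([\text{id}]_U),[\text{id}]_W)$, which is what the remark after \cref{iterating} actually gives. Under the natural lexicographic order on such pairs (compare the $U$-component first, then the $W$-component), commutativity makes the $U$-components of $k([\text{id}]_{(U,W)})$ and $k'([\text{id}]_{(U,W')})$ equal, and the hypothesis $k([\text{id}]_W)<k'([\text{id}]_{W'})$ then decides the comparison immediately. This is presumably what the paper intends; note that $(U,W)$ as defined in \cref{iterating} is not literally a uniform ultrafilter on some $[\kappa]^n$, so the order being applied to its seed is already a mild extension of the one set up before \cref{seed}.
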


\begin{cor}
The converse of the Product Lemma is true.
\end{cor}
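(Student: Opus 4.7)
The corollary is essentially a one-line application of Lemma \ref{productlemma}, so the proof plan is quite short. The plan is to unpack the hypothesis of the converse using the definition of the strict $E$-order, then appeal directly to the preceding lemma.

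First, I would suppose the hypothesis of the converse: $\textnormal{Ult}(V,U) \vDash W \sE W'$. By the definition of $\sE$ applied inside $\textnormal{Ult}(V,U)$, this unpacks to the existence of an $\textnormal{Ult}(V,U)$-lopsided comparison $\langle k,k'\rangle$ of $\langle W, W'\rangle$ such that $k([\text{id}]_W) < k'([\text{id}]_{W'})$. Then I would invoke Lemma \ref{productlemma} verbatim: this same pair $\langle k, k'\rangle$ is simultaneously a lopsided comparison of $\langle (U,W),(U,W')\rangle$ witnessing $(U,W) \sE (U,W')$. This gives the desired conclusion, so the corollary follows.

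There is essentially no nontrivial step once Lemma \ref{productlemma} is in hand, which is why the paper records this as a corollary rather than a theorem. If I were being careful, the one thing worth a brief remark is that the strict inequality of seeds transfers: recall from the discussion after \cref{iterating} that the seed of $(U,W)$ is the union $j_W([\text{id}]_U) \cup [\text{id}]_W$ (and symmetrically for $(U,W')$). Because the two composite seeds share the common initial component arising from $U$, and the comparison embeddings $k$ and $k'$ agree on $\text{ran}(j_W \circ j_U) = \text{ran}(j_{W'} \circ j_U)$ by the commutativity requirement, the strict ordering $k([\text{id}]_W) < k'([\text{id}]_{W'})$ of the $W$- and $W'$-components transfers directly to a strict ordering of the composite seeds under the canonical wellorder on $[\text{Ord}]^{<\omega}$. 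So no new argument beyond the lemma is required; the only "hard" part is already contained in Lemma \ref{productlemma} itself.
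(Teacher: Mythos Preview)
Your proposal is correct and takes the only reasonable approach: the corollary is an immediate consequence of Lemma~\ref{productlemma}, and the paper itself simply writes ``We omit the proofs,'' so there is nothing to compare against. Your optional remark about why the seed inequality transfers is accurate in spirit (the key point being $k\circ j_W = k'\circ j_{W'}$ on all of $\textnormal{Ult}(V,U)$, hence $k(j_W([\text{id}]_U)) = k'(j_{W'}([\text{id}]_U))$), though strictly speaking it belongs to the omitted proof of the lemma rather than to the corollary.
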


We omit the proofs.

\begin{prp}
The Ultrapower Axiom implies the Product Lemma.
\begin{proof}
Suppose \((U,W)\sE (U,W')\). By the Ultrapower Axiom, \((U,W)\swo (U,W')\). In particular, \(W\neq W'\). Since the Ultrapower Axiom holds in \(\textnormal{Ult}(V,U)\), either \(\textnormal{Ult}(V,U) \vDash W \swo W'\) or \(\textnormal{Ult}(V,U) \vDash W' \swo W\). The latter cannot hold by \cref{productlemma} since it implies \((U,W')\swo (U,W)\), contradicting \cref{welldefined}.
\end{proof}
\end{prp}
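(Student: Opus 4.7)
The plan is to reduce the Product Lemma, via the usual trichotomy/antisymmetry for the seed order, to the ZFC-provable converse recorded in \cref{productlemma}. Two facts make this almost automatic: first, \cref{Eextend} tells us that under the Ultrapower Axiom the \(E\)-order coincides with the seed order; second, the Ultrapower Axiom is a first-order statement, so by elementarity of \(j_U\) it is inherited by every ultrapower \(\textnormal{Ult}(V,U)\) of \(V\).

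First I would unpack the hypothesis. Suppose \((U,W)\sE(U,W')\). By \cref{Eextend} this is the same as \((U,W)\swo(U,W')\), and in particular the two ultrafilters are distinct, which forces \(W\neq W'\) as uniform ultrafilters of \(\textnormal{Ult}(V,U)\). Next, since \(\textnormal{Ult}(V,U)\) also satisfies the Ultrapower Axiom, its seed order is a wellorder (by \cref{transitive}, \cref{antisymmetric}, and \cref{wellfounded} applied inside the ultrapower), so it totally compares \(W\) and \(W'\). Together with \(W\neq W'\), this yields a strict dichotomy: either \(\textnormal{Ult}(V,U)\vDash W\swo W'\) or \(\textnormal{Ult}(V,U)\vDash W'\swo W\).

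The first alternative is exactly the desired conclusion, since \(\textnormal{Ult}(V,U)\) itself satisfies the Ultrapower Axiom and hence \(\swo\) and \(\sE\) agree there as well. To rule out the second alternative, I would apply \cref{productlemma} (the ZFC-provable converse of the Product Lemma) to a witness of \(W'\swo W\) inside \(\textnormal{Ult}(V,U)\); this yields \((U,W')\swo(U,W)\) in \(V\). But this contradicts the standing assumption \((U,W)\swo(U,W')\) by the antisymmetry of the seed order (\cref{antisymmetric}), completing the proof.

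There is no genuine obstacle here; the only conceptual ingredient is the observation that the Ultrapower Axiom is inherited by its own ultrapowers, after which the easy ZFC direction \cref{productlemma} together with antisymmetry does all the work. If anything, the slightly delicate point to double-check is just that the witness of \(W'\swo W\) supplied inside \(\textnormal{Ult}(V,U)\) can be unpacked as a comparison of \(\langle(U,W'),(U,W)\rangle\) in \(V\), which is precisely the content of \cref{productlemma}.
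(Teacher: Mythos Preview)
Your proof is correct and follows essentially the same route as the paper's: convert \(\sE\) to \(\swo\) via \cref{Eextend}, use the Ultrapower Axiom inside \(\textnormal{Ult}(V,U)\) to get a trichotomy for \(W\) and \(W'\), and eliminate the wrong branch with \cref{productlemma}. The only cosmetic difference is that the paper phrases the final contradiction via \cref{welldefined} while you invoke \cref{antisymmetric}; these are interchangeable here.
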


\begin{defn}
The {\it Factor Lemma} is the statement that if \(U\) is a nonprincipal uniform ultrafilter and \(W\) is a uniform ultrafilter of \(\textnormal{Ult}(V,U)\) then letting \(Z = (U,W)\), \(\textnormal{Ult}(V,U)\vDash W \sE j_U(Z)\).
\end{defn}

The Factor Lemma is essentially the statement that \cref{decomplemma} holds, weakening its conclusion by replacing the seed order with the \(E\)-order.

\begin{prp}
The Product Lemma implies the Factor Lemma.
\begin{proof}
Since \((U,W) = Z\) and \(Z \sE U\times Z\), \((U,W) \sE U\times Z = (U,j_U(Z))\). By the Product Lemma, \(W \sE j_U(Z)\).
\end{proof}
\end{prp}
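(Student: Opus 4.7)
The plan is to derive the Factor Lemma from the Product Lemma by an application with $W' = j_U(Z)$. Since $Z = (U,W)$ and $U \times Z = (U, j_U(Z))$, the Product Lemma turns any strict $E$-inequality $(U,W) \sE (U, j_U(Z))$ into $\textnormal{Ult}(V,U) \models W \sE j_U(Z)$, which is precisely the conclusion of the Factor Lemma. So the entire argument reduces to the ZFC fact $Z \sE U \times Z$.

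To establish $Z \sE U \times Z$ in ZFC, I would write down an explicit $1$-internal semi-comparison. Using the standard identifications $\textnormal{Ult}(V, U \times Z) = j_U(\textnormal{Ult}(V,Z))$ and $j_{U \times Z} = j_U \circ j_Z$, let $k_0 := j_U \restriction \textnormal{Ult}(V,Z)$, which is an external ultrapower embedding of $\textnormal{Ult}(V,Z)$ into $\textnormal{Ult}(V, U \times Z)$, and let $k_1$ be the identity on $\textnormal{Ult}(V, U \times Z)$, a principal (hence $1$-internal) ultrapower embedding. The commutativity requirement $k_0 \circ j_Z = k_1 \circ j_{U \times Z}$ is immediate from $j_{U \times Z} = j_U \circ j_Z$. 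It then remains to verify the strict seed inequality $j_U([\textnormal{id}]_Z) < [\textnormal{id}]_{U \times Z}$, which, by definition of the strict $E$-order, gives $Z \sE U \times Z$.

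For the seed inequality, observe that $j_U([\textnormal{id}]_Z)$, viewed as a finite set of ordinals in $\textnormal{Ult}(V,U\times Z)$, already generates $\textnormal{Ult}(V,U\times Z)$ over $j_{U\times Z}[V]$; however, a straightforward calculation shows that the uniform ultrafilter it derives from $j_{U\times Z}$ is $Z$ itself, not $U \times Z$. Since $U$ is nonprincipal, $Z \not\equiv_{\textnormal{RK}} U \times Z$, so in particular $j_U([\textnormal{id}]_Z) \neq [\textnormal{id}]_{U \times Z}$. Unwinding \cref{iterating} shows that $[\textnormal{id}]_{U \times Z}$ agrees with $j_U([\textnormal{id}]_Z)$ on its top portion but is augmented by ordinals coming from $[\textnormal{id}]_U$, which lie below the critical point of $j_{j_U(Z)}$ and hence below every ordinal in $j_U([\textnormal{id}]_Z)$. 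By the lexicographic rule defining the canonical order on $[\textnormal{Ord}]^{<\omega}$, appending ordinals strictly below every element of a finite set makes it strictly larger, yielding $j_U([\textnormal{id}]_Z) < [\textnormal{id}]_{U \times Z}$.

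The main obstacle is the bookkeeping for this seed inequality: one must verify how the seed of the Fubini-like product of two ultrafilters decomposes, and confirm that the extra contribution from $[\textnormal{id}]_U$ genuinely falls below $j_U([\textnormal{id}]_Z)$ in every case. Concrete examples (such as $U$ normal on $\kappa$ with $Z = U$, where $j_U([\textnormal{id}]_Z) = \{j_U(\kappa)\} < \{\kappa, j_U(\kappa)\} = [\textnormal{id}]_{U \times U}$) render this transparent, but a uniform treatment requires care when the critical points of $U$ and $Z$ interact. Once the seed inequality is settled, a single application of the Product Lemma completes the proof.
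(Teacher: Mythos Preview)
Your approach is exactly the paper's: reduce to $Z \sE U\times Z$ and apply the Product Lemma. Your choice of witness---the pair $\langle j_U\restriction \textnormal{Ult}(V,Z),\ \textnormal{id}\rangle$---is the right $1$-internal comparison of $\langle Z, U\times Z\rangle$, and the commutativity check is correct.

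The detailed justification of the seed inequality, however, contains two errors. First, it is \emph{not} true that $j_U([\textnormal{id}]_Z)$ generates $\textnormal{Ult}(V,U\times Z)$ over $j_{U\times Z}[V]$; it generates only $j_U[\textnormal{Ult}(V,Z)]$, a proper elementary substructure (this is precisely why the derived ultrafilter is $Z$ and not $U\times Z$). Second, the claim that the extra ordinals $j_{j_U(Z)}([\textnormal{id}]_U)$ lie below the critical point of $j_{j_U(Z)}$, and hence below every element of $j_U([\textnormal{id}]_Z)$, fails in general: $\textnormal{crt}(j_{j_U(Z)}) = j_U(\textnormal{crt}(j_Z)) = j_U(\textnormal{crt}(j_U))$, while $[\textnormal{id}]_U$ is only bounded by $j_U(\textsc{sp}(U))$, and there is no reason $\textsc{sp}(U)\leq \textnormal{crt}(j_U)$.

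The correct argument is simpler than the one you attempt. By the convention following \cref{iterating}, $[\textnormal{id}]_{U\times Z} = j_{j_U(Z)}([\textnormal{id}]_U)\cup j_U([\textnormal{id}]_Z)$, so $j_U([\textnormal{id}]_Z)\subseteq [\textnormal{id}]_{U\times Z}$. The inclusion is proper, since equality would mean both seeds derive the same uniform ultrafilter from $j_{U\times Z}$, forcing $Z = U\times Z$, which contradicts $U$ nonprincipal. But in the canonical order on $[\textnormal{Ord}]^{<\omega}$, any proper subset is strictly below its superset: the symmetric difference lies entirely in the larger set. Hence $j_U([\textnormal{id}]_Z) < [\textnormal{id}]_{U\times Z}$, and you are done.
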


\begin{defn}
The {\it ACC Lemma} is the statement that \(\mathcal D\) satisfies the ascending chain condition below a point.
\end{defn}

\begin{prp}
The Factor Lemma implies the ACC Lemma.
\begin{proof}
The proof is exactly the same as \cref{acc}.
\end{proof}
\end{prp}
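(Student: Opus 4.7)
The plan is to follow the proof of \cref{acc} essentially verbatim, substituting the Factor Lemma for \cref{decomplemma} and the \(E\)-order for the seed order. I proceed by contradiction: suppose \(M_0 <_\mathcal D M_1 <_\mathcal D \cdots\) is a strictly increasing chain in \(\mathcal D\) bounded above by some \(N \in \mathcal D\). For each \(i\), fix \(U_i \in M_i\) with \(M_{i+1} = \textnormal{Ult}(M_i, U_i)\), along with a coherent sequence \(\langle W_i : i < \omega\rangle\), where \(W_i \in M_i\) satisfies \(\textnormal{Ult}(M_i, W_i) = N\) and \(W_i = (U_i, W_{i+1})\) in \(M_i\) (in the sense of \cref{iterating}). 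The existence of such a coherent sequence replaces the silent appeal to uniqueness of ultrapower embeddings (\cref{everythingcommutes}) in \cref{acc}: here I would arrange a compatible family of embeddings \(k_i : M_i \to N\) satisfying \(k_{i+1} \circ j_{U_i} = k_i\), and read off the \(W_i\) as the ultrafilters of the \(k_i\).

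Apply the Factor Lemma inside each \(M_i\). Since the lemma is a first-order \(\Pi\)-statement about countably complete ultrafilters, it descends from \(V\) to \(M_i\) by elementarity of the ultrapower embedding \(V \to M_i\). Applied with parameters \(U_i\) and \(W_{i+1}\), together with the coherence \(W_i = (U_i, W_{i+1})\), this gives
\[
M_{i+1} \vDash W_{i+1} \sE j_{U_i}(W_i) \qquad \text{for every } i < \omega.
\]

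Finally, form the direct limit \(M_\omega\) of the internal linear iteration \(V \to M_0 \to M_1 \to \cdots\), with limit embeddings \(j_{i, \omega}: M_i \to M_\omega\). By the standard wellfoundedness of internal iterated ultrapowers of countably complete ultrafilters (as invoked at the end of \cref{wellfounded}), \(M_\omega\) is wellfounded, and applying \cref{Ewellfounded} inside \(M_\omega\) by elementarity, the \(E\)-order of \(M_\omega\) admits an \(M_\omega\)-ordinal-valued rank function \(|\cdot|_E^{M_\omega}\). Applying \(j_{i+1, \omega}\) to the displayed inequality and using \(j_{i, \omega} = j_{i+1, \omega} \circ j_{U_i}\), the sequence \(\rho_i := |j_{i, \omega}(W_i)|_E^{M_\omega}\) is a strictly descending sequence of \(M_\omega\)-ordinals. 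Since \(M_\omega\) is wellfounded, these are genuine ordinals, which yields the desired contradiction.

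The main obstacle is the opening coherence step: under the Ultrapower Axiom the sequence \(\langle W_i \rangle\) is forced by \cref{everythingcommutes}, but under the Factor Lemma alone it must be assembled by hand via the compatible family of \(k_i\)'s. Everything else is a direct transcription of \cref{acc}, with the Factor Lemma and \cref{Ewellfounded} taking the roles of \cref{decomplemma} and \cref{wellfounded} respectively.
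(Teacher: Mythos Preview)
Your argument is exactly the adaptation of \cref{acc} that the paper has in mind: replace \cref{decomplemma} by the Factor Lemma and the seed order by the \(E\)-order, and then invoke \cref{Ewellfounded} in the direct limit \(M_\omega\). You go beyond the paper's one-line proof by isolating the coherence of the \(\langle W_i\rangle\) (automatic under UA via \cref{everythingcommutes}) as the one point where the transcription is not entirely mechanical.
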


\begin{defn}
The {\it Local Finiteness Theorem} is the statement that a countably complete ultrafilter has only finitely many factors.
\end{defn}

\begin{defn}
The {\it Upper Bounds Lemma} is the statement that if two ultrapowers \(M_0\) and \(M_1\) have an upper bound \(N\) in \(\mathcal D\), then they have an upper bound \(P\leq N\) that is least among all upper bounds of \(M_0\) and \(M_1\) below \(N\).
\end{defn}

\begin{prp}
The ACC Lemma and the Upper Bounds Lemma imply the Local Finiteness Theorem.
\begin{proof}
This follows from the proof of \cref{locallyfinite}.
\end{proof}
\end{prp}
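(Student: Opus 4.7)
The plan is to mimic the proof of \cref{locallyfinite}, supplying each appeal to the Ultrapower Axiom there by the ACC Lemma, the Upper Bounds Lemma, or ZFC. That proof invokes UA in three essentially distinct ways: to minimize over counterexamples in the seed order, to form the joins $F_m$ via canonical comparisons, and to run the seed argument that delivers $F_m <_{\mathcal{D}} Z$ strictly.

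To replace the $\wo$-minimization, I would lex-minimize on minimum seeds. The lexicographic order on $[\textnormal{Ord}]^{<\omega}$ is a ZFC wellorder, so I can choose a counterexample $Z$ whose minimum seed $b = [\textnormal{id}]_Z$ is lex-least. Next, enumerate infinitely many distinct minimal factors $\{U_n : n < \omega\}$ of $Z$, fix $W_n$ with $Z = (U_n, W_n)$, and set $a_n = [\textnormal{id}]_{U_n}$. The inequality $j_{W_n}(a_n) < b$ required from \cref{factorlemma} is in fact proved there in ZFC: the argument showing $k(a_0) \leq a_1$ uses only minimality of $U_0$ and the commutativity $k \circ j_{U_0} = j_{U_1}$, while strictness in the present application follows from $U_n \neq Z$ via the Rigid Ultrapowers Lemma. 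A pigeonhole over the finite set $b$ then yields an infinite $A \subseteq \omega$ on which $\xi_n = \max(b \setminus j_{W_n}(a_n))$ takes a constant value $\xi$.

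To replace the canonical comparison joins, I would apply the Upper Bounds Lemma iteratively to form $F_m = \bigvee_{n \in A \cap m} U_n$ as the least upper bound below $Z$. The crucial canonicity property---that $\textnormal{Ult}(V,F_m)$ is generated over $j_{F_m}[V]$ by the union of image seeds $\bigcup_{n \in A \cap m} i_n(a_n)$---should follow from least-upper-bound-ness itself, since any proper submodel generated by these seeds already furnishes an ultrapower of $V$ with internal embeddings from each $\textnormal{Ult}(V,U_n)$, and hence a strictly smaller upper bound would contradict minimality. With this in hand, the seed calculation of \cref{locallyfinite} shows that $u_m = \bigcup_{n \in A \cap m} j_{W_n}(a_n)$ is lex-below $b$, and therefore $F_m <_{\mathcal{D}} Z$.

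Finally, the supremum $F^*$ of the weakly $\leq_{\mathcal{D}}$-increasing chain $\{F_m\}$ has each $U_n$ with $n \in A$ as a factor, and so has infinitely many factors. If $F^* <_{\mathcal{D}} Z$ strictly, then the ZFC seed inequality applied to $F^*$ as a minimal factor of $Z$, combined with the monotonicity of ultrapower embeddings on ordinals, forces the minimum seed of $F^*$ to be lex-below $b$, contradicting the minimality of $b$. Hence $F^* = Z$; but the ACC Lemma forces $\{F_m\}$ to stabilize, yielding $F_m = Z$ for some $m$ and contradicting $F_m <_{\mathcal{D}} Z$. The hard part will be the canonicity argument in the previous step: verifying in detail that the abstract least upper bound supplied by the Upper Bounds Lemma carries the generation-by-seeds structure that the canonical comparison enjoys under UA, as the entire seed argument for strict $F_m <_{\mathcal{D}} Z$ rests on this point.
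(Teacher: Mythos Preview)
Your overall strategy is exactly what the paper intends: rerun the proof of \cref{locallyfinite}, replacing the seed-order minimization by lex-minimization on minimum seeds, the canonical-comparison joins by the least upper bounds supplied by the Upper Bounds Lemma, and the termination argument by the ACC Lemma. Your identification of the ``hard part'' is also correct. However, your sketch of that step contains a genuine gap.

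You argue that if $M_{F_m}$ were not generated over $j_{F_m}[V]$ by $\bigcup_{n\in A\cap m} i_n(a_n)$, then the hull $P'$ on these generators would be a strictly smaller upper bound below $Z$, contradicting leastness. Two things go wrong here. First, to invoke the leastness of $F_m$ you need $P'\leq_{\mathcal D} Z$, i.e.\ the factor map $P'\to M_{F_m}$ (or $P'\to M_Z$) must be an \emph{internal} ultrapower embedding of $P'$. But Rudin--Keisler factor maps are not internal in general: if $U$ is the normal ultrafilter derived from a $\mu$-measure $Z$, the factor map $M_U\to M_Z$ cannot be internal since $U\in M_Z\setminus M_U$. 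Your hull argument gives only an elementary factor map, not a $\mathcal D$-morphism. Second, the seed computation you import from \cref{locallyfinite} writes $u_m=\bigcup_{n\in A\cap m} j_{W_n}(a_n)$ and tacitly uses $e\circ i_n=j_{W_n}$ (where $e:M_{F_m}\to M_Z$ is the embedding witnessing $F_m\leq_{\mathcal D} Z$). That equality is exactly \cref{everythingcommutes}, a consequence of the Ultrapower Axiom. Without it, the $W_n$ you fixed before the pigeonhole step need not match the embeddings produced by the abstract least-upper-bound construction, so the pigeonhole on $\xi_n$ and the strict inequality $F_m<_{\mathcal D} Z$ decouple.

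In short, the ACC Lemma and the Upper Bounds Lemma as you use them give existence of least upper bounds below $Z$, but neither the generation-by-seeds structure nor the commutativity of the resulting triangles follows from those bare statements. The paper's one-line proof leaves these details unspoken; filling them in requires either strengthening the Upper Bounds Lemma to deliver the canonical-comparison structure (so that the hull \emph{is} the least upper bound and the factor map to $M_Z$ is internal by construction, as in \cref{closetoultra}), or finding a different route to $F_m<_{\mathcal D} Z$ that does not pass through the seed identity $e\circ i_n=j_{W_n}$.
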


\begin{prp}\label{glbs}
The ACC Lemma and the Upper Bounds Lemma imply that any pair of ultrapowers has a greatest lower bound in \(\mathcal D\).
\begin{proof}
This is a purely order theoretic fact, proved as in \cref{meets}.
\end{proof}
\end{prp}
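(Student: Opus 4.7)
The plan is to mimic the proof of \cref{meets} almost verbatim, substituting the Upper Bounds Lemma for the canonical comparison. I take \(X = \{M_0, M_1\}\) and consider the class \(C\) of common \(\leq_\mathcal D\)-lower bounds of \(M_0\) and \(M_1\); this is nonempty because the trivial ultrapower belongs to it, and the ACC Lemma applied below \(M_0\) furnishes a \(\leq_\mathcal D\)-maximal element of \(C\).

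As in \cref{meets}, it suffices to show that \(C\) is directed under \(\leq_\mathcal D\), for then any maximal element of \(C\) is in fact a maximum, and by construction this maximum is the greatest lower bound of \(M_0\) and \(M_1\). To establish directedness, I take \(N_0, N_1 \in C\) and invoke the Upper Bounds Lemma: both \(M_0\) and \(M_1\) are upper bounds of \(\{N_0, N_1\}\) in \(\mathcal D\), so UBL produces a least upper bound \(Q\) of \(N_0, N_1\) (the analog, in the present setting, of the canonical comparison \(Q\) of \(\langle N_0, N_1\rangle\) used via \cref{joins} in the proof of \cref{meets}). Since \(M_0\) and \(M_1\) are both upper bounds of \(N_0, N_1\) and \(Q\) is least among all such upper bounds, we get \(Q \leq_\mathcal D M_0\) and \(Q \leq_\mathcal D M_1\), so \(Q \in C\) and dominates \(N_0, N_1\), as required for directedness.

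\textbf{Main obstacle.} The only delicate point is the reading of UBL: the argument requires that \(Q\) be least among \emph{all} upper bounds of \(N_0, N_1\) in \(\mathcal D\), not merely among those lying below a particular fixed upper bound. This is the strong reading of UBL (matching how it is used in the proof of the Local Finiteness Theorem, where one identifies a globally least join of finitely many factors of a fixed ultrafilter), and it is what makes the directedness step work. If one only had local-least upper bounds, applying UBL with the two upper bounds \(M_0\) and \(M_1\) could in principle yield incomparable witnesses \(P_0 \leq_\mathcal D M_0\) and \(P_1 \leq_\mathcal D M_1\), with no way to merge them inside \(C\).
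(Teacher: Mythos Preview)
Your approach is exactly the paper's: the proof is sketched there as ``proved as in \cref{meets}'', and you have correctly unpacked what that means---form the class \(C\) of common lower bounds, use the ACC Lemma to extract a maximal element, and use upper bounds (now furnished by UBL rather than by canonical comparisons via \cref{joins}) to establish directedness of \(C\).

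Your ``main obstacle'' is not merely a delicacy but a genuine gap, and it applies equally to the paper's own sketch. With UBL read literally (a least upper bound \emph{among those below a given} \(N\)), the claim fails as a purely order-theoretic statement. A finite counterexample: take \(\bot < a,b < p,q\) with \(a\parallel b\) and \(p\parallel q\); add \(M_0 > p\) and \(M_1 > q\) with \(p \not\leq M_1\), \(q \not\leq M_0\), \(M_0\parallel M_1\). This poset satisfies ACC trivially and satisfies the local UBL (the only interesting check is the pair \(a,b\): its upper bounds below \(M_0\) have least element \(p\), and below \(M_1\) have least element \(q\)), yet the common lower bounds of \(M_0,M_1\) are exactly \(\{\bot,a,b\}\), which has two incomparable maximal elements and hence no maximum. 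So the directedness step really does require the global reading of UBL; since the paper derives the global least-upper-bound property only in the \emph{subsequent} proposition, and does so by invoking the present one, the literal logical order is circular under the weak reading. You are right to flag this.

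One small correction to your parenthetical: the proof of the Local Finiteness Theorem does not actually need global joins. All the joins \(F_m = \bigvee_{n\in A\cap m} U_n\) there are taken among factors of a single fixed \(Z\), so the local UBL below \(Z\) suffices.
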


\begin{prp}
The ACC Lemma and the Upper Bounds Lemma imply that any pair of ultrapowers with an upper bound has a least upper bound.
\begin{proof}
This is a purely order theoretic fact. Suppose \(M_0\) and \(M_1\) have an upper bound \(N\). Let \(P\) be the least upper bound of \(M_0\) and \(M_1\) below \(N\), as given by the Upper Bounds Lemma. Let \(N'\) be any other upper bound of \(M_0\) and \(M_1\). Let \(Q\) be the greatest lower bound of \(P\) and \(N'\). Then \(M_0,M_1 \leq Q\) since \(M_0\) and \(M_1\) are lower bounds of \(P\) and \(N'\). Thus \(Q\) is an upper bound of \(M_0\) and \(M_1\), and so since \(Q\leq P \leq N\) and \(P\) is the least upper bound of \(M_0\) and \(M_1\) below \(N\), \(P \leq Q\). So \(P = Q\). Thus \(P \leq N'\). Since \(N'\) was arbitrary, \(P\) is the least upper bound of \(M_0\) and \(M_1\).
\end{proof}
\end{prp}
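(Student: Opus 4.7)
My plan is to use the existence of greatest lower bounds (\cref{glbs}) together with the Upper Bounds Lemma to promote a least upper bound witnessed locally below some $N$ into a globally least upper bound. Given ultrapowers $M_0$ and $M_1$ with some upper bound $N \in \mathcal D$, I would first apply the Upper Bounds Lemma to extract a least upper bound $P$ of $M_0,M_1$ among upper bounds below $N$. The claim is then that $P$ is a least upper bound of $M_0,M_1$ in $\mathcal D$ with no restriction.

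To verify the claim, I would take an arbitrary upper bound $N'$ of $M_0$ and $M_1$ and show $P \leq N'$. By \cref{glbs}, let $Q$ be the greatest lower bound of the pair $\{P,N'\}$. Since $M_0,M_1 \leq P$ and $M_0,M_1 \leq N'$, both $M_0$ and $M_1$ are lower bounds of $\{P,N'\}$, so the defining property of $Q$ forces $M_0,M_1 \leq Q$. Thus $Q$ is itself an upper bound of $M_0,M_1$; and since $Q \leq P \leq N$, the minimality of $P$ among upper bounds below $N$ yields $P \leq Q$. Combined with $Q \leq P$, this gives $P = Q$, so in particular $P \leq N'$.

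Since $N'$ was arbitrary, $P$ is a global least upper bound of $M_0$ and $M_1$, as desired. The argument is a standard lattice-theoretic trick — the Upper Bounds Lemma supplies a local minimum, and the existence of greatest lower bounds lets us drag any competing upper bound down to become comparable with the local witness so that local minimality applies. No substantial obstacle arises beyond invoking the two stated hypotheses; the only thing to be careful about is that \cref{glbs} genuinely follows from the ACC Lemma and the Upper Bounds Lemma (as asserted), so that the proof relies on nothing outside the advertised inputs.
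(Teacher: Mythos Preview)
Your proof is correct and follows essentially the same route as the paper: apply the Upper Bounds Lemma to get a local least upper bound $P$ below $N$, then use the existence of greatest lower bounds (which follows from the ACC Lemma and Upper Bounds Lemma via \cref{glbs}) to show that $P$ lies below any other upper bound $N'$. The argument and its steps are identical to the paper's.
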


\begin{defn}
The {\it Irreducible Factorization Theorem} is the statement that every countably complete ultrafilter factors as a finite iteration of irreducible ultrafilters.
\end{defn}

\begin{prp}
The ACC Lemma implies the Irreducible Factorization Theorem.
\begin{proof}
Again this is just abstract order theory.
\end{proof}
\end{prp}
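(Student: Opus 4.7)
The plan is to adapt the proof of \cref{decomp}, substituting the ACC Lemma for the seed-order argument based on \cref{decomplemma}. Given a countably complete ultrafilter $Z$, I would recursively construct a factorization by setting $M_0=V$ and $W_0=Z$, and at stage $i$: if $W_i$ is principal in $M_i$, terminate with factorization $(U_0,\dots,U_{i-1})$; otherwise choose $U_i\in M_i$ to be an irreducible factor of $W_i$ in $M_i$, set $M_{i+1}=\textnormal{Ult}(M_i,U_i)$, and let $W_{i+1}\in M_{i+1}$ be the complementary ultrafilter so that $W_i=(U_i,W_{i+1})$ in $M_i$.

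Termination is the direct application of the ACC Lemma: if the process never halted, the sequence $V=M_0<_\mathcal{D}M_1<_\mathcal{D}M_2<_\mathcal{D}\cdots$ would form an infinite $<_\mathcal{D}$-ascending chain uniformly bounded above by $\textnormal{Ult}(V,Z)$, since each $W_i\in M_i$ witnesses an internal ultrapower embedding $M_i\to\textnormal{Ult}(V,Z)$, giving $M_i\leq_\mathcal{D}\textnormal{Ult}(V,Z)$ for every $i$. This contradicts the ACC Lemma, so the construction must terminate at some finite $n$, producing $Z\equiv_\textnormal{RK}(U_0,U_1,\dots,U_{n-1})$.

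The main obstacle, and the entire content of the ``abstract order theory'' step, is the existence of an irreducible factor $U_i$ of $W_i$ in $M_i$ at each nonterminal stage. I would obtain this by picking $U_i$ so that $\textnormal{Ult}(M_i,U_i)$ is a $\leq_\mathcal{D}$-minimal proper successor of $M_i$ among models $N$ with $M_i<_\mathcal{D} N\leq_\mathcal{D}\textnormal{Ult}(V,Z)$ through which $j_{W_i}$ factors; minimality then forces $U_i$ to be irreducible in $M_i$, since an intermediate ultrapower would arise from a nontrivial $M_i$-factorization of $U_i$. The existence of such a minimal $N$ amounts to the wellfoundedness of $\leq_\mathcal{D}$ below $\textnormal{Ult}(V,Z)$, which does not formally follow from ACC in a generic poset; however, I would derive it from the ACC Lemma together with the specific structure of iterated ultrapowers in $\mathcal{D}$, whereby any $\leq_\mathcal{D}$-descending chain in an upward-bounded interval can be unfolded via its associated iterated ultrapower representation into bounded ascending chains of unbounded finite length, which in our setting must give rise to a single bounded infinite ascending chain, contradicting ACC.
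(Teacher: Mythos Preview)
Your overall structure is correct and matches what the paper intends: recursively peel off an irreducible factor, then invoke the ACC Lemma for termination. The gap is precisely where you locate it---the existence of an irreducible factor at each stage---but your proposed fix does not work.

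You suggest that an infinite $\leq_\mathcal D$-descending chain bounded above by $\textnormal{Ult}(V,Z)$ can be ``unfolded'' into a single bounded infinite ascending chain. This fails as pure order theory: take the poset with bottom $0$, top $z$, and elements $a_1>a_2>\cdots$ satisfying $0<a_i<z$ and no other relations. It has ACC below $z$, yet admits no finite maximal chain from $0$ to $z$. The chains $0<a_k<a_{k-1}<\cdots<a_1<z$ have unbounded finite length but cannot be merged into one infinite ascending chain, since their least nontrivial element keeps changing.

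What is actually needed is the descending chain condition for $<_\mathcal D$, and this is a ZFC theorem independent of the ACC Lemma. Suppose $U_0,U_1,U_2,\dots$ are minimal uniform ultrafilters with $U_{i+1}$ a proper factor of $U_i$, and let $k_i:\textnormal{Ult}(V,U_{i+1})\to\textnormal{Ult}(V,U_i)$ be the factor embedding. The argument in the proof of \cref{factorlemma} (which nowhere uses the Ultrapower Axiom) gives $k_i([\text{id}]_{U_{i+1}})\leq[\text{id}]_{U_i}$, and the inequality is strict when $U_{i+1}\neq U_i$, since equality would force $k_i$ to be surjective. Composing, the sequence $b_n=k_0\circ\cdots\circ k_{n-1}([\text{id}]_{U_n})$ is strictly decreasing in the wellorder $[\textnormal{Ord}]^{<\omega}$, a contradiction. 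Hence $<_\mathcal D$ is wellfounded, so at each stage a $\leq_\mathcal D$-minimal proper successor of $M_i$ below $\textnormal{Ult}(V,Z)$ exists, giving you the irreducible $U_i$; your first two paragraphs then finish the proof. With ACC and DCC both available, the existence of a finite maximal chain really is just abstract order theory.
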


\subsection{The Limit of All Ultrapowers}\label{LimitSection}
Throughout this subsection we assume the Ultrapower Axiom. Consider \(\mathcal D\), the category of ultrapowers of \(V\) with morphisms the internal ultrapower embeddings. Assuming the Ultrapower Axiom, this forms a directed system. It is impossible to resist the temptation to take the direct limit.

Let \((N_\infty,E)\) be the direct limit of \(\mathcal D\). (Here \(E\) is the membership relation of \(N_\infty\).) For each \(M_0,M_1\in \mathcal D\), let \(k_{M_0,M_1}\) denote the unique internal ultrapower embedding from \(M_0\) to \(M_1\), if there is one. Also, for \(M\in \mathcal D\) denote by \(k_{M,N_\infty}\) the canonical direct limit embedding.  We denote \(k_{V,N_\infty}\) by \(k_\infty : V \to N_\infty\).

\begin{prp}
The relation \(N_\infty\) is wellfounded, but if there is a supercompact cardinal, then \(N_\infty\) is not setlike.
\begin{proof}
To show that \(N_\infty\) is wellfounded, it suffices to show that there exists no \(E\)-descending sequence \(k_{M_0,N_\infty}(x_0) \mathbin{E} k_{M_1,N_\infty}(x_1) \mathbin{E}  k_{M_2,N_\infty}(x_2)\mathbin{E} \cdots\) where \(E\) is the membership relation of \(N_\infty\). The point is that we may assume that \(M_0 <_\mathcal D M_1 <_\mathcal D M_2 <_\mathcal D \cdots\), by the usual argument, since \(\mathcal D\) is a directed system. But then the internal iteration \[V\stackrel{k_{V,M_0}}{\longrightarrow} M_0\stackrel{k_{M_0,M_1}}{\longrightarrow} M_{1}\stackrel{k_{M_1,M_2}}{\longrightarrow} M_{2}\stackrel{k_{M_2,M_3}}{\longrightarrow}\cdots\]
is illfounded, and this is a contradiction.

To see that if there is a supercompact cardinal then \(N_\infty\) is not setlike, note that for any ordinal \(\alpha\) and any minimal ultrafilter \(U\), \(j_U(\alpha)\) embeds in \(k_\infty(\alpha)\), via the map \(k_{\textnormal{Ult}(M,U),N_\infty}\). If \(\kappa\) is supercompact, then there exist ultrafilters \(U\) sending \(\kappa\) arbitrarily high. Thus arbitrarily large ordinals order embed in \(k_\infty(\kappa)\), so \(E|k_\infty(\kappa)\) does not form a set.
\end{proof}
\end{prp}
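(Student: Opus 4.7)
The plan is to handle the two assertions separately: wellfoundedness will follow from the general fact that linear internal iterated ultrapowers have wellfounded direct limits, while the failure of setlikeness under a supercompact will follow from the unboundedness of images of a supercompact cardinal under countably complete ultrapower embeddings.

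For wellfoundedness, I would argue by contradiction. Suppose $\cdots \mathrel{E} y_2 \mathrel{E} y_1 \mathrel{E} y_0$ is an infinite $E$-descending chain in $N_\infty$. Each $y_n$ is of the form $k_{M_n,N_\infty}(x_n)$ for some $x_n \in M_n \in \mathcal{D}$. The idea is to replace the unstructured collection $\langle M_n : n < \omega\rangle$ by a coherent $\leq_\mathcal{D}$-increasing chain. Using the lattice structure of $\mathcal{D}$ provided by \cref{latticethm}, I would set $N_n = M_0 \vee M_1 \vee \cdots \vee M_n$ and replace $x_n$ by its image in $N_n$ under the canonical internal ultrapower embedding. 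The transition maps $k_{N_n,N_{n+1}}$ are internal ultrapower embeddings of the respective models, so the composite sequence is a bona fide internal linear iterated ultrapower of $V$ of length $\omega$. By Kunen's theorem on wellfoundedness of countable linear iterated ultrapowers, the direct limit $N_\omega$ is wellfounded. But the images of the $x_n$ in $N_\omega$ form a descending $\in$-sequence, contradicting wellfoundedness.

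For the non-setlike conclusion, assume $\kappa$ is supercompact and consider the ordinal $k_\infty(\kappa) \in N_\infty$. For any countably complete uniform ultrafilter $U$ with ultrapower $M_U = \textnormal{Ult}(V,U) \in \mathcal{D}$, we have $k_\infty = k_{M_U,N_\infty} \circ j_U$ by uniqueness of ultrapower embeddings (\cref{everythingcommutes}), so $k_\infty(\kappa) = k_{M_U,N_\infty}(j_U(\kappa))$. Since $k_{M_U,N_\infty}$ is an elementary embedding, the map $\alpha \mapsto k_{M_U,N_\infty}(\alpha)$ order-embeds $j_U(\kappa)$ into the class of ordinals $E$-below $k_\infty(\kappa)$. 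By supercompactness, for each ordinal $\lambda$ there exists a normal fine $\kappa$-complete ultrafilter $\mathcal{U}$ on $\mathcal{P}_\kappa(\lambda)$, and one reduces to a uniform ultrafilter $U$ with $j_U(\kappa) > \lambda$. Thus $k_\infty(\kappa)$ has arbitrarily large $E$-predecessors, so its extension is a proper class and $N_\infty$ is not setlike.

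The main obstacle is the first part: ensuring that the diagonal chain $N_0 \leq_\mathcal{D} N_1 \leq_\mathcal{D} \cdots$ built from joins really produces an internal linear iteration in the sense required by Kunen's wellfoundedness theorem. Concretely, one must verify that $k_{N_n,N_{n+1}}$ is the ultrapower embedding by some countably complete ultrafilter $W_n$ which is an element of $N_n$, and that the coherence of these data is preserved under composition. This follows from the characterization of $\leq_\mathcal{D}$ via internal ultrapower embeddings in \cref{latticethm} together with the commutativity built into canonical comparisons, but it is the step where the lattice theory of $\mathcal{D}$ is genuinely used; once it is set up, Kunen's classical argument applies verbatim.
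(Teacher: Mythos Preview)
Your proposal is correct and follows essentially the same approach as the paper. The paper's proof is terser: for wellfoundedness it simply says that by ``the usual argument'' one may assume the $M_n$ form a $<_\mathcal D$-increasing chain (using only directedness of $\mathcal D$, not the full lattice structure you invoke via \cref{latticethm}), and then the internal iteration along this chain is illfounded; for non-setlikeness it makes exactly your observation that $j_U(\kappa)$ order-embeds into $k_\infty(\kappa)$ via $k_{M_U,N_\infty}$ and that supercompactness makes $j_U(\kappa)$ unbounded.
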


We identify the setlike part of \(N_\infty\) with its transitive collapse. We conjecture that in a fine structure model, the least ordinal \(\alpha\) such that \(k_\infty(\alpha)\) is not a set is precisely the least supercompact cardinal. We present some weak evidence for this conjecture. First we prove the following fact, which is quite obvious, but is what makes \(N\) interesting. 

\begin{prp}\label{definabilityoflimit}
For any \(M\in \mathcal D\), \((N_\infty)^{M} = N_\infty\) and \((k_\infty)^{M} = k_{M,N_\infty}\).
\end{prp}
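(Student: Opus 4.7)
The plan is to reduce the proposition to a statement about coincidence of directed systems: I will argue that the directed system $\mathcal D^M$ that $M$ uses internally to construct $(N_\infty)^M$ coincides with the cofinal subsystem $\mathcal D_{\geq M} = \{N \in \mathcal D : M \leq_\mathcal D N\}$ of the system $\mathcal D$ constructed in $V$. Once this is established, $N_\infty$ and $(N_\infty)^M$ are direct limits of the same directed system computed by the same recipe, and so must coincide, as must the canonical direct limit embeddings out of $M$.

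First I will verify the identification of objects. Fix any $U_0$ with $M = \textnormal{Ult}(V,U_0)$. For one inclusion: if $W \in M$ is a countably complete $M$-ultrafilter (in the sense of $M$), then $(U_0, W)$ is a countably complete $V$-ultrafilter, since $\textnormal{Ult}(V,(U_0,W)) = \textnormal{Ult}(M,W)$ is wellfounded; hence $\textnormal{Ult}(M,W)$ lies in $\mathcal D_{\geq M}$. For the reverse inclusion: given $N \in \mathcal D_{\geq M}$, the unique internal ultrapower embedding $k : M \to N$ afforded by \cref{everythingcommutes} is close to $M$ (any such $k = j_W^M$ is close, since the $M$-ultrafilter derived from $k$ using an element of $N$ is definable in $M$ from $W$ and a function representative), so the $M$-ultrafilter derived from $k$ using a seed generating $N$ over $k[M]$ lies in $M$, witnessing $N \in \mathcal D^M$.

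The identification of morphisms follows by the same countable-completeness absoluteness argument together with \cref{everythingcommutes} applied inside both $M$ and $V$, which guarantees uniqueness of internal ultrapower embeddings in either category whenever they exist. Next, $\mathcal D_{\geq M}$ is cofinal in $\mathcal D$: for any $N \in \mathcal D$, the join $M \vee N$ from \cref{joins} lies in $\mathcal D_{\geq M}$ and above $N$. Therefore the direct limit of $\mathcal D_{\geq M}$ coincides with that of $\mathcal D$, namely $N_\infty$, and since $V$ and $M$ construct this direct limit from the same directed system by the same definition, we conclude $(N_\infty)^M = N_\infty$ and $(k_\infty)^M = k_{M,N_\infty}$.

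The main technical subtlety I expect is the identification $\mathcal D^M = \mathcal D_{\geq M}$, which itself rests on \cref{everythingcommutes}, the absoluteness between $M$ and $V$ of countable completeness (via wellfoundedness of the associated ultrapower), and the automatic closeness of internal ultrapower embeddings. None of these is deep, so once the framework is in place, the proof is essentially immediate from the lattice structure of $\mathcal D$ developed earlier in the section.
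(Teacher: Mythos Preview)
Your proposal is correct and follows essentially the same approach as the paper: both arguments rest on the observation that $\mathcal D^M$ coincides with the cofinal subsystem $\mathcal D_{\geq M}$ of $\mathcal D$, with the same morphisms, so the direct limits agree. The paper's proof is a one-liner asserting exactly this cofinality; you have simply filled in the verification (via \cref{everythingcommutes} and the lattice join) that the paper leaves implicit.
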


We remark that since \(N\) is not setlike, one should really say \((N)^{M_U}\) is isomorphic to \(N\) via an isomorphism \(\pi: N^{M_U} \to N\) defined in \(V\) with the property that \(\pi\circ (k_\infty)^{N} = k_U\).

\begin{proof}[Proof of \cref{definabilityoflimit}]
The directed system \((\mathcal D)^{M}\) is cofinal in the directed system \(\mathcal D\), with the same maps, and this induces an isomorphism of the limit structures \((N_\infty)^{M}\) and \(N_\infty\).
\end{proof}

The following formula for \(k_\infty\) will be useful. Recall that \(\mathcal I\) is the class of minimal ultrafilters.
\begin{prp}
For any set \(x\), \(k_\infty(x) = \bigcup \{j_U(k_\infty[x]) : U\in \mathcal I\}\).
\end{prp}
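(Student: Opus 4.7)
The plan is first to parse the notation. By \cref{definabilityoflimit}, $j_U$ sends the class $k_\infty$ (as interpreted in $V$) to the class $(k_\infty)^{M_U} = k_{M_U, N_\infty}$ (as interpreted in $M_U$), so the symbol $j_U(k_\infty[x])$ unpacks as $k_{M_U, N_\infty}[\,j_U(x)\,]$. I will therefore prove the reformulation
\[
k_\infty(x) \;=\; \bigcup_{U\in \mathcal I} k_{M_U, N_\infty}[\,j_U(x)\,].
\]

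For the $\supseteq$ direction, I fix $U \in \mathcal I$. Commutativity of the direct limit system gives $k_\infty = k_{M_U, N_\infty}\circ j_U$, so $k_\infty(x) = k_{M_U, N_\infty}(j_U(x))$, and elementarity of $k_{M_U, N_\infty}$ yields $k_{M_U, N_\infty}[\,j_U(x)\,] \subseteq k_{M_U, N_\infty}(j_U(x)) = k_\infty(x)$.

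For the $\subseteq$ direction, suppose $y \mathbin{E} k_\infty(x)$ in $N_\infty$. Since $N_\infty$ is the direct limit of $\mathcal D$, $y = k_{M, N_\infty}(z)$ for some $M \in \mathcal D$ and some $z \in M$. The key ingredient I would invoke is that every object of $\mathcal D$ is of the form $M_U$ for some minimal $U$: by \cref{everythingcommutes} there is a unique ultrapower embedding $j_M : V \to M$, and if $U$ is the minimal ultrafilter derived from $j_M$ using the least sequence of ordinals generating $M$ over $j_M[V]$, then the factor map $\textnormal{Ult}(V,U) \to M$ is elementary and surjective between transitive classes, hence is the identity; so $\textnormal{Ult}(V,U) = M$ and $j_U = j_M$. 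Thus I may write $y = k_{M_U, N_\infty}(z)$ for some $U \in \mathcal I$ and $z \in M_U$. Then $y \mathbin{E} k_{M_U, N_\infty}(j_U(x))$, and elementarity of $k_{M_U, N_\infty}$ gives $z \in j_U(x)$ (in $M_U$), so $y \in k_{M_U, N_\infty}[\,j_U(x)\,]$.

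The only genuine obstacle is notational, namely translating $j_U(k_\infty[x])$ via \cref{definabilityoflimit} and the uniqueness of ultrapower embeddings; once that is done, the identity is just the statement that every $E$-element of $k_\infty(x)$ comes from some stage of the direct limit, combined with elementarity.
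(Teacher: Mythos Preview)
Your proof is correct and follows essentially the same route as the paper: both unpack $j_U(k_\infty[x])$ as $k_{M_U,N_\infty}[j_U(x)]$ via \cref{definabilityoflimit}, and then use the direct-limit characterization of $k_\infty(x)$ together with the fact that every object of $\mathcal D$ is $M_U$ for some $U\in\mathcal I$. Your version is simply more explicit about the two inclusions and about why every $M\in\mathcal D$ arises from a minimal ultrafilter.
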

We trust the reader to parse this proposition correctly. In particular, the union is meant in the sense of \(N\), but cannot really be computed within \(N\). It is literally true when \(k_\infty(x)\) is a subset of the setlike part of \(N_\infty\).
\begin{proof}
For any \(M\in \mathcal D\) and \(y\in M\), we have \(k_{M,N_\infty}(y) \in k_\infty(x)\) if and only if \(y\in k_{V,M}(x)\). Hence \[k_\infty(x) = \bigcup \{k_{M,N_\infty}[k_{V,M}(x)] : M\in \mathcal D\} = \bigcup \{k_{\textnormal{Ult}(M,U),N_\infty}[j_U(x)] : U\in \mathcal I\}\] But \(k_{\textnormal{Ult}(M,U),N_\infty}[j_U(x)] = j_U(k_\infty[x])\) by \cref{definabilityoflimit}. The proposition follows.
\end{proof}

\begin{prp}
Suppose that \(N_\infty\) is not setlike. Then there is an ordinal \(\lambda\) that is lifted arbitrarily high by ultrapower embeddings.
\begin{proof}
Let \(\kappa\) be the least ordinal such that \(k_\infty(\kappa)\) has a proper class of predecessors. Let \(\lambda = \sup k_\infty[\kappa]\) (recall that we identify the setlike part of \(N_\infty\) with its transitive collapse). Then \(\lambda\) is lifted arbitrarily high by ultrapower embeddings because \[k(\kappa) = \bigcup_{U\in \mathcal I} j_U(k_\infty[\kappa]) = \sup_{U\in\mathcal I} j_U(\sup k_\infty[\kappa]) = \sup_{U\in\mathcal I} j_U(\lambda)\]
Since \(k_\infty(\kappa)\) is not a set, \(\{j_U(\lambda): U\in \mathcal I\}\) is unbounded in the ordinals. 
\end{proof}
\end{prp}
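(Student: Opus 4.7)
The plan is to identify a canonical candidate for $\lambda$ and then extract the unboundedness from the preceding union formula. Since $N_\infty$ is not setlike, there is a least ordinal $\kappa$ such that $k_\infty(\kappa)$ has a proper class of $E$-predecessors; by minimality, for each $\beta < \kappa$ the element $k_\infty(\beta)$ lies in the setlike part of $N_\infty$, so (identifying the setlike part with its transitive collapse) $k_\infty(\beta)$ is just an ordinal of $V$. Applying Replacement to the definable class function $\beta \mapsto k_\infty(\beta)$ on the set $\kappa$, the collection $k_\infty[\kappa]$ is a set of ordinals, and I will take $\lambda = \sup k_\infty[\kappa]$.

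Next I would invoke the formula $k_\infty(x) = \bigcup\{j_U(k_\infty[x]) : U \in \mathcal{I}\}$ from the preceding proposition, applied to the \emph{set} $x = \kappa$. For any $U \in \mathcal{I}$, $j_U(k_\infty[\kappa])$ is a set of ordinals in $\mathrm{Ult}(V,U)$ bounded above by $j_U(\lambda)$, simply because $k_\infty[\kappa] \subseteq \lambda$ and $j_U$ is elementary. Hence every ordinal appearing as a setlike $E$-predecessor of $k_\infty(\kappa)$ lies below $j_U(\lambda)$ for some $U \in \mathcal{I}$. Equivalently, the class $\bigcup_{U \in \mathcal{I}} j_U(\lambda)$ must contain (as ordinals) all of the setlike predecessors of $k_\infty(\kappa)$.

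Finally I would argue by contradiction: if the class $\{j_U(\lambda) : U \in \mathcal{I}\}$ were bounded in $\mathrm{Ord}$, say by some ordinal $\eta$, then $\bigcup_{U} j_U(k_\infty[\kappa]) \subseteq \eta$, so $k_\infty(\kappa)$ would have at most $\eta$-many setlike predecessors. Since the setlike part of $N_\infty$ is an inner model of $V$ and contains all of $\eta$, this bounds the predecessor class by a set, contradicting the choice of $\kappa$ as the least ordinal where $k_\infty(\kappa)$ fails to be setlike. Hence $\{j_U(\lambda) : U \in \mathcal{I}\}$ is a proper class, which is exactly the statement that $\lambda$ is lifted arbitrarily high.

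The one subtle point — and the thing I would want to be most careful about — is the interpretation of the formula from the preceding proposition when $k_\infty(\kappa)$ is not setlike: the union on the right is meant in the sense of $N_\infty$, not literally in $V$, so I need to separately check that every \emph{setlike} predecessor of $k_\infty(\kappa)$ does arise as (the transitive-collapse image of) an element of some $j_U(k_\infty[\kappa])$. This follows by unwinding the direct-limit construction: any setlike predecessor is represented by some $(M,y)$ with $y \in k_{V,M}(\kappa) = j_{U}(\kappa)$ for an appropriate $U \in \mathcal{I}$, and the commuting system identifies $k_{M,N_\infty}(y)$ with an ordinal strictly below the corresponding representative of $j_U(\lambda)$. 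Once that bookkeeping is checked, the argument above goes through.
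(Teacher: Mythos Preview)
Your proof is correct and follows essentially the same route as the paper: choose the least $\kappa$ with $k_\infty(\kappa)$ not setlike, set $\lambda=\sup k_\infty[\kappa]$, and use the preceding formula $k_\infty(\kappa)=\bigcup_{U\in\mathcal I} j_U(k_\infty[\kappa])$ to conclude that $\{j_U(\lambda):U\in\mathcal I\}$ is unbounded. Your final paragraph about the interpretation of the union formula when $k_\infty(\kappa)$ is not setlike is a welcome clarification that the paper leaves implicit.
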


We suspect that in a fine structure model, the least ordinal that can be lifted arbitrarily high by ultrapower embeddings is in fact supercompact, but in the context of the Ultrapower Axiom, we do not even know how to prove this ordinal is a cardinal. The least ordinal \(\kappa\) such that \(k_\infty(\kappa)\) has a proper class of predecessors is inaccessible, since it is easy to see that the class of predecessors of \(k_\infty(\kappa)\) is isomorphic to the class of all ordinals (so the inaccessibility of \(\kappa\) follows by replacement and powerset). Must \(\kappa\) be measurable? Must \(\kappa\) be lifted arbitrarily high by ultrapower embeddings? Equivalently, must \(\kappa = \sup k_\infty[\kappa]\)?

We now explore the relationship between \(N_\infty\) and the seed order, noting that the maps \(k_{M,N_\infty}: M\to N_\infty\) constitute a close comparison of all ultrapowers of \(V\) in the sense implicit in \cref{closetoultra}.

\begin{lma}
For any uniform ultrafilter \(U\), there is a finite sequence of \(N_\infty\)-ordinals \(a\) such that \(U\) is the ultrafilter derived from \(k_\infty\) using \(a\).
\begin{proof}
Let \(M = \textnormal{Ult}(V,U)\). The lemma follows immediately from the fact that \(k_\infty = k_{M,N_\infty}\circ j_U\), taking \(a = k_{M,N_\infty}([\text{id}]_U)\).
\end{proof}
\end{lma}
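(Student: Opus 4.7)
The plan is to take $M = \textnormal{Ult}(V,U)$ and use the universal property of the direct limit $N_\infty$ to factor $k_\infty$ through $M$. Since $M \in \mathcal D$, we have a canonical embedding $k_{M,N_\infty} : M \to N_\infty$, and by the definition of the direct limit together with the uniqueness of ultrapower embeddings (\cref{everythingcommutes}), $k_\infty = k_{M,N_\infty} \circ j_U$. I would then set
\[ a = k_{M,N_\infty}([\textnormal{id}]_U). \]
Since $[\textnormal{id}]_U$ is a finite sequence of ordinals in $M$ and $N_\infty$ is wellfounded (so that its ordinals can be identified with actual ordinals), $a$ is a finite sequence of $N_\infty$-ordinals.

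To verify that $U$ is the uniform ultrafilter derived from $k_\infty$ using $a$, first I would check that the space is right: $\textsc{sp}(U)$ is the least ordinal $\kappa$ such that $[\textnormal{id}]_U \subseteq j_U(\kappa)$, and applying $k_{M,N_\infty}$ this gives the analogous statement for $a$ and $k_\infty(\kappa)$, which is the required uniformity condition for the derived ultrafilter. Next, for $X \subseteq [\textsc{sp}(U)]^{|a|}$, chase the equivalences
\begin{align*}
X \in U &\iff [\textnormal{id}]_U \in j_U(X) \\
&\iff k_{M,N_\infty}([\textnormal{id}]_U) \in k_{M,N_\infty}(j_U(X)) \\
&\iff a \in k_\infty(X),
\end{align*}
using the elementarity of $k_{M,N_\infty}$ and the identity $k_\infty = k_{M,N_\infty} \circ j_U$.

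There is essentially no obstacle here; the only subtlety is that since $N_\infty$ need not be setlike, one must be slightly careful about what it means for $a$ to be a ``finite sequence of $N_\infty$-ordinals.'' But the wellfoundedness of $N_\infty$ (established just before this lemma in the preceding subsection) means that every $N_\infty$-ordinal is a genuine ordinal in the transitive collapse of the setlike part, and since $[\textnormal{id}]_U \in M$ is literally a finite set of ordinals, its image under the elementary embedding $k_{M,N_\infty}$ lands in the setlike part of $N_\infty$. So the proposition is really just the observation that the derived-ultrafilter construction commutes with composition of elementary embeddings.
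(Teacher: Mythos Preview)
Your proof is correct and follows exactly the paper's approach: set \(a = k_{M,N_\infty}([\text{id}]_U)\) and use the factorization \(k_\infty = k_{M,N_\infty}\circ j_U\). One small caveat: your claim that \(a\) necessarily lands in the setlike part of \(N_\infty\) is not justified (and may fail past a supercompact), but this is irrelevant since the statement only asks for \(N_\infty\)-ordinals, not \(V\)-ordinals.
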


\begin{defn}
Suppose \(U\) is a uniform ultrafilter. We denote by \(a_U\) the least finite set of \(N_\infty\)-ordinals \(a\) such that \(U\) is the ultrafilter derived from \(k_\infty\) using \(a\).
\end{defn}

\begin{prp}
Suppose \(U\) is a uniform ultrafilter and let \(M = \textnormal{Ult}(V,U)\). Then \(a_U = k_{M,N_\infty}([\textnormal{id}]_U)\).
\begin{proof}
 This follows from the minimality of close embeddings. Suppose \(b\) is a sequence of \(N_\infty\)-ordinals such that \(U\) is derived from \(k_\infty\) using \(b\). Let \(i: M\to N_\infty\) be the factor map given by \(i(j_U(f)([\text{id}]_U)) = k_\infty(f)(b)\). Let \(\Gamma = j_U[V]\). Then \(k_{M,N_\infty}\restriction \Gamma = i\restriction \Gamma\) and \(M\) is finitely generated mod \(\Gamma\). We can therefore apply the Close Embeddings Lemma, \cref{cel}: \(k_{M,N_\infty}\) is close to \(M\), being definable over \(M\) by \cref{definabilityoflimit}, and so it follows that \(k_{M,N_\infty}([\text{id}]_U) \leq i([\text{id}]_U) = b\). 
 
Since  \(b\) was an arbitrary sequence of \(N_\infty\)-ordinals such that \(U\) is derived from \(k_\infty\) using \(b\), it follows that \(k_{M,N_\infty}([\text{id}]_U)\) is the least finite sequence of \(N_\infty\)-ordinals \(a\) such that \(U\) is the ultrafilter derived from \(k_\infty\) using \(a\). That is, \(k_{M,N_\infty}([\text{id}]_U) = a_U\).
\end{proof}
\end{prp}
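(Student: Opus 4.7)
The plan is to establish both containments: first that $k_{M,N_\infty}([\text{id}]_U)$ is in fact a sequence of $N_\infty$-ordinals using which $U$ is derived from $k_\infty$, and second that this sequence is minimal among all such sequences, forcing equality with $a_U$.

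For the first part, note that $k_\infty = k_{M,N_\infty}\circ j_U$ by the definition of the direct limit embedding, and $U$ is trivially derived from $j_U$ using $[\text{id}]_U$. Pushing this derivation forward by $k_{M,N_\infty}$ shows that $U$ is derived from $k_\infty$ using $k_{M,N_\infty}([\text{id}]_U)$. Hence $a_U \leq k_{M,N_\infty}([\text{id}]_U)$ in the canonical wellorder of $[\textnormal{Ord}]^{<\omega}$.

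For the reverse inequality, I would take an arbitrary finite sequence $b$ of $N_\infty$-ordinals from which $U$ is derived via $k_\infty$, and show $k_{M,N_\infty}([\text{id}]_U) \leq b$. As in the preceding proposition, the factor map $i:M\to N_\infty$ defined by $i(j_U(f)([\text{id}]_U)) = k_\infty(f)(b)$ is elementary, satisfies $i\circ j_U = k_\infty$, and sends $[\text{id}]_U$ to $b$. Setting $\Gamma = j_U[V]$, both $k_{M,N_\infty}$ and $i$ agree on $\Gamma$ (both compose with $j_U$ to give $k_\infty$), and $M$ is finitely generated over $\Gamma$ by $[\text{id}]_U$ in the sense of \cref{fingen}. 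The embedding $k_{M,N_\infty}$ is close to $M$ because it is definable over $M$ by \cref{definabilityoflimit}, so the hypotheses of the Close Embeddings Lemma, \cref{cel}, are satisfied for $j = k_{M,N_\infty}$ and $i$. It follows that $k_{M,N_\infty}([\text{id}]_U) \leq i([\text{id}]_U) = b$.

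Since $b$ was arbitrary, this shows $k_{M,N_\infty}([\text{id}]_U)$ is the least sequence of $N_\infty$-ordinals witnessing $U$ as a derived ultrafilter of $k_\infty$, which is the definition of $a_U$. The main (and only nontrivial) obstacle is correctly verifying the closeness of $k_{M,N_\infty}$, which reduces to the definability granted by \cref{definabilityoflimit}, together with confirming that the Close Embeddings Lemma applies despite the fact that $N_\infty$ is not assumed to be setlike --- but the hypotheses of \cref{cel} concern only the finite-generation of the source model $M$ and definability of the embedding over $M$, both of which hold in our setting.
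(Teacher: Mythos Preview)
Your proof is correct and follows essentially the same approach as the paper: both arguments invoke the Close Embeddings Lemma (\cref{cel}) applied to $k_{M,N_\infty}$ (close by \cref{definabilityoflimit}) and the arbitrary factor map $i$ associated to $b$, with $\Gamma = j_U[V]$, to obtain $k_{M,N_\infty}([\text{id}]_U)\leq b$. Your explicit verification that $k_{M,N_\infty}([\text{id}]_U)$ itself realizes $U$ as a derived ultrafilter is handled in the paper by the immediately preceding lemma, so the two proofs are the same in substance.
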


Note, however, that there will be many other finite sets of \(N_\infty\)-ordinals with which \(U\) can be derived from \(k_\infty\). For example, suppose \(U\) is the unique normal measure on the least measurable cardinal \(\kappa\). Let \(M_0 = \textnormal{Ult}(V,U)\), let \(M_1 = \textnormal{Ult}(V,U\times U)\), and let \(\kappa_1 = j_U(\kappa)\). Then \[a_U = k_{M_0,N_\infty}(\kappa) = k_{M_1,N_\infty}(k_{M_0,M_1}(\kappa)) = k_{M_1,N_\infty}(\kappa) < k_{M_1,N_\infty}(\kappa_1)\]
But the ultrafilter derived from \(k_\infty\) using \(k_{M_1,N_\infty}(\kappa_1)\) is \(U\). In general, Rudin-Keisler reductions that are not internal ultrapower embeddings give rise to this kind of example.

\begin{cor}
Suppose \(U_0\) and \(U_1\) are uniform ultrafilters. Then \(U_0\wo U_1\) if and only if \(a_{U_0} \leq a_{U_1}\). 
\begin{proof}
Let \(M_0 = \textnormal{Ult}(V,U_0)\) and \(M_1 = \textnormal{Ult}(V,U_1)\). By \cref{closetoultra}, the close embeddings \(k_{M_0,N_\infty}:M_0\to N_\infty\) and \(k_{M_1,N_\infty}:M_{1}\to N_\infty\) suffice as a comparison of \(\langle U_0,U_1\rangle\), and they witness \(U_0\wo U_1\) since \(k_{M_0,N_\infty}([\text{id}]_{U_0}) = a_{U_0} \leq a_{U_1} = k_{M_1,N_\infty}([\text{id}]_{U_1})\).
\end{proof}
\end{cor}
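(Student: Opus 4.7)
The plan is to factor the direct limit embeddings through the canonical comparison and transfer the inequality there. Fix $M_i = \textnormal{Ult}(V,U_i)$ and let $\langle W_0,W_1\rangle$ be the canonical comparison of $\langle U_0,U_1\rangle$ to a common model $P$, which sits above $M_0$ and $M_1$ in $\mathcal D$. Since $P$ is an object of the lattice $\mathcal D$, the direct limit embeddings factor as $k_{M_i,N_\infty} = k_{P,N_\infty}\circ j_{W_i}$ for $i=0,1$, using the uniqueness of internal ultrapower embeddings (\cref{everythingcommutes}). Applying this at $[\textnormal{id}]_{U_i}$ and using the identification $a_{U_i} = k_{M_i,N_\infty}([\textnormal{id}]_{U_i})$ established just before the corollary, one obtains
\[a_{U_i} = k_{P,N_\infty}\bigl(j_{W_i}([\textnormal{id}]_{U_i})\bigr).\]

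Next I would observe that $k_{P,N_\infty}$ is elementary between wellfounded models (ignoring that $N_\infty$ may fail to be setlike, which is irrelevant here since the relevant images live in the setlike part), and that the canonical order on $[\textnormal{Ord}]^{<\omega}$ is $\Delta_0$. Therefore $k_{P,N_\infty}$ preserves and reflects $\leq$ on finite sequences of ordinals, so $a_{U_0}\leq a_{U_1}$ if and only if $j_{W_0}([\textnormal{id}]_{U_0}) \leq j_{W_1}([\textnormal{id}]_{U_1})$.

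The latter is precisely the assertion that $\langle W_0,W_1\rangle$ witnesses $U_0\wo U_1$; by \cref{welldefined} this is equivalent to $U_0\wo U_1$ itself. There is no serious obstacle: the only step requiring care is verifying that the factor map $k_{P,N_\infty}$ preserves the lexicographic order on descending finite sequences, and this is immediate from elementarity. This approach avoids reinvoking \cref{closetoultra} and relies only on the already-established lattice structure of $\mathcal D$ and the formula $a_U = k_{M_U,N_\infty}([\textnormal{id}]_U)$.
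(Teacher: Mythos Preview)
Your proof is correct and the underlying idea is the same as the paper's, though organized differently. The paper invokes \cref{closetoultra}: the direct limit maps $k_{M_i,N_\infty}$ are close to $M_i$ (being definable there by \cref{definabilityoflimit}), so \cref{closetoultra} produces a comparison $\langle W_0,W_1\rangle$ to some $P$ together with a factor embedding $h:P\to N_\infty$ satisfying $h\circ j_{W_i}=k_{M_i,N_\infty}$; the inequality on seeds then transfers along $h$. You instead take the canonical comparison directly (available under the Ultrapower Axiom) and use the direct limit property of $\mathcal D$ to obtain the factorization $k_{M_i,N_\infty}=k_{P,N_\infty}\circ j_{W_i}$. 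This is the same factorization, reached from the other side, and it has the mild advantage of yielding both directions of the biconditional at once via \cref{welldefined}, whereas the paper's proof as written only spells out the implication from $a_{U_0}\leq a_{U_1}$ to $U_0\wo U_1$. One small point: the factorization you use is simply the compatibility of direct limit maps with morphisms of the system, not really a consequence of \cref{everythingcommutes}.
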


We mention a related inner model that seems quite wild assuming very large cardinals. The model we have in mind is the intersection of all ultrapowers of \(V\).

\begin{prp}[Ultrapower Axiom]
The intersection \(P\) of all ultrapowers of \(V\) by countably complete ultrafilters is an inner model of \textnormal{ZF} containing the setlike part of \(N_\infty\).
\begin{proof}
Obviously \(P\) is a transitive class containing all the ordinals, and since the transitive collapse of the setlike part of \(N_\infty\) is contained in every ultrapower of \(V\) by \cref{definabilityoflimit}, it is contained in \(P\). The only real issue in verifying ZF is with the Axiom of Comprehension. The key point is that \(P\) is a definable subclass of every ultrapower of \(V\): this is because the Ultrapower Axiom implies that \((P)^{\textnormal{Ult}(V,U)} = P\), since it easily yields that the class of internal ultrapowers of \(\textnormal{Ult}(V,U)\) are cofinal in the partial order of ultrapowers of \(V\) ordered by reverse inclusion. Given this, it is easy to prove Comprehension. Suppose \(A\in P\), \(p\) is a parameter in \(P\), and \(\varphi(v_0,v_1)\) is a formula in the language of set theory. We must show \(\{x\in A: P\vDash \varphi(x,p)\}\in P\). But \(\{x\in A: P\vDash \varphi(x,p)\}\) is in \(\textnormal{Ult}(V,U)\) for any countably complete ultrafilter \(U\), since \(P\) is a definable subclass of \(\textnormal{Ult}(V,U)\) and comprehension holds in \(\textnormal{Ult}(V,U)\).
\end{proof}
\end{prp}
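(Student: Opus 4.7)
The plan is to verify the axioms of ZF for $P$ by establishing that $P$ admits a uniform first-order definition inside every ultrapower of $V$. Transitivity and containment of all ordinals are immediate: $P$ is an intersection of transitive proper classes each of which contains the ordinals. The setlike part of $N_\infty$ sits inside $P$ because \cref{definabilityoflimit} gives $(N_\infty)^M = N_\infty$ for every $M \in \mathcal D$, so any set in the setlike part of $N_\infty$ belongs to every ultrapower and hence to $P$.

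The crucial step is to show $(P)^M = P$ for every $M = \textnormal{Ult}(V,U)$. I would deduce this from the cofinality claim that the internal ultrapowers of $M$, i.e.\ the models $\textnormal{Ult}(M,W)$ for $W$ a countably complete $M$-ultrafilter, are cofinal in the class of ultrapowers of $V$ ordered by reverse inclusion. The Ultrapower Axiom supplies this at once: for any countably complete ultrafilter $Z$ of $V$, a comparison $\langle W_0,W_1\rangle$ of $\langle U,Z\rangle$ yields an internal ultrapower $\textnormal{Ult}(M,W_0)$ of $M$ contained in $\textnormal{Ult}(V,Z)$. Intersecting, the $M$-internal intersection yielding $(P)^M$ coincides with the external intersection yielding $P$.

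Once $(P)^M = P$ holds, $P$ is uniformly definable in each ultrapower by a single formula, and the remaining axioms follow by straightforward bookkeeping. Pairing and Union are routine closure properties that transfer through each ultrapower. For Powerset, if $x \in P$ then the set $\{y \in M : M \vDash y \in P \wedge y \subseteq x\}$ equals $P^P(x)$, exists in $M$ by Separation in $M$, and agrees across all $M$ because $P$ has the same definition there; hence $P^P(x) \in P$. Separation and Replacement in $P$ go through identically: given $A, p \in P$ and a formula $\varphi$, the set $\{y \in A : P \vDash \varphi(y,p)\}$ is definable in each $M$ using the uniform definition of $P$, so lies in every such $M$ and therefore in $P$.

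The main obstacle is verifying the cofinality claim that underwrites $(P)^M = P$, but this is really just an unpacking of the Ultrapower Axiom via the comparison of any given $Z$ against the fixed $U$. A minor subtlety will be to confirm that the Powerset and Separation arguments really do produce the same set in every $M$; this reduces to the uniformity of the defining formula for $P$, which is exactly what the equality $(P)^M = P$ delivers once one observes that the same defining phrase (``intersection of all ultrapowers by countably complete ultrafilters'') is being evaluated inside each $M$.
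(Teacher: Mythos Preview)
Your proposal is correct and follows essentially the same approach as the paper: both establish $(P)^M = P$ via the cofinality of internal ultrapowers of $M$ among all ultrapowers of $V$ (using comparisons supplied by the Ultrapower Axiom), and then use this uniform definability of $P$ inside each ultrapower to verify the problematic ZF axioms. You spell out Powerset, Pairing, Union, and Replacement in addition to Separation, whereas the paper only treats Comprehension explicitly, but the underlying mechanism is identical.
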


We note that \(P\) is closed under \(\kappa\)-sequences where \(\kappa\) is the least measurable cardinal. Does \(P\) satisfy the Axiom of Choice in general? In \(L[U]\), \(P\) is the Prikry extension of the \(\omega\)th iterate obtained by adjoining the critical sequence.

\section{Weak Comparison}\label{comparisonsection}
The point of this section is to provide evidence for our assertion that the Ultrapower Axiom holds in canonical inner models. In order to avoid committing ourselves to any particular choice of fine structure, we prove the Ultrapower Axiom from Woodin's axiom Weak Comparison (see \cref{axcompultra}). One can also prove the Ultrapower Axiom in more specific contexts with fewer auxiliary assumptions. (In particular one can in many circumstances avoid the assumption that \(V = \text{HOD}\): it is enough that every element of the model be ordinal definable with access to the predicate from which the model is constructed. Also, one can avoid using the \(\Sigma_2\) Closure Axiom using a bit of fine structure.) 

We use the following convention in our statement of Weak Comparison below, and throughout \cref{comparisonsection}: 

\begin{conv} A model of ZFC is just a set \(X\), not necessarily transitive, such that \((X,\in)\vDash \textnormal{ZFC}\).\end{conv}

\begin{defn}
The axiom of {\it Weak Comparison} is the following few sentences. First, \(V = \text{HOD}\). Second, suppose \(X_0,X_1\prec_{\Sigma_2} V\) are finitely generated models of ZFC (see \cref{fingen}). Let \(M_0\) be the transitive collapse of \(X_0\), and \(M_1\) the transitive collapse of \(X_1\). Assume \[\mathbb R\cap M_0 = \mathbb R\cap M_1\] Then there is a transitive set \(N\) admitting close embeddings \(k_0:M_0\to N\) and \(k_1: M_1\to N\).
\end{defn}

We briefly explain why Weak Comparison should hold in canonical inner models with a comparison process like the one that exists in the known models. The assumption \(V = \text{HOD}\) does not actually always hold in the fine structure models, but it simplifies things here, and it is a reasonable requirement since it is a formal consequence of Woodin's axiom \(V = \text{Ultimate }L\). We now try to justify the comparison principle itself. We keep this justification somewhat vague to show that it does not make many assumptions about the fine structure of the model in question, but we note that in particular the justification can be turned into a proof of a version of Weak Comparison that holds in the Mitchell-Steel models as well as the models at the finite levels of supercompactness using comparison by disagreement. We assume the reader has some familiarity with the comparison process.

The assumption that \(X_0,X_1\prec_{\Sigma_2} V\) should ensure that the transitive collapses \(M_0,M_1\) are iterable, using a copying construction to embed iterates of \(M_0,M_1\) into iterates of \(V\). That \(X_0,X_1\prec_{\Sigma_2} V\) should also ensure that \(M_0\) and \(M_1\) fall into the fine structural hierarchy used to construct \(V\). It should be possible to compare any two models in this hierarchy, and so we compare \(M_0\) and \(M_1\). Since \(M_0\) and \(M_1\) satisfy ZFC, if neither side of the comparison drops, their comparison yields a transitive set \(N\) admitting close embeddings \(k_0:M_0\to N\) and \(k_1: M_1\to N\), and thus one verifies Weak Comparison. The assumptions on \(X_0\) and \(X_1\), namely that \(X_0\) and \(X_1\) are finitely generated and \(\mathbb R\cap M_0 = \mathbb R\cap M_1\), are meant to guarantee that the comparison cannot drop. We explain this briefly. Suppose for example that the \(M_0\)-side of the comparison drops. Then the \(M_1\)-side does not drop, and the last model on the \(M_1\)-side is a proper initial segment of the last model \(\mathcal Q\) on the \(M_0\)-side. Since \(M_1\) is finitely generated, \(M_1\) is coded by a real in \(\mathcal Q\). By a backwards induction along the \(M_0\)-side of the comparison, one proves that there is a real coding \(M_1\) in \(M_0\), and this contradicts the assumption that \(\mathbb R\cap M_0 = \mathbb R\cap M_1\).

We spend the rest of this section proving that Weak Comparison implies the Ultrapower Axiom assuming that there is a proper class of strong cardinals. In fact we will make do with a much weaker large cardinal axiom than a proper class of strong cardinals.

\begin{defn}[\(\Sigma_2\) Closure Axiom]
For all sets \(x\), there is a \(\Sigma_2\) elementary substructure of \(V\) satisfying ZFC and containing \(x\) as an element.
\end{defn}

Our next proposition is an immediate consequence of the well-known fact that if \(\kappa\) is a strong cardinal then \(V_\kappa\prec_{\Sigma_2} V\).

\begin{prp}
Assume there is a proper class of strong cardinals. Then the \(\Sigma_2\) Closure Axiom holds.
\end{prp}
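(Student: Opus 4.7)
The plan is to verify the proposition by directly exhibiting, for a given set $x$, a strong cardinal $\kappa$ above the rank of $x$ and then taking the structure $V_\kappa$. First I would use the hypothesis that strong cardinals form a proper class to fix some strong $\kappa$ with $x \in V_\kappa$; this is routine, since the class of strong cardinals being proper means it is unbounded in the ordinals, so we may choose $\kappa$ exceeding $\mathrm{rank}(x)$. Second, since every strong cardinal is (strongly) inaccessible, $V_\kappa$ is a transitive model of $\mathrm{ZFC}$ containing $x$.

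The substantive ingredient is the cited folklore fact that $V_\kappa \prec_{\Sigma_2} V$ whenever $\kappa$ is strong. I would either quote this directly or, if desired, sketch the standard argument: a $\Sigma_2$ formula $\varphi(v)$ is provably equivalent to one of the shape $\exists \alpha\, V_\alpha \vDash \psi(v)$ with $\psi$ first-order, so upward absoluteness from $V_\kappa$ to $V$ is immediate. For downward absoluteness, given $x \in V_\kappa$ and some $\alpha$ with $V_\alpha \vDash \psi(x)$, one picks a $(\alpha+1)$-strong embedding $j\colon V \to M$ with critical point $\kappa$ such that $V_{\alpha+1} \subseteq M$; then $M$ correctly sees $V_\alpha \vDash \psi(x)$, so $M \vDash \exists \beta < j(\kappa)\, V_\beta \vDash \psi(x)$, and applying elementarity of $j$ gives a witness below $\kappa$ in $V$, which is absorbed by $V_\kappa$.

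Combining these: $V_\kappa$ is a $\Sigma_2$-elementary substructure of $V$ satisfying $\mathrm{ZFC}$ and containing $x$, which is precisely the assertion of the $\Sigma_2$ Closure Axiom. There is no real obstacle here beyond invoking the standard reflection fact for strong cardinals; the proof is essentially a one-line appeal, as the remark preceding the proposition indicates.
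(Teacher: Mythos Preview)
Your proposal is correct and matches the paper's approach exactly: the paper does not give a proof but simply states the proposition as ``an immediate consequence of the well-known fact that if \(\kappa\) is a strong cardinal then \(V_\kappa\prec_{\Sigma_2} V\),'' and your argument is precisely the natural unpacking of that remark. Your optional sketch of why \(V_\kappa\prec_{\Sigma_2} V\) for strong \(\kappa\) is also the standard one.
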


In fact, we do not need anything as strong as a strong cardinal. For example, Morse-Kelley set theory implies the \(\Sigma_2\) Closure Axiom.

\begin{prp}[\(\Sigma_2\) Closure Axiom]\label{axcompultra}
Weak Comparison implies the Ultrapower Axiom.
\end{prp}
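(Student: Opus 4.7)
The plan is to apply Proposition~\ref{closetoultra} at the level of a finitely generated $\Sigma_2$-elementary substructure of $V$ and then transfer the resulting comparison up to $V$ via $\Sigma_2$-elementarity, noting that the existence of a comparison by internal ultrafilters is a $\Sigma_2$ property by Proposition~\ref{sigma2comp}. Using $V = \textnormal{HOD}$, fix ordinals $\alpha_0, \alpha_1$ defining $U_0, U_1$. By the $\Sigma_2$ Closure Axiom combined with a $\Sigma_2$-Skolem hull construction, form a finitely generated $\Sigma_2$-elementary substructure $Q \prec_{\Sigma_2} V$ containing $U_0, U_1$; let $\bar Q$ be its transitive collapse, $\pi : \bar Q \to V$ the anti-collapse, and $\bar U_i := \pi^{-1}(U_i)$. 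Form $M_i := \textnormal{Ult}(\bar Q, \bar U_i)$; wellfoundedness is witnessed by the factor embeddings $\pi_i : M_i \to \textnormal{Ult}(V, U_i)$ given by $[f]_{\bar U_i} \mapsto [\pi(f)]_{U_i}$. Each $M_i$ is a finitely generated transitive model of \textnormal{ZFC}, and since $\bar U_i$ is countably complete in $\bar Q$, both ultrapower embeddings preserve reals, yielding $\mathbb R \cap M_0 = \mathbb R \cap M_1 = \mathbb R \cap \bar Q$.

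To invoke Weak Comparison, realize $M_i$ as the transitive collapse of $X_i := \pi_i[M_i]$, viewed as a substructure of $V$ via the inclusion $\textnormal{Ult}(V, U_i) \subseteq V$; as a subset of $V$, $X_i$ is finitely generated by $\{j_{U_i}(\alpha_0), j_{U_i}(\alpha_1), [\textnormal{id}]_{U_i}\}$. Verifying that $X_i \prec_{\Sigma_2} V$ is the key technical point, and reduces to enough $\Sigma_2$-agreement between $V$ and $\textnormal{Ult}(V, U_i)$ on these parameters. Applying Weak Comparison to $X_0, X_1$ yields a transitive set $N$ and close embeddings $k_0 : M_0 \to N$ and $k_1 : M_1 \to N$. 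To verify the commutativity hypothesis $k_0 \circ j_{\bar U_0} = k_1 \circ j_{\bar U_1}$ of Proposition~\ref{closetoultra}, set $h_i := k_i \circ j_{\bar U_i} : \bar Q \to N$ and observe that each $h_i$ is close to $\bar Q$: for any $a \in N$, the $M_i$-ultrafilter $W_i^a$ derived from $k_i$ using $a$ lies in $M_i$ by closeness of $k_i$, and the $\bar Q$-ultrafilter derived from $h_i$ using $a$ is then definable from $\bar U_i$ and any function in $\bar Q$ representing $W_i^a$, hence lies in $\bar Q$.

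Since $\bar Q$ is finitely generated over $\emptyset$ by a tuple $\vec c$ of ordinals (the collapse-images of $\alpha_0, \alpha_1$) and $\bar Q \models V = \textnormal{HOD}$ by $\Sigma_2$-inheritance, the Close Embeddings Lemma (Theorem~\ref{cel}) applied symmetrically to the two close embeddings $h_0, h_1$ with $\Gamma = \emptyset$ forces $h_0(\vec c) \leq h_1(\vec c)$ and $h_1(\vec c) \leq h_0(\vec c)$, so $h_0(\vec c) = h_1(\vec c)$; since $\vec c$ generates $\bar Q$, this gives $h_0 = h_1$. With commutativity in hand, Proposition~\ref{closetoultra} applied to $\bar Q$, $\bar U_0, \bar U_1$, and the close embeddings $k_0, k_1$ produces, inside $\bar Q$, a canonical comparison of $\langle \bar U_0, \bar U_1\rangle$ by internal ultrafilters. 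By Proposition~\ref{sigma2comp} the existence of such a comparison is $\Sigma_2$, so $\Sigma_2$-elementarity of $\pi$ lifts the comparison to $V$: $\langle U_0, U_1\rangle$ admits a comparison by internal ultrafilters, establishing the Ultrapower Axiom.

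The main obstacle will be verifying that $X_i = \pi_i[M_i]$ is indeed $\Sigma_2$-elementary in $V$, which is what makes Weak Comparison applicable: this is sufficient $\Sigma_2$-agreement between $V$ and $\textnormal{Ult}(V, U_i)$ on the generators of $X_i$, and securing it likely requires enlarging the hull $Q$ via further $\Sigma_2$-reflection or incorporating $j_{U_i}$-related witnesses into the generating set of $X_i$. Once this elementarity is secured, the rest of the argument proceeds cleanly via the Close Embeddings Lemma and the $\Sigma_2$-characterization of comparisons.
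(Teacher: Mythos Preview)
Your overall architecture matches the paper's proof exactly: take a finitely generated $\Sigma_2$-hull, form the two internal ultrapowers, apply Weak Comparison to get close embeddings into a common $N$, use \cref{cel} plus $V=\textnormal{HOD}$ to get commutativity, apply \cref{closetoultra} inside the hull, and reflect up via \cref{sigma2comp}. Your treatment of the commutativity step and the final reflection is fine.

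The gap is precisely the step you flag. Realizing $M_i$ as $X_i=\pi_i[M_i]\subseteq\textnormal{Ult}(V,U_i)$ and then hoping for $X_i\prec_{\Sigma_2} V$ via ``enough $\Sigma_2$-agreement between $V$ and $\textnormal{Ult}(V,U_i)$'' is the wrong route: $\textnormal{Ult}(V,U_i)$ is not $\Sigma_2$-correct in $V$ in general, and no amount of enlarging $Q$ will fix this, since the parameters $j_{U_i}(\alpha_0),j_{U_i}(\alpha_1),[\textnormal{id}]_{U_i}$ live in the ultrapower and the relevant $\Sigma_2$ facts are being evaluated there.

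The paper's resolution (\cref{ultragen}) avoids $\textnormal{Ult}(V,U_i)$ entirely. Since $Q$ is countable and $U_i$ is countably complete, one can choose $a\in\bigcap(U_i\cap Q)$ and set
\[
Y_i=\{g(a): g\in Q,\ \textnormal{dom}(g)=S_i\}\subseteq V.
\]
The map $[g]_{\bar U_i}\mapsto g(a)$ is an isomorphism $M_i\cong Y_i$, so $Y_i$ is a finitely generated model of \textnormal{ZFC} with the same reals as $\bar Q$. Now $Y_i\prec_{\Sigma_2} V$ follows directly from $Q\prec_{\Sigma_2} V$ by a Tarski--Vaught argument: given a nonempty $\Sigma_2$-definable class with parameter $g(a)\in Y_i$, pull the $\Sigma_2$ Skolem function back to $Q$, apply it pointwise, and evaluate at $a$. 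No correctness of the ultrapower is needed. Once you have this lemma, your proof goes through verbatim.
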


The proof requires two easy lemmas.

\begin{lma}[\(\Sigma_2\) Closure Axiom + \(V = \text{HOD}\)]\label{finitelygenerated}
For all sets \(x\), there is a finitely generated \(\Sigma_2\)-elementary substructure \(Y\) of \(V\) that satisfies \textnormal{ZFC} and contains \(x\) as an element.
\begin{proof}
Fix \(x\). By the \(\Sigma_2\) Closure Axiom, there is a set \(X\prec_{\Sigma_2} V\) such that \(x\in X\) and \(X\vDash \textnormal{ZFC}\). Since the statement \(V = \textnormal{HOD}\) is \(\Pi_3\), it is downwards absolute to \(\Sigma_2\)-elementary substructures, and so \[X\vDash V = \textnormal{HOD}\] Therefore \(X\) has definable Skolem functions. Thus the set \[Y = \{y\in X: y\text{ is definable in \(X\) from the parameter }x\}\] is an elementary substructure of \(X\). It is clear that \(Y\) is finitely generated and contains \(x\). Since \(Y\prec X\prec_{\Sigma_2} V\), \(Y\prec_{\Sigma_2} V\). Since \(Y\prec X\), \(Y\) satisfies ZFC. Thus \(Y\) is as desired.
\end{proof}
\end{lma}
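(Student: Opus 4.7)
The plan is to first invoke the $\Sigma_2$ Closure Axiom to obtain some $\Sigma_2$-elementary substructure $X$ of $V$ that contains $x$ and satisfies \textnormal{ZFC}, and then to cut $X$ down to a finitely generated elementary substructure $Y$ by taking a definable Skolem hull, using the definable wellorder afforded by $V = \textnormal{HOD}$. The point of the two-stage procedure is that the $\Sigma_2$ Closure Axiom on its own does not guarantee finite generation, while $V = \textnormal{HOD}$ on its own does not yield any $\Sigma_2$-elementary substructures to work with.

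Concretely, I would first apply the $\Sigma_2$ Closure Axiom to fix $X \prec_{\Sigma_2} V$ with $x \in X$ and $X \vDash \textnormal{ZFC}$. The second step is to verify $X \vDash V = \textnormal{HOD}$. Here I would observe that ``$x$ is ordinal definable'' is expressible as a $\Sigma_2$ formula (using satisfaction in some $V_\alpha$), so $V = \textnormal{HOD}$ is a $\Pi_3$ sentence; and $\Pi_3$ sentences true in $V$ are downwards absolute to $\Sigma_2$-elementary substructures, since the outer universal quantifier simply gets restricted to $X$ while the inner $\Sigma_2$ matrix is absolute between $X$ and $V$ by $\Sigma_2$-elementarity. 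Third, since $X \vDash V = \textnormal{HOD}$, $X$ admits a definable global wellorder and hence definable Skolem functions; let $Y$ be the definable Skolem hull of $\{x\}$ in $X$. Then $Y \prec X$, and $Y$ is finitely generated by $x$ by construction. Finally, composing $Y \prec X \prec_{\Sigma_2} V$ gives $Y \prec_{\Sigma_2} V$, and $Y \vDash \textnormal{ZFC}$ since $Y \prec X$.

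The one place where care is required is the quantifier complexity calculation for $V = \textnormal{HOD}$, together with the verification that $\Pi_3$ is preserved downwards by $\Sigma_2$-elementary embeddings. Everything else is routine: the $\Sigma_2$ Closure Axiom is applied verbatim, and the definable-Skolem-hull construction is standard once a definable wellorder is available. I do not expect any genuine obstacle.
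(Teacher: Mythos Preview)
Your proposal is correct and follows essentially the same route as the paper's proof: invoke the $\Sigma_2$ Closure Axiom to get $X$, use the $\Pi_3$ complexity of $V=\textnormal{HOD}$ to transfer it to $X$, and then take the definable Skolem hull of $\{x\}$ inside $X$. Your write-up even supplies a bit more detail on the complexity calculation than the paper does.
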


Our second lemma is a variation on Los's theorem.

\begin{lma}\label{ultragen} Suppose \(U\) is a countably complete ultrafilter on a set \(S\). Then for any finitely generated model \(X\prec_{\Sigma_2} V\) containing \(U\), there is a finitely generated model \(Y\prec_{\Sigma_2} V\) that is isomorphic to \(\textnormal{Ult}(X,U)\) where the ultrapower is computed in \(X\).
\begin{proof}
Fix such a model \(X\). Let \(a\) be an element of \(\bigcap (U \cap X)\), the intersection of all \(U\)-large sets that belong to \(X\). Note that \(\bigcap (U\cap X)\) is nonempty because \(X\) is countable and \(U\) is countably complete. Let \[Y = \{f(a) : f\in X\text{ and } \text{dom}(f) = S\}\]

We claim \(Y\) is a \(\Sigma_2\) elementary substructure of \(V\). For this, it suffices to show that \(C\cap Y\neq \emptyset\) for any nonempty class \(C = \{v : \varphi(v,y)\}\) defined in \(V\) from the parameter \(y\in Y\) by the \(\Sigma_2\) formula \(\varphi\). 

Let \(M\) be the transitive collapse of \(X\), and let \(\pi:M\to V\) be the \(\Sigma_2\) elementary embedding given by the inverse of the collapse. Fix \(f\in X\) such that \(y = f(a)\), and let \(\bar f\in M\) be such that \(\pi(\bar f) = f\). Let \(\bar S\in M\) be such that \(\pi(\bar S) = S\). Let \[B = \{w\in S: \exists v\ \varphi(v,f(w))\}\] and let \[\bar B = \{w\in \bar S: M\vDash \exists v\ \varphi(v,\bar f(w))\}\] We have \(\pi(\bar B) =B\) since \(\pi\) is \(\Sigma_2\) elementary and \(\bar B\) is defined by the same Boolean combination of \(\Sigma_2\) formulas that defines \(B\) in \(V\) using the corresponding parameters. (More precisely, since \(\forall w\in \bar B\ \exists v\ \varphi(v,\bar f(w))\) and \(\forall w\in S(\exists v\ \varphi(v,\bar f(w))\to w\in \bar B)\), these formulas hold of \(\pi(\bar B)\) with \(f\) replacing \(\bar f\), and so \(\pi(\bar B) = B\).) Note also that since \(C\) is nonempty, there is some \(v\) such that \(\varphi(v,f(a))\); in other words \(a\in B\).

Since \(M\) satisfies ZFC, there is a function \(\bar g\in M\) such that \(\text{dom}(\bar g) = \bar B\) and \[M\vDash \forall w\in \bar B\ \varphi(\bar g(w),\bar f (w))\] Again this is equivalent to a \(\Sigma_2\) formula, and so letting \(g = \pi(\bar g)\in X\), we have \[\forall w\in B\ \varphi(g(w),f(w))\] Since \(a\in B\), \(a\in \text{dom}(g)\) and \(\varphi(g(a),f(a))\). Thus \(g(a)\in C\cap Y\), which shows \(C\cap Y\neq \emptyset\), as desired.
\end{proof}
\end{lma}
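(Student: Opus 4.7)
The plan is to construct $Y$ explicitly as $\{f(a) : f\in X,\ \mathrm{dom}(f) = S\}$ for a carefully chosen $a$, and then verify separately that this $Y$ is $\Sigma_2$-elementary in $V$, is finitely generated, and is isomorphic to the internal ultrapower via the natural map $[f]_U \mapsto f(a)$. First I will observe that $X$ is countable: since the ambient hypothesis is $V = \mathrm{HOD}$, this property is $\Pi_3$ and so descends to $X$, which then has definable Skolem functions; any such Skolem hull of a finite parameter has at most countably many elements, one per $(\text{formula},\text{tuple of parameters})$ pair. Consequently $U \cap X$ is countable, and countable completeness of $U$ produces some $a \in \bigcap(U \cap X)$.

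Next, to check $Y \prec_{\Sigma_2} V$, I fix a $\Sigma_2$ formula $\exists v\,\varphi(v,y)$ true in $V$ with parameter $y = f(a) \in Y$, and aim to find a witness in $Y$. Let $B = \{w \in S : V \models \exists v\,\varphi(v, f(w))\}$. Since $\exists v\,\varphi$ is $\Sigma_2$ in $f(w)$, $B$ is $\Sigma_2$-definable from $f, S \in X$, so $B \in X$. A standard ultrafilter argument shows $B \in U$: by $\mathrm{ZFC}$ in $X$, either $B \cap S$ or its complement is $U$-large in $X$, but $a$ belongs to every member of $U \cap X$, and $a \in B$, so $B \in U$. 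Now let $\pi : M \to X$ be the inverse of the transitive collapse, $\bar f = \pi^{-1}(f)$, $\bar B = \pi^{-1}(B)$. Since $M \models \mathrm{ZFC}$ and $M \models \forall w\in \bar B\,\exists v\,\varphi(v,\bar f(w))$, $M$ produces a choice function $\bar g \in M$ with $M \models \varphi(\bar g(w),\bar f(w))$ for all $w\in \bar B$; pushing $g = \pi(\bar g)$ into $X$ and extending arbitrarily to a function on $S$ in $X$, we have $\varphi(g(w), f(w))$ on all of $B$, so $g(a) \in Y$ is the desired witness.

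For the isomorphism, I define $\Phi : \mathrm{Ult}(X,U) \to Y$ by $\Phi([f]_U) = f(a)$. The same ``$a$ computes membership in every $X$-measurable set'' argument shows $f(a) \in g(a) \iff \{w : f(w) \in g(w)\} \in U^X \iff [f]_U \mathbin{\in^{\mathrm{Ult}(X,U)}} [g]_U$, and analogously for equality; surjectivity is by construction, so $\Phi$ is an $\in$-isomorphism. For finite generation, if $\vec x$ is a finite generator of $X$, then every $f \in X$ is definable in $X$ from $\vec x$, hence in $V$ by $\Sigma_2$-elementarity applied (as needed) through $\pi$; composing with evaluation at $a$ shows every $f(a) \in Y$ is definable from parameters in $\vec x \cup \{a\}$.

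The main obstacle will be the $\Sigma_2$-elementarity step, since the relevant defining properties of $B$, $\bar B$ and the Skolem function $\bar g$ involve a bounded universal quantifier over a $\Sigma_2$ matrix, which is formally $\Pi_3$. The trick is to extract the information one need in two stages: use $\Sigma_2$-elementarity of $\pi$ only to transfer the $\Sigma_2$ definition of $B$ (so that $\pi(\bar B) = B$), and separately use full $\mathrm{ZFC}$ in $M$ to produce the witnessing function $\bar g$ as an internal object, whose transfer under $\pi$ is then an instance of $\pi$'s elementarity on an atomic property of sets in $M$. Once this bookkeeping is organized correctly, the remainder of the verification is routine.
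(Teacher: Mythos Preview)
Your argument is correct and follows essentially the same route as the paper: pick $a\in\bigcap(U\cap X)$, set $Y=\{f(a):f\in X,\ \mathrm{dom}(f)=S\}$, and run the Tarski--Vaught test by producing a choice function in the transitive collapse $M$ and pushing it back along $\pi$. Your worry about bounded-universal-over-$\Sigma_2$ being ``formally $\Pi_3$'' is misplaced, since provably in ZFC (via collection) the class $\Sigma_n$ is closed under bounded quantification for $n\geq 1$; hence the formulas $\forall w\in\bar B\,\exists v\,\varphi(v,\bar f(w))$ and $\forall w\in\bar B\,\varphi(\bar g(w),\bar f(w))$ are genuinely $\Sigma_2$ and transfer directly under $\Sigma_2$-elementarity of $\pi$---no special trick is needed, and the transfer of $\bar g$'s defining property is not ``atomic'' but simply an instance of $\Sigma_2$-elementarity.
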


\begin{proof}[Proof of \cref{axcompultra}]
Assume Weak Comparison and suppose \(F_0\) and \(F_1\) are countably complete ultrafilters. By \cref{finitelygenerated}, we may fix a finitely generated \(\Sigma_2\) substructure  \(X\) of \(V\) satisfying ZFC and containing \(F_0\) and \(F_1\) as elements. Let \(M\) be the transitive collapse of \(X\) and \(U_0\) and \(U_1\) the images of \(F_0\) and \(F_1\) under the collapse. By \cref{ultragen}, the ultrapowers \(\textnormal{Ult}(M,U_0)\) and \(\textnormal{Ult}(M, U_1)\) can each be \(\Sigma_2\)-elementarily embedded into \(V\), and so by Weak Comparison, there is a transitive set \(N\) that admits close embeddings \(k_0:\textnormal{Ult}(M, U_0)\to N\) and \(k_1:\textnormal{Ult}(M,U_1)\to N\). 

By the Close Embeddings Lemma (\cref{cel}) applied to the finitely generated model \(M\), \(k_0\circ j_{U_0} \restriction \text{Ord} = k_1\circ j_{U_1}\restriction \text{Ord}\). Since \(M\vDash V = \text{HOD}\), this implies \(k_0\circ j_{U_0} = k_1\circ j_{U_1}\). Now by \cref{closetoultra}, in \(M\) there is a comparison of \(\langle U_0, U_1\rangle\). 

Since the inverse of the transitive collapse of \(X\) is a \(\Sigma_2\)-elementary embedding of \(M\) into \(V\) sending \(\langle U_0,U_1\rangle\) to \(\langle F_0,F_1\rangle\), and since \(M\) satisfies the \(\Sigma_2\) statement that there is a comparison of \(\langle U_0,U_1\rangle\) (see \cref{sigma2comp}), there is a comparison of \(\langle F_0,F_1\rangle\). Thus the Ultrapower Axiom holds.
\end{proof}

\section{An independence result}\label{IndependenceSection}
Intuitively, it is clear that the Ultrapower Axiom is more powerful than the mere linearity of the Mitchell order on normal ultrafilters, a statement we will refer to as \(\textnormal{UA}_\text{normal}\). (Note that the linearity of the Mitchell order on normal ultrafilters is equivalent to the restriction of the Ultrapower Axiom to normal ultrafilters.) Finding a model that separates \(\textnormal{UA}_\text{normal}\) from the Ultrapower Axiom is a little subtle: as we mentioned in the introduction, the only known method for obtaining instances of \(\textnormal{UA}_\text{normal}\) involves the theory of inner models, and in the inner models, the Ultrapower Axiom holds. 

In this section we show that if it is consistent that there is a measurable cardinal, then it is consistent that \(\textnormal{UA}_\text{normal}\) holds but the Ultrapower Axiom fails. In fact we show something stronger, and of independent interest:

\begin{thm}\label{lu}
If \(U\) is a normal measure on a measurable cardinal \(\kappa\), there is a forcing extension \(N\) of \(L[U]\) with a unique measurable cardinal \(\kappa\), a unique normal ultrafilter \(W\) on \(\kappa\), and a \(\kappa\)-complete ultrafilter \(Z\) on \(\kappa\) that is not Rudin-Keisler equivalent to a finite iterated product of \(W\). 
\end{thm}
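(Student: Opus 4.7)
The plan is to force over $L[U]$ with a $\kappa$-closed, $\kappa^+$-cc forcing $\mathbb{P}$, chosen so that in the extension $N = L[U][G]$ the lift of $U$ is the unique normal ultrafilter on $\kappa$ but the generic creates enough ultrapower diversity to produce a $\kappa$-complete ultrafilter on $\kappa$ not RK-equivalent to any power of the lift. The natural first candidate is Cohen forcing $\mathbb{P} = \textnormal{Add}(\kappa,1)^{L[U]}$, with a reverse Easton variant adding Cohen subsets of each inaccessible $\alpha \leq \kappa$ as a backup if more flexibility is needed.

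First I would carry out the standard Silver--Hamkins master-condition lifting of $j_U:L[U]\to L[U_1]$ to an elementary $\tilde j: N\to L[U_1][G^*]$. Since $\mathbb{P}$ is $\kappa$-closed and $|j_U(\mathbb{P})|^{L[U_1]}\leq \kappa^+$, one constructs in $N$ a generic $G^*\supseteq j_U[G]$ for $j_U(\mathbb{P})$ over $L[U_1]$ (using $\bigcup G$ as a master condition); the ultrafilter $W$ derived from $\tilde j$ using $\kappa$ is then a normal measure on $\kappa$ in $N$, witnessing that $\kappa$ remains measurable.

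Next I would establish the two uniqueness claims. That $\kappa$ is the unique measurable in $N$ follows from reflecting any candidate measurable in $N$ to one in $L[U]$ via $\kappa^+$-cc name arguments, contradicting the uniqueness of $\kappa$ in $L[U]$. Uniqueness of $W$ as a normal measure is the main technical point: given any normal $W'$ on $\kappa$ in $N$, the ultrapower $\textnormal{Ult}(N,W')$ is of the form $L[U'][G']$ where $U'$ is an iterate of $U$ in $L[U]$ and $G'$ is an appropriate lift of $G$; coiterating $L[U']$ with $L[U_1]$ using Dodd--Jensen forces $U'=U_1$, and the uniqueness of the master-condition extension pins down $G'=G^*$, giving $W'=W$. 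This is a direct adaptation of Kunen's uniqueness theorem to the generic extension, using $\kappa^+$-cc to handle the non-smallness of $\mathbb{P}$.

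Finally, to construct $Z$, pick an ordinal $\xi$ in $L[U_1][G^*]$ whose definition essentially uses the part of $G^*$ above $\kappa$ (for instance, a canonical ordinal coded by $G^*\setminus G$), and let $Z$ be the $\kappa$-complete ultrafilter derived from $\tilde j$, or from a finite iterate $j_{W^m}$, using a seed involving $\xi$. A density argument on $\mathbb{P}$ should show that $\xi$ is not of the form $j_{W^n}(f)(\kappa,j_W(\kappa),\ldots,j_W^{n-1}(\kappa))$ for any $f\in N$ and any $n$, which rules out $Z\equiv_{\textnormal{RK}}W^n$. The hard part is this last step: by the paper's factorization theorem combined with the analysis of ultrafilters below the least $\mu$-measurable cardinal, under UA every $\kappa$-complete ultrafilter in $N$ would be RK-equivalent to some $W^n$ (since $W$ is the unique normal measure and $\kappa$ carries no $\mu$-ultrafilter in $N$), so producing $Z$ is tantamount to forcing UA to fail in $N$. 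The forcing must therefore be delicate enough to break UA while preserving the uniqueness of $W$ --- this balance is the principal obstacle, and getting it right may require replacing simple Cohen forcing with a reverse Easton iteration in order to manufacture the requisite generic seed $\xi$.
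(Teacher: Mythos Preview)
Your proposal has two genuine gaps, and the paper's argument takes a quite different route.

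\textbf{Uniqueness of the normal measure.} The step ``the uniqueness of the master-condition extension pins down $G'=G^*$'' is where Cohen forcing fails you. For $\mathbb{P}=\textnormal{Add}(\kappa,1)$ (or a reverse Easton iteration of Cohens), the master condition $\bigcup G$ has many inequivalent extensions to a $j(\mathbb{P})$-generic over $L[U_1]$, and distinct extensions typically yield distinct normal ultrafilters on $\kappa$ in $N$. This is exactly the obstacle the Friedman--Magidor machinery was invented to overcome: they replace Cohen forcing with a nonstationary-support iteration of $\textnormal{Sacks}^*(\alpha)*\textnormal{Code}(\alpha)$, where fusion in $\textnormal{Sacks}^*$ and the coding stage together force the generic over the target model to be \emph{unique}, not merely to exist. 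The paper uses precisely this: it cites the Friedman--Magidor extension $L[U_0][G_0][t_0][g_0]$ as the base model in which $W$ is already known to be the unique normal measure, and then verifies that a further ${\leq}\kappa_0$-closed forcing $\mathbb{Q}$ does not disturb this.

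\textbf{Construction of $Z$.} Your plan to derive $Z$ from $\tilde j$ (or from $j_{W^m}$) using a clever seed $\xi$ cannot work: any ultrafilter derived from $j_{W^m}$ is Rudin--Keisler below $W^m$, and in a model where UA might still hold this is exactly what you are trying to avoid. The paper instead performs an \emph{additional} forcing $\mathbb{Q}=j_W(\textnormal{Code}(\kappa_0))$ over the Friedman--Magidor model; the new generic $h$ allows one to lift the \emph{second} iterate $j_{12}:L[U_1]\to L[U_2]$ in a way that uses $h$ rather than the canonical $g_1$. Composing with $j_W$ gives an embedding $k\circ j_W$ whose target model contains $h$ and hence is \emph{not} contained in $\textnormal{Ult}(N,W)$. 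The derived ultrafilter $Z$ then cannot be any $W^n$, since $\textnormal{Ult}(N,W^n)\subseteq\textnormal{Ult}(N,W)$. The point is that $Z$ comes from a genuinely new elementary embedding, not a new seed for an old one.

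You correctly diagnosed the tension (break UA while keeping $W$ unique), but resolving it requires the Sacks$^*$/coding technology for uniqueness and a second-stage forcing to manufacture the alternative lift.
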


\begin{cor}
If the theory \(\textnormal{ZFC} + \textnormal{there is a measurable cardinal}\) is consistent, then \(\textnormal{ZFC} + \textnormal{UA}_\textnormal{normal}\) does not prove the Ultrapower Axiom.
\begin{proof}
Clearly \(\textnormal{UA}_\text{normal}\) holds in any model of ZFC with a single measurable cardinal that carries a unique normal ultrafilter. On the other hand, the Ultrapower Axiom implies that every \(\kappa\)-complete ultrafilter on the least measurable cardinal \(\kappa\) is Rudin-Keisler equivalent to an iterated product of the the unique normal ultrafilter on \(\kappa\). Thus assuming the consistency of a measurable cardinal, \cref{lu} yields a model in which \(\textnormal{UA}_\text{normal}\) holds while the Ultrapower Axiom fails. 
\end{proof}
\end{cor}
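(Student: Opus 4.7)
The plan is to exhibit a single model witnessing $\textnormal{UA}_{\text{normal}} + \neg\text{UA}$ by directly invoking \cref{lu}. Using the consistency assumption, fix a model of ZFC and a measurable cardinal \(\kappa\) with a normal ultrafilter \(U\); the inner model \(L[U]\) then satisfies the hypothesis of \cref{lu}. Let \(N\) denote the forcing extension of \(L[U]\) produced by \cref{lu}, so that \(N \vDash \text{ZFC}\), the cardinal \(\kappa\) is the unique measurable in \(N\) and carries a unique normal ultrafilter \(W\), and there is a \(\kappa\)-complete ultrafilter \(Z\) on \(\kappa\) in \(N\) that is not Rudin-Keisler equivalent to any finite iterated product of \(W\).

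To see \(N \vDash \textnormal{UA}_{\text{normal}}\), note that \(\textnormal{UA}_{\text{normal}}\) asserts the linearity of the Mitchell order on the class of normal ultrafilters (\cref{NormalMitchell}). In \(N\) this class is the singleton \(\{W\}\), so the assertion is trivially true.

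To see \(N \vDash \neg \text{UA}\), suppose toward contradiction that \(N\) satisfies the Ultrapower Axiom and apply \cref{FactorizationTheorem} to \(Z\): there is a finite iteration \((U_0,\ldots,U_{n-1})\) of irreducible ultrafilters with \(Z \equiv_{\text{RK}} (U_0,\ldots,U_{n-1})\). After replacing each \(U_i\) with its minimal representative, \cref{muthm} classifies each \(U_i\) as either a normal ultrafilter or a \(\mu\)-ultrafilter. Since \(\kappa\) is the only measurable cardinal of \(N\), each \(U_i\) concentrates on \(\kappa\), and since \(W\) is the only normal ultrafilter on \(\kappa\), each \(U_i\) must be Rudin-Keisler equivalent either to \(W\) or to a \(\mu\)-ultrafilter on \(\kappa\). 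Once the \(\mu\)-ultrafilter alternative is ruled out, each \(U_i\) is Rudin-Keisler equivalent to \(W\), so \(Z\) is Rudin-Keisler equivalent to an \(n\)-fold iterated product of \(W\), contradicting the defining property of \(Z\) supplied by \cref{lu}.

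The main obstacle is precisely the step of excluding \(\mu\)-ultrafilters on \(\kappa\) in \(N\): such an ultrafilter would have derived normal ultrafilter \(W\) and satisfy \(W \mo U_i\), which should be precluded by the construction in \cref{lu}, using that \(L[U]\) has no \(\mu\)-measurable cardinal and that the forcing in \cref{lu} is arranged to preserve this. An alternative, perhaps cleaner route that avoids preservation arguments is to quote the dichotomy stated in the introduction—under UA, either there exists a \(\mu\)-measurable cardinal or every countably complete ultrafilter is Rudin-Keisler equivalent to a finite iteration of normal ultrafilters—and use the structure of \(N\) inherited from \(L[U]\) to block the \(\mu\)-measurable horn directly.
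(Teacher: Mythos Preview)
Your approach is the same as the paper's, but you leave the key step unfinished. You correctly isolate the obstacle---ruling out \(\mu\)-ultrafilters on \(\kappa\) in \(N\)---and then propose to handle it either by a forcing-preservation argument or by appealing to the dichotomy from the introduction, without actually carrying either out. Both suggestions are more complicated than necessary and, as stated, are not arguments.

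The missing observation is elementary and requires no appeal to the construction of \(N\) beyond the conclusion of \cref{lu} itself: since \(\kappa\) is the \emph{unique} measurable cardinal of \(N\), there is no \(\mu\)-measurable cardinal in \(N\) at all. Indeed, any \(\mu\)-measurable would have to be measurable, hence equal to \(\kappa\); but if \(Z'\) were a \(\mu\)-ultrafilter on \(\kappa\) with derived normal ultrafilter \(W\), then \(W\in\textnormal{Ult}(N,Z')\), so \(\kappa\) is measurable in \(\textnormal{Ult}(N,Z')\), while by elementarity the unique measurable there is \(j_{Z'}(\kappa)>\kappa\). This contradiction shows \(N\) has no \(\mu\)-measurable, so the corollary following \cref{muthm} applies directly: every countably complete ultrafilter on \(\kappa\) factors as a finite iteration of normal ultrafilters, i.e., of \(W\). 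This is exactly what the paper invokes in one sentence. Once you add this observation, your proof is complete and coincides with the paper's.
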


The technique we use to prove \cref{lu} is a very minor twist on the Friedman-Magidor iterated forcing with \(*\)-perfect trees \cite{FriedmanMagidor}.

Fix \(U\), a normal ultrafilter on \(\kappa\) in \(L[U]\). Let \(j:L[U]\to L[U']\) be the ultrapower embedding, with \(U'\) the unique normal ultrafilter on \(\kappa'\) in \(L[U']\). In \(L[U]\), we define \(\mathbb P\) to be the nonstationary support iteration of length \(\kappa+1\) with \(\mathbb P(\alpha) = \text{Sacks}^*(\alpha) * \text{Code}(\alpha)\). (The forcing \(\text{Sacks}^*(\alpha)\) is defined on page 11 of \cite{FriedmanMagidor}, and \(\text{Code}(\alpha)\) is defined on page 4 in the very similar context of \(\text{Sacks}(\alpha)\)-forcing.) We remark, in preparation for the statement of \cref{FMlemma}, that \(j(\mathbb P)({\leq}\kappa) = \mathbb P\).

\begin{lma}\label{kuneniter}
If \(M\) is an inner model such that every \(\kappa\)-sequence in \(M\) is dominated by a \(\kappa\)-sequence of \(L[U]\) and \(W\) is a normal ultrafilter on \(\kappa\) in \(M\), then \(j_W:M \to \textnormal{Ult}(M,W)\) lifts \(j_U: L[U]\to \textnormal{Ult}(L[U],U)\).
\end{lma}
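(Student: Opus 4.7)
The plan is to isolate $\pi := j_W \restriction L[U]$ as an elementary embedding of $L[U]$, identify it via Kunen's iterability and uniqueness theorem as an iteration map $j_{0\alpha}$ of $L[U]$ by its unique normal measure, and then use the domination hypothesis to force $\alpha = 1$, thereby giving $\pi = j_U$. I assume implicitly that $L[U] \subseteq M$, which the context of the proof of \cref{lu} ensures.

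First, let $\bar U := U \cap L[U] \in L[U]$, the unique normal measure on $\kappa$ in $L[U]$. Restricting $j_W$ to $L[U]$ yields an elementary $\pi : L[U] \to j_W(L[U])$, and by absoluteness of the $L[\cdot]$-construction, $j_W(L[U]) = L[U^*]$ where $U^* := j_W(\bar U)$ is by elementarity the unique normal measure on $j_W(\kappa)$ in $L[U^*]$. The critical point of $\pi$ is $\kappa$. Kunen's iterability and uniqueness theorem for $L[U]$ produces a linear class sequence of iterates $L[U] = L[U_0], L[U_1], L[U_2], \dots$ with iteration maps $j_{\alpha\beta}$ having critical points forming the Silver indiscernible sequence $\kappa_0 = \kappa$, $\kappa_1 = j_U(\kappa)$, $\kappa_2, \dots$; any elementary embedding of $L[U]$ into a Kunen-type model $L[U^*]$ with critical point $\kappa$ is one of these iteration maps. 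Hence $\pi = j_{0\alpha}$ for some $\alpha \geq 1$ and $j_W(\kappa) = \pi(\kappa) = \kappa_\alpha$.

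To pin down $\alpha = 1$, I invoke the domination hypothesis: for each $f : \kappa \to \text{Ord}$ in $M$, choose $g : \kappa \to \text{Ord}$ in $L[U]$ with $f(\xi) \leq g(\xi)$ for all $\xi < \kappa$. Then $[f]_W \leq [g]_W = j_W(g)(\kappa) = \pi(g)(\kappa) = j_{0\alpha}(g)(\kappa)$. Factoring $j_{0\alpha} = j_{1\alpha} \circ j_U$ and using that $\text{crt}(j_{1\alpha}) = \kappa_1 > j_U(g)(\kappa)$, we compute $j_{0\alpha}(g)(\kappa) = j_{1\alpha}(j_U(g)(\kappa)) = j_U(g)(\kappa) < \kappa_1$. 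Taking suprema over $f$ gives $j_W(\kappa) \leq \kappa_1$, and combined with $j_W(\kappa) = \kappa_\alpha \geq \kappa_1$ from the previous paragraph, this forces $\alpha = 1$, so $\pi = j_U$. The main obstacle is the application of Kunen's iterability/uniqueness theorem to the externally defined $\pi$; once $\pi$ has been placed into the linear iteration hierarchy of $L[U]$, the domination argument reduces to a routine calculation with the factor embedding $j_{1\alpha}$.
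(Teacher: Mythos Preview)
The paper states this lemma without proof, treating it as a classical fact (the argument is essentially Kunen's, and underlies the Friedman--Magidor analysis cited immediately afterward). Your proof follows the standard route and is correct in outline.

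One step needs a word of justification. You assert that $\textnormal{crt}(j_{1\alpha}) = \kappa_1 > j_U(g)(\kappa)$, which is what allows $j_{1\alpha}$ to fix $j_U(g)(\kappa)$. This inequality holds when $g:\kappa\to\kappa$, but the domination hypothesis as you have phrased it (``for each $f:\kappa\to\textnormal{Ord}$ in $M$, choose $g:\kappa\to\textnormal{Ord}$ in $L[U]$\dots'') only produces $g$ with range in $\textnormal{Ord}$, and for such $g$ the value $j_U(g)(\kappa)$ can certainly exceed $\kappa_1$. The fix is simply to note that in computing $j_W(\kappa) = \sup\{[f]_W + 1 : f\in {}^\kappa\kappa\cap M\}$ you only need to bound functions $f:\kappa\to\kappa$, and the bounding property actually established for the Friedman--Magidor iteration is precisely that every $f\in{}^\kappa\kappa\cap M$ is dominated by some $g\in{}^\kappa\kappa\cap L[U]$. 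Restrict to such $f$ and $g$ explicitly and the calculation $j_{0\alpha}(g)(\kappa) = j_U(g)(\kappa) < \kappa_1$ is justified; the rest of your argument then goes through unchanged.
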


\begin{lma}\label{FMlemma}
Suppose \(G\subseteq \mathbb P({<}\kappa)\) is \(L[U]\)-generic and \(t\subseteq \textnormal{Sacks}^*(\kappa)_{G}\) is \(L[U][G]\)-generic. Given any \(L[U'][G][t]\)-generic \(g\subseteq \textnormal{Code}(\kappa)_{G*t}\), there is a unique \(L[U']\)-generic \(G'\subseteq j(\mathbb P({<}\kappa))\) projecting to \(G*t*g\) with \(j[G]\subseteq G'\). Let \(j^*: L[U][G]\to L[U'][G']\) be the lift of \(j\). Then there is a unique \(L[U'][G']\)-generic filter \(t'\subseteq\textnormal{Sacks}^*(\kappa')_{G'}\) such that \(j^*[t]\subseteq t'\).

Finally, assume \(\kappa^{+L[U]}\) is a regular cardinal and \(L[U][G][t][g]\) is correct about stationary subsets of \(\kappa^{+L[U]}\). Then \(t*g\) is the unique \(L[U'][G]\)-generic filter on \((\textnormal{Sacks}^*(\kappa) * \textnormal{Code}(\kappa))_{G}\).
\end{lma}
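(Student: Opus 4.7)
The plan is to carry out a master-condition lift in the Friedman--Magidor style, with the extra feature that the generics constructed are shown to be unique, not just to exist.

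First, factor $j(\mathbb{P}({<}\kappa))$ in $L[U']$ as $\mathbb{P}^{L[U]}({\leq}\kappa) * \mathbb{Q}$, where $\mathbb{Q}$ is the tail iteration at stages $\kappa+1$ through $<\kappa'$. Since $L[U']$ agrees with $L[U]$ on bounded subsets of $\kappa$, the filter $G*t*g$ is automatically $L[U']$-generic on $\mathbb{P}^{L[U]}({\leq}\kappa)$, so producing $G'$ reduces to exhibiting an $L[U'][G*t*g]$-generic $H\subseteq \mathbb{Q}$ with $j[G]\subseteq G*t*g*H$. This proceeds in two stages: first, build a master condition $p^*\in \mathbb{Q}$ lying below the tail of every $j(p)$ for $p\in G$, using the $*$-perfect tree fusion machinery together with the closure of $L[U']$ under $\kappa$-sequences of $L[U][G]$ afforded by \cref{kuneniter}; then, below $p^*$, diagonalize through the $L[U']$-maximal antichains of $\mathbb{Q}$, which can be enumerated in $L[U][G*t*g]$ since every such antichain has the form $j(\vec A)(\alpha)$ for some $\vec A\in L[U]$ and $\alpha<\kappa$. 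The extra genericity of $g$ over $L[U'][G][t]$ (as opposed to merely over $L[U][G][t]$) is used here to absorb $L[U']$-definable dense sets into the enumeration.

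The critical point is the uniqueness of $G'$. The Friedman--Magidor forcing is arranged so that a generic filter is reconstructible from the trunks of its $*$-perfect trees at each stage, and the $\text{Code}$ factor at each stage inscribes that trunk into the stationary pattern of $\kappa^{+L[U]}$-subsets; the requirement $j[G]\subseteq G'$ pins these trunks down at every stage, and, combined with the code, this determines $G'$ uniquely. Lifting $j$ to $j^*:L[U][G]\to L[U'][G']$ in the standard way, we then repeat the construction one stage higher to produce the unique $t'\supseteq j^*[t]$ inside $\text{Sacks}^*(\kappa')_{G'}$, the uniqueness arising by the same trunk-rigidity principle. For the final assertion, the $\text{Code}(\kappa)$-component $g$ writes the generic function added by $t$ into the stationary versus nonstationary pattern of subsets of $\kappa^{+L[U]}$; under the stated stationary-correctness hypothesis this pattern is correctly computed in $L[U'][G]$, so any competing $L[U'][G]$-generic $\tilde t*\tilde g$ on $(\text{Sacks}^*(\kappa)*\text{Code}(\kappa))_G$ would yield a distinguishable pattern, forcing $\tilde t*\tilde g = t*g$.

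The principal obstacle is the uniqueness of $G'$, which is what distinguishes this lemma from a boilerplate master-condition lift; it rests on the rigidity property of $*$-perfect tree forcing combined with the $\text{Code}$ factor, and one must verify carefully that this rigidity propagates through the nonstationary-support limit stages of $\mathbb{Q}$ and is not disturbed by the tail diagonalization.
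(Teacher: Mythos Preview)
The paper does not actually prove this lemma; it is stated without proof and attributed (implicitly) to the Friedman--Magidor construction in \cite{FriedmanMagidor}, with the forcings $\text{Sacks}^*(\alpha)$ and $\text{Code}(\alpha)$ defined there. Your sketch is a reasonable outline of how the Friedman--Magidor argument runs, and in broad strokes it matches what one finds in that paper: factor $j(\mathbb P({<}\kappa))$ past stage $\kappa$, build a master condition in the tail using fusion and the closure of $L[U']$ under $\kappa$-sequences from $L[U][G]$, then diagonalize.

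One point in your uniqueness argument for $G'$ is slightly misstated. You write that ``the requirement $j[G]\subseteq G'$ pins these trunks down at every stage,'' but $j[G]$ only constrains $G'$ at stages in $j[\kappa]$, not at the new stages in $(\kappa,\kappa')$. The uniqueness at those intermediate stages comes instead from the $\text{Code}$ mechanism: the generic at each nontrivial stage $\alpha$ is coded into the stationary pattern below $\alpha^+$, and this pattern is visible in any model computing stationarity correctly, so the generic is recoverable and hence unique. Your final paragraph gets this right for the stage-$\kappa$ component; the same idea is what propagates uniqueness through the tail iteration. This is a minor slip in an otherwise sound sketch.
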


In particular, if \(\kappa^{+L[U]}\) is a regular cardinal and \(L[U][G][t][g]\) is correct about stationary subsets of \(\kappa^{+L[U]}\), there is a unique lift of \(j\) to an elementary embedding of \(L[U][G][t][g]\): its existence follows from the existence of \(t'\) and the \({\leq}\kappa\)-distributivity of \(\text{Code}(\kappa)_{G*t}\), and its uniqueness follows from the fact that any lift is uniquely determined by the choice of an \(L[U'][G]\)-generic on \((\textnormal{Sacks}^*(\kappa) * \textnormal{Code}(\kappa))_{G}\).

Given \cref{kuneniter} and \cref{FMlemma}, we can complete the proof of \cref{lu}. (Although one can easily prove the uniqueness of the normal ultrafilter in the Friedman-Magidor extension using these lemmas, we will skip this step since it is spelled out in \cite{FriedmanMagidor}.)

\begin{proof}[Proof of \cref{lu}]
Fix \(U_0\), a normal ultrafilter on \(\kappa_0\) in \(L[U_0]\). Let \(j_{01}: L[U_0]\to L[U_1]\) and \(j_{12}: L[U_1]\to L[U_2]\) be the ultrapower embeddings, with \(U_1\) and \(U_2\) the unique normal ultrafilters on \(\kappa_1\) and \(\kappa_2\) in \(L[U_1]\) and \(L[U_2]\) respectively. 

Let \(L[U_0][G_0][t_0][g_0]\) be a forcing extension of \(L[U_0]\) by \(\mathbb P\). By this we mean that \(G_0\subseteq \mathbb P({<}\kappa_0)\) is \(L[U_0]\)-generic, \(t_0\subseteq \text{Sacks}^*(\kappa_0)_{G_0}\) is \(L[U_0][G_0]\)-generic, and \(g_0\subseteq \text{Code}(\kappa_0)_{G_0*t_0}\) is \(L[U_0][G_0][t_0]\)-generic. Let \(W\) denote the unique normal ultrafilter on \(\kappa_0\) in \(L[U_0][G_0][t_0][g_0]\).

The model \(N\) is obtained by forcing over \(L[U_0][G_0][t_0][g_0]\) with \[\mathbb Q = j_W(\textnormal{Code}(\kappa_0)_{G_0*t_0}) = \textnormal{Code}(\kappa_1)_{G_1*t_1}\] Note that \(\mathbb Q\) is \(\kappa_1\)-closed in the model \(L[U_1][G_1][t_1]\), which is closed under \(\kappa_0\)-sequences in \(L[U_0][G_0][t_0][g_0]\), and hence \(\mathbb Q\) is a \({\leq}\kappa_0\)-closed forcing of size \(\kappa_0^+\) in \(L[U_0][G_0][t_0][g_0]\). (We remark that this implies that in \(L[U_0][G_0][t_0]\), \(\mathbb Q \simeq \text{Add}(\kappa_0^+,1)\).) Hence \(\mathbb Q\) preserves cofinalities, adds no new \(\kappa_0\)-sequences, and preserves stationary subsets of \(\kappa_0^{+L[U_0]}\). Thus let \(h\subseteq \mathbb Q\) be \(L[U_0][G_0][t_0][g]\) generic, and let \(N = L[U_0][G_0][t_0][g_0][h]\). 

We first show that in \(N\), there is a unique normal ultrafilter on \(\kappa_0\). Existence is easy: since \(\mathbb Q\) adds no new \(\kappa_0\)-sequences, \(W\) is a normal ultrafilter on \(\kappa_0\) in \(N\). For uniqueness, let \(W'\) be any normal ultrafilter on \(\kappa_0\) in \(N\), and we will show \(W' = W\). Every \(\kappa_0\)-sequence in \(N\) is dominated by a \(\kappa_0\)-sequence in \(L[U_0]\), since this is true of \(L[U_0][G_0][t_0][g_0]\) by the proof of Lemma 6 in \cite{FriedmanMagidor}, and hence the ultrapower embedding \(j_{W'}: N\to \text{Ult}(N,W')\) lifts the ultrapower embedding \(j_{01}: L[U_0]\to L[U_1]\). Since \(\kappa_0^{+L[U_0]}\) is a regular cardinal of \(N\) and \(L[U_0][G_0][t_0][g_0]\) is correct about stationary subsets of \(\kappa_0^{+L[U_0]}\) in \(N\), there is at most one lift of \(j_{01}\) to an embedding of \(L[U_0][G_0][t_0][g_0]\) that is definable in \(N\) by the remarks following \cref{FMlemma}. The lift of \(j_{01}\) must then be the ultrapower embedding by \(W\). It follows that \(j_{W'}\) lifts \(j_{W}|L[U_0][G_0][t_0][g_0]\), and so since \(N\) has no new \(\kappa\)-sequences, \(W' = W\).

Finally, we show that in \(N\), there is a \(\kappa_0\)-complete ultrafilter on \(\kappa_0\) that is not an iterated product of \(W\). Let \(j_{12}: L[U_1]\to L[U_2]\) be the ultrapower embedding. We apply \cref{FMlemma} to \(L[U_1]\), now with \(U = U_1\) and \(U' = U_2\), but still working in \(N\). But now we are free to take \(g' = h\) in that lemma, since \(h\subseteq \textnormal{Code}(\kappa_1)_{G_1*t_1} = \mathbb Q\) is \(L[U_2][G_1][t_1]\)-generic, being \(L[U_0][G_0][t_0][g_0]\)-generic. Thus we obtain an elementary embedding \(j_{12}^**: L[U_1][G_1][t_1]\to L[U_2][G_2][t_2]\) such that \(G_2\) projects to \(G_1 * t_1 * h\). Moreover, by distributivity considerations, \(j_{12}^**\) lifts to an embedding \(k: \text{Ult}(N,W)\to N'\): note that \(\text{Ult}(N,W)\) is generic over \(L[U_1][G_1][t_1]\) by the \({\leq}\kappa_1\)-distributive forcing \(\textnormal{Code}(\kappa_1)_{G_1*t_1}\) followed by the \({<}\kappa_2\)-distributive forcing \(j_W(\mathbb Q)\). The model \(N'\), however, is not contained in \(\text{Ult}(N,W)\): \(\text{Ult}(N,W)\) has the same \({<}\kappa_2\)-sequences as \(L[U_1][G_1][t_1][g_1]\), and hence does not contain \(h\) since \(t_1*g_1\) is the unique \((\textnormal{Sacks}^*(\kappa_1) * \textnormal{Code}(\kappa_1))_{G_1}\)-generic over \(L[U_1][G_1]\) in \(L[U_1][G_1][t_1][g_1]\). The elementary embedding \(k\circ j_W\) is the ultrapower embedding of \(N\) by a \(\kappa_0\)-complete ultrafilter \(Z\) on \(\kappa_0\) since it lifts \(j_{02}\). But \(Z\) is not Rudin-Keisler equivalent to an iterated product \(F\) of \(W\), since the ultrapower by such an \(F\) is contained in \(\text{Ult}(N,W)\).
\end{proof}

We note the following curious corollary of the proof of \cref{lu}.

\begin{cor}
If it is consistent that there is a measurable cardinal, then it is consistent that there is a measurable cardinal \(\kappa\) carrying a unique normal ultrafilter and that \(\textnormal{Add}(\kappa^+,1)\) adds a \(\kappa\)-complete ultrafilter on \(\kappa\) without adding a normal ultrafilter.
\end{cor}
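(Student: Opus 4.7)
The plan is to extract the desired model directly from the proof of \cref{lu}. In the notation of that proof, take $N^\star = L[U_0][G_0][t_0][g_0]$ as the witnessing ground model. The proof of \cref{lu} (together with the analysis in \cite{FriedmanMagidor}) already shows that $\kappa_0$ is measurable in $N^\star$ and that it carries a unique normal ultrafilter $W$: the uniqueness argument given there for the final model $N^\star[h]$ only uses properties that $N^\star$ itself already has, namely that every $\kappa_0$-sequence in $N^\star$ is dominated by a $\kappa_0$-sequence in $L[U_0]$, that $\kappa_0^{+L[U_0]}$ is a regular cardinal of $N^\star$, and that $N^\star$ is correct about stationary subsets of $\kappa_0^{+L[U_0]}$.

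Next, I would identify the forcing $\mathbb{Q} = j_W(\textnormal{Code}(\kappa_0)_{G_0*t_0})$ with $\textnormal{Add}(\kappa_0^+,1)^{N^\star}$ inside $N^\star$. The proof of \cref{lu} notes that $\mathbb{Q}$ is ${\leq}\kappa_0$-closed and has size $\kappa_0^+$ in $N^\star$, and the Friedman--Magidor preparation preserves the GCH at $\kappa_0$, so $2^{\kappa_0}=\kappa_0^+$ holds in $N^\star$. Any nontrivial ${\leq}\kappa_0$-closed forcing of size $\kappa_0^+$ under this instance of GCH is isomorphic to $\textnormal{Add}(\kappa_0^+,1)$, by a standard density argument. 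So $\mathbb{Q}\simeq \textnormal{Add}(\kappa_0^+,1)^{N^\star}$ as forcing notions in $N^\star$, and generics for one translate into generics for the other.

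Given this identification, the proof of \cref{lu} yields precisely the corollary. For the existence half, the construction of the lifted embedding $k\circ j_W$ in that proof produces, in $N^\star[h]$, a $\kappa_0$-complete ultrafilter $Z$ on $\kappa_0$ that does not belong to $\textnormal{Ult}(N^\star[h],W)$ and so in particular does not belong to $N^\star$; so forcing with $\textnormal{Add}(\kappa_0^+,1)$ over $N^\star$ adds a new $\kappa_0$-complete ultrafilter on $\kappa_0$. For the non-existence half, the uniqueness argument in the proof of \cref{lu} shows that $W$ is also the unique normal ultrafilter on $\kappa_0$ in $N^\star[h]$, so $\textnormal{Add}(\kappa_0^+,1)$ adds no new normal ultrafilter.

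The main obstacle, such as it is, is purely bookkeeping: one must check that the forcing isomorphism $\mathbb{Q}\simeq \textnormal{Add}(\kappa_0^+,1)$ is available already in $N^\star$, rather than merely in the slightly smaller model $L[U_0][G_0][t_0]$ as stated parenthetically in the proof of \cref{lu}. This reduces to verifying $2^{\kappa_0}=\kappa_0^+$ in $N^\star$, which follows from the fact that each step of the Friedman--Magidor iteration and the final $\textnormal{Code}(\kappa_0)$-forcing adds no new subsets of $\kappa_0$ beyond those contributed by $\textnormal{Sacks}^\star(\kappa_0)$, while GCH is preserved throughout by the standard analysis of the iteration.
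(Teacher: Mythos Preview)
Your overall strategy—taking $N^\star = L[U_0][G_0][t_0][g_0]$ as the ground model and reading off the rest from the proof of \cref{lu}—is exactly what the paper intends. The identification of $\mathbb Q$ with $\textnormal{Add}(\kappa_0^+,1)$ and the uniqueness of the normal ultrafilter in $N^\star[h]$ are handled correctly.

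The gap is in your argument that $Z$ is genuinely new, i.e., that $Z\notin N^\star$. You write that $Z\notin\textnormal{Ult}(N^\star[h],W)$ ``and so in particular'' $Z\notin N^\star$. This implication would require $N^\star\subseteq\textnormal{Ult}(N,W)$, which is false: $W$ itself lies in $N^\star$ but not in $\textnormal{Ult}(N,W)$, by the strictness of the Mitchell order (\cref{mostrict}). So even granting $Z\notin\textnormal{Ult}(N,W)$, nothing about $N^\star$ follows. (Separately, the proof of \cref{lu} only establishes $N'=\textnormal{Ult}(N,Z)\not\subseteq\textnormal{Ult}(N,W)$, not $Z\notin\textnormal{Ult}(N,W)$ directly, though the latter does turn out to be true.)

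A correct argument is short. Suppose $Z\in N^\star$. Since $\mathbb Q$ adds no $\kappa_0$-sequences, $N$ and $N^\star$ have the same functions $\kappa_0\to N^\star$, so $j_Z^{N^\star}(G_0)=j_Z^N(G_0)$. But $j_Z^N=k\circ j_W$ sends $G_0\mapsto G_1\mapsto G_2$, and by construction $G_2$ projects to $G_1*t_1*h$. Hence $h$ is computable from $G_2\in\textnormal{Ult}(N^\star,Z)\subseteq N^\star$, contradicting the genericity of $h$ over $N^\star$.
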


It is not clear that it is {\it possible} for \(\textnormal{Add}(\kappa^+,1)\) to add a normal ultrafilter on a measurable cardinal \(\kappa\) that carries a unique normal ultrafilter in the ground model. Over \(L[U]\), for example, \({\leq}\kappa\)-distributive forcing adds no new \(\kappa\)-complete ultrafilters on \(\kappa\). On the other hand, if \(\kappa\) is indestructibly supercompact, and \(G\subseteq \textnormal{Add}(\kappa^+,1)\) is \(V\)-generic, there is a new normal ultrafilter on \(\kappa\) in \(V[G]\): since \(\kappa\) is \(2^\kappa\)-supercompact in \(V[G]\), there is in \(V[G]\) a normal ultrafilter \(W\) on \(\kappa\) such that \(G\in \text{Ult}(V[G],W)\), and it is easy to see that no such \(W\) can lie in \(V\). Another curious corollary:

\begin{cor}
Assume it is consistent that there is a measurable cardinal. Then it is consistent that there is a measurable cardinal \(\kappa\) carrying a unique normal ultrafilter and \(2^{2^\kappa}\) distinct \(\kappa\)-complete ultrafilters.
\begin{proof}
In the proof of \cref{lu}, instead of forcing over \(L[U_0][G_0][t_0][g_0]\) with \(\mathbb Q\), force with the \({\leq}\kappa\)-support product of \(\mathbb Q\) with itself. Equivalently, instead of \(\text{Add}(\kappa^+,1)\), use \(\text{Add}(\kappa^+,\kappa^{++})\).
\end{proof}
\end{cor}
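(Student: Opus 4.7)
The plan is to follow the proof of \cref{lu} but replace the last-step forcing $\mathbb Q$ with a large product, extracting one new $\kappa_0$-complete ultrafilter per coordinate. Explicitly, after obtaining $L[U_0][G_0][t_0][g_0]$ as in \cref{lu}, I would force instead with the $\leq\kappa_0$-support product $\prod_{\alpha < \kappa_0^{++}}\mathbb Q$ of copies of $\mathbb Q = \textnormal{Code}(\kappa_1)_{G_1*t_1}$, which in $L[U_0][G_0][t_0]$ is forcing-equivalent to $\textnormal{Add}(\kappa_0^+, \kappa_0^{++})$. Write $H = \langle h_\alpha : \alpha < \kappa_0^{++}\rangle$ for the generic filter and set $N = L[U_0][G_0][t_0][g_0][H]$. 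This product is $\leq\kappa_0$-closed and satisfies the $\kappa_0^{++}$-cc in $L[U_0][G_0][t_0][g_0]$, so it preserves cardinals and stationary subsets of $\kappa_0^{+L[U_0]}$ and adds no new $\kappa_0$-sequences.

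Uniqueness of the normal ultrafilter $W$ on $\kappa_0$ in $N$ then transfers verbatim from the proof of \cref{lu}: any normal ultrafilter $W'$ on $\kappa_0$ in $N$ provides a lift of $j_{01}$, and the uniqueness clause of \cref{FMlemma} identifies this lift with the one associated to $W$, forcing $W = W'$. Next, for each $\alpha < \kappa_0^{++}$, mutual genericity of the coordinates makes $h_\alpha$ an $L[U_1][G_1][t_1]$-generic filter on $\textnormal{Code}(\kappa_1)_{G_1*t_1}$, so running the lifting argument of \cref{lu} with $h_\alpha$ in place of $h$ yields an elementary embedding $k_\alpha : \textnormal{Ult}(N,W) \to N'_\alpha$ lifting $j_{12}$, together with a $\kappa_0$-complete ultrafilter $Z_\alpha$ on $\kappa_0$ whose ultrapower embedding is $k_\alpha \circ j_W$ and whose ultrapower $N'_\alpha$ contains $h_\alpha$ as the ``extra'' generic not already in $\textnormal{Ult}(N,W)$.

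The hard part is to verify that the map $\alpha \mapsto Z_\alpha$ is injective. The idea is that $N'_\alpha = \textnormal{Ult}(N, Z_\alpha)$ canonically encodes $h_\alpha$: applying the uniqueness clause of \cref{FMlemma} inside $N'_\alpha$ at level $\kappa_1$, the generic $h_\alpha$ is identifiable in $N'_\alpha$ as the unique $L[U_2][G_2][t_2]$-generic for $\textnormal{Code}(\kappa_1)_{G_1 * t_1}$ arising from the lift, so if $Z_\alpha = Z_\beta$ then $h_\alpha = h_\beta$, contradicting mutual genericity of distinct coordinates of $H$. Concretely, one can witness this distinctness by exhibiting for each $\alpha$ a set $A_\alpha \subseteq \kappa_0$ definable from a single condition in $h_\alpha$ such that $A_\alpha \in Z_\alpha \setminus Z_\beta$ whenever $\beta \neq \alpha$. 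Finally, GCH in $L[U_0]$ together with cardinal preservation gives $2^{2^{\kappa_0}} = \kappa_0^{++}$ in $N$, so $\{Z_\alpha : \alpha < \kappa_0^{++}\}$ is the desired family of $2^{2^{\kappa_0}}$ pairwise distinct $\kappa_0$-complete ultrafilters on $\kappa_0$.
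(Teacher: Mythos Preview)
Your proposal is correct and takes essentially the same approach as the paper: replace the single copy of $\mathbb Q$ by its ${\leq}\kappa_0$-support $\kappa_0^{++}$-fold product (equivalently, $\text{Add}(\kappa_0^+,\kappa_0^{++})$), and extract one new ultrafilter per coordinate. The paper's proof is a one-line sketch that omits the verifications you supply (preservation properties, uniqueness of $W$, injectivity of $\alpha\mapsto Z_\alpha$), so your elaboration is a faithful expansion of the intended argument.
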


The model of \cref{lu} also separates the seed order from the \(\E\)-order:

\begin{cor}
Let \(N\) be the model of \cref{lu}, and let \(Z\) be as in the proof of \cref{lu}. In \(N\), \(W\E Z\) but \(W\) and \(Z\) are \(\wo\)-incomparable.
\begin{proof}
By the proof of \cref{muthm}, if \(\langle W,Z\rangle\) admits a comparison by internal ultrafilters, then either \(W\) is an internal factor of \(Z\) or else \(W\mo Z\). The former is impossible since \(\text{Ult}(V,Z)\) is not contained in \(\text{Ult}(V,W)\), and the second is impossible since there is only one measurable cardinal in \(N\). Thus \(W\) and \(Z\) are \(\wo\)-incomparable. On the other hand, \(W\leq_{\text{RK}} Z\) by construction, and so \(W\E Z\) by \cref{rkE}.
\end{proof}
\end{cor}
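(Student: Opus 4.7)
The plan is to verify the two assertions separately, invoking the structural analysis of minimal irreducible ultrafilters from \cref{muthm} for the incomparability claim and the Rudin-Keisler characterization of the $E$-order from \cref{rkE} for the positive claim.

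First I would deal with $W \E Z$. By construction $Z$ was produced as an ultrafilter on $\kappa_0$ derived from $k \circ j_W$, where $k : \textnormal{Ult}(N, W) \to N'$ lifts $j_{12}^{**}$; equivalently, $Z$ was engineered so that $j_Z$ factors through $j_W$. Thus $W \leq_{\textnormal{RK}} Z$ in $N$. Since the seed of the principal ultrafilter of $\textnormal{Ult}(N,W)$ witnessing this factorization sits below $[\textnormal{id}]_Z$ in the canonical wellorder (or more simply: $W$ is the ultrafilter derived from $Z$ using its critical point, and $\kappa_0 \leq [\textnormal{id}]_Z$), \cref{rkE} gives $W \E Z$ directly.

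Next I would show $W$ and $Z$ are $\wo$-incomparable. The key input is the analysis of \cref{muthm}: if $U$ is an irreducible ultrafilter on a measurable $\kappa$ and the normal ultrafilter $U_0$ derived from $U$ compares with $U$ by internal ultrafilters, then either $U_0$ is an internal factor of $U$ or $U_0 \mo U$. The argument adapts verbatim to a hypothetical comparison of $\langle W, Z\rangle$ by internal ultrafilters: since $W$ is the normal ultrafilter derived from $Z$, one of the two possibilities must occur. I will rule out each in turn.

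The Mitchell option $W \mo Z$ fails because $N$ has a unique measurable cardinal and hence no nonprincipal ultrafilter lies below another in the Mitchell order on ordinals --- or more concretely, $W \mo Z$ would require $W \in \textnormal{Ult}(N, Z)$, but $\textnormal{Ult}(N, Z)$ inherits from $L[U_0]$ that only $\kappa_2$ is measurable, while $W$ concentrates on $\kappa_0 < \kappa_2$. The factorization option $W$-is-an-internal-factor-of-$Z$ fails for the reason already noted in the construction: if $Z$ factored as $(W, F)$ with $F \in \textnormal{Ult}(N, W)$, then $\textnormal{Ult}(N, Z) \subseteq \textnormal{Ult}(N, W)$, but the model $N'$ into which $\textnormal{Ult}(N, Z)$ embeds contains $h$, which is not in $\textnormal{Ult}(N, W)$ by the uniqueness clause of \cref{FMlemma}. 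Since neither case holds, $\langle W, Z\rangle$ admits no comparison by internal ultrafilters at all, so $W$ and $Z$ are $\wo$-incomparable in $N$. The main obstacle is simply checking that the argument of \cref{muthm} goes through for the pair $\langle W, Z\rangle$ without assuming the Ultrapower Axiom in $N$, but this is fine: that proof only uses the existence of a single comparison witnessing $W \wo Z$ (or $Z \wo W$), which is exactly what $\wo$-comparability in $N$ would supply.
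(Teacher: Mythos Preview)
Your proposal is correct and follows essentially the same approach as the paper's proof: invoke the dichotomy from the proof of \cref{muthm} (a comparison of $\langle W,Z\rangle$ forces either $W$ to be an internal factor of $Z$ or $W\mo Z$), rule out each alternative using the features of the construction, and then deduce $W\E Z$ from $W\leq_{\textnormal{RK}} Z$ via \cref{rkE}. One small imprecision: you write ``the model $N'$ into which $\textnormal{Ult}(N,Z)$ embeds,'' but in fact $\textnormal{Ult}(N,Z)=N'$ outright, since $k\circ j_W$ \emph{is} the ultrapower embedding by $Z$; this is what makes $h\in\textnormal{Ult}(N,Z)\setminus\textnormal{Ult}(N,W)$ and hence rules out the factorization case.
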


The ultrafilter \(Z\) is the only example we know of a minimal irreducible ultrafilter that is not Dodd solid.

\section{Questions}\label{QuestionSection}
We conclude with some questions. We begin with the most obvious one:

\begin{qst}\label{supercompactua}
Is the Ultrapower Axiom consistent with a supercompact cardinal?
\end{qst}

It seems unlikely that this will be answered positively without a solution to the Inner Model Problem at the level of supercompact cardinals. \cref{comparisonsection} argues that if a solution to the Inner Model Problem is within reach of current technology, then this solution yields a positive answer to \cref{supercompactua}. But this is somewhat speculative.

Our next questions regards the analysis in \cref{RealmSection}, which admittedly does not reach very far in large cardinal terms. The question of how detailed an inductive analysis of countably complete ultrafilters the Ultrapower Axiom provides is made precise by the following question:
\begin{qst}
Assume the Ultrapower Axiom. Does the Mitchell order wellorder the class of minimal irreducible ultrafilters?
\end{qst}

It seems unlikely that the Ultrapower Axiom proves that minimal irreducible ultrafilters are Dodd solid, though this would give a positive answer to this question by \cref{doddlin}. In fact, if the Ultrapower Axiom proves minimal irreducible ultrafilters are Dodd solid in general, one has a positive answer to \cref{gch} as well, by \cref{superirred} and \cref{doddgch}. It seems more likely that it is consistent that GCH fails at \(\kappa\) for \(\kappa\) a supercompact while the Ultrapower Axiom holds, which implies that there are minimal irreducible ultrafilters on \(\kappa^+\) that are not Dodd solid. Perhaps there is a weakening of Dodd solidity that makes sense for ultrafilters on \(\delta\) when \(2^{<\delta}> \delta\), and which suffices for the proof of \cref{doddlin}. The natural inductive proof that minimal irreducible ultrafilters are Dodd solid (see \cref{doddfail1}) seems to break down when one needs to analyze objects that are not finitely generated; that is, when one reaches ultrafilters whose extender initial segments are not themselves ultrafilters.

There are many other questions one can ask about the algebra of irreducible ultrafilters. Are the ultrafilters of the canonical comparison of a pair of irreducible ultrafilters irreducible themselves in the models to which they are applied? Suppose \(U_0\) is an irreducible ultrafilter and \(U_0\) is a factor of \(U_1\vee U_2\). Must \(U_0\) be a factor of either \(U_1\) or of \(U_2\)? That is, are irreducible ultrafilters prime? If so, this generalizes Euclid's lemma, a first step towards a {\it unique} factorization theorem assuming the Ultrapower Axiom.

Our last question is whether it is possible to generalize the Ultrapower Axiom in a way that applies to a wider class of elementary embeddings. For example, we make the following definition.

\begin{defn}
The {\it Extender Axiom} is the statement that if \(E_0\) and \(E_1\) are extenders then there exist extenders \(F_0\in \textnormal{Ult}(V,E_0)\) and \(F_1\in \textnormal{Ult}(V,E_1)\) such that \(\textnormal{Ult}(\textnormal{Ult}(V,E_0),F_0) = \textnormal{Ult}(\textnormal{Ult}(V,E_1),F_1)\).
\end{defn}

We have no reason (in the style of \cref{axcompultra}) to believe that the Extender Axiom is consistent with very large cardinals.

\begin{conj}
Assume there is a cardinal \(\kappa\) that is \(2^\kappa\)-supercompact. Then the Extender Axiom is false.
\end{conj}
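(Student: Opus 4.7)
The plan is to derive a Kunen-style inconsistency from the Extender Axiom together with the existence of a $2^\kappa$-supercompact cardinal $\kappa$, exploiting the fact that supercompactness extenders exhibit a strong self-containment phenomenon that interacts badly with internal comparison of extenders.

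First, I would set up the large cardinal data. Let $\lambda = 2^\kappa$, fix a $\kappa$-complete normal fine ultrafilter $\mathcal U$ on $P_\kappa(\lambda)$, and let $j = j_\mathcal U : V \to M$ be its ultrapower. Let $E$ be the $(\kappa,\lambda)$-extender derived from $j$. The crucial observation is that since $M$ is closed under $\lambda$-sequences and $E$ can be coded as a $\lambda$-sequence of ultrafilters, $E \in M = \textnormal{Ult}(V, E)$; this self-containment has no analogue in the ultrafilter setting and is the source of the strength we will exploit. In particular, $M$ can iterate by $E$ internally, producing a canonical sequence of further extenders $j_E(E), j_E(j_E(E)), \dots$ all of which live in $M$.

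Next, I would apply the Extender Axiom to the pair $\langle E, E'\rangle$ where $E'$ is a second $V$-extender chosen to produce a maximally constraining comparison --- for instance, $E' = j_E(E)$ after checking that $j_E(E)$ is in fact a $V$-extender, using the $\lambda$-closure of $M$ to verify that the ultrafilters in $j_E(E)$ measure all $V$-sets of the appropriate rank. The axiom yields $F_0 \in \textnormal{Ult}(V,E)$ and $F_1 \in \textnormal{Ult}(V,E')$ with $\textnormal{Ult}(\textnormal{Ult}(V,E),F_0) = \textnormal{Ult}(\textnormal{Ult}(V,E'),F_1) = N$. Applying \cref{minult} to the resulting two composed embeddings $V \to N$ pins down their action on the ordinals. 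Combined with the self-containment $E \in M$, this agreement of ordinals then propagates to the generators of $E$ and $E'$, giving enough structure to define an elementary embedding of an inner model of $M$ back into $M$ that moves generators of $E$ in a constrained way.

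The main obstacle, and where the $2^\kappa$-supercompactness is essential, is converting this generator-level agreement into a genuine Kunen violation. The Extender Axiom deliberately omits the commutativity clause that the Ultrapower Axiom imposes, so the composed embeddings need not agree as maps from $V$, and the contradiction cannot come directly from \cref{everythingcommutes}. Instead, one must iterate the construction along a cofinal sequence $\langle E_\xi : \xi < \theta\rangle$ of $V$-extenders obtained by repeatedly pushing $E$ forward by itself, take a direct limit of the induced comparison models, and argue that at the limit stage one obtains a nontrivial elementary embedding of a rank initial segment of $V$ containing $\lambda$ into itself, definable in the limit model from the accumulated comparison data. The supercompactness of $\kappa$ is used precisely to guarantee that the iteration reaches a rank above $\lambda$ before it collapses, which is what Kunen's inconsistency theorem rules out; at weaker large cardinal levels, $E$ fails to live inside $M$ and the iteration stabilizes too early to yield a contradiction.
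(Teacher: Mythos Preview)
The statement you are attempting to prove is explicitly labeled a \emph{conjecture} in the paper: the author offers no proof and remarks only that there is ``no reason (in the style of \cref{axcompultra}) to believe that the Extender Axiom is consistent with very large cardinals.'' There is therefore no paper proof to compare against, and your proposal must be evaluated on its own merits as a candidate argument for an open problem.

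On those merits, the proposal has a concrete gap and a programmatic one. The concrete gap is your claim that $E' = j_E(E)$ is a $V$-extender. With $\lambda = 2^\kappa$ and $E$ the $(\kappa,\lambda)$-extender, $j_E(E)$ is a $(j_E(\kappa), j_E(\lambda))$-extender of $M$, and its component measures are ultrafilters on $P([j_E(\kappa)]^{<\omega})\cap M$. For $j_E(E)$ to be a $V$-extender you would need $P(j_E(\kappa)) \subseteq M$, but $j_E(\kappa) > \lambda$ and $M$ is only closed under $\lambda$-sequences; $\lambda$-closure of $M$ does not deliver the required power set, so the verification you propose does not go through. You could instead take $E'$ to be a second $V$-extender derived from a supercompactness embedding, but then the self-containment leverage you want from $E' = j_E(E)$ disappears.

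The programmatic gap is the one you yourself identify as ``the main obstacle'': converting agreement on ordinals (which \cref{defemb} does give for the two composed embeddings $V\to N$) into a Kunen violation. The Extender Axiom deliberately omits commutativity, so the argument of \cref{everythingcommutes} is unavailable, and the mechanism you sketch --- iterate along $\langle E_\xi : \xi < \theta\rangle$, pass to a direct limit, extract a nontrivial self-embedding of some $V_\alpha$ with $\alpha > \lambda$ --- is not specified in enough detail to assess. What are the iteration maps, why does the limit land at a rank above $\lambda$, and why is the resulting map nontrivial rather than the identity? Without answers to these, the proposal is an outline of a strategy rather than a proof, which is consistent with the paper's recording the statement as open.
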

\newpage
\bibliography{seedorder}{}
\bibliographystyle{unsrt}
\end{document}